\documentclass{article}

\usepackage[a4paper, total={6in, 8in}]{geometry}

\usepackage{amsmath, amsthm, amssymb}
\usepackage{mathtools}
\usepackage{mathrsfs}

\numberwithin{equation}{section}

\usepackage{graphicx}
\usepackage{subcaption}
\usepackage{standalone}
\usepackage{tikz}

\usetikzlibrary{calc}

\usepackage{enumitem}
\usepackage{todonotes}
\usepackage{theoremref}

\usepackage{hyperref}

\hypersetup{
    colorlinks=true,
    linkcolor=blue,
    filecolor=blue,
    urlcolor=blue,
    citecolor=blue,
    pdftitle={From local giants to locality in long-range percolation},
    pdfpagemode=FullScreen
}

\theoremstyle{plain}

\newtheorem{theorem}{Theorem}[section]
\newtheorem{corollary}[theorem]{Corollary}

\newtheorem{proposition}[theorem]{Proposition}
\newtheorem{lemma}[theorem]{Lemma}

\theoremstyle{definition}

\newtheorem{definition}[theorem]{Definition}
\newtheorem{example}[theorem]{Example}

\theoremstyle{remark}

\newtheorem{remark}[theorem]{Remark}

\newcommand{\LRP}{long-range percolation}
\newcommand{\BNNP}{Bernoulli percolation}
\newcommand{\ER}{Erd\H{o}s-R\'enyi}

\renewcommand{\epsilon}{\varepsilon}

\newcommand{\norm}[1]{\|#1\|}
\newcommand{\charf}{\mathbf{1}}
\newcommand{\dd}{\,\mathrm{d}}

\newcommand{\aut}{\mathrm{Aut}}
\newcommand{\Aut}{\mathrm{Aut}}

\newcommand{\diam}{\mathrm{diam}}

\newcommand{\Bin}{\mathrm{Bin}}

\newcommand{\E}{\mathbf{E}}
\renewcommand{\P}{\mathbf{P}}

\newcommand{\Z}{\mathbf{Z}}
\newcommand{\R}{\mathbf{R}}
\newcommand{\N}{\mathbf{N}}

\renewcommand{\L}{\mathbf{L}}

\renewcommand{\AA}{\mathcal{A}}
\newcommand{\CC}{\mathcal{C}}

\newcommand{\GG}{\mathcal{G}}
\newcommand{\HH}{\mathcal{H}}
\newcommand{\NN}{\mathcal{N}}

\newcommand{\radius}{\overline{r}}

\newcommand{\tile}[2]{T_{#1}(#2)}
\newcommand{\Tile}[2]{T_{#1,#2}}

\newcommand{\cell}[2]{\mathrm{Cell}_{#1}(#2)}
\newcommand{\Cell}[2]{\mathrm{Cell}_{#1,#2}}

\newcommand{\vor}[1]{\mathrm{Vor}(#1)}

\newcommand{\numtiles}[1]{\kappa_{#1,j}}

\newcommand{\numsub}[1]{\kappa_{#1,j}}

\newcommand{\netvertex}[2]{x_{#1,#2}}

\newcommand{\cvol}{c_G}

\newcommand{\cvor}{c_{\mathrm{vor}}}
\newcommand{\Cvor}{C_{\mathrm{vor}}}

\newcommand{\cluster}{K}

\newcommand{\kinfty}{\cluster_{\infty}}

\newcommand{\giantone}[1]{\cluster_{#1}^{\scriptscriptstyle (1)}}
\newcommand{\gianttwo}[1]{\cluster_{#1}^{\scriptscriptstyle (2)}}
\newcommand{\giantcomp}[2]{\cluster_{#1,#2}^{\scriptscriptstyle (1)}}

\newcommand{\goodcluster}[2]{\mathcal{C}_{#1,#2}}
\newcommand{\goodsize}[2]{\mathcal{S}_{#1,#2}}

\newcommand{\loccomp}[2]{\cluster_{#1,#2}}

\newcommand{\level}[1]{level-$#1$}

\newcommand{\error}[1]{\mathrm{err}_{#1}}

\newif\ifusecolor
\newif\ifshowlabels
\newif\ifshowsites
\newif\ifshowcoarseguide
\newif\ifshowselectedmerge

\usecolortrue
\showlabelstrue
\showsitestrue
\showcoarseguidetrue
\showselectedmergetrue

\def\r{4.0}
\def\vorRadius{4.35}
\def\M{25}

\def\fineedgewidth{0.22pt}
\def\coarseedgewidth{0.95pt}
\def\mergededgewidth{1.15pt}
\def\selectededgewidth{1.75pt}

\newcommand{\sitecolor}[1]{%
  \ifcase#1 black%
  \or red%
  \or teal%
  \or orange%
  \or cyan%
  \or blue%
  \or violet%
  \or lime!65!black%
  \or pink!85!red%
  \else black%
  \fi
}

\def\finesites{
    -1.351/-3.438/1/5,
    0.473/-3.393/2/5,
    -0.088/-3.366/3/5,
    -0.594/-3.337/4/5,
    1.842/-3.169/5/5,
    -0.222/-2.838/6/5,
    -1.648/-2.813/7/5,
    -0.794/-2.755/8/5,
    2.772/-2.702/9/7,
    -2.721/-2.679/10/2,
    -2.237/-2.648/11/2,
    0.869/-2.615/12/5,
    -3.139/-2.317/13/2,
    -0.084/-2.228/14/5,
    2.076/-2.202/15/7,
    -1.224/-2.196/16/5,
    -2.507/-2.161/17/2,
    1.122/-2.152/18/5,
    -1.973/-2.061/19/2,
    3.173/-2.044/20/7,
    -0.907/-1.843/21/5,
    2.711/-1.794/22/7,
    -0.141/-1.758/23/5,
    1.563/-1.682/24/7,
    -2.882/-1.653/25/2,
    -1.691/-1.598/26/2,
    -3.538/-1.560/27/2,
    1.104/-1.535/28/5,
    0.496/-1.509/29/5,
    -2.313/-1.493/30/2,
    1.941/-1.278/31/7,
    -0.777/-1.209/32/4,
    -0.095/-1.197/33/5,
    -3.032/-1.138/34/2,
    -1.790/-1.117/35/2,
    -1.320/-1.101/36/2,
    2.623/-1.041/37/7,
    3.254/-1.010/38/7,
    1.133/-0.976/39/7,
    0.332/-0.679/40/4,
    -2.935/-0.555/41/2,
    3.349/-0.552/42/7,
    1.595/-0.506/43/7,
    -1.717/-0.505/44/2,
    0.968/-0.489/45/7,
    -3.530/-0.463/46/2,
    2.763/-0.447/47/7,
    -1.092/-0.426/48/4,
    -0.401/-0.409/49/4,
    2.193/-0.389/50/7,
    1.314/-0.140/51/7,
    -2.095/-0.070/52/2,
    0.541/-0.040/53/4,
    -3.713/0.020/54/2,
    -0.748/0.105/55/4,
    -3.253/0.115/56/2,
    3.048/0.116/57/7,
    -0.139/0.126/58/4,
    3.568/0.389/59/7,
    1.130/0.504/60/6,
    -0.330/0.513/61/4,
    2.158/0.538/62/7,
    -1.102/0.545/63/4,
    -2.387/0.575/64/1,
    0.163/0.614/65/4,
    -3.031/0.643/66/1,
    3.050/0.655/67/7,
    -3.475/0.673/68/1,
    -1.697/0.699/69/4,
    1.207/0.990/70/6,
    3.751/1.013/71/8,
    0.568/1.016/72/6,
    -3.033/1.081/73/1,
    2.467/1.157/74/8,
    0.015/1.179/75/4,
    -0.765/1.268/76/4,
    -1.208/1.269/77/4,
    -2.766/1.484/78/1,
    3.570/1.493/79/8,
    2.810/1.527/80/8,
    -2.115/1.634/81/1,
    0.304/1.739/82/6,
    -0.482/1.802/83/3,
    2.242/1.810/84/8,
    1.757/1.820/85/6,
    -1.599/1.823/86/1,
    1.011/1.836/87/6,
    -1.956/2.122/88/1,
    -1.207/2.140/89/3,
    -2.601/2.175/90/1,
    1.997/2.201/91/6,
    -0.637/2.243/92/3,
    0.633/2.379/93/6,
    -1.076/2.605/94/3,
    1.734/2.630/95/6,
    0.963/2.665/96/6,
    2.704/2.671/97/8,
    0.154/2.708/98/3,
    -1.566/2.904/99/3,
    -0.393/2.926/100/3,
    2.091/2.950/101/8,
    -1.984/3.161/102/3,
    -1.198/3.445/103/3,
    1.249/3.462/104/6,
    -0.090/3.492/105/3,
    0.754/3.494/106/6
}

\def\coarsesites{
    -2.750/1.850/1,
    -2.850/-1.150/2,
    -1.050/3.000/3,
    -0.650/0.450/4,
    0.100/-2.750/5,
    1.250/1.950/6,
    2.650/-0.700/7,
    3.000/2.350/8
}

\newcommand{\finecell}[5]{}
\newcommand{\finecells}{%
  \finecell{5}{1}{-1.351}{-3.438}{(-1.004,-3.152) -- (-1.207,-2.986) -- (-2.094,-3.408) -- (-2.090,-3.411) -- (-1.954,-3.490) -- (-1.816,-3.564) -- (-1.675,-3.633) -- (-1.531,-3.696) -- (-1.384,-3.753) -- (-1.236,-3.804) -- (-1.086,-3.850) -- (-0.934,-3.889) -- (-0.905,-3.896) -- cycle}
  \finecell{5}{2}{0.473}{-3.393}{(1.152,-3.249) -- (0.347,-2.839) -- (0.210,-3.010) -- (0.162,-3.997) -- (0.314,-3.988) -- (0.470,-3.972) -- (0.626,-3.951) -- (0.780,-3.923) -- (0.934,-3.889) -- (1.086,-3.850) -- (1.236,-3.804) -- (1.243,-3.802) -- cycle}
  \finecell{5}{3}{-0.088}{-3.366}{(0.210,-3.010) -- (-0.329,-3.146) -- (-0.377,-3.981) -- (-0.314,-3.988) -- (-0.157,-3.997) -- (0.000,-4.000) -- (0.157,-3.997) -- (0.162,-3.997) -- cycle}
  \finecell{5}{4}{-0.594}{-3.337}{(-0.329,-3.146) -- (-0.536,-2.991) -- (-1.004,-3.152) -- (-0.905,-3.896) -- (-0.780,-3.923) -- (-0.626,-3.951) -- (-0.470,-3.972) -- (-0.377,-3.981) -- cycle}
  \finecell{5}{5}{1.842}{-3.169}{(2.212,-2.747) -- (1.578,-2.593) -- (1.490,-2.654) -- (1.152,-3.249) -- (1.243,-3.802) -- (1.384,-3.753) -- (1.531,-3.696) -- (1.675,-3.633) -- (1.816,-3.564) -- (1.954,-3.490) -- (2.090,-3.411) -- (2.222,-3.326) -- (2.351,-3.236) -- (2.428,-3.178) -- cycle}
  \finecell{5}{6}{-0.222}{-2.838}{(0.305,-2.636) -- (-0.460,-2.464) -- (-0.536,-2.991) -- (-0.329,-3.146) -- (0.210,-3.010) -- (0.347,-2.839) -- cycle}
  \finecell{5}{7}{-1.648}{-2.813}{(-1.231,-2.646) -- (-1.641,-2.364) -- (-1.867,-2.462) -- (-2.126,-3.387) -- (-2.094,-3.408) -- (-1.207,-2.986) -- cycle}
  \finecell{5}{8}{-0.794}{-2.755}{(-0.536,-2.991) -- (-0.460,-2.464) -- (-0.605,-2.268) -- (-0.765,-2.288) -- (-1.231,-2.646) -- (-1.207,-2.986) -- (-1.004,-3.152) -- cycle}
  \finecell{7}{9}{2.772}{-2.702}{(2.765,-2.246) -- (2.562,-2.260) -- (2.212,-2.747) -- (2.428,-3.178) -- (2.476,-3.141) -- (2.598,-3.042) -- (2.715,-2.937) -- (2.828,-2.828) -- (2.937,-2.715) -- (3.042,-2.598) -- (3.141,-2.476) -- (3.142,-2.476) -- cycle}
  \finecell{2}{10}{-2.721}{-2.679}{(-2.491,-2.470) -- (-2.797,-2.344) -- (-3.029,-2.612) -- (-2.937,-2.715) -- (-2.828,-2.828) -- (-2.715,-2.937) -- (-2.598,-3.042) -- (-2.476,-3.141) -- (-2.448,-3.163) -- cycle}
  \finecell{2}{11}{-2.237}{-2.648}{(-1.867,-2.462) -- (-2.203,-2.311) -- (-2.491,-2.470) -- (-2.448,-3.163) -- (-2.351,-3.236) -- (-2.222,-3.326) -- (-2.126,-3.387) -- cycle}
  \finecell{5}{12}{0.869}{-2.615}{(1.490,-2.654) -- (0.515,-2.121) -- (0.305,-2.636) -- (0.347,-2.839) -- (1.152,-3.249) -- cycle}
  \finecell{2}{13}{-3.139}{-2.317}{(-2.797,-2.344) -- (-2.873,-2.038) -- (-3.250,-1.892) -- (-3.461,-2.003) -- (-3.411,-2.090) -- (-3.326,-2.222) -- (-3.236,-2.351) -- (-3.141,-2.476) -- (-3.042,-2.598) -- (-3.029,-2.612) -- cycle}
  \finecell{5}{14}{-0.084}{-2.228}{(0.515,-2.121) -- (0.514,-2.117) -- (0.299,-1.943) -- (-0.498,-2.040) -- (-0.605,-2.268) -- (-0.460,-2.464) -- (0.305,-2.636) -- cycle}
  \finecell{7}{15}{2.076}{-2.202}{(2.209,-1.711) -- (2.027,-1.737) -- (1.600,-2.159) -- (1.578,-2.593) -- (2.212,-2.747) -- (2.562,-2.260) -- cycle}
  \finecell{5}{16}{-1.224}{-2.196}{(-1.321,-1.791) -- (-1.573,-1.987) -- (-1.641,-2.364) -- (-1.231,-2.646) -- (-0.765,-2.288) -- cycle}
  \finecell{2}{17}{-2.507}{-2.161}{(-2.536,-1.790) -- (-2.873,-2.038) -- (-2.797,-2.344) -- (-2.491,-2.470) -- (-2.203,-2.311) -- (-2.286,-1.863) -- cycle}
  \finecell{5}{18}{1.122}{-2.152}{(1.600,-2.159) -- (1.260,-1.839) -- (0.786,-1.853) -- (0.514,-2.117) -- (0.515,-2.121) -- (1.490,-2.654) -- (1.578,-2.593) -- cycle}
  \finecell{2}{19}{-1.973}{-2.061}{(-2.029,-1.709) -- (-2.286,-1.863) -- (-2.203,-2.311) -- (-1.867,-2.462) -- (-1.641,-2.364) -- (-1.573,-1.987) -- cycle}
  \finecell{7}{20}{3.173}{-2.044}{(3.157,-1.522) -- (2.765,-2.246) -- (3.142,-2.476) -- (3.236,-2.351) -- (3.326,-2.222) -- (3.411,-2.090) -- (3.490,-1.954) -- (3.564,-1.816) -- (3.633,-1.675) -- (3.681,-1.563) -- cycle}
  \finecell{5}{21}{-0.907}{-1.843}{(-1.111,-1.470) -- (-1.244,-1.545) -- (-1.321,-1.791) -- (-0.765,-2.288) -- (-0.605,-2.268) -- (-0.498,-2.040) -- (-0.548,-1.586) -- cycle}
  \finecell{7}{22}{2.711}{-1.794}{(3.157,-1.522) -- (2.957,-1.384) -- (2.383,-1.450) -- (2.209,-1.711) -- (2.562,-2.260) -- (2.765,-2.246) -- cycle}
  \finecell{5}{23}{-0.141}{-1.758}{(0.124,-1.497) -- (-0.432,-1.452) -- (-0.548,-1.586) -- (-0.498,-2.040) -- (0.299,-1.943) -- cycle}
  \finecell{7}{24}{1.563}{-1.682}{(1.495,-1.240) -- (1.442,-1.272) -- (1.260,-1.839) -- (1.600,-2.159) -- (2.027,-1.737) -- cycle}
  \finecell{2}{25}{-2.882}{-1.653}{(-2.671,-1.312) -- (-3.190,-1.463) -- (-3.250,-1.892) -- (-2.873,-2.038) -- (-2.536,-1.790) -- cycle}
  \finecell{2}{26}{-1.691}{-1.598}{(-1.548,-1.318) -- (-1.979,-1.407) -- (-2.029,-1.709) -- (-1.573,-1.987) -- (-1.321,-1.791) -- (-1.244,-1.545) -- cycle}
  \finecell{2}{27}{-3.538}{-1.560}{(-3.250,-1.892) -- (-3.190,-1.463) -- (-3.567,-1.011) -- (-3.870,-1.009) -- (-3.850,-1.086) -- (-3.804,-1.236) -- (-3.753,-1.384) -- (-3.696,-1.531) -- (-3.633,-1.675) -- (-3.564,-1.816) -- (-3.490,-1.954) -- (-3.461,-2.003) -- cycle}
  \finecell{5}{28}{1.104}{-1.535}{(0.812,-1.240) -- (0.786,-1.853) -- (1.260,-1.839) -- (1.442,-1.272) -- cycle}
  \finecell{5}{29}{0.496}{-1.509}{(0.651,-1.047) -- (0.328,-1.111) -- (0.124,-1.497) -- (0.299,-1.943) -- (0.514,-2.117) -- (0.786,-1.853) -- (0.812,-1.240) -- cycle}
  \finecell{2}{30}{-2.313}{-1.493}{(-2.373,-0.858) -- (-2.482,-0.930) -- (-2.671,-1.312) -- (-2.536,-1.790) -- (-2.286,-1.863) -- (-2.029,-1.709) -- (-1.979,-1.407) -- cycle}
  \finecell{7}{31}{1.941}{-1.278}{(2.383,-1.450) -- (2.180,-0.866) -- (1.964,-0.804) -- (1.596,-0.969) -- (1.495,-1.240) -- (2.027,-1.737) -- (2.209,-1.711) -- cycle}
  \finecell{4}{32}{-0.777}{-1.209}{(-0.739,-0.738) -- (-0.986,-0.838) -- (-1.111,-1.470) -- (-0.548,-1.586) -- (-0.432,-1.452) -- (-0.442,-0.878) -- cycle}
  \finecell{5}{33}{-0.095}{-1.197}{(-0.110,-0.749) -- (-0.442,-0.878) -- (-0.432,-1.452) -- (0.124,-1.497) -- (0.328,-1.111) -- cycle}
  \finecell{2}{34}{-3.032}{-1.138}{(-3.277,-0.797) -- (-3.567,-1.011) -- (-3.190,-1.463) -- (-2.671,-1.312) -- (-2.482,-0.930) -- cycle}
  \finecell{2}{35}{-1.790}{-1.117}{(-2.317,-0.744) -- (-2.373,-0.858) -- (-1.979,-1.407) -- (-1.548,-1.318) -- (-1.564,-0.834) -- cycle}
  \finecell{2}{36}{-1.320}{-1.101}{(-1.374,-0.707) -- (-1.564,-0.834) -- (-1.548,-1.318) -- (-1.244,-1.545) -- (-1.111,-1.470) -- (-0.986,-0.838) -- cycle}
  \finecell{7}{37}{2.623}{-1.041}{(2.927,-0.799) -- (2.450,-0.687) -- (2.180,-0.866) -- (2.383,-1.450) -- (2.957,-1.384) -- cycle}
  \finecell{7}{38}{3.254}{-1.010}{(3.016,-0.722) -- (2.927,-0.799) -- (2.957,-1.384) -- (3.157,-1.522) -- (3.681,-1.563) -- (3.696,-1.531) -- (3.753,-1.384) -- (3.804,-1.236) -- (3.850,-1.086) -- (3.889,-0.934) -- (3.896,-0.904) -- cycle}
  \finecell{7}{39}{1.133}{-0.976}{(1.277,-0.656) -- (0.727,-0.842) -- (0.651,-1.047) -- (0.812,-1.240) -- (1.442,-1.272) -- (1.495,-1.240) -- (1.596,-0.969) -- cycle}
  \finecell{4}{40}{0.332}{-0.679}{(0.599,-0.413) -- (0.128,-0.258) -- (0.055,-0.301) -- (-0.110,-0.749) -- (0.328,-1.111) -- (0.651,-1.047) -- (0.727,-0.842) -- cycle}
  \finecell{2}{41}{-2.935}{-0.555}{(-2.681,-0.024) -- (-3.195,-0.268) -- (-3.277,-0.797) -- (-2.482,-0.930) -- (-2.373,-0.858) -- (-2.317,-0.744) -- (-2.321,-0.648) -- cycle}
  \finecell{7}{42}{3.349}{-0.552}{(3.486,-0.088) -- (3.098,-0.263) -- (3.016,-0.722) -- (3.896,-0.904) -- (3.923,-0.780) -- (3.951,-0.626) -- (3.972,-0.470) -- (3.988,-0.314) -- (3.994,-0.206) -- cycle}
  \finecell{7}{43}{1.595}{-0.506}{(1.816,-0.045) -- (1.282,-0.455) -- (1.277,-0.656) -- (1.596,-0.969) -- (1.964,-0.804) -- cycle}
  \finecell{2}{44}{-1.717}{-0.505}{(-1.494,0.070) -- (-2.321,-0.648) -- (-2.317,-0.744) -- (-1.564,-0.834) -- (-1.374,-0.707) -- (-1.471,0.056) -- cycle}
  \finecell{7}{45}{0.968}{-0.489}{(0.926,-0.101) -- (0.599,-0.413) -- (0.727,-0.842) -- (1.277,-0.656) -- (1.282,-0.455) -- cycle}
  \finecell{2}{46}{-3.530}{-0.463}{(-3.567,-1.011) -- (-3.277,-0.797) -- (-3.195,-0.268) -- (-3.438,-0.152) -- (-3.983,-0.359) -- (-3.972,-0.470) -- (-3.951,-0.626) -- (-3.923,-0.780) -- (-3.889,-0.934) -- (-3.870,-1.009) -- cycle}
  \finecell{7}{47}{2.763}{-0.447}{(3.016,-0.722) -- (3.098,-0.263) -- (2.523,0.028) -- (2.450,-0.687) -- (2.927,-0.799) -- cycle}
  \finecell{4}{48}{-1.092}{-0.426}{(-1.257,0.058) -- (-1.471,0.056) -- (-1.374,-0.707) -- (-0.986,-0.838) -- (-0.739,-0.738) -- (-0.750,-0.270) -- cycle}
  \finecell{4}{49}{-0.401}{-0.409}{(-0.438,-0.059) -- (-0.750,-0.270) -- (-0.739,-0.738) -- (-0.442,-0.878) -- (-0.110,-0.749) -- (0.055,-0.301) -- cycle}
  \finecell{7}{50}{2.193}{-0.389}{(2.450,-0.687) -- (2.523,0.028) -- (2.489,0.086) -- (1.846,0.062) -- (1.816,-0.045) -- (1.964,-0.804) -- (2.180,-0.866) -- cycle}
  \finecell{7}{51}{1.314}{-0.140}{(1.651,0.305) -- (0.953,0.105) -- (0.926,-0.101) -- (1.282,-0.455) -- (1.816,-0.045) -- (1.846,0.062) -- cycle}
  \finecell{2}{52}{-2.095}{-0.070}{(-1.528,0.124) -- (-1.993,0.365) -- (-2.668,0.058) -- (-2.681,-0.024) -- (-2.321,-0.648) -- (-1.494,0.070) -- cycle}
  \finecell{4}{53}{0.541}{-0.040}{(0.634,0.450) -- (0.246,0.225) -- (0.128,-0.258) -- (0.599,-0.413) -- (0.926,-0.101) -- (0.953,0.105) -- cycle}
  \finecell{2}{54}{-3.713}{0.020}{(-3.438,-0.152) -- (-3.536,0.326) -- (-3.970,0.484) -- (-3.972,0.470) -- (-3.988,0.314) -- (-3.997,0.157) -- (-4.000,0.000) -- (-3.997,-0.157) -- (-3.988,-0.314) -- (-3.983,-0.359) -- cycle}
  \finecell{4}{55}{-0.748}{0.105}{(-0.718,0.492) -- (-1.257,0.058) -- (-0.750,-0.270) -- (-0.438,-0.059) -- (-0.447,0.214) -- cycle}
  \finecell{2}{56}{-3.253}{0.115}{(-3.268,0.432) -- (-3.536,0.326) -- (-3.438,-0.152) -- (-3.195,-0.268) -- (-2.681,-0.024) -- (-2.668,0.058) -- (-2.751,0.214) -- cycle}
  \finecell{7}{57}{3.048}{0.116}{(3.238,0.385) -- (2.632,0.387) -- (2.489,0.086) -- (2.523,0.028) -- (3.098,-0.263) -- (3.486,-0.088) -- cycle}
  \finecell{4}{58}{-0.139}{0.126}{(-0.052,0.410) -- (-0.447,0.214) -- (-0.438,-0.059) -- (0.055,-0.301) -- (0.128,-0.258) -- (0.246,0.225) -- cycle}
  \finecell{7}{59}{3.568}{0.389}{(4.000,0.000) -- (3.997,0.157) -- (3.988,0.314) -- (3.972,0.470) -- (3.952,0.615) -- (3.435,0.767) -- (3.238,0.385) -- (3.486,-0.088) -- (3.994,-0.206) -- (3.997,-0.157) -- cycle}
  \finecell{6}{60}{1.130}{0.504}{(0.879,0.793) -- (0.644,0.535) -- (0.634,0.450) -- (0.953,0.105) -- (1.651,0.305) -- (1.639,0.672) -- cycle}
  \finecell{4}{61}{-0.330}{0.513}{(-0.139,0.837) -- (-0.403,0.974) -- (-0.705,0.800) -- (-0.718,0.492) -- (-0.447,0.214) -- (-0.052,0.410) -- cycle}
  \finecell{7}{62}{2.158}{0.538}{(1.835,1.086) -- (1.639,0.672) -- (1.651,0.305) -- (1.846,0.062) -- (2.489,0.086) -- (2.632,0.387) -- (2.590,0.709) -- cycle}
  \finecell{4}{63}{-1.102}{0.545}{(-0.987,0.931) -- (-1.332,0.881) -- (-1.528,0.124) -- (-1.494,0.070) -- (-1.471,0.056) -- (-1.257,0.058) -- (-0.718,0.492) -- (-0.705,0.800) -- cycle}
  \finecell{1}{64}{-2.387}{0.575}{(-2.120,1.071) -- (-2.341,1.127) -- (-2.540,1.044) -- (-2.682,0.863) -- (-2.751,0.214) -- (-2.668,0.058) -- (-1.993,0.365) -- cycle}
  \finecell{4}{65}{0.163}{0.614}{(0.244,0.937) -- (-0.139,0.837) -- (-0.052,0.410) -- (0.246,0.225) -- (0.634,0.450) -- (0.644,0.535) -- cycle}
  \finecell{1}{66}{-3.031}{0.643}{(-3.239,0.861) -- (-3.268,0.432) -- (-2.751,0.214) -- (-2.682,0.863) -- cycle}
  \finecell{7}{67}{3.050}{0.655}{(3.263,1.103) -- (3.174,1.158) -- (2.915,1.087) -- (2.590,0.709) -- (2.632,0.387) -- (3.238,0.385) -- (3.435,0.767) -- cycle}
  \finecell{1}{68}{-3.475}{0.673}{(-3.536,0.326) -- (-3.268,0.432) -- (-3.239,0.861) -- (-3.744,1.408) -- (-3.753,1.384) -- (-3.804,1.236) -- (-3.850,1.086) -- (-3.889,0.934) -- (-3.923,0.780) -- (-3.951,0.626) -- (-3.970,0.484) -- cycle}
  \finecell{4}{69}{-1.697}{0.699}{(-1.748,1.237) -- (-2.120,1.071) -- (-1.993,0.365) -- (-1.528,0.124) -- (-1.332,0.881) -- cycle}
  \finecell{6}{70}{1.207}{0.990}{(1.376,1.475) -- (0.903,1.365) -- (0.879,0.793) -- (1.639,0.672) -- (1.835,1.086) -- (1.823,1.178) -- cycle}
  \finecell{8}{71}{3.751}{1.013}{(3.263,1.103) -- (3.435,0.767) -- (3.952,0.615) -- (3.951,0.626) -- (3.923,0.780) -- (3.889,0.934) -- (3.850,1.086) -- (3.804,1.236) -- (3.782,1.299) -- cycle}
  \finecell{6}{72}{0.568}{1.016}{(0.700,1.474) -- (0.367,1.352) -- (0.244,0.937) -- (0.644,0.535) -- (0.879,0.793) -- (0.903,1.365) -- cycle}
  \finecell{1}{73}{-3.033}{1.081}{(-3.239,0.861) -- (-2.682,0.863) -- (-2.540,1.044) -- (-3.598,1.745) -- (-3.633,1.675) -- (-3.696,1.531) -- (-3.744,1.408) -- cycle}
  \finecell{8}{74}{2.467}{1.157}{(2.450,1.517) -- (1.991,1.358) -- (1.823,1.178) -- (1.835,1.086) -- (2.590,0.709) -- (2.915,1.087) -- cycle}
  \finecell{4}{75}{0.015}{1.179}{(-0.103,1.595) -- (-0.356,1.393) -- (-0.403,0.974) -- (-0.139,0.837) -- (0.244,0.937) -- (0.367,1.352) -- cycle}
  \finecell{4}{76}{-0.765}{1.268}{(-0.964,1.715) -- (-0.985,1.704) -- (-0.987,0.931) -- (-0.705,0.800) -- (-0.403,0.974) -- (-0.356,1.393) -- cycle}
  \finecell{4}{77}{-1.208}{1.269}{(-1.179,1.705) -- (-1.711,1.329) -- (-1.748,1.237) -- (-1.332,0.881) -- (-0.987,0.931) -- (-0.985,1.704) -- cycle}
  \finecell{1}{78}{-2.766}{1.484}{(-2.540,1.044) -- (-2.341,1.127) -- (-2.493,1.784) -- (-3.455,2.014) -- (-3.490,1.954) -- (-3.564,1.816) -- (-3.598,1.745) -- cycle}
  \finecell{8}{79}{3.570}{1.493}{(3.219,2.142) -- (3.174,1.158) -- (3.263,1.103) -- (3.782,1.299) -- (3.753,1.384) -- (3.696,1.531) -- (3.633,1.675) -- (3.564,1.816) -- (3.490,1.954) -- (3.411,2.090) -- (3.327,2.221) -- cycle}
  \finecell{8}{80}{2.810}{1.527}{(2.739,2.098) -- (2.450,1.517) -- (2.915,1.087) -- (3.174,1.158) -- (3.219,2.142) -- cycle}
  \finecell{1}{81}{-2.115}{1.634}{(-2.294,1.962) -- (-2.493,1.784) -- (-2.341,1.127) -- (-2.120,1.071) -- (-1.748,1.237) -- (-1.711,1.329) -- (-1.895,1.833) -- cycle}
  \finecell{6}{82}{0.304}{1.739}{(0.167,2.214) -- (-0.057,2.180) -- (-0.103,1.595) -- (0.367,1.352) -- (0.700,1.474) -- (0.632,1.975) -- cycle}
  \finecell{3}{83}{-0.482}{1.802}{(-0.074,2.192) -- (-0.871,1.913) -- (-0.964,1.715) -- (-0.356,1.393) -- (-0.103,1.595) -- (-0.057,2.180) -- cycle}
  \finecell{8}{84}{2.242}{1.810}{(2.485,2.235) -- (2.002,1.932) -- (1.991,1.358) -- (2.450,1.517) -- (2.739,2.098) -- cycle}
  \finecell{6}{85}{1.757}{1.820}{(1.546,2.219) -- (1.392,2.214) -- (1.376,1.475) -- (1.823,1.178) -- (1.991,1.358) -- (2.002,1.932) -- cycle}
  \finecell{1}{86}{-1.599}{1.823}{(-1.583,2.205) -- (-1.895,1.833) -- (-1.711,1.329) -- (-1.179,1.705) -- cycle}
  \finecell{6}{87}{1.011}{1.836}{(1.331,2.270) -- (1.032,2.253) -- (0.632,1.975) -- (0.700,1.474) -- (0.903,1.365) -- (1.376,1.475) -- (1.392,2.214) -- cycle}
  \finecell{1}{88}{-1.956}{2.122}{(-2.016,2.640) -- (-2.238,2.634) -- (-2.294,1.962) -- (-1.895,1.833) -- (-1.583,2.205) -- (-1.589,2.427) -- cycle}
  \finecell{3}{89}{-1.207}{2.140}{(-1.494,2.472) -- (-1.589,2.427) -- (-1.583,2.205) -- (-1.179,1.705) -- (-0.985,1.704) -- (-0.964,1.715) -- (-0.871,1.913) -- (-0.944,2.318) -- cycle}
  \finecell{1}{90}{-2.601}{2.175}{(-2.493,1.784) -- (-2.294,1.962) -- (-2.238,2.634) -- (-2.718,2.935) -- (-2.828,2.828) -- (-2.937,2.715) -- (-3.042,2.598) -- (-3.141,2.476) -- (-3.236,2.351) -- (-3.326,2.222) -- (-3.411,2.090) -- (-3.455,2.014) -- cycle}
  \finecell{6}{91}{1.997}{2.201}{(2.277,2.546) -- (2.113,2.567) -- (1.546,2.219) -- (2.002,1.932) -- (2.485,2.235) -- cycle}
  \finecell{3}{92}{-0.637}{2.243}{(-0.250,2.490) -- (-0.677,2.642) -- (-0.944,2.318) -- (-0.871,1.913) -- (-0.074,2.192) -- cycle}
  \finecell{6}{93}{0.633}{2.379}{(0.564,2.791) -- (0.167,2.214) -- (0.632,1.975) -- (1.032,2.253) -- cycle}
  \finecell{3}{94}{-1.076}{2.605}{(-0.874,3.063) -- (-1.158,3.022) -- (-1.494,2.472) -- (-0.944,2.318) -- (-0.677,2.642) -- cycle}
  \finecell{6}{95}{1.734}{2.630}{(1.617,3.119) -- (1.363,2.971) -- (1.331,2.270) -- (1.392,2.214) -- (1.546,2.219) -- (2.113,2.567) -- cycle}
  \finecell{6}{96}{0.963}{2.665}{(0.978,3.110) -- (0.576,3.009) -- (0.564,2.791) -- (1.032,2.253) -- (1.331,2.270) -- (1.363,2.971) -- cycle}
  \finecell{8}{97}{2.704}{2.671}{(2.277,2.546) -- (2.485,2.235) -- (2.739,2.098) -- (3.219,2.142) -- (3.327,2.221) -- (3.326,2.222) -- (3.236,2.351) -- (3.141,2.476) -- (3.042,2.598) -- (2.937,2.715) -- (2.828,2.828) -- (2.715,2.937) -- (2.598,3.042) -- (2.528,3.099) -- cycle}
  \finecell{3}{98}{0.154}{2.708}{(0.333,3.194) -- (-0.012,3.086) -- (-0.250,2.490) -- (-0.074,2.192) -- (-0.057,2.180) -- (0.167,2.214) -- (0.564,2.791) -- (0.576,3.009) -- cycle}
  \finecell{3}{99}{-1.566}{2.904}{(-1.597,3.321) -- (-2.016,2.640) -- (-1.589,2.427) -- (-1.494,2.472) -- (-1.158,3.022) -- cycle}
  \finecell{3}{100}{-0.393}{2.926}{(-0.642,3.424) -- (-0.874,3.063) -- (-0.677,2.642) -- (-0.250,2.490) -- (-0.012,3.086) -- cycle}
  \finecell{8}{101}{2.091}{2.950}{(1.617,3.119) -- (2.113,2.567) -- (2.277,2.546) -- (2.528,3.099) -- (2.476,3.141) -- (2.351,3.236) -- (2.222,3.326) -- (2.090,3.411) -- (1.954,3.490) -- (1.870,3.535) -- cycle}
  \finecell{3}{102}{-1.984}{3.161}{(-2.238,2.634) -- (-2.016,2.640) -- (-1.597,3.321) -- (-1.705,3.618) -- (-1.816,3.564) -- (-1.954,3.490) -- (-2.090,3.411) -- (-2.222,3.326) -- (-2.351,3.236) -- (-2.476,3.141) -- (-2.598,3.042) -- (-2.715,2.937) -- (-2.718,2.935) -- cycle}
  \finecell{3}{103}{-1.198}{3.445}{(-1.597,3.321) -- (-1.158,3.022) -- (-0.874,3.063) -- (-0.642,3.424) -- (-0.664,3.944) -- (-0.780,3.923) -- (-0.934,3.889) -- (-1.086,3.850) -- (-1.236,3.804) -- (-1.384,3.753) -- (-1.531,3.696) -- (-1.675,3.633) -- (-1.705,3.618) -- cycle}
  \finecell{6}{104}{1.249}{3.462}{(0.978,3.110) -- (1.363,2.971) -- (1.617,3.119) -- (1.870,3.535) -- (1.816,3.564) -- (1.675,3.633) -- (1.531,3.696) -- (1.384,3.753) -- (1.236,3.804) -- (1.086,3.850) -- (1.027,3.865) -- cycle}
  \finecell{3}{105}{-0.090}{3.492}{(-0.642,3.424) -- (-0.012,3.086) -- (0.333,3.194) -- (0.331,3.986) -- (0.314,3.988) -- (0.157,3.997) -- (0.000,4.000) -- (-0.157,3.997) -- (-0.314,3.988) -- (-0.470,3.972) -- (-0.626,3.951) -- (-0.664,3.944) -- cycle}
  \finecell{6}{106}{0.754}{3.494}{(0.333,3.194) -- (0.576,3.009) -- (0.978,3.110) -- (1.027,3.865) -- (0.934,3.889) -- (0.780,3.923) -- (0.626,3.951) -- (0.470,3.972) -- (0.331,3.986) -- cycle}
}

\newcommand{\mergededge}[4]{}
\newcommand{\mergededges}{%
  \mergededge{-2.126}{-3.387}{-1.867}{-2.462}
  \mergededge{2.212}{-2.747}{2.428}{-3.178}
  \mergededge{1.578}{-2.593}{2.212}{-2.747}
  \mergededge{1.578}{-2.593}{1.600}{-2.159}
  \mergededge{-1.867}{-2.462}{-1.641}{-2.364}
  \mergededge{-1.641}{-2.364}{-1.573}{-1.987}
  \mergededge{-1.573}{-1.987}{-1.321}{-1.791}
  \mergededge{1.260}{-1.839}{1.600}{-2.159}
  \mergededge{1.260}{-1.839}{1.442}{-1.272}
  \mergededge{-1.321}{-1.791}{-1.244}{-1.545}
  \mergededge{-0.548}{-1.586}{-0.432}{-1.452}
  \mergededge{-1.244}{-1.545}{-1.111}{-1.470}
  \mergededge{-1.111}{-1.470}{-0.548}{-1.586}
  \mergededge{-1.111}{-1.470}{-0.986}{-0.838}
  \mergededge{0.812}{-1.240}{1.442}{-1.272}
  \mergededge{0.328}{-1.111}{0.651}{-1.047}
  \mergededge{0.651}{-1.047}{0.812}{-1.240}
  \mergededge{0.651}{-1.047}{0.727}{-0.842}
  \mergededge{-0.442}{-0.878}{-0.432}{-1.452}
  \mergededge{-0.442}{-0.878}{-0.110}{-0.749}
  \mergededge{-0.110}{-0.749}{0.328}{-1.111}
  \mergededge{-1.374}{-0.707}{-0.986}{-0.838}
  \mergededge{0.599}{-0.413}{0.727}{-0.842}
  \mergededge{0.599}{-0.413}{0.926}{-0.101}
  \mergededge{0.926}{-0.101}{0.953}{0.105}
  \mergededge{-1.471}{0.056}{-1.374}{-0.707}
  \mergededge{-2.668}{0.058}{-1.993}{0.365}
  \mergededge{0.953}{0.105}{1.651}{0.305}
  \mergededge{-1.528}{0.124}{-1.494}{0.070}
  \mergededge{-2.751}{0.214}{-2.668}{0.058}
  \mergededge{-3.536}{0.326}{-3.268}{0.432}
  \mergededge{-1.993}{0.365}{-1.528}{0.124}
  \mergededge{-3.268}{0.432}{-2.751}{0.214}
  \mergededge{0.634}{0.450}{0.953}{0.105}
  \mergededge{0.634}{0.450}{0.644}{0.535}
  \mergededge{-3.970}{0.484}{-3.536}{0.326}
  \mergededge{1.639}{0.672}{1.651}{0.305}
  \mergededge{1.639}{0.672}{1.835}{1.086}
  \mergededge{2.590}{0.709}{2.915}{1.087}
  \mergededge{3.435}{0.767}{3.952}{0.615}
  \mergededge{0.244}{0.937}{0.644}{0.535}
  \mergededge{0.244}{0.937}{0.367}{1.352}
  \mergededge{-2.120}{1.071}{-1.993}{0.365}
  \mergededge{-2.120}{1.071}{-1.748}{1.237}
  \mergededge{1.835}{1.086}{2.590}{0.709}
  \mergededge{2.915}{1.087}{3.174}{1.158}
  \mergededge{3.263}{1.103}{3.435}{0.767}
  \mergededge{3.174}{1.158}{3.263}{1.103}
  \mergededge{1.823}{1.178}{1.835}{1.086}
  \mergededge{1.823}{1.178}{1.991}{1.358}
  \mergededge{-1.748}{1.237}{-1.711}{1.329}
  \mergededge{-1.711}{1.329}{-1.179}{1.705}
  \mergededge{1.991}{1.358}{2.002}{1.932}
  \mergededge{-0.356}{1.393}{-0.103}{1.595}
  \mergededge{-0.103}{1.595}{0.367}{1.352}
  \mergededge{-0.103}{1.595}{-0.057}{2.180}
  \mergededge{-1.179}{1.705}{-0.985}{1.704}
  \mergededge{-0.964}{1.715}{-0.356}{1.393}
  \mergededge{2.002}{1.932}{2.485}{2.235}
  \mergededge{-0.057}{2.180}{0.167}{2.214}
  \mergededge{-1.583}{2.205}{-1.179}{1.705}
  \mergededge{0.167}{2.214}{0.564}{2.791}
  \mergededge{-1.589}{2.427}{-1.583}{2.205}
  \mergededge{2.277}{2.546}{2.485}{2.235}
  \mergededge{2.113}{2.567}{2.277}{2.546}
  \mergededge{-2.238}{2.634}{-2.016}{2.640}
  \mergededge{-2.016}{2.640}{-1.589}{2.427}
  \mergededge{0.564}{2.791}{0.576}{3.009}
  \mergededge{-2.718}{2.935}{-2.238}{2.634}
  \mergededge{1.617}{3.119}{2.113}{2.567}
  \mergededge{1.617}{3.119}{1.870}{3.535}
  \mergededge{0.333}{3.194}{0.576}{3.009}
  \mergededge{0.331}{3.986}{0.333}{3.194}
}

\newcommand{\selectededge}[4]{}
\newcommand{\selectededges}{%
  \selectededge{-0.548}{-1.586}{-0.432}{-1.452}
  \selectededge{-1.111}{-1.470}{-0.548}{-1.586}
  \selectededge{-1.111}{-1.470}{-0.986}{-0.838}
  \selectededge{0.328}{-1.111}{0.651}{-1.047}
  \selectededge{0.651}{-1.047}{0.727}{-0.842}
  \selectededge{-0.442}{-0.878}{-0.432}{-1.452}
  \selectededge{-0.442}{-0.878}{-0.110}{-0.749}
  \selectededge{-0.110}{-0.749}{0.328}{-1.111}
  \selectededge{-1.374}{-0.707}{-0.986}{-0.838}
  \selectededge{0.599}{-0.413}{0.727}{-0.842}
  \selectededge{0.599}{-0.413}{0.926}{-0.101}
  \selectededge{0.926}{-0.101}{0.953}{0.105}
  \selectededge{-1.471}{0.056}{-1.374}{-0.707}
  \selectededge{-1.528}{0.124}{-1.494}{0.070}
  \selectededge{-1.993}{0.365}{-1.528}{0.124}
  \selectededge{0.634}{0.450}{0.953}{0.105}
  \selectededge{0.634}{0.450}{0.644}{0.535}
  \selectededge{0.244}{0.937}{0.644}{0.535}
  \selectededge{0.244}{0.937}{0.367}{1.352}
  \selectededge{-2.120}{1.071}{-1.993}{0.365}
  \selectededge{-2.120}{1.071}{-1.748}{1.237}
  \selectededge{-1.748}{1.237}{-1.711}{1.329}
  \selectededge{-1.711}{1.329}{-1.179}{1.705}
  \selectededge{-0.356}{1.393}{-0.103}{1.595}
  \selectededge{-0.103}{1.595}{0.367}{1.352}
  \selectededge{-1.179}{1.705}{-0.985}{1.704}
  \selectededge{-0.964}{1.715}{-0.356}{1.393}
}

\newcommand{\drawball}{%
  \fill[white] (0,0) circle (\vorRadius);
  \fill[black!2] (0,0) circle (\r);
}

\newcommand{\drawfinesites}{%
  \ifshowsites
    \foreach \x/\y/\idx/\owner in \finesites {
      \fill[black, fill opacity=0.55] (\x,\y) circle (0.030);
    }
  \fi
}

\newcommand{\drawcoarsesites}{%
  \ifshowsites
    \foreach \x/\y/\cidx in \coarsesites {
      \fill[white] (\x,\y) circle (0.105);
      \fill[\sitecolor{\cidx}] (\x,\y) circle (0.075);
      \draw[black, line width=0.35pt] (\x,\y) circle (0.105);
    }
  \fi
}

\newcommand{\drawfinecells}{%
  \begin{scope}
    \clip (0,0) circle (\r);
    \renewcommand{\finecell}[5]{%
      \filldraw[fill=black!3, draw=black!52, line width=\fineedgewidth] ##5;
    }
    \finecells
  \end{scope}
  \drawfinesites
}

\newcommand{\drawfinemesh}[1]{%
  \begin{scope}
    \clip (0,0) circle (\r);
    \renewcommand{\finecell}[5]{%
      \draw[#1] ##5;
    }
    \finecells
  \end{scope}
}

\newcommand{\drawmergedcells}{%
  \begin{scope}
    \clip (0,0) circle (\r);
    \renewcommand{\finecell}[5]{%
      \ifusecolor
        \ifnum##1=4
          \fill[\sitecolor{##1}!30] ##5;
        \else
          \fill[\sitecolor{##1}!14] ##5;
        \fi
      \else
        \ifnum##1=4
          \fill[black!22] ##5;
        \else
          \fill[black!7] ##5;
        \fi
      \fi
      \draw[black!35, line width=0.16pt] ##5;
    }
    \finecells
  \end{scope}
}

\newcommand{\drawmergededges}{%
  \begin{scope}
    \clip (0,0) circle (\r);
    \renewcommand{\mergededge}[4]{%
      \draw[black, line width=\mergededgewidth] (##1,##2) -- (##3,##4);
    }
    \mergededges
    \ifshowselectedmerge
      \renewcommand{\selectededge}[4]{%
        \draw[\sitecolor{4}!75!black, line width=\selectededgewidth]
          (##1,##2) -- (##3,##4);
      }
      \selectededges
    \fi
  \end{scope}
}

\newcommand{\drawcoarsefills}{%
  \foreach[count=\k] \x/\y/\cidx in \coarsesites {
    \begin{scope}
      \clip (0,0) circle (\r);
      \foreach[count=\m] \xx/\yy/\ccidx in \coarsesites {
        \ifnum\k=\m\else
          \pgfmathsetmacro{\mx}{(\x+\xx)/2}
          \pgfmathsetmacro{\my}{(\y+\yy)/2}
          \pgfmathsetmacro{\nx}{\xx-\x}
          \pgfmathsetmacro{\ny}{\yy-\y}
          \pgfmathsetmacro{\tx}{-\ny}
          \pgfmathsetmacro{\ty}{\nx}
          \clip
            ({\mx+\M*\tx},{\my+\M*\ty}) --
            ({\mx-\M*\tx},{\my-\M*\ty}) --
            ({\mx-\M*\tx-\M*\nx},{\my-\M*\ty-\M*\ny}) --
            ({\mx+\M*\tx-\M*\nx},{\my+\M*\ty-\M*\ny}) -- cycle;
        \fi
      }
      \ifusecolor
        \fill[\sitecolor{\cidx}!18] (-\M,-\M) rectangle (\M,\M);
      \else
        \fill[black!8] (-\M,-\M) rectangle (\M,\M);
      \fi
    \end{scope}
  }
}

\newcommand{\drawcoarseedges}[1]{%
  \foreach[count=\k] \x/\y/\cidx in \coarsesites {
    \foreach[count=\m] \xx/\yy/\ccidx in \coarsesites {
      \ifnum\k<\m
        \begin{scope}
          \clip (0,0) circle (\r);
          \pgfmathsetmacro{\mx}{(\x+\xx)/2}
          \pgfmathsetmacro{\my}{(\y+\yy)/2}
          \pgfmathsetmacro{\dx}{\xx-\x}
          \pgfmathsetmacro{\dy}{\yy-\y}
          \pgfmathsetmacro{\tx}{-\dy}
          \pgfmathsetmacro{\ty}{\dx}
          \foreach[count=\n] \xxx/\yyy/\cccidx in \coarsesites {
            \ifnum\n=\k\else
            \ifnum\n=\m\else
              \pgfmathsetmacro{\mxa}{(\x+\xxx)/2}
              \pgfmathsetmacro{\mya}{(\y+\yyy)/2}
              \pgfmathsetmacro{\nxa}{\xxx-\x}
              \pgfmathsetmacro{\nya}{\yyy-\y}
              \pgfmathsetmacro{\txa}{-\nya}
              \pgfmathsetmacro{\tya}{\nxa}
              \clip
                ({\mxa+\M*\txa},{\mya+\M*\tya}) --
                ({\mxa-\M*\txa},{\mya-\M*\tya}) --
                ({\mxa-\M*\txa-\M*\nxa},{\mya-\M*\tya-\M*\nya}) --
                ({\mxa+\M*\txa-\M*\nxa},{\mya+\M*\tya-\M*\nya}) -- cycle;
              \pgfmathsetmacro{\mxb}{(\xx+\xxx)/2}
              \pgfmathsetmacro{\myb}{(\yy+\yyy)/2}
              \pgfmathsetmacro{\nxb}{\xxx-\xx}
              \pgfmathsetmacro{\nyb}{\yyy-\yy}
              \pgfmathsetmacro{\txb}{-\nyb}
              \pgfmathsetmacro{\tyb}{\nxb}
              \clip
                ({\mxb+\M*\txb},{\myb+\M*\tyb}) --
                ({\mxb-\M*\txb},{\myb-\M*\tyb}) --
                ({\mxb-\M*\txb-\M*\nxb},{\myb-\M*\tyb-\M*\nyb}) --
                ({\mxb+\M*\txb-\M*\nxb},{\myb+\M*\tyb-\M*\nyb}) -- cycle;
            \fi\fi
          }
          \draw[#1] ({\mx-\M*\tx},{\my-\M*\ty}) -- ({\mx+\M*\tx},{\my+\M*\ty});
        \end{scope}
      \fi
    }
  }
}

\title{From local giants to locality in long-range percolation}
\author{Yago Moreno Alonso and J\'ulia Komj\'athy}
\date{\today}
\begin{document}

\maketitle

\begin{abstract} 
    We prove the analogue of Schramm's locality conjecture for \LRP{} on transitive graphs of polynomial growth with $\alpha \in (0,2)$.
    In this setting, we also prove the joint continuity of the percolation probability $\theta$ with respect to three parameters: the underlying graph with respect to the local topology, the connectivity kernel, and the percolation parameter $\beta$ for all values of $\beta \in \R_+$, including the critical parameter $\beta_c$.
    We also prove a number of results related to the supercritical sharpness of \LRP{}: the long-range order decay of the distribution of finite clusters, the truncation problem, the anchored isoperimetric dimension and the transience of the infinite percolation cluster, and the smoothness of the percolation characters.
    We obtain these results from proving the local existence-and-uniqueness of the linear-sized (giant) cluster.
    As an immediate corollary of the local existence-and-uniqueness of the giant we obtain the law of large numbers, which answers a special case of a question of Nekrashevych and Pete \cite[Question 1.3]{nekrashevych_scale-invariant_2011}.
    The main technical contribution is the construction of a renormalisation scheme combining iteratively merged Voronoi tiles with scale-invariant nets, related to the scale-invariant groups of Benjamini.
\end{abstract}

{\footnotesize
    \hspace{1em}
    Keywords: \LRP{}, locality, sharpness, transitive graphs of polynomial growth.
    
    \hspace{1em}
    MSC2020 Class: 82B43, 20F65.

    \hspace{1em}
    E-mail: ymamorenoalonso@tudelft.nl, j.komjathy@tudelft.nl

    \hspace{1em}
    Address: Delft Institute of Applied Mathematics, Delft University of Technology.
}

\section{Introduction}
\label{sec:intro}
\subsection{Long-range percolation on transitive graphs of polynomial growth}\label{sec:intro-model-def}
Let $G$ be a transitive graph of polynomial growth equipped with a fixed origin $o$.
We write $B(r)$ for the ball of radius $r$ centred at the origin with respect to the graph metric $d_G$.
It is a consequence of the theorems of Gromov, Trofimov, Bass and Guivarc'h \cite{gromov_groups_1981,trofimov_graphs_1985,bass_degree_1972,guivarch_groupes_1970} that there exists a unique positive integer $d$, called the \textbf{dimension} of $G$ and denoted by $\dim(G)$, and positive reals $c_G,C_G$ such that 
\begin{equation}
\label{eq:polynomial_volume_growth}
    c_G 
    r^d 
    \leq 
    \# B(r)
    \leq 
    C_G 
    r^d
\end{equation}
for all $r \in \N$.
Let $J : V \times V \to \R_+$ be \textbf{transitive}, meaning that for any graph automorphism $\gamma$ of $G$ we have $J(\gamma(x), \gamma(y)) = J(x,y)$.
For $\beta \in \R_+=[0, \infty)$, \textbf{\LRP{}} on $G$ with kernel $J$ is the random graph with vertex set $V$ where we include an edge between the vertices $x,y \in V$ independently at random with probability $1 - \exp(-\beta J(x,y))$.
We are mostly interested in the case where $J(x,y) = \Theta(d_G(x,y)^{-d \alpha})$ for $\alpha > 0$, meaning that there exist positive reals $c_J,C_J$, and $R_J$ such that 
\begin{equation}
\label{eq:polynomial_kernel}
    c_J 
    d_G(x,y)^{-d \alpha}
    \leq 
    J(x,y) 
    \leq 
    C_J d_G(x,y)^{-d\alpha}
\end{equation}
for all $x,y$ with $d_G(x,y) \geq R_J$. 
We write $\P_{\beta}$ for the law of the resulting random graph and $\E_{\beta}$ for the expectation operator.
We call the connected components of this random graph \textbf{clusters}, and we write $K$ for the cluster of the origin. 
The model undergoes a phase transition at the \textbf{critical value}
\begin{equation}
    \label{eq:critical_parameter}
    \beta_c
    =
    \beta_c(G,J)
    =
    \inf
    \{
        \beta \geq 0 : \P_{\beta}(\# K = \infty) > 0 
    \}
\end{equation}
which satisfies $0 < \beta_c < \infty$ if $d \geq 2$ and $\alpha > 1$ or $d = 1$ and $1 < \alpha \leq 2$ \cite{schulman_long_1983,newman_one-dimensional_1986}.
When $\alpha > 1$ the expected degree of the origin 
\begin{equation}
    \E_{\beta} 
    \left[ 
        \deg(o) 
    \right]
    =
    \E_{\beta}
    \left[
        \# 
        \{
            y \in V
            :
            o 
            \sim 
            y
        \}
    \right]
\end{equation}
is finite and we say that the pair $(G,J)$ is \textbf{integrable}.
When $0 < \alpha \leq 1$ the expected degree of the origin is infinite, in which case $\beta_c = 0$ and we say that the pair $(G,J)$ is \textbf{non-integrable}.
In this paper we study the supercritical regime when $\beta > \beta_c$ for values of $\alpha \in (0,2)$, where the long-range nature of the model is most apparent.

\subsection{Local topology for \LRP{}}
\label{subsec:local_topology}
Let $(G_n)_{n \in \N}$ be a sequence of transitive graphs, and let $G$ be a transitive graph. 
The sequence $(G_n)_{n \in \N}$ \textbf{converges locally} to $G$ (in the sense of Benjamini and Schramm) if and only if, for each $r \in \N$, the balls of radius $r$ in $G_n$ and $G$ are isomorphic as rooted graphs for all sufficiently large $n$.
\textbf{Schramm's locality conjecture}, stated in \cite[Conjecture 1.2]{benjamini_is_2011} and recently solved in full generality by Easo and Hutchcroft \cite{easo_critical_2023}, states that if $(G_n)_{n \in \N}$ is a sequence of infinite transitive graphs converging locally to $G$ an infinite transitive graph and $\limsup_{n \to \infty} p_c(G_n) < 1$ then $p_c(G_n) \to p_c(G)$ as $n \to \infty$.
Let $(G_n,J_n)_{n \in \N}$ be a sequence of pairs of a transitive graph $G_n=(V_n, E_n)$ and a transitive kernel $J_n : V_n \times V_n \to \R_+$, and let $(G,J)$ be a pair of a transitive graph $G=(V,E)$ and a transitive kernel $J : V \times V \to \R_+$.
To state an analogue of Schramm's locality conjecture for \LRP{},
for $\epsilon > 0 $ we define the \textbf{coupling radius} $R_n = R_n(G_n,G,\epsilon)$ to be the largest integer such that both 
\begin{equation}
    \label{eq:coupling_radius_definition}
    B_{G_n}(2 R_n) 
    \cong 
    B_{G}(2 R_n) 
    \quad 
    \text{ and }
    \sum_{x,y \in B(R_n)} 
    \lvert 
        J_n(x,y)
        -
        J(x,y)
    \rvert
    \leq 
    \epsilon.
\end{equation}
When $B_{G_n}(2 R_n) \cong B_{G}(2 R_n)$, we drop the reference to the graph and we write $x \in B(2R_n)$ for both the vertex in $B_{G}(2R_n)$ and its isomorphic copy in $B_{G_n}(2R_n)$.
Further, the graph metrics $d_{G_n}$ and $d_G$ agree on $B(R_n)$, in the sense that $d_{G_n}(x,y) = d_G(x,y)$ for all $x,y \in B(R_n)$.
We say that $(G_n,J_n)_{n \in \N}$ \textbf{converges locally} to $(G,J)$ if for all $\epsilon > 0$ the coupling radius $R_n(G_n,G,\epsilon)$ tends to infinity as $n$ tends to infinity.
This amounts to requiring that the sequence of graphs $(G_n)_{n \in \N}$ converges locally to $G$ and that the kernels $(J_n)_{n \in \N}$ converge to $J$ in $L^1$ inside $B(R_n)$. 
We say that $(J_n)_{n \in \N}$ are \textbf{uniformly integrable} if there exists $M > 0$ such that
\begin{equation}
\label{eq:tight-decay-Jn}
    \sum_{x \in V_n \setminus \{o_n\}} 
    J_n(o_n,x) 
    \leq 
    M
\end{equation}
for all $n$ sufficiently large.
\thref{lem:kernel_example} shows that the definition of local convergence and the assumption of uniform integrability are satisfied for a natural class of examples.
\subsection{Main results}
Our first result is the 
locality of \LRP{} with $\alpha \in (0,2)$ on 
transitive graphs of polynomial growth.

\begin{theorem}[Locality]
\thlabel{thm:lrp_locality}
    Let $G$ be a transitive graph of polynomial growth with $d \geq 2$, and suppose that $J : V \times V \to \R_+$ is a transitive kernel satisfying $J(x,y) = \Omega(d_G(x,y)^{-d \alpha})$ with $\alpha \in (0, 2)$. 
    Let $(G_n,J_n)_{n \in \N}$ be a sequence of transitive graphs and kernels converging locally to $(G,J)$ with $\liminf_{n \to \infty} \dim(G_n) \geq 2$.
    If $\alpha \leq 1$, or if $\alpha > 1$ and $(J_n)_{n \in \N}$ is uniformly integrable, then 
    \begin{equation}
        \lim_{n \to \infty}
        \beta_c(G_n,J_n) 
        =
        \beta_c(G,J).
    \end{equation}
\end{theorem}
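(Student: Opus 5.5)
The proof has two halves: lower semicontinuity, $\liminf_n \beta_c(G_n,J_n) \ge \beta_c(G,J)$, and upper semicontinuity, $\limsup_n \beta_c(G_n,J_n) \le \beta_c(G,J)$. When $\alpha \le 1$ the pair $(G,J)$ is non-integrable, so $\beta_c(G,J)=0$ and only the upper bound needs proof. In that case I would also check that the $J_n$ eventually inherit non-integrability, or at least $\beta_c(G_n,J_n)\to 0$; this should follow from local $L^1$ convergence of the kernels on growing balls together with the lower bound $J=\Omega(d^{-d\alpha})$.

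\emph{Lower bound (integrable case).} Fix $\beta<\beta_c(G,J)$. By sharpness of the subcritical phase for long-range percolation on transitive graphs (Duminil-Copin--Tassion or Menshikov type), there is a finite set $S\ni o$ with
\[
\varphi_\beta(S)=\beta\sum_{x\in S,\,y\notin S}\P_\beta(o\leftrightarrow x \text{ in } S)\,J(x,y)<1 .
\]
This is a finite-volume criterion, except for the tail sum over $y\notin S$. To transfer it to $(G_n,J_n)$, I would split the sum over $y$ into $y\in B(R_n)$, which is controlled by the $L^1$ closeness in \eqref{eq:coupling_radius_definition}, and $y\notin B(R_n)$. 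The second part is bounded by $\sum_{d(x,y)>R_n-\mathrm{diam} S}J_n(x,y)$, and I need this to tend to $0$. Uniform integrability \eqref{eq:tight-decay-Jn} alone does not give decay of these tails. I would try to combine it with local $L^1$ convergence on $B(R_n)$ and $\sum_y J(o,y)<\infty$: the mass of $J_n$ inside $B(R_n)$ approaches that of $J$, but the total mass might not. I expect to need care here, possibly by slightly lowering $\beta$ and using that the $\varphi$ criterion has some slack. Once this is handled, $\varphi_{\beta}^{(n)}(S)<1$ for large $n$, which gives $\beta\le\beta_c(G_n,J_n)$.

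\emph{Upper bound.} This is the main obstacle, and it is where the paper's local giant machinery enters. Fix $\beta>\beta_c(G,J)$. The plan is a finite-size criterion: a statement that some event in a ball of radius $r$ has probability close to $1$ implies percolation on any polynomial-growth transitive graph of dimension at least $2$ with kernel $\Omega(d^{-d\alpha})$ and $\alpha<2$. The candidate event is the local existence-and-uniqueness of a giant cluster in $B(r)$, containing a positive fraction $\theta$ of the vertices. I would carry out three steps.
\begin{enumerate}
    \item Use the main local-giant theorem to show that for $(G,J)$ at $\beta'\in(\beta_c,\beta)$, with high probability $B(r)$ contains a unique cluster of size at least $c r^d$, up to an error $\mathrm{err}_r\to0$.
    \item By the coupling on $B(2R_n)$, the same event holds in $G_n$ for large $n$ with probability close to $1$, with the kernels coupled up to $\epsilon$ in $L^1$.
    \item Run the renormalisation of the paper, with iteratively merged Voronoi tiles at geometrically growing scales and scale-invariant nets, in $G_n$. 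Between two neighbouring tiles at scale $r_k$, the two giants of size $\ge c r_k^d$ connect directly with probability at least $1-\exp(-\beta c' r_k^{2d} r_k^{-d\alpha})$. This goes to $1$ superexponentially precisely because $\alpha<2$. The multiscale errors are therefore summable, and $G_n$ percolates at $\beta$.
\end{enumerate}
The hard part is making the constants uniform in $n$. The hypothesis $\liminf\dim(G_n)\ge2$ and the polynomial-growth constants of $G_n$ must be controlled at scales beyond $R_n$. Here I would invoke that a local limit of polynomial-growth transitive graphs has growth constants that transfer (Gromov--Trofimov with Breuillard--Green--Tao uniformity), so that the Voronoi tiles in $G_n$ have comparable volumes. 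I would also use the sprinkling gap $\beta-\beta'$ to absorb the $\epsilon$-discrepancy in the kernels.
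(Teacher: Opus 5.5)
\paragraph{Upper bound: step 3 fails.} A multiscale renormalisation inside $G_n$ needs, at every scale $r_k$, the bulk-to-bulk estimate $\exp(-c\beta r_k^{2d-d\alpha})$. That requires $J_n(x,y)\gtrsim d_{G_n}(x,y)^{-d\alpha}$ and tile volumes $\gtrsim r_k^d$ in $G_n$ at all distances, and local convergence gives neither.
\begin{itemize}
\item $J_n$ is only controlled in $L^1$ on $B(R_n)$. It may even have finite range, e.g.\ $J_n=J\charf(d_G\le R_n)$.
\item The structure theory transfers only the \emph{upper} volume bound (\thref{prop:prop:polynomial_volume_implies_polynomial_growth}). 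The dimension can drop, as for $\Z^2\times\Z/n\Z\to\Z^3$. Then the relevant exponent becomes $2\dim(G_n)-d\alpha$, which can be negative.
\end{itemize}
Also, your multiscale argument makes no essential use of $\liminf\dim(G_n)\ge 2$, yet locality fails in dimension one, so something must use it.

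The missing idea is that no renormalisation in $G_n$ is needed. The paper uses the single-scale finite-size criterion (\thref{prop:finite_size_criterion}), which only sees edges inside a fixed ball $B(20AR)$:
\begin{enumerate}
\item Fix $R$ using \thref{prop:biskup} in the limit graph. Then $\#\giantone{R}\ge\nu\#B(R)$ holds with probability $\ge1-\delta/2$. Likewise, any two sets in $B(10AR)$ of size $\ge\nu\#B(R)$ and at distance $\ge 2R$ are joined by a direct edge with probability $\ge1-\delta/2$.
\item Transfer both estimates to $G_n$ via the coupling of \thref{lem:coupling-on-radius} on the fixed ball.
\item Use the uniformly $A$-controlled nets of \thref{prop:unif_control_nets}. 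This is exactly where $\liminf\dim(G_n)\ge2$ enters: the net graph of a $(4R,4AR)$-net contains $\L^2$.
\item The induced site-bond process then dominates Bernoulli bond percolation with $q>1/2$ (Liggett--Schonmann--Stacey).
\end{enumerate}

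\paragraph{Lower bound: your proposed fix cannot work.} You are right that \eqref{eq:tight-decay-Jn}, read literally as a uniform bound on total mass, gives no uniform tail control. But slack in $\varphi$, or lowering $\beta$, cannot absorb a non-vanishing tail, because such a tail genuinely moves $\beta_c$. On $G_n=G=\Z^2$, add to $J$ a mass $m$ spread uniformly over edges of length in $[L_n,2L_n]$, with $L_n\to\infty$. This preserves local convergence and \eqref{eq:tight-decay-Jn}. Yet by the spread-out (mean-field) limit, $\limsup_n\beta_c(G_n,J_n)\le 1/m$, which is below $\beta_c(G,J)$ once $m$ is large.

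What you need is uniform tail decay, $\lim_{R\to\infty}\sup_n\sum_{y\notin B(R)}J_n(o,y)=0$. This is exactly what the paper's proof of \thref{prop:lower_semi_continuity} extracts from its uniform integrability assumption, and with it your splitting of the $\varphi$-sum goes through as in the paper.

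\paragraph{The case $\alpha\le1$.} Nothing needs to be inherited. $J_n$ need not be non-integrable (see \thref{lem:non_integrable_limits}), and $\beta_c(G_n,J_n)\to0$ is precisely the upper bound.
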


This is somewhat surprising since we might expect long-range edges to have a greater impact on the geometry of the infinite cluster. 
For instance, in this setting (and on $\Z^d$) Biskup \cite{biskup_scaling_2004} proves that the chemical distance is polylogarithmic in the Euclidean distance.
\thref{thm:lrp_locality} may be intuitively understood as a consequence of proving the truncation problem for \LRP{} on transitive graphs of polynomial growth, which is the content of the next result.

\begin{theorem}[Truncation]
    \thlabel{thm:truncation_problem}
    Let $G$ be a transitive graph of polynomial growth with dimension $d \geq 2$ and suppose that $J : V \times V \to \R_+$ is a transitive kernel satisfying $J(x,y) = \Omega (d_G(x,y)^{-d \alpha})$ with $\alpha \in (1,2)$. 
    Let $\beta > \beta_c(J)$. 
    Then there exists $N=N(\beta) \in \N$ such that the truncated kernel 
    \begin{equation}
        \label{eq:truncated_kernel}
        \widetilde{J} 
        = 
        J(x,y) 
        \charf(
            d_G(x,y) \leq N
        )
    \end{equation}
    satisfies $ \beta_c(\widetilde{J})<\beta$.
\end{theorem}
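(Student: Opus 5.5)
The plan is to run a finite-size criterion argument: derive the existence of a local giant from the supercritical hypothesis, and then show that a suitably truncated kernel already carries a renormalisation scheme which percolates. Fix $\beta > \beta_c(J)$ and choose $\beta' \in (\beta_c, \beta)$. The engine is the local existence-and-uniqueness of the giant announced in the abstract. At $\beta'$, with probability tending to one as $r \to \infty$, the ball $B(r)$ contains a cluster of size at least $\theta' \# B(r)$, using only edges inside $B(r)$. Any two disjoint sets of linear size in adjacent balls at scale $r$ are then joined by a direct long edge with probability at least
\[
1-\exp\bigl(-\beta c_J (\theta' c_G r^d)^2 (C r)^{-d\alpha}\bigr),
\]
which tends to one because $2d - d\alpha > 0$ when $\alpha<2$. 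This is the sole place where $\alpha<2$ is used, and it is what makes a finite range suffice.

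The first step is a finite-size event. For scales $r_0 < r_1 < \dots$ with $r_{k+1} = L r_k$, call a tile at level $k$ \emph{good} if it contains a cluster of density at least $\theta_k$, built only from edges of length at most $C r_k$, that meets the giants of all good sub-tiles. The tiles are the merged Voronoi cells of the scale-invariant nets. The second step is the induction. A level-$(k+1)$ tile is good if all but a small fraction of its level-$k$ sub-tiles are good and the giants of neighbouring good sub-tiles are joined by direct edges of length at most $C r_{k+1}$. The volume bound above gives a failure probability $\exp(-c\, r_k^{d(2-\alpha)})$ per neighbouring pair. A union bound together with a domination of bad sub-tiles by a highly subcritical field then yields $p_{k+1} \le \epsilon_k$ with $\sum_k \epsilon_k < \infty$, and the densities satisfy $\theta_{k+1} \ge \theta_k(1-\epsilon_k)$, so they stay bounded below. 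The sequence of edge lengths $C r_k$ is unbounded, however.

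This unboundedness is the main obstacle. A naive hierarchy needs ever-longer edges, so one cannot simply truncate at a fixed $N$. The plan is to use the hierarchy only up to a fixed level $k_0$, with $N = C r_{k_0}$, and then switch to a comparison with a finite-range dependent percolation. From level $k_0$ on, treat good level-$k_0$ tiles as sites of a coarse-grained graph. This graph is roughly isometric to $G$, since the Voronoi tiles come from a net, so it has dimension $d \ge 2$. Join two adjacent good tiles when their giants are connected by an edge of length at most $N$. Both the goodness events and these connection events depend only on edges within distance $N$ of the tiles, so the coarse field is $O(1)$-dependent. Its marginal probability is at least $1 - p_{k_0} - e^{-c r_{k_0}^{d(2-\alpha)}}$, which can be made arbitrarily close to one by taking $k_0$ large. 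By Liggett--Schonmann--Stacey domination and the fact that $p_c < 1$ for Bernoulli percolation on graphs of polynomial growth with $d \ge 2$, the coarse field percolates. Hence $\widetilde{J}$ percolates at $\beta$.

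To obtain the strict inequality $\beta_c(\widetilde J) < \beta$, run the whole argument at $\beta'$ rather than at $\beta$, so that $\widetilde J$ percolates at $\beta' < \beta$. The delicate point is that the base case needs the local giant in $B(r_0)$ at $\beta'$ using only bounded-length edges. I would obtain this from the uniqueness and density statement for the full kernel by discarding edges longer than $r_0$ inside $B(r_0)$, which costs nothing since such edges have length at most $2r_0 \le N$.
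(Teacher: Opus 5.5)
Your working argument is the paper's. The paper proves the truncation theorem with a single coarse-graining step, the finite-size criterion \thref{prop:finite_size_criterion}. It feeds that criterion three things: \thref{prop:biskup} at one scale $R$, bulk-to-bulk connections between linear-sized sets, and Liggett--Schonmann--Stacey domination. The event $\#\giantone{R}\ge\nu\#B(R)$ only involves edges of length at most $2R$, so it has the same probability under $J$ and under $\widetilde J$ with $N=20AR$.

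Your multi-level hierarchy is therefore unnecessary. Your base case already invokes the linear giant, so you can take $k_0=0$. As sketched, the hierarchy is also inconsistent:
\begin{itemize}
\item With a fixed ratio $L$, a tile has boundedly many sub-tiles. The allowed bad fraction cannot then be summable without forcing every sub-tile to be good, which makes $p_{k+1}\approx L^d p_k$ grow.
\item Joining only neighbouring good sub-tiles does not connect giants that are separated by bad ones.
\end{itemize}
Neither issue matters once you stop at a fixed level. Running everything at $\beta'\in(\beta_c,\beta)$ is a clean way to get the strict inequality.

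The step that does not go through as written is the last one. Your tile graph is not transitive, so the fact that $p_c<1$ for transitive graphs of polynomial growth with $d\ge 2$ does not apply to it. Being ``roughly isometric to $G$'' does not transfer $p_c<1$ without further work. For non-transitive bounded-degree graphs, polynomial growth of degree $2$ is compatible with $p_c=1$; a tree with quadratic growth is an example.

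This is exactly where the paper uses structure theory, via \thref{prop:fixed_unif_control_nets}: $G$ admits $A$-controlled nets. Concretely, there is a $(4R,4AR)$-net whose net graph, joining net points within distance $16AR$, contains a copy of $\L^2$. Run your site-bond process on that net graph:
\begin{itemize}
\item a site $x$ is open when $B(x,R)$ contains a cluster of size at least $\nu\#B(R)$;
\item a bond is open when the giants of two net points at distance at most $16AR$ share an edge;
\item the truncation is at $20AR$.
\end{itemize}
This process dominates Bernoulli bond percolation with parameter above $1/2=p_c(\L^2)$, which closes the argument. Some such input is needed in your version too, namely a copy of $\L^2$ (or a bounded-multiplicity Lipschitz image of $\Z^2$) inside the coarse-grained graph.
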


Our results for the supercritical phase imply the continuity of the phase transition for a fixed $G$.

\begin{theorem}[Continuity of $\theta$]
\thlabel{thm:continuity_percolation_density}
    Let $G$ be a transitive graph of polynomial growth with $d \geq 1$ and suppose that $J : V \times V \to \R_+$ is a transitive kernel satisfying $J(x,y) = \Omega (d_G(x,y)^{-d \alpha})$ with $\alpha \in (1,2)$. 
    Then the set $\{\beta \in \R_+ : \theta(\beta) > 0 \}$ is open. 
    In particular, $\theta(\beta_c) = 0$.
\end{theorem}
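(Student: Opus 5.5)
The plan is to deduce openness from the truncation property, using a finite-range approximation and a standard continuity argument. Fix $\beta$ with $\theta(\beta)>0$, so $\beta\geq\beta_c$. I will show that some $\beta'<\beta$ also satisfies $\theta(\beta')>0$. Upward openness is automatic, since $\theta$ is nondecreasing by the monotone coupling.

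First suppose $\beta>\beta_c$. For $d\geq 2$, \thref{thm:truncation_problem} gives $N$ with $\beta_c(\widetilde J)<\beta$, where $\widetilde J$ is the kernel truncated at range $N$. Choose $\beta'$ with $\beta_c(\widetilde J)<\beta'<\beta$. Since $\widetilde J\leq J$, monotone coupling gives $\theta_J(\beta')\geq\theta_{\widetilde J}(\beta')>0$. For $d=1$ the truncation theorem is not stated, so this case needs a separate argument. I would use the one-dimensional version of the same renormalisation, namely the local existence-and-uniqueness of the giant. Alternatively, I would apply the general mechanism below directly.

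The real content is the case $\beta=\beta_c$, that is, showing $\theta(\beta_c)=0$. The general mechanism is that $\theta(\beta)=\lim_{r\to\infty}\P_\beta(o\leftrightarrow \partial B(r))$ is a decreasing limit of continuous increasing functions, so $\theta$ is upper semicontinuous and right-continuous. Left-continuity at a point with $\theta>0$ is what must be proved. For this I would use the finite-size criterion produced by the renormalisation scheme (iterated Voronoi tiles with scale-invariant nets). The idea is a finite-volume event $E_{n}$ of the following form: inside a ball of some scale $n$, a cluster of linear size exists and is unique, with probability at least $1-\delta$. The scheme should guarantee that $\P_\beta(E_n)\geq 1-\delta$ at a single scale implies $\theta(\beta)>0$. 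Moreover, this implication should come with constants uniform in $\beta$ on compact sets.

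Given such a criterion, the argument runs as follows. If $\theta(\beta)>0$, the local giant statement gives a scale $n$ with $\P_\beta(E_n)\geq 1-\delta/2$. The event $E_n$ depends on finitely many edges, possibly after truncating very long edges, which costs arbitrarily little because $\alpha>1$ makes the kernel integrable. Hence $\P_{\beta'}(E_n)$ is continuous in $\beta'$, and $\P_{\beta'}(E_n)\geq 1-\delta$ for all $\beta'$ close to $\beta$. The criterion then gives $\theta(\beta')>0$. This yields openness, and since $\theta(\beta')=0$ for $\beta'<\beta_c$, it also yields $\theta(\beta_c)=0$.

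The main obstacle is the finite-size criterion itself: a single-scale local-giant event must bootstrap to an infinite cluster. This is where the restriction $\alpha<2$ enters. Long edges between giants at consecutive scales must connect with high probability, requiring roughly $\beta n^{2d}n^{-d\alpha}\to\infty$, which holds exactly when $\alpha<2$. I would first try to prove it by a multiscale induction in which the error probabilities at successive scales are summable.
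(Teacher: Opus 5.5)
Your plan is essentially the paper's proof. For $\beta$ with $\theta(\beta)>0$, the paper takes the local giant at a single scale $R$ from \thref{prop:biskup} (whose proof uses only $\theta(\beta)>0$), uses that this finite-volume event is continuous in $\beta$, and applies the finite-size criterion \thref{prop:finite_size_criterion} at $\beta-\epsilon$, with the bulk-to-bulk condition automatic because $\alpha<2$. For $d\ge 2$ that criterion is proved by a one-step comparison with supercritical percolation on a net graph containing $\L^2$; for $d=1$ the paper instead restarts the multiscale induction of \thref{prop:almost_linear}, as you suggest.

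Your truncation detour is unnecessary. If $\beta>\beta_c$, then every $\beta'\in(\beta_c,\beta)$ already has $\theta(\beta')>0$ by the definition of $\beta_c$ and monotonicity, so this also covers $d=1$ with no separate argument. The only real content is excluding $\theta(\beta_c)>0$, which is exactly what your second paragraph does.
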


For \LRP{} on $\Z^d$ this result is due to Berger \cite{berger_transience_2002}.
For \LRP{} on transitive \textit{unimodular} graphs, which include transitive graphs of polynomial growth, a result of Hutchcroft \cite[Theorem 1.2]{hutchcroft_power-law_2021} implies that there is no percolation at $\beta_c$.
Our next result is the \textit{joint} continuity of the percolation density $\theta$ with respect to three parameters: the underlying graph with respect to the local topology, the connectivity kernel, and the percolation parameter $\beta$ for all values of $\beta \in \R_+$, including the critical parameter $\beta_c$.

\begin{theorem}[Joint continuity of $\theta$]
\thlabel{thm:theta_locality}
    Let $G$ be a transitive graph of polynomial growth with $d \geq 2$ and suppose that $J : V \times V \to \R_+$ is a transitive kernel satisfying $J(x,y) = \Theta(d_G(x,y)^{-d\alpha})$ with $\alpha \in (0, 2)$. 
    Let $\beta \in \R_+$ and let $\beta_n \to \beta$. 
    Let $(G_n,J_n)_{n \in \N}$ be a sequence of transitive graphs and kernels converging locally to $(G,J)$ with $\liminf_{n \to \infty} \dim(G_n) \geq 2$ and $(J_n)_{n \in \N}$ uniformly integrable.
    If either $\alpha \leq 1$ and $\beta>0$, or if $\alpha > 1$, then
    \begin{equation}
        \label{eq:full-continuity}
        \lim_{n\to \infty}
        \theta(\beta_n,G_n,J_n)
        = 
        \theta(\beta,G,J).
    \end{equation}
\end{theorem}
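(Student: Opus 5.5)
The plan is to prove the two inequalities $\limsup_n \theta(\beta_n,G_n,J_n) \leq \theta(\beta,G,J)$ and $\liminf_n \theta(\beta_n,G_n,J_n) \geq \theta(\beta,G,J)$ separately, using different tools for each.

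\emph{Upper bound.} Write $\theta(\beta,G,J) = \lim_{r\to\infty} \P_{\beta}(o \leftrightarrow \partial B(r))$. The event that the origin is connected to distance $r$ by a path whose vertices all lie in $B(r)$ is not quite the right approximating event, because long edges can exit $B(r)$. Instead I would fix $r$ and use the event $A_r$ that $\#K \geq r$. Then $\theta \le \P(\#K\ge r)$ and $\P(\# K \geq r) \downarrow \theta$. For fixed $r$, the event that the cluster explored from $o$ reaches size $r$ is well approximated by the exploration inside a large ball $B(R)$, with an error bounded by the probability that one of the first $r$ explored vertices has an edge of length at least $R$. Uniform integrability \eqref{eq:tight-decay-Jn}, together with $J$ summable in the integrable case, makes this error at most $r\sup_n \sum_{d(o,x)>R-r}J_n(o,x)\beta_n$. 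To make that small I need uniform tail smallness, not just a uniform bound; I would obtain it from $L^1$ convergence on $B(R_n)$ plus summability of $J$, via the coupling radius \eqref{eq:coupling_radius_definition}. Inside $B(R)$, the laws under $(\beta_n,G_n,J_n)$ and $(\beta,G,J)$ can be coupled with total-variation error at most $\sum_{x,y\in B(R)}|\beta_nJ_n-\beta J| \to 0$. This gives $\limsup_n \P_{\beta_n,G_n,J_n}(\#K\ge r)\le \P_{\beta,G,J}(\#K\ge r)+o_R(1)$, and letting $r\to\infty$ yields the upper bound. When $\alpha\le 1$ uniform integrability is also assumed, so the same argument applies. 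For $\beta\le\beta_c(G,J)$, including $\beta = \beta_c$ by \thref{thm:continuity_percolation_density}, this already gives convergence to $0$.

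\emph{Lower bound for $\beta>\beta_c(G,J)$.} This is the main obstacle. The plan is to use the local existence-and-uniqueness of the giant, the main technical input announced in the abstract, in the following finitary form. For $\beta'\in(\beta_c,\beta)$ and large $R$, with probability close to $1$ the ball $B(R)$ contains a unique cluster of the model restricted to $B(R)$ that has linear size $\ge cR^d$. Moreover, the origin lies in it with probability at least $\theta(\beta')-\epsilon$, and this event is measurable with respect to edges in $B(R)$ only. The renormalisation then says that such local giants at all scales merge to form an infinite cluster with high probability. Crucially, this holds \emph{uniformly} over graphs whose $2R$-ball agrees with that of $G$ and whose kernels are close on $B(R)$, since the multiscale input only requires control of a finite-scale event.

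Concretely, I would take the seed scale $R$ below the coupling radius $R_n$ and transfer the seed event to $(G_n,J_n)$ by the coupling. Using the lower bound $J_n=\Omega(d^{-d\alpha})$ at larger scales, which holds since $\alpha<2$ and $\liminf\dim(G_n)\ge2$, the renormalisation from the seed then shows that $o$ lies in an infinite cluster with probability at least $\theta(\beta')-2\epsilon$ under $\P_{\beta_n,G_n,J_n}$ for $n$ large, once $\beta_n>\beta'$. Finally I would let $\beta'\uparrow\beta$ and use left-continuity of $\theta$ on $(\beta_c,\infty)$. That left-continuity follows from the standard argument that the infinite cluster at $\beta$ is the increasing union of infinite clusters at $\beta'<\beta$, which relies on uniqueness of the infinite cluster via the Burton–Keane argument for amenable graphs. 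The hardest part is making the multiscale merging uniform in $n$, including the dimension and constants of $G_n$. I would try first to phrase the renormalisation input purely in terms of a single finite-volume seed probability exceeding a universal threshold.
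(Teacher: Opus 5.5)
Your lower bound has a genuine gap. You run the multiscale renormalisation on $(G_n,J_n)$ ``using the lower bound $J_n=\Omega(d^{-d\alpha})$ at larger scales, which holds since $\alpha<2$ and $\liminf_n\dim(G_n)\ge2$''. That step is unfounded. The hypotheses control $J_n$ only through $L^1$ closeness to $J$ on the coupling radius and the uniform integrability bound. They give no lower bound $J_n(x,y)\ge c\,d_{G_n}(x,y)^{-d\alpha}$ uniform in $n$, and nothing at all about $J_n$ beyond the coupling radius; the dimension of $G_n$ says nothing about the kernel. For example, with $G_n=G$ and $J=d_G^{-d\alpha}$, the truncations $J_n=J\charf(d_G(x,y)\le n)$ converge locally to $J$ and are uniformly integrable, yet have finite range. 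A scheme that merges local giants at ever larger scales through bulk-to-bulk edges, as in Section~\ref{sec:cluster_decay_below}, cannot be run on such $J_n$. Your claim that it works ``uniformly over graphs whose $2R$-ball agrees with that of $G$'' is precisely what needs proof, and your closing sentence only gestures at a replacement.

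The missing idea is a single-scale finite-size criterion, \thref{prop:finite_size_criterion}, in which every event lives in a fixed ball. Such events transfer to $(G_n,J_n)$ by the coupling on the coupling radius (\thref{lem:coupling-on-radius}), and this is where $\liminf_n\dim(G_n)\ge2$ actually enters.
\begin{itemize}
\item By the finitary structure theory (\thref{prop:unif_control_nets}) there is a single $A$ such that every $G_n$ with $n$ large admits a $(4R,4AR)$-net whose net graph contains $\L^2$.
\item Declare a net point open if its $R$-ball contains a cluster of size at least $\nu\#B(R)$, and a net edge open if the two such clusters are joined by a direct edge. Both events are determined by edges inside a ball of radius $10AR$.
\item Hence, for $n$ large, their probabilities under $\P_{\beta_n,G_n,J_n}$ are within $\delta/2$ of those under $\P_{\beta,G,J}$. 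There they exceed $1-\delta/2$ by \thref{prop:biskup} and the bulk-to-bulk bound.
\item Domination by high-density independent percolation on a graph containing $\L^2$, plus a Peierls argument, gives $\P_{\beta_n,G_n,J_n}(\giantone{R}\not\leftrightarrow\infty)\le\epsilon$ for all large $n$.
\item Combine this with $\P_{\beta,G,J}(o\in\giantone{R})\ge\theta(\beta,G,J)-\epsilon$ from \thref{prop:o_in_kinfty}, transferred by the coupling, to obtain the lower bound.
\end{itemize}
The coupling already absorbs $|\beta_n-\beta|$ on a fixed ball, so your detour through $\beta'<\beta$ and left-continuity of $\theta$ is unnecessary, though harmless.

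Your upper bound is sound and is dual to the paper's. The paper approximates $1-\theta(\beta,G,J)$ by finitely many events $\{K=A_i\}$ and controls $\P_{\beta_n,G_n,J_n}(A_i\sim B(R)^c)$; you use $\P(\#K\ge r)\downarrow\theta$ instead. Two details need fixing.
\begin{itemize}
\item The exploration of the first $r$ vertices can leave $B(R)$ through several edges each shorter than $R-r$. The error term should therefore involve edges of length at least $R/r$, not $R-r$.
\item Uniformly small tails $\sum_{x\notin B(L)}J_n(o,x)$ do not follow from local $L^1$ convergence plus summability of $J$. With $G_n=G$, adding to $J$ a unit of mass spread over pairs at distance exactly $2n$ preserves local convergence and a uniform bound on $\sum_x J_n(o,x)$, but the tails stay at least $1$. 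The tail smallness has to come from the uniform-integrability hypothesis itself, read as uniform tail control. This is how the paper uses it in \thref{prop:theta_upper_semi_continuity}.
\end{itemize}
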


The techniques we develop are also suitable to obtain finer quantitative results about supercritical \LRP{}.
Our next result, which was known for $\Z^d$ and more general inhomogeneous random graphs \cite{jorritsma_cluster-size_2025} and is new for arbitrary transitive graphs of polynomial growth, is the stretched-exponential decay of the distribution of finite clusters. 

\begin{theorem}[Cluster-size decay]
\thlabel{thm:cluster_size_decay}
    Let $G$ be a transitive graph of polynomial growth with $d \geq 1$ and suppose that $J : V \times V \to \R_+$ is a transitive kernel satisfying $J(x,y) = \Omega(d_G(x,y)^{-d \alpha})$ with $\alpha \in (0,2)$. 
    Let $\beta > \beta_c$.
    Then there exists $A = A(G,J) > 0$ such that for all $k \in \N$
    \begin{equation}
        \label{eq:cluster_size_decay}
        \P_{\beta}
        (
            k
            \leq
            \# \cluster 
            < 
            \infty
        )
        \leq
        \exp
        (
            -\beta k^{\min(2 - \alpha,1)}
            / 
            A
        ).
    \end{equation}
\end{theorem}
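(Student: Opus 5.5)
We plan to derive the cluster-size decay from a local existence-and-uniqueness statement for the giant cluster, obtained through a multi-scale renormalisation. The first step is to show that, for $\beta>\beta_c$, there are constants $\rho,c>0$ and a scale $r_0$ such that for every $r\ge r_0$ the ball $B(r)$ contains, with probability at least $1-\exp(-c\,\beta\, r^{d\min(2-\alpha,1)})$, a cluster of size at least $\rho\, \# B(r)$. This cluster is to be connected (inside a slightly larger ball) to the corresponding giants of all neighbouring balls of comparable size. We will get this by induction over scales. At level $j$, tile $G$ into Voronoi cells of radius of order $r_j$ around a scale-invariant net; for $d_G$ coarse scales on a polynomial growth group these cells have volume comparable to $r_j^d$ by \eqref{eq:polynomial_volume_growth}. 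A cell is called good if it contains a local giant of density $\rho_j$. To pass to level $j+1$, we merge groups of level-$j$ cells into a level-$(j+1)$ cell and connect the good giants using the direct long-range edges between them.

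The quantitative input is the following estimate. Take two sets $A_1,A_2$ of size at least $\rho\, r^d$ at distance $O(r)$. By \eqref{eq:polynomial_kernel}, the number of edges between them is Poisson-like with mean at least $\beta c_J \rho^2 r^{2d} r^{-d\alpha}=\beta c\, r^{d(2-\alpha)}$. So the probability that they are not directly connected is at most $\exp(-\beta c\, r^{d(2-\alpha)})$, which is superpolynomially small since $\alpha<2$. This failure probability is summable over the polynomially many cells per scale. It also lets the density loss $\rho_{j+1}\ge\rho_j(1-\epsilon_j)$ form a convergent product, provided the scales grow, say $r_{j+1}=K r_j$ with $K$ large. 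The base case would come from $\beta>\beta_c$: the infinite cluster has positive density, and ergodicity/uniqueness (Burton–Keane for amenable graphs) gives, at some finite scale $r_0$, a large fraction of good cells with high probability. An independent sprinkling, writing $\beta=\beta'+\delta$ with $\beta'>\beta_c$, supplies fresh edges for the merging steps.

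The second step is to deduce \eqref{eq:cluster_size_decay}. Suppose $k\le\#\cluster<\infty$. We may choose $r$ with $r^d\asymp k$ (up to constants depending on $\rho$) so that either (a) $\cluster$ is contained in a ball of radius about $r$, or (b) $\cluster$ is spread out. In case (a), $\cluster$ has at least $k$ vertices in $B(r)$, and the local giant $\mathcal G$ of that ball exists and links to the infinite cluster with the probability above. $\cluster$ is finite, so it is disjoint from $\mathcal G$. Then there are no edges between $\cluster$ and $\mathcal G$, an event of probability at most $\exp(-\beta c\, k\rho r^d r^{-d\alpha})=\exp(-\beta c' k^{2-\alpha})$. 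This is made rigorous by exploring $\cluster$ first, revealing only edges incident to it, and then using the unexplored edges for the giant. In case (b) we cover $\cluster$ by balls of radius $r$. Each ball meets $\cluster$ in few vertices, but there are many of them, so the at least $k/r^{d}$ balls touched each independently contribute an avoidance cost. This reduces to case (a) at a smaller scale, after a union bound over lattice animals of coarse cells whose count $e^{O(k/r^d)}$ is dominated. For $\alpha\le1$ the mean edge count from a set of size $k$ to a linear-size giant is at least of order $k$, which gives the exponent $\min(2-\alpha,1)$. For $d=1$ with $\alpha\in(1,2)$, the same argument applies with the nets on $\Z$-like graphs.

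The main obstacle is making the exploration in case (a) compatible with the giant. The giant's existence must be certified on edges independent of those revealed while exploring $\cluster$. We will handle this with a two-round sprinkling: the giant event uses $\beta'$-edges avoiding a neighbourhood of $\cluster$, and the removed region is small, $O(k)$ out of $r^d$ vertices, which requires robustness of the renormalised giant under deleting a small set. We would prove that robustness inside the induction, using the fact that deleted vertices only spoil $O(k/r_j^d)$ cells at each level.
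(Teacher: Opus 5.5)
Your Step 2 has a genuine gap at exactly the point you call the ``main obstacle'', and neither proposed fix closes it.

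If you explore $\cluster$ first, you reveal every edge with an endpoint in $\cluster$. In particular you reveal every edge between $\cluster$ and any giant $\mathcal G$ that you later build in $V\setminus\cluster$ from the unexplored edges. Conditionally on the exploration, the event ``no edge between $\cluster$ and $\mathcal G$'' therefore has probability one, and there is nothing left to which you can charge $\exp(-\beta c\,k\rho r^{d(1-\alpha)})$.

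Working unconditionally does not help. Since $\P_\beta(\cluster=S)=\P_\beta(\omega[S]\text{ connected})\exp(-\beta J(S,S^c))$ and $J(S,S^c)$ can be much smaller than $k$ (e.g.\ for balls), summing over candidate sets $S$ costs an entropy factor your bound cannot absorb.

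Sprinkling fails for two reasons. First, $\cluster$ is a $\beta$-cluster, so it depends on the sprinkled edges, and conditioning on it again reveals the sprinkled edges to $\mathcal G$ as closed. Second, a large $\beta$-cluster need not contain any large $\beta'$-cluster, so you cannot reduce to objects fixed by the first round.

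Robustness under deleting a neighbourhood of $\cluster$ would have to hold uniformly over all possible $\cluster$, which is again a union over sets. Your claim that $\cluster$ spoils only $O(k/r_j^d)$ cells is false for spread-out clusters, where one vertex per cell touches $k$ cells. In case (b), the coarse cells met by $\cluster$ need not form a lattice animal because long edges jump, so the count $e^{O(k/r^d)}$ is unjustified.

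The missing idea is to reverse the order of revelation, as the paper does in proving \thref{prop:second_largest}.
\begin{itemize}
\item Tile $B(r)$ into tiles of volume $\asymp k$.
\item Reveal intra-tile edges. Each tile giant has size $\ge\rho k$ off an event of probability $\exp(-c\beta k^{\min(2-\alpha,1)})$, by \thref{prop:biskup}.
\item Reveal edges between giants of neighbouring tiles along a spanning tree, merging them into one cluster $\CC$.
\item Reveal all edges between non-giant vertices.
\end{itemize}
Now every cluster other than $\CC$ is an already \emph{determined} pre-cluster, and at most $\#B(r)/k$ of them have size $\ge k$. The edges from their vertices to giants of neighbouring tiles are still unrevealed and independent. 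Each vertex misses the neighbouring giant (size $\ge\rho k$, distance $O(k^{1/d})$) with probability at most $\exp(-c\beta k^{1-\alpha})$. Each pre-cluster of size $\ge k$ therefore survives with probability at most $\exp(-c\beta k^{2-\alpha})$, and the union bound runs over determined objects. This also removes the need for your (a)/(b) split.

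You still need a localisation step your sketch omits. The $\#B(r)/k$ prefactor forbids sending $r\to\infty$ directly. The paper instead works at scale $R_k=\exp(ck^{\min(2-\alpha,1)})$ and treats clusters with at most $k$ vertices in $B(R_k)$ that escape it via an edge of length $\ge R_k/k$. It then uses the ``leaving giant'' estimate (existence and uniqueness at every intermediate scale, plus Borel--Cantelli) to get $\giantone{r}\subseteq\kinfty$ eventually, and concludes by dominated convergence.

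Your Step 1 also does not deliver the sharp error. With $r_{j+1}=Kr_j$, $K$ fixed and $\sum_j\epsilon_j<\infty$, once $\epsilon_j$ times the bounded number of subcells drops below one, a cell is good only if all its subcells are, so the bad probabilities stop decreasing. The paper needs super-exponentially growing ratios $m_i=r_{i-1}^{\xi}$ (losing $\delta$ in the exponent), followed by O'Connell's large deviations for the Erd\H{o}s--R\'enyi giant. You should simply cite \thref{prop:biskup} here.
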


We also prove the following lower bound on the distribution of finite clusters.

\begin{theorem}
\thlabel{thm:cluster_size_lower_bound}
    Let $G$ be a transitive graph of polynomial growth with $d \geq 1$, and suppose that $J : V \times V \to \R_+$ is a transitive kernel satisfying $J(x,y) = \Theta(d_G(x,y)^{-d \alpha})$ with $\alpha \in (1,2)$.
    Let $\beta > \beta_c$.
    %
    %
    Then there exists $A = A(G,J) > 0$ such that for all $k \in \N$
    \begin{equation}
    \label{eq:cluster-lower-bound-alt}
        \exp
        (
            -  
            A 
            \beta k^{
                \max(2-\alpha, (d-1)/d)
            }(\log k)^{
                \charf
                \left(
                    \alpha=1+1/d
                \right)}
        )
        \leq 
        \P_{\beta}
        \left(
            k 
            < 
            \# K 
            < 
            \infty
        \right).
    \end{equation}
\end{theorem}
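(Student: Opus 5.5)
We prove the lower bound in \eqref{eq:cluster-lower-bound-alt} by exhibiting, for each large $k$, an explicit event of the required probability on which $k<\#K<\infty$. The exponent $\max(2-\alpha,(d-1)/d)$ points to two competing strategies. We take the better one, according to whether $2-\alpha\geq (d-1)/d$, i.e.\ $\alpha\leq 1+1/d$, or not. Constants $A$ may depend on $G,J,\beta$ as in the statement.

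\emph{Step 1 (a finite cluster of size at least $k$ inside a ball).} Let $r$ satisfy $c_G r^d\asymp k$, so $\#B(r)\asymp k$. We need an event $E_1$, measurable with respect to edges with both endpoints in $B(r)$, on which $B(r)$ contains a connected set of more than $k$ vertices. The crude choice, that all nearest-neighbour edges of a path of length $k$ are open, costs $e^{-Ck}$ and is far too expensive. Instead we use the supercritical regime itself. Since $\beta>\beta_c$, truncation (\thref{thm:truncation_problem}) together with the local giant existence (the paper's main technical input) shows that with probability at least $1/2$ the restriction of the configuration to $B(r)$ has a cluster of size at least $c\,\#B(r)$. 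Choosing $r$ with $c\,\#B(r)>k$ gives $\P(E_1)\geq 1/2$. If needed we also condition on the origin lying in this cluster; by transitivity and a union over translates, this costs only a factor polynomial in $k$, which is absorbed into the exponent.

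\emph{Step 2 (isolating the ball).} Let $E_2$ be the event that no edge joins $B(r)$ to $V\setminus B(r)$. This event is independent of $E_1$ and of the interior configuration, and
\begin{equation*}
    \P_\beta(E_2)
    =
    \exp\Big(
        -\beta
        \sum_{x\in B(r),\, y\notin B(r)} J(x,y)
    \Big).
\end{equation*}
Summing $J(x,y)\asymp d(x,y)^{-d\alpha}$ over $y\notin B(r)$ for $x$ at distance $s$ from the sphere gives roughly $\max(1,s)^{d-d\alpha}$. Using \eqref{eq:polynomial_volume_growth}, and treating the shells $\partial B(r-s)$ with care since the growth bounds only control balls, one expects a shell count of about $r^{d-1}$ per unit $s$. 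Integrating over $s\in[1,r]$ then gives $r^{d-1}\int_1^r s^{d-d\alpha}\,\mathrm ds$, which is of order $r^{d-1}$, $r^{d-1}\log r$, or $r^{2d-d\alpha}$ according as $d-d\alpha<-1$, $=-1$, or $>-1$. In terms of $k\asymp r^d$ these are $k^{(d-1)/d}$, $k^{(d-1)/d}\log k$ (exactly when $\alpha=1+1/d$), and $k^{2-\alpha}$, matching the claimed exponent. The case $\alpha\leq 1+1/d$ thus already gives the bound. When $\alpha<1+1/d$ the isolation cost is dominated by long edges rather than the boundary, so the ball is in any case the optimal shape.

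\emph{Step 3 (main obstacle).} On $E_1\cap E_2$ the cluster of the origin lies inside $B(r)$ and has size greater than $k$, so $\P(k<\#K<\infty)\geq \frac12 k^{-C}\exp(-A\beta k^{\max(\cdot)}(\log k)^{\charf(\alpha=1+1/d)})$, and the polynomial factor is absorbed by enlarging $A$. The delicate point is Step 1. We need a linear-size cluster inside a finite ball with probability bounded below, using only edges inside the ball and uniformly in $r$. This is precisely the local existence of the giant, so we invoke it rather than reprove it. The second genuine difficulty is the shell estimate in Step 2 on a general polynomial-growth graph, since $\#\partial B(r)\lesssim r^{d-1}$ needs justification. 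To avoid sphere counts altogether, we plan to bound the sum directly as $\sum_{x\in B(r)}\sum_{y:\,d(x,y)\geq r-|x|}d(x,y)^{-d\alpha}$ and to control $\#\{x\in B(r): |x|\geq r-s\}=\#B(r)-\#B(r-s)$ by known results that balls in polynomial-growth groups have volume $\sim c r^d$, which give $O(s r^{d-1})$ for the annuli.
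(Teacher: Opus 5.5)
Your decomposition is the paper's own: $\P_\beta(k<\#K<\infty)\ge \P_\beta(\#K_r>k)\,\P_\beta(B(r)\not\sim B(r)^c)$, using independence of the two edge sets. However, Step~2 has a genuine gap, exactly where you suspected it. To get $J(B(r),B(r)^c)\lesssim r^{\gamma}(\log r)^{\charf(\alpha=1+1/d)}$ with $\gamma=\max(d(2-\alpha),d-1)$ at a \emph{prescribed} radius $r$, you need $\#B(r)-\#B(r-s)=O(s\,r^{d-1})$ uniformly in $1\le s\le r$.

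This does not follow from $\#B(r)\sim c\,r^d$. An asymptotic only gives $\#B(r)-\#B(r-s)=c(r^d-(r-s)^d)+o(r^d)$, and the additive $o(r^d)$ swamps $s\,r^{d-1}$ when $s$ is small. A uniform bound $\#S(r)=O(r^{d-1})$ is open for transitive graphs of polynomial growth, and the paper says so explicitly when motivating its sphere calculus. What is actually available is an error term $\#B(r)=c r^d+O(r^{d-\delta})$, or an annular decay bound $\#(B(r)\setminus B(r-s))\le C(s/r)^{\delta}\#B(r)$, with some small $\delta>0$. After summation by parts this yields only $J(B(r),B(r)^c)=O(r^{\gamma}\log r+r^{d-\delta})$. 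That spoils the exponent and the exact logarithm as soon as $d-\delta>\gamma$, for instance for every $\alpha\ge 1+1/d$ unless $\delta\ge 1$, which is precisely the open sharp bound.

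The missing idea is that you never need a specific radius. Step~1 only requires $\nu\#B(r/2)\ge k$ and $\#B(r)\lesssim k$, so you may take $r$ anywhere in a window $[q,2q]$ and average. Exchange the order of summation and use \thref{lem:integrability}:
\[
\frac{1}{q}\sum_{r=q}^{2q}J\big(B(r),B(r)^c\big)\lesssim \frac{1}{q}\sum_{x\in B(2q)}\ \sum_{r=\max(q,d_G(o,x))}^{2q}\big(r-d_G(o,x)+1\big)^{d(1-\alpha)}\le \frac{\#B(2q)}{q}\sum_{u=1}^{2q+1}u^{d(1-\alpha)}.
\]
The right-hand side is $O(q^{d-1})$, $O(q^{d-1}\log q)$ or $O(q^{d(2-\alpha)})$ according as $d(\alpha-1)>1$, $=1$ or $<1$. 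This uses only ball volumes (``spheres partition balls''). By Markov's inequality, at least half the radii in the window then satisfy the desired bound. This is \thref{lem:averaged_ball_connection}, and the paper's proof picks $r\in[q,2q]$ in exactly this way.

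On Step~1, the event must be $\{\#K_r>k\}$, not merely that some cluster of $B(r)$ is large. Your union over translates does repair this: since $B(r/2)\subseteq B(x,r)$ for $x\in B(r/2)$, transitivity gives $\P_\beta(\#K_r>k)\ge \P_\beta(\#\giantone{r/2}>k)/\#B(r/2)$. The paper's Markov argument with $Z_k=\sum_{x\in B(r/2)}\charf(\#K(x,r/2)\ge k)$ recovers the factor $k$ and gives a constant instead; either suffices. Truncation plays no role.
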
 

The lower bound in \thref{thm:cluster_size_lower_bound} matches the upper bound in \thref{thm:cluster_size_decay} when $\alpha \in (1,1+1/d)$. 
For $\alpha > 1 + 1/d$, we prove the matching upper bound in \cite{alonso_supercritical_2026} under the assumption that $\beta>\beta_c$ is sufficiently large. 
\thref{thm:cluster_size_lower_bound} improves on an earlier result \cite[Theorem 1.4]{alonso_supercritical_2026} which contained a correction term in the exponent.
and required $\beta > \beta_c$ sufficiently large. 

Our next results determine the anchored isoperimetric dimension of the infinite cluster in \LRP{}. 
We say that a locally finite connected graph $G = (V,E)$ satisfies an \textbf{anchored $d$-dimensional isoperimetric inequality} if for some (and hence every) vertex $v \in V$ there exists $c = c(v) > 0$ such that $\# \partial_{E} W \geq c \left(\# W \right)^{(d-1)/d}$ for every finite connected set of vertices $W \subset V$ that contains $v$, where $\partial_E W$ denotes the edge boundary of $W$ in $G$.
The \textbf{anchored isoperimetric dimension} of $G$ is defined to be the supremal value of $d$ for which $G$ satisfies an anchored $d$-dimensional isoperimetric inequality.

\begin{corollary}
    \thlabel{cor:anchored_isop_dim}
    Let $G$ be a transitive graph of polynomial growth with $d \geq 1$, and suppose that $J: V \times V \to \R_+$ is a transitive kernel with $J(x,y) = \Omega(d_G(x,y)^{-d\alpha})$ with $\alpha \in (1,2)$.
    Let $\beta > \beta_c$. 
    Then the infinite cluster $\cluster_{\infty}$ has anchored isoperimetric dimension at least $1/(\alpha-1)$ almost surely.
\end{corollary}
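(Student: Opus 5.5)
The plan is to deduce the anchored isoperimetric inequality from the cluster-size decay of \thref{thm:cluster_size_decay}, by a first-moment argument over anchored sets, in the style of the standard derivation of anchored isoperimetry from stretched-exponential tails of finite clusters. We must show that almost surely there is $c>0$ such that every finite connected $W \subset \kinfty$ containing a fixed vertex satisfies $\#\partial_E W \geq c(\#W)^{(2-\alpha)}$, since $(d'-1)/d' = 2-\alpha$ exactly when $d' = 1/(\alpha-1)$. By transitivity and countability of the vertex set it suffices to anchor at $o$ on the event $o \in \kinfty$.

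The key observation is a conditional version of the decay estimate. For a fixed finite set $W \ni o$, the event that $W$ is connected in the cluster of $o$ and has at most $m$ open boundary edges to the rest of the cluster is handled by closing those $m$ boundary edges. Closing them turns the event into $\{W \subset K,\ \#K<\infty \text{ with } \#K \geq \#W\}$. The cost is a factor depending on $m$, and this is where long-range edges hurt: the set of $m$ edges can be chosen among all edges leaving $W$.

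To avoid a combinatorial blow-up, I would work instead with the sharper intermediate estimate behind \thref{thm:cluster_size_decay}. Its proof presumably shows that a set of size $k$ that is not attached to the giant has probability at most $\exp(-\beta k^{2-\alpha}/A)$, because the expected number of edges from a $k$-set to the locally dense giant is at least order $\beta k^{2-\alpha}$ (summing $J \asymp r^{-d\alpha}$ over a ball of volume $k$ against linear-density giants at all scales). Concretely, I would prove the following: with probability at least $1-\exp(-c k^{2-\alpha})$, every connected $W\ni o$ in the cluster of $o$ with $\#W=k$ has at least $c k^{2-\alpha}$ open edges to $\kinfty\setminus W$. This follows by applying the giant-based renormalisation to the set $W$ in place of the cluster: $W$ meets order $k$ vertices, so it is exposed to at least $c\beta k^{2-\alpha}$ expected edge weight toward the local giants, and the number of open such edges is a sum of independent Bernoullis, hence concentrated by Chernoff. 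A union bound over the choices of $W$ is the delicate part.

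The main obstacle is that union bound, since the number of connected $W\ni o$ of size $k$ in a long-range graph is not exponentially bounded a priori. I would handle it by coarse-graining. Replace $W$ by the set of renormalisation blocks at scale $k^{1/d}$ that it occupies. Only $\exp(O(k^{2-\alpha}\log k))$ coarse shapes are relevant once the edges of $W$ are restricted to lengths where the exponent wins, and the remaining sets are absorbed into the decay estimate directly. If the counting fails, fall back on the transience-style argument of Berger: construct inside $\kinfty$ a renormalised subgraph quasi-isometric to a hierarchical graph with dimension $1/(\alpha-1)$, and transfer its isoperimetry.

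Once the tail bound $\exp(-ck^{2-\alpha})$ is summable in $k$, Borel--Cantelli gives the inequality for all large $k$. Small $k$ are handled by shrinking $c$, which completes the argument.
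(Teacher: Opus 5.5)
Your plan, as written, does not close. Your ``sharper intermediate estimate'' says that with probability $1-\exp(-ck^{2-\alpha})$, every connected $W\ni o$ of size $k$ has at least $ck^{2-\alpha}$ open edges to $\kinfty\setminus W$. That is already stronger than the corollary. The only mechanism you give for it is a Chernoff bound for a fixed $W$ followed by a union bound over $W$, and that union bound cannot be paid for. Chernoff gives at best $\exp(-ck^{2-\alpha})$ per set. The only a priori bound on the number of open connected $W\ni o$ with $\#W=k$ is exponential in $k$, which is far too large since $2-\alpha<1$. Your coarse-grained count $\exp(O(k^{2-\alpha}\log k))$ still exceeds $\exp(ck^{2-\alpha})$ by a logarithm in the exponent, so the sketch fails by your own numbers.

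The Berger-type fallback is not valid either. Having a renormalised subgraph $H\subseteq\kinfty$ passes transience up to $\kinfty$ by monotonicity, but it does not pass isoperimetry. A connected $W\ni o$ in $\kinfty$ can meet $H$ in a small or disconnected set, and $\#\partial_E W$ is only bounded below by the $H$-boundary of $W\cap H$.

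The paper's proof is a two-line reduction. \thref{thm:cluster_size_decay} gives $\P_\beta(k\le\#\cluster<\infty)\le\exp(-\beta k^{2-\alpha}/A)$. Theorem 1.5 of \cite{alonso_supercritical_2026}, which extends the arguments of Pete and Hutchcroft to long-range percolation, converts this decay into an anchored isoperimetric inequality of the matching dimension; here $2-\alpha=(d'-1)/d'$ at $d'=1/(\alpha-1)$.

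The idea you abandoned in your second paragraph already proves the corollary as stated. The point you missed is that closing the open edges leaving $W$ makes $\cluster=W$ exactly, not merely $W\subseteq\cluster$.
\begin{itemize}
\item Write $p_e=1-e^{-\beta J(e)}$. Fix $W\ni o$ with $\#W=k$ and a finite set $F$ of pairs leaving $W$. The probability that $W$ is open-connected and its open outgoing edges are exactly $F$ equals $\P_\beta(\cluster=W)\prod_{e\in F}p_e/(1-p_e)$.
\item Summing over $\#F\le m$ costs at most $\sum_{j\le m}\Lambda^j/j!\le(m+1)(e\Lambda/m)^m$ for $m\le\Lambda$. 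Here $\Lambda\le\beta\norm{J}e^{\beta\norm{J}}k$ when $J$ is integrable; otherwise all degrees are a.s.\ infinite and the statement is degenerate.
\item Summing over $W$ collapses to $\P_\beta(\#\cluster=k)$, because the events $\{\cluster=W\}$ are disjoint. There is no count of the $W$'s at all.
\item Take $m=ck^{2-\alpha}/\log k$ with $c$ small. The total is then summable in $k$, and Borel--Cantelli gives $\#\partial_E W\ge c'(\#W)^{2-\alpha}/\log(\#W+1)$ for every finite connected $W\ni o$ in $\kinfty$.
\end{itemize}
The logarithmic loss is harmless because the anchored isoperimetric dimension is a supremum: this bound gives the anchored $d''$-dimensional inequality for every $d''<1/(\alpha-1)$. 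Integrability only makes the weighted choice of boundary edges cost $\exp(O(m\log k))$, so it is not where long-range edges hurt. Removing the logarithm, to get the inequality at $d'=1/(\alpha-1)$ itself, is what the Pete--Hutchcroft-type argument is for.
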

 
\thref{cor:anchored_isop_dim} follows from \thref{thm:cluster_size_decay} and \cite[Theorem 1.5]{alonso_supercritical_2026}, which in turn extends arguments of Pete and Hutchcroft \cite{pete_note_2008,hutchcroft_transience_2023} to \LRP{}.
Whereas in \BNNP{} the anchored isoperimetric dimension is at most the dimension of the underlying graph, this is not the case for \LRP{}.
With the results of this paper, we prove a matching \textit{upper} bound on the anchored isoperimetric dimension, which is new for \LRP{}.

\begin{theorem}
    \thlabel{thm:anch_dim_upper}
    Let $G$ be a transitive graph of polynomial growth with $d \geq 1$, and suppose that $J: V \times V \to \R_+$ is a transitive kernel with $J(x,y) = \Theta(d_G(x,y)^{-d\alpha})$ with $\alpha \in (1,2)$.
    Let $\beta>\beta_c$.
    Then the infinite cluster $K_\infty$ has anchored isoperimetric dimension at most $\max(1/(\alpha-1), d)$ almost surely. 
\end{theorem}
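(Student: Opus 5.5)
To prove that $\cluster_\infty$ has anchored isoperimetric dimension at most $D:=\max(1/(\alpha-1),d)$ almost surely, it suffices to show the following. For every $D'>D$ and every $c>0$, almost surely there are arbitrarily large finite connected sets $W\subset \cluster_\infty$ containing a fixed vertex $v\in\cluster_\infty$ with $\#\partial_E W < c(\#W)^{(D'-1)/D'}$. Equivalently, we exhibit connected sets $W\ni v$ with $\#W\to\infty$ and
\[
\#\partial_E W \le C(\#W)^{(D-1)/D}(\log \#W)^{C}.
\]
A polylogarithmic loss is harmless because $D'>D$ is strict. The natural candidates are $W_r:=$ the connected component of $v$ in $\cluster_\infty\cap B(v,r)$ (using only edges inside the ball), or the union of the giant components produced in $B(r)$. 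By the local existence-and-uniqueness of the giant (the main technical input of the paper), with high probability $W_r$ contains a positive fraction of $B(r)$. So $\#W_r\asymp r^d$ with probability tending to one, and by a Borel–Cantelli argument along $r=2^k$ this holds almost surely along a subsequence, once the giant estimates have summable error.

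The edge boundary of $W_r$ in $\cluster_\infty$ consists of two kinds of edges: open edges from $W_r$ to vertices outside $B(r)$, and open edges inside $B(r)$ to vertices of $\cluster_\infty\cap B(r)$ that are not in $W_r$. I bound the expected number of the first kind by
\[
\sum_{x\in B(r)}\sum_{y\notin B(r)}\beta J(x,y)\asymp \sum_{x\in B(r)} (r-|x|+1)^{d-d\alpha}.
\]
This uses the polynomial growth \eqref{eq:polynomial_volume_growth}, the upper bound in \eqref{eq:polynomial_kernel} (this is where $J=\Theta$ is needed), and the fact that spheres have size $O(s^{d-1})$. The sum is of order $r^{d-1}\cdot r^{d(2-\alpha)-(d-1)}$ when $d(2-\alpha)>1$, i.e.\ $r^{d(2-\alpha)}$, and of order $r^{d-1}$ (with a $\log r$ at $\alpha=1+1/d$) otherwise. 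In terms of $n=\#W_r\asymp r^d$, this is $n^{\max(2-\alpha,(d-1)/d)}$ up to logs, exactly the exponent of \thref{thm:cluster_size_lower_bound}. Note that $\max(2-\alpha,(d-1)/d)=(D-1)/D$ with $D=\max(1/(\alpha-1),d)$, since $2-\alpha=1-(\alpha-1)$. Markov's inequality plus Borel–Cantelli along dyadic $r$ (allowing an extra $r^{\epsilon}$ factor) controls this term almost surely.

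The second kind of boundary edge is the main obstacle: other pieces of $\cluster_\infty\cap B(r)$ that are disconnected from $W_r$ inside the ball but attached to it by open edges. To avoid this, I redefine $W_r$ as the component of $v$ in $\cluster_\infty$ restricted to $B(r)$ with all internal open edges allowed, i.e.\ the cluster of $v$ in the induced subgraph on $\cluster_\infty\cap B(r)$. Then every boundary edge of $W_r$ in $\cluster_\infty$ leaves $B(r)$, because any open edge from $W_r$ to a vertex of $\cluster_\infty\cap B(r)$ would put that vertex in $W_r$. Hence only the first estimate is needed, and the remaining task is the lower bound $\#W_r\gtrsim r^d$ together with $v\in W_r$. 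For this I use the local uniqueness of the giant: with high probability there is a single giant cluster of the restriction to $B(r)$ containing a positive fraction of vertices, and it lies in $\cluster_\infty$ and meets a neighbourhood of $v$. I expect this step to require quoting the quantitative version of the local giant statement with summable error probabilities in $r$.
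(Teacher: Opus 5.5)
Your architecture is the paper's. The set you end up with, the cluster of $v$ in the open subgraph restricted to $B(r)$, is exactly the paper's $\giantone{n}$ on the event $o\in\giantone{n}$. As in the paper, its boundary edges in $\kinfty$ all leave the ball, their expected number is at most $\beta J(B(n),B(n)^c)$, linear size comes from \thref{prop:biskup}, and Borel--Cantelli upgrades everything to almost-sure statements along one radius per dyadic block.

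The genuine gap is your estimate of $J(B(r),B(r)^c)$, which rests on ``spheres have size $O(s^{d-1})$''. For transitive graphs of polynomial growth this is open, as the paper stresses. Only the lower bound $\#S(s)\ge c\,s^{d-1}$ is available, and the known upper bounds carry error terms that would enter the exponent. The bulk part $J(B(r/2),B(r)^c)\lesssim r^{d(2-\alpha)}$ needs only ball volumes. The boundary layer $\sum_{r/2\le s\le r}\#S(s)(r-s+1)^{d(1-\alpha)}$, however, is exactly where sphere sizes enter, and any weaker upper bound gives a larger exponent and hence a weaker dimension bound. So at your radii $r=2^k$ the estimate is unjustified.

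The missing idea is that you only need \emph{one} good radius per dyadic block. Average the boundary-layer sum over $r\in[2^{k-1},2^k]$, exchange the order of summation, and use that disjoint spheres have total volume at most $\#B(2^k)$. The average is then at most a constant times $2^{k(d-1)}\int_1^{2^k}(t+1)^{d(1-\alpha)}\dd t$. By Markov's inequality over radii, a positive proportion of $r$ in each block satisfy $J(B(r),B(r)^c)\le C\, r^{\max(d(2-\alpha),d-1)}\log(r)^{\charf(\alpha=1+1/d)}$. This is \thref{lem:averaged_ball_connection} (via \thref{lem:sphere_calculus}), and the paper runs the rest of the argument along such radii. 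Your Markov-plus-$r^{\epsilon}$ step works there in place of the paper's Chernoff bound. Incidentally, the threshold between your two regimes is $d(2-\alpha)=d-1$, i.e.\ $\alpha=1+1/d$, not $d(2-\alpha)=1$; your final exponent is nonetheless correct.

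The anchoring step is also underspecified. It is not enough that the giant ``meets a neighbourhood of $v$'', because $W_r$ is the cluster of $v$ itself. If $v\notin\giantone{r}$ then $\#W_r\le\#\gianttwo{r}$, which by \thref{prop:second_largest} is typically only polylogarithmic in $r$. You need $v\in\giantone{r}$ along the chosen radii, almost surely on $\{v\in\kinfty\}$. Two routes give this:
\begin{enumerate}
\item \thref{prop:o_in_kinfty} gives $\P_\beta(o\notin\giantone{r},\,o\in\kinfty)\le(\#B(r))^{-c}$, which is summable over dyadic blocks. The paper derives this bound directly from \thref{prop:second_largest} and the truncated one-arm estimate of the companion paper.
\item \thref{leaving_giant} with Borel--Cantelli shows that the giants are eventually nested, of linear size, and contained in $\kinfty$. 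Once $B(r)$ contains a finite open path from $v$ to some early giant $\giantone{r_0}$, this forces $v\in\giantone{r}$.
\end{enumerate}
With these two repairs your proof goes through.
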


Our next result is that several percolation characters, such as the percolation probability $\theta(\beta) = \P_{\beta}(o \leftrightarrow \infty)$, the truncated susceptibility $\chi^f = \E_{\beta} [ \# K \charf(\# K < \infty)]$, the free energy $\kappa(\beta) = \E_{\beta} [ (\# K)^{-1}]$, or the truncated two point function $\tau_{\beta}^f(u,v) = \P_{\beta}(u \leftrightarrow v, \# K < \infty)$, are all smooth functions of $\beta$ on the interval $(\beta_c,\infty)$.
This is new for \LRP{}.

\begin{theorem}
\thlabel{cor:characters_smoothness}
    Let $G$ be a transitive graph of polynomial growth with $d \geq 1$, and suppose that $J : V \times V \to \R_+$ is a transitive kernel satisfying $J(x,y) = \Omega (d_G(x,y)^{-d \alpha})$ with $\alpha \in (1,2)$.
    Let $F$ be a function of $K$ so that $|F(K)|$ can be bounded from above by a polynomial of $\# \cluster$.
    Then $\E_{\beta} \left[ F(\cluster) \charf(\# \cluster < \infty) \right]$ is a smooth function of $\beta$ for $\beta \in (\beta_c,\infty)$.
\end{theorem}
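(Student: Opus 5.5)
We will show that every derivative $\frac{\dd^m}{\dd\beta^m}\E_{\beta}[F(\cluster)\charf(\#\cluster<\infty)]$ exists and is continuous on $(\beta_c,\infty)$. The engine is the stretched-exponential tail from \thref{thm:cluster_size_decay}: since $\alpha\in(1,2)$ we have $\min(2-\alpha,1)=2-\alpha>0$, so $\P_\beta(k\le\#\cluster<\infty)\le\exp(-\beta k^{2-\alpha}/A)$. As the constant $A$ can be taken uniform for $\beta$ in a compact subinterval $[\beta_1,\beta_2]\subset(\beta_c,\infty)$ (monotone coupling gives $\P_\beta(k\le\#K<\infty)$ controlled by the bound at nearby parameters; I will check uniformity by rerunning the proof with $\beta_1$ in place of $\beta$), this tail beats any polynomial, uniformly on compacts.

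\textbf{Step 1: finite-volume expansion.} Decompose
\[
\E_\beta\bigl[F(\cluster)\charf(\#\cluster<\infty)\bigr]=\sum_{H}F(H)\,\P_\beta(\cluster=H),
\]
the sum running over finite connected vertex sets $H\ni o$. For a fixed $H$ with $\#H=k$ we have
\[
\P_\beta(\cluster=H)=\P_\beta(H\text{ spans a connected open graph})\cdot\exp\Bigl(-\beta\sum_{x\in H,\,y\notin H}J(x,y)\Bigr).
\]
The first factor is a polynomial in the variables $e^{-\beta J(x,y)}$ over the finitely many pairs in $H$. The boundary sum $J(H,H^c)\le kM$ is finite because $\alpha>1$ makes $J$ summable. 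Hence each term is smooth, and the $m$-th derivative is bounded by $C_m\,(k^2\|J\|_\infty+kM)^m$ times a quantity comparable to $\P_{\beta'}(\cluster=H)$ for a slightly smaller $\beta'$.

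\textbf{Step 2: derivative bound (main obstacle).} The key is to make this last comparison rigorous. The open-edge factor $1-e^{-\beta J}$ changes by a bounded ratio when $\beta$ is varied within a compact set. For the closed edges, differentiating $e^{-\beta J}$ produces a factor $J$. Summing over closed edges, Russo's formula expresses the $m$-th derivative as an expectation of sums over $m$-tuples of edges of products of $J$-weights and indicators. Each closed edge must touch $H$, and the total closed $J$-weight touching $H$ is at most $kM$; each open edge lies inside $H$, and the number of these is at most the number of pairs in $H$, which is at most $k^2$. This yields
\[
\Bigl|\tfrac{\dd^m}{\dd\beta^m}\E_\beta[\cdot]\Bigr|\le C_m\sum_k \mathrm{poly}(k)\,\P_\beta(\#\cluster=k),
\]
using the polynomial bound on $|F|$. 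This is finite, uniformly on compacts, by Step 0. The delicate point is that the $k^2$ pairs inside $H$ may have tiny $J$. I handle this using $\frac{J\,e^{-\beta J}}{1-e^{-\beta J}}\le 1/\beta$, so each open edge contributes a derivative factor of at most $1/\beta_1$, which avoids any dependence on the size of $J$.

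\textbf{Step 3: conclude.} Truncate the sum to $\#H\le n$. Each truncation is a finite sum of smooth functions. By Step 2 combined with the tail bound, all derivatives of the truncated sums converge uniformly on $[\beta_1,\beta_2]$, so the limit is $C^\infty$ there. Since $\beta_1<\beta_2$ were arbitrary in $(\beta_c,\infty)$, the expectation is smooth on the whole interval.
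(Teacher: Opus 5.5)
Your proposal is essentially the paper's argument. Both expand over the possible finite clusters $H\ni o$, bound the $\ell$-th derivative of each term by a polynomial in $\#H$ times $\P_\beta(\cluster=H)$ itself, and sum using the stretched-exponential cluster-size decay. The paper indexes by subgraphs $H\in\HH_n^m$ and differentiates $e^{-\beta J(\partial_E H)}\prod_i(1-e^{-\beta J(e_i)})$ with the multinomial formula. It handles open edges with your inequality, in the form $J^a e^{-\beta J}/(1-e^{-\beta J})\le a!\,\beta^{-a}$, and the boundary with $J(\partial_E H)\le n\norm{J}$. This gives $\lvert\frac{d^\ell}{d\beta^\ell}\P_\beta(\cluster=H)\rvert\le(2n^3(\norm{J}+\max(1,\beta^{-1})))^\ell\,\P_\beta(\cluster=H)$. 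Indexing by vertex sets instead is cosmetic. Your insistence on uniformity over a compact $[\beta_1,\beta_2]$ is something the paper's proof passes over: it only checks absolute convergence of the differentiated series at each fixed $\beta$, which alone does not justify termwise differentiation. Your Step 3 structure is the right one.

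Several statements need correcting, though none changes the strategy.

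(i) The event $\{k\le\#\cluster<\infty\}$ is not monotone, so monotone coupling does not carry the tail bound between parameters. Uniformity of $A$ must come from rerunning the giant existence/uniqueness arguments. This works because the events used there are monotone: a small local giant, and non-adjacency of two revealed sets, which has probability $e^{-\beta J(S_1,S_2)}\le e^{-\beta_1 J(S_1,S_2)}$. The arm term only needs $\beta\le\beta_2$.

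(ii) The comparison with $\P_{\beta'}(\cluster=H)$ in Step 1 is unnecessary, and it cannot work. Uniformly in $H$ it loses a factor exponential in $\#H$: the open-edge ratios compound when $\beta'<\beta$, and the boundary contributes $e^{(\beta'-\beta)J(H,H^c)}$ when $\beta'>\beta$. A tail $e^{-ck^{2-\alpha}}$ with $2-\alpha<1$ cannot absorb this. The same-$\beta$ bound you actually derive in Step 2 is the one to use.

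(iii) Truncating to $\#H\le n$ does not give a finite sum, because long edges allow infinitely many $H\ni o$ of each size. You need an extra M-test. For example, $\sup_{\beta\le\beta_2}\P_\beta(\cluster=H)\le\P_{\beta_2}(H\text{ spans an open connected graph})$, and a union bound over spanning trees gives $\sum_{H\ni o,\,\#H=n}\P_{\beta_2}(\cdot)\le n^{n-2}(\beta_2\norm{J})^{n-1}<\infty$.

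(iv) The hypothesis is only a lower bound on $J$, so $\alpha>1$ does not give $\norm{J}<\infty$. The paper also uses this tacitly. If $\norm{J}=\infty$, every vertex has infinite degree almost surely, so the expectation is identically $0$ and the claim is trivial.
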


Most of the above results crucially rely on the local existence-and-uniqueness of the linear-sized (\textbf{giant}) component, which is one of the main contribution of this paper.
We write $\giantone{r}$ and $\gianttwo{r}$ for the largest and second-largest clusters in $B(r)$, respectively. 

\begin{theorem}[Existence of the giant]
\thlabel{prop:biskup}
    Let $G$ be a transitive graph of polynomial growth with $d \geq 1$, and suppose that $J : V \times V \to \R_+$ is a transitive kernel satisfying $J(x,y) = \Omega(d_G(x,y)^{-d \alpha})$ with $\alpha \in (0,2)$. 
    Let $\beta > \beta_c$.
    Then there exist $\nu = \nu(G, J) > 0$ and $A = A(G,J) > 0$ such that for all $r \in \N$
    \begin{equation}
    \label{eq:linear_giant_11}
        \P_{\beta}
        \left(
            \# \giantone{r}
            \leq 
            \nu
            \# B(r)
        \right) 
        \leq 
        \exp(
            -
           \beta 
           (\# B(r))^{\min(2 - \alpha,1)}
           /
           A
        ).
    \end{equation}
\end{theorem}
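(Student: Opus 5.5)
We would prove \thref{prop:biskup} by a multi-scale renormalisation, in the spirit of Biskup's argument on $\Z^d$, carried out with the tiles described in the abstract. Since $G$ has no lattice structure, we replace dyadic boxes by iteratively merged Voronoi tiles built on scale-invariant nets. At each level $j$ we fix a maximal $L^j$-separated net in $V$. The level-$j$ Voronoi cells then partition $V$ into tiles of diameter $\asymp L^j$ and volume $\asymp L^{jd}$, by \eqref{eq:polynomial_volume_growth}. We coarsen by merging level-$(j-1)$ cells according to their nearest level-$j$ site. This makes the hierarchy nested, and each level-$j$ tile is a union of $\asymp L^d$ level-$(j-1)$ tiles. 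We then restrict to the tiles meeting $B(r)$, so that $B(r)$ is covered, up to a boundary layer of negligible volume, by $O(1)$ top-level tiles of scale $\asymp r$.

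The base step uses $\beta>\beta_c$. By definition of $\beta_c$ we have $\theta(\beta)>0$. A standard argument, using sprinkling $\beta'\in(\beta_c,\beta)$ together with ergodicity and uniqueness of the infinite cluster on amenable transitive graphs, gives a seed scale $L_0$ with the following property. With probability at least $1-\delta$, a tile of scale $L_0$ contains a cluster of density at least $\nu_0$ inside the tile. Call such a tile \emph{good}.

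The induction is the heart of the proof. Call a level-$j$ tile good if at least a $(1-\epsilon_j)$-fraction of its subtiles are good and all their local giants are joined, inside the tile, into one cluster of density at least $\nu_j$, where $\nu_j\downarrow\nu>0$ and $\sum\epsilon_j<\infty$. Two good subtiles $T,T'$ of volume $m=L^{(j-1)d}$ have giants of size $\ge\nu m$ at distance $\lesssim L^{j}$. The probability that they are not directly connected is therefore
\[
\exp\bigl(-\beta\,c\,\nu^2 m^2 L^{-jd\alpha}\bigr)=\exp\bigl(-c\beta\nu^2 L^{-d\alpha}\,m^{2-\alpha}\bigr).
\]
This is where $\alpha<2$ enters: the exponent grows with the scale. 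Thus, given the subtile configuration, all good subtiles merge except on an event of probability $\exp(-c\beta m^{2-\alpha})$. For $\alpha\le 1$ the number of subtiles bounds the exponent instead, which explains the $\min(2-\alpha,1)$. The bad subtiles must also be controlled. The events for disjoint subtiles are independent, since we use only edges inside each subtile. A binomial large deviation then shows that the probability of more than $\epsilon_j L^d$ bad subtiles is at most $\binom{CL^d}{\epsilon_j L^d}p_{j-1}^{\epsilon_j L^d}$. This gives the recursion
\[
p_j\le \exp(-c\beta L^{jd\min(2-\alpha,1)})+\bigl(C p_{j-1}^{\epsilon_j}\bigr)^{L^d}.
\]
By induction this yields $p_j\le\exp(-\beta (L^{jd})^{\min(2-\alpha,1)}/A)$. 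The constant $A$ must survive the recursion, which requires choosing $L$ large and $\epsilon_j$ decaying slowly.

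The main obstacle is closing this recursion with the correct exponent rather than a lossy one. In the binomial term, the bad subtiles contribute only $p_{j-1}^{\epsilon_j L^d}$, whose exponent is $\epsilon_j L^d (L^{(j-1)d})^{2-\alpha}$. For $\alpha>1$ this is smaller than $(L^{jd})^{2-\alpha}$ unless $\epsilon_j L^{d(\alpha-1)}\gtrsim 1$. To handle this, we plan to let a bounded number of bad subtiles be tolerated, with a density loss $\epsilon_j\asymp L^{-d(2-\alpha)}$ that is still summable. We would also exploit the long edges: the event that a tile fails must then be attributed either to the direct-connection failure term above, or to so many bad subtiles that the product bound wins. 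A second, more technical issue is the irregular geometry of Voronoi tiles. The merged tiles must have uniformly comparable volumes and diameters at every level, so that the bounds above hold with constants independent of $j$; this is where the scale-invariant nets are used. Finally, $B(r)$ for general $r$ is handled by taking the level $j$ with $L^{j}\asymp r$ and applying a union bound over the $O(1)$ tiles covering $B(r)$, merging their giants by the same two-giant estimate.
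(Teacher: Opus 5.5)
There is a genuine gap, and it sits where you located your own ``main obstacle'': the fix you propose does not close the recursion. With a fixed ratio $L$, a rule that tolerates $\epsilon_j L^d$ bad subtiles can certify at best $\log(1/p_j)\approx \epsilon_j L^{d}\log(1/p_{j-1})$. Write $\gamma=\min(2-\alpha,1)$. Then the bound $p_{j-1}\le \exp(-\beta L^{(j-1)d\gamma}/A)$ propagates only if $\epsilon_j L^{d(1-\gamma)}>1$.
\begin{itemize}
\item For $\alpha\le 1$ (so $\gamma=1$) this is impossible. The direct-connection term is therefore not what produces the $\min(2-\alpha,1)$.
\item For $\alpha>1$ it forces $\epsilon_j\gtrsim L^{-d(\alpha-1)}$, which is a constant. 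Then $\sum_j\epsilon_j=\infty$ and $\nu_j\to 0$. Your choice $\epsilon_j\asymp L^{-d(2-\alpha)}$ is likewise constant in $j$ and not summable.
\item Once $\epsilon_j L^d<1$, no bad subtile is tolerated at all, so $p_j\ge p_{j-1}$ and nothing decays.
\end{itemize}
Letting the ratio $L_j$ grow repairs the binomial term but breaks the merging term. Giants of size $\nu M_{j-1}$ at distance $M_j^{1/d}$ fail to connect with probability $\exp(-c\nu^2 M_j^{2-\alpha}L_j^{-2d})$, or at best with $L_j^{-d}$ in place of $L_j^{-2d}$ via Erd\H{o}s--R\'enyi connectivity, so the exponent degrades. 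No recursion in which a good tile must merge \emph{all} its good subtiles gives the sharp exponent with a fixed $A$ and density bounded below.

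The paper accepts this loss and then removes it with one extra coarse-graining, not through the recursion.
\begin{itemize}
\item \thref{prop:biskup_with_error} runs essentially your recursion with super-exponential scales $m_i=r_{i-1}^{\xi}$. Then $\kappa_{i,j}\asymp m_i^d$ grows, and $\epsilon_i\kappa_{i,j}r_{i-1}^{d(2-\alpha-\delta)}\gg r_i^{d(2-\alpha-\delta)}$ holds with $\epsilon_i\asymp i^{-2}$ summable; this is where $\alpha+\delta>1$ is needed. The output is only the lossy bound $\exp(-c\#B(r)^{2-\alpha-\delta})$.
\item For the sharp bound, $B(r)$ is cut into tiles of radius $R$ with $R^d\asymp M r^{d\max(\alpha-1,0)}$. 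Each tile is good only with \emph{constant} probability $1-\xi/2$.
\item There are $\kappa\asymp\#B(r)^{\min(2-\alpha,1)}/M$ tiles, so Chernoff controls the number of bad tiles with error $e^{-c\kappa}$.
\item $R$ is chosen so that two tile giants connect with probability of order $M/\kappa$. Hence the good clusters dominate a supercritical $G(N,cM/N)$. By O'Connell's large-deviation bound, its largest component has at least $(1-\xi)N$ vertices except with probability $e^{-cN}$.
\end{itemize}
So you are missing two ideas. First, ask only for a giant of the tile graph rather than full connectivity, since full connectivity is killed by isolated vertices. Second, work at the scale where the inter-tile mean degree is constant; there the number of tiles is exactly $\#B(r)^{\min(2-\alpha,1)}$.

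Separately, your seed step is not standard. Ergodicity and uniqueness only connect $K_\infty\cap B(L_0)$ inside a ball of uncontrolled radius. For $\alpha>1$, sprinkling cannot join the pieces unless they already have size $\gg L_0^{d(\alpha-1)}$. The paper needs a separate cell-based renormalisation for this (\thref{prop:almost_linear}), and even that yields only an almost-linear cluster.
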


\begin{theorem}[Uniqueness of the giant]
    \thlabel{prop:second_largest}
    Let $G$ be a transitive graph of polynomial growth with $d \geq 1$, and suppose that $J : V \times V \to \R_+$ is a transitive kernel satisfying $J(x,y) = \Omega(d_G(x,y)^{-d \alpha})$ with $\alpha \in (0,2)$. 
    Let $\beta > \beta_c$.
    Then there exists $A=A(G,J) > 0$ such that for all $r \in \N$ and $k \in \N$
    \begin{equation}\label{eq:second-largest-in-theorem}
        \P_{\beta}
        \left( 
            \# \gianttwo{r}
            > 
            k
        \right)
        \leq 
        \frac{\#B(r)}{k}
        \exp
        \left(
            -
            \beta 
            k^{\min(2-\alpha,1)}/A
        \right).
    \end{equation} 
    In particular, $ \# \gianttwo{r}=O_{\P}((\log r)^{\max(1, 1/(2-\alpha))})$.
\end{theorem}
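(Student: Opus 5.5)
We would derive \thref{prop:second_largest} from \thref{prop:biskup} together with a sprinkling argument. The mechanism is that any two large clusters inside $B(r)$ are very unlikely to avoid each other, because there are many long edges between them. Concretely, we split $\beta = \beta' + \delta$ with $\beta' \in (\beta_c,\beta)$. Independence of the edge Poisson clocks lets us realise $\P_\beta$ as the union of a $\beta'$-configuration and an independent $\delta$-sprinkling. The event $\{\#\gianttwo{r} > k\}$ means there are two distinct clusters of size $>k$ in $B(r)$ in the final configuration. We reduce this to a union bound over vertices $x \in B(r)$ whose cluster $K_r(x)$ in $B(r)$ satisfies $k < \#K_r(x)$ and $x$ is not in the largest cluster. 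Since there are at most $\#B(r)/k$ such clusters of size $>k$, we charge each one once. This is the source of the prefactor $\#B(r)/k$.

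The main step is a local estimate. We fix a connected set $W$ of size between $k$ and roughly $\nu\#B(r)$, which will play the role of a candidate non-giant cluster. We then bound the probability that $W$ is a cluster in $B(r)$ while a disjoint cluster of size at least $\max(k,\nu\#B(r))$ also exists, so that $W$ is not merged with the giant. For this we localise. Around $W$ we take a ball $B(x,s)$ with $\#B(s) \asymp k$, or a covering of $W$ by such balls. We apply \thref{prop:biskup} in that ball to obtain, with probability $1-\exp(-\beta k^{\min(2-\alpha,1)}/A)$, a local giant of linear size there. Given $W$ and a disjoint set $U$ of size $\gtrsim k$ within distance $O(s)$, the probability that no edge joins them is
\[
\exp\Big(-\beta\sum_{u\in W,v\in U}J(u,v)\Big)\le \exp\big(-c\beta\, k^2 s^{-d\alpha}\big)=\exp(-c\beta k^{2-\alpha}),
\]
using $J=\Omega(d^{-d\alpha})$ and $s^d\asymp k$. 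When $\alpha\le 1$ the exponent is at least linear in $k$, which explains the $\min(2-\alpha,1)$. So either the local giant is absent, which costs the bound from \thref{prop:biskup}, or $W$ must avoid it, which costs $\exp(-c\beta k^{2-\alpha})$. In the latter case $W$ coincides with the local giant, and we iterate the same argument to connect it to the local giants of neighbouring balls up to scale $r$. The costs across scales are summed as a geometric series dominated by the smallest scale $k$.

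The main obstacle is that $W$ is random and determined by the same edges. The ``no edge between $W$ and $U$'' estimate cannot be applied naively, since conditioning on $W$ being a cluster already reveals that those edges are closed. This is exactly what we want to bound, but $U$ must be found independently of the revealed edges. We would handle this with the sprinkling, using the $\beta'$-configuration to build both $W$-candidate clusters and local giants. The fresh $\delta$-edges between two $\beta'$-clusters of sizes $\ge k$ at distance $O(s)$ are then independent, so the bound becomes $\exp(-c\delta k^{2-\alpha})$, and all constants are absorbed into $A$. A second difficulty is that $W$ may be spread out rather than concentrated in one ball of volume $k$. We would first try a dyadic decomposition: some ball of radius $s'$ contains at least $\#W\cdot \#B(s')/\#B(r)$ points of $W$. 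At each scale we compare the mass of $W$ with the local giant, which still yields exponent $\gtrsim k^{2-\alpha}$ because $\alpha<2$ gives a favourable sum over scales. The final statement $\#\gianttwo{r}=O_\P((\log r)^{\max(1,1/(2-\alpha))})$ then follows by choosing $k=C(\log r)^{1/\min(2-\alpha,1)}$, which makes the right-hand side $o(1)$.
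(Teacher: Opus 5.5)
\paragraph{Where the proposal has a gap.} Your ingredients are the right ones: local giants at scale $k$ from \thref{prop:biskup}, bulk-to-bulk bounds, and a charge of $\#B(r)/k$. The gap is in the step meant to make the edges between $W$ and the local giant fresh.

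Your sprinkling only treats candidates $W$ that are already $\beta'$-clusters of size at least $k$. A cluster of size $>k$ in the final $\beta$-configuration that is not the giant can be a union of many small $\beta'$-clusters glued together by $\delta$-edges. Your argument gives no control over such clusters, so the event $\{\#\gianttwo{r}>k\}$ is not covered. A conditional union bound over the at most $\#B(r)/k$ large clusters is legitimate only if those clusters are determined by revealed information that leaves their edges to nearby giants unrevealed. That is exactly what is missing. To keep sprinkling you would still have to reveal every $\delta$-edge not touching a local giant before those that do, at which point sprinkling is superfluous.

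\paragraph{How the paper gets fresh edges.} The paper orders the revealment and uses no sprinkling:
\begin{enumerate}
\item Tile $B(r)$ into Voronoi tiles of volume $\asymp k$ and diameter $O(k^{1/d})$, and reveal only intra-tile edges. Outside probability $\frac{\#B(r)}{k}e^{-c\beta k^{\min(2-\alpha,1)}}$, every tile's largest cluster has size at least $\rho k$, by \thref{prop:biskup}.
\item Join these tile giants along a spanning tree of adjacent tiles. Each tree edge fails with probability at most $e^{-c\beta k^{2-\alpha}}$ by a bulk-to-bulk bound.
\item Reveal all edges between non-giant vertices.
\end{enumerate}
Any cluster of size $>k$ other than the merged giant $\CC$ is then one of the resulting non-giant clusters, of which there are at most $\#B(r)/k$. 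For each of its vertices, the edges to the giant of a \emph{neighbouring} tile are still unrevealed.

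\paragraph{Your spread-out fix also fails.} Suppose $W$ consists of $k$ points spread evenly over $B(r)$. A ball of volume $V$ contains about $kV/\#B(r)$ of them. The no-edge exponent against a local giant of size $\asymp V$ is then $\asymp kV^{2-\alpha}/\#B(r)\le k(\#B(r))^{1-\alpha}$. This tends to $0$ for $\alpha>1$, and summing over dyadic scales does not change that.

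The per-vertex structure above handles this case. Every vertex $v$ of the candidate cluster lies within distance $O(k^{1/d})$ of a neighbouring tile giant of size at least $\rho k$, with the connecting edges unrevealed. So $v$ fails to attach with probability at most $e^{-c\beta k^{1-\alpha}}$. The at least $k$ vertices together give $e^{-c\beta k^{2-\alpha}}$, whatever the geometry of the cluster. No multiscale connection of local giants up to scale $r$ is needed. Requiring all $\asymp\#B(r)/k$ tile giants at the single scale $k$ is exactly where the prefactor $\#B(r)/k$ comes from.
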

Note that \thref{prop:biskup,prop:second_largest} allow for $\alpha \in (0,1]$; we use this to prove the non-integrable part of \thref{thm:lrp_locality}.
One corollary to \thref{prop:biskup,prop:second_largest} is the following result which may be understood as saying that the giant is a local witness to the percolation density.
This is one of the two main inputs to prove \thref{thm:theta_locality}.

\begin{corollary}
    \thlabel{prop:o_in_kinfty}
    Let $G$ be a transitive graph of polynomial growth with $d \geq 1$ and suppose that $J : V \times V \to \R_+$ is a transitive kernel satisfying $J(x,y) = \Omega(d_G(x,y)^{-d \alpha})$ with $\alpha \in (0,2)$.
    Let $\beta > \beta_c$.
    Then there exists $c = c(G,J) > 0$ such that 
    \begin{equation}
        \P_{\beta}
        \left(
            o 
            \not\in 
            \giantone{r},
            o 
            \in 
            \kinfty
        \right)
        \leq 
        \frac{1}{(\#B(r))^{c}}
    \end{equation}
    and in particular, for all $r \in \N$
    \begin{equation}
        \P_{\beta}
        \left(
            o 
            \in 
            \giantone{r}
        \right)
        \geq 
        \theta(\beta) 
        -
        \frac{1}{(\#B(r))^{c}}.
    \end{equation}
\end{corollary}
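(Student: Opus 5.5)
We prove the first bound; the second follows from it, since $\P_\beta(o \in \giantone{r}) \ge \P_\beta(o \in \kinfty) - \P_\beta(o\notin \giantone{r}, o \in \kinfty)$.

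Fix a scale $s = s(r)$ with $s = \lfloor (\log \#B(r))^{\gamma}\rfloor$ for a large constant $\gamma$, or a small power of $r$ if that proves more convenient. Consider the event $\{o \in \kinfty, o \notin \giantone{r}\}$. We may assume the event in \thref{prop:biskup} fails, so $\#\giantone{r} > \nu \#B(r)$; the complementary probability is at most $\exp(-\beta(\#B(r))^{\min(2-\alpha,1)}/A)$, which is far smaller than any polynomial. We split according to the cluster $C$ of $o$ inside $B(r)$, meaning the cluster of $o$ in the graph restricted to $B(r)$.

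Case 1: $\#C > k$ with $k = (\log \#B(r))^{\max(1,1/(2-\alpha))+1}$. Since $o \notin \giantone{r}$ and $\#\giantone{r} > \nu\#B(r) > k$, the cluster $C$ is a second cluster of size greater than $k$. Hence $\#\gianttwo{r} > k$, and by \thref{prop:second_largest} this has probability at most $\frac{\#B(r)}{k}\exp(-\beta k^{\min(2-\alpha,1)}/A)$. With our choice of $k$ this is superpolynomially small in $\#B(r)$.

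Case 2: $\#C \le k$ while $o \in \kinfty$. This case is the main obstacle, since the restricted cluster of $o$ is small but $o$ escapes $B(r)$ through long edges. First we pass to a smaller ball. Let $C' $ be the cluster of $o$ in $B(r/2)$. If $\#C' \le k$ and $o\in\kinfty$, then some vertex of $C'$ has an edge leaving $B(r/2)$, or else the cluster of $o$ in $B(r/2)$ is finite and equals all of $K$; the latter contradicts $o\in\kinfty$. We need to bound the event that a small set containing $o$ has an edge to $B(r/2)^c$. For fixed $C'$ the expected number of such edges is at most $\beta k \sum_{d(x,y)\ge r/2 - d(o,x)} J(x,y)$. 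Here $d(o,x)\le k$ because $C'$ is connected with $\le k$ vertices through graph edges; with long edges this is no longer true, so we must handle it.

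To fix this, we enumerate lattice animals only implicitly. We bound the probability that $o$ has a path of at most $k$ open edges reaching distance $r/2$. The expected number of such paths is at most $\sum_{m\le k}(\beta M)^m \cdot \sup$ (tail), which is too crude. Instead we use a stretching argument. A path of $m \le k$ edges covering distance $r/2$ must contain an edge of length at least $r/(2k)$. By a union bound over the position of that edge in the cluster, using the cluster-size decay bound of \thref{thm:cluster_size_decay} to control $\#K$ on the event that the cluster is finite, the probability is at most $\E[\#C']\cdot\beta\sum_{d\ge r/(2k)} J \lesssim k\,\beta (r/k)^{d(1-\alpha)}$ when $\alpha>1$. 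Since $k$ is polylogarithmic, this is polynomially small in $\#B(r)$. For $\alpha \le 1$ the kernel is not summable, so this argument fails. There we would instead note that every vertex of the giant $\giantone{r/2}$, which has size at least $\nu\#B(r/2)$ with overwhelming probability, is joined to $o$'s small cluster by a direct edge with total intensity at least $\beta \nu \#B(r)\cdot r^{-d\alpha}c_J \ge c r^{d(1-\alpha)}\to\infty$. The probability that no such edge is present is then $\exp(-c r^{d(1-\alpha)})$, and this requires only that $C$ is nonempty, which is automatic.

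The same connection argument in fact handles Case 2 for $\alpha\in(1,2)$ as well, and it is what we would try first. Conditionally on the configuration inside $B(r/2)$, the edges between $C'$ and the giant of the annulus are independent. If $o$ reaches distance $r/2$, the arm of $o$ lands at some vertex $y$ outside $B(r/2)$. That $y$ must then avoid the giant of a ball of radius $r/4$ around $y$, and by \thref{prop:second_largest} and exploration this costs the decay from Case 1 again. Combining the cases gives a total bound of $(\#B(r))^{-c}$.
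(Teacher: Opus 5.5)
For $\alpha\in(1,2)$ your argument is the paper's. You split on whether the cluster $C$ of $o$ in $B(r)$ has more than a polylogarithmic number $k$ of vertices. If it does, then $\#\gianttwo{r}\ge\#C>k$ and \thref{prop:second_largest} applies. If not, the path from $o$ out of the ball has at most $k$ edges, so one of them has length at least $r/k$. The paper bounds this by $k\sum_{L\ge r/k}\E_\beta[\deg(o,L)]\lesssim k(r/k)^{d(1-\alpha)}$ via \thref{lem:integrability}, which is exactly your ``stretching'' bound.

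Two corrections to that part. First, do not invoke \thref{thm:cluster_size_decay}. The paper deduces that theorem from this very corollary, so the citation is circular. It is also unnecessary: the union bound over the position of the long edge only needs $\E_\beta[\#C\,\charf(\#C\le k)]\le k$, which the case restriction gives for free, rather than the untruncated $\E_\beta[\#C']$. The detour through $B(r/2)$ buys nothing either. Second, discard your last paragraph. For $\alpha>1$ the intensity you can guarantee from a single vertex to a giant at distance of order $r$ is only of order $r^{d(1-\alpha)}\to0$, so the ``same connection argument'' gives nothing. Moreover, the landing vertex $y$ lies in a cluster with at most $k$ vertices, which is exactly the regime \thref{prop:second_largest} says nothing about, so that recursion does not close.

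The genuine gap is the range $\alpha\le 1$, where $\sum_{L}\E_\beta[\deg(o,L)]=\infty$. The paper's own bound for this term is the same divergent sum, so it does not cover this range either. Your replacement argument has two problems.

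First, the conditioning is wrong. $\giantone{r/2}$ is a function of all edges in $B(r/2)$, including those at $o$. On $\{o\notin\giantone{r/2}\}$ every edge from $o$ to $\giantone{r/2}$ is closed by definition, so these edges are not fresh randomness. You can decouple by using instead the largest cluster $D$ of a ball $B(x,r/4)$ with $d_G(o,x)=\lceil r/2\rceil$. This cluster is independent of the edges at $o$, and it is large with overwhelming probability by \thref{prop:biskup}. An open edge from $o$ to $D$ forces $\#C\ge\#D>k$, contradicting Case 2. For $\alpha<1$ this gives $\exp(-cr^{d(1-\alpha)})$, as you wanted.

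Second, at $\alpha=1$ the intensity $\beta c_J\nu\#B(r/4)\,r^{-d}$ is of order one and does not tend to infinity. So a single ball only gives a constant failure probability, not $(\#B(r))^{-c}$. You need many independent chances. Take disjoint balls $B(x_j,2^{j-2})$ with $d_G(o,x_j)=2^j$, over the order $\log r$ dyadic scales satisfying $\nu\#B(2^{j-2})>k$ and $2^j\le r/2$. Outside a superpolynomially small event (a union bound over $j$), all their largest clusters are large. Given that, $o$ is joined to each of them with probability at least some $p>0$, independently across $j$, since the relevant edge sets are disjoint. Hence Case 2 has probability at most $(1-p)^{c\log r}=r^{-c'}$, which is the polynomial bound the statement claims.
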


As a second corollary to \thref{prop:biskup,prop:second_largest}, we verify the conditions of \cite[Theorem 2.2]{van_der_hofstad_giant_2023} to obtain the law of large numbers for \LRP{}.

\begin{corollary}[Law of large numbers]
    \thlabel{thm:lln}
    Let $G$ be a transitive graph of polynomial growth with $d \geq 1$, and suppose that $J : V \times V \to \R_+$ is a transitive kernel satisfying $J(x,y) = \Omega(d_G(x,y)^{-d \alpha})$ with $\alpha \in (0,2)$. 
    Let $\beta > \beta_c$.
    Then as $r\to \infty$
    \begin{equation}
        \frac{\# \giantone{r}}{\# B(r)}
        \overset{\P}{\longrightarrow}
        \theta(\beta, G, J)
        \quad
        \text{ and }
        \quad
        \frac{\# \gianttwo{r}}{\# B(r)}
        \overset{\P}{\longrightarrow}
        0.
    \end{equation}
\end{corollary}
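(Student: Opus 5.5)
We verify the hypotheses of \cite[Theorem 2.2]{van_der_hofstad_giant_2023}, which (in the form we use) says the following. Suppose the largest cluster in a growing sequence of finite boxes contains, with high probability, a positive fraction of the vertices. Suppose also that the number of vertices in clusters of size at least $k$ concentrates around its mean. Then $\#\giantone{r}/\#B(r)$ converges in probability to $\lim_{k}\P_\beta(\# K \ge k)=\theta(\beta)$, and the second largest cluster is sublinear. Rather than rely on the black box alone, we outline the direct argument, which uses only \thref{prop:biskup,prop:second_largest,prop:o_in_kinfty}.

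\textbf{Second largest cluster.} Taking $k=\epsilon\#B(r)$ in \thref{prop:second_largest} gives
\[
\P_\beta(\#\gianttwo{r}>\epsilon\#B(r))\le \epsilon^{-1}\exp\bigl(-\beta(\epsilon\#B(r))^{\min(2-\alpha,1)}/A\bigr)\to 0 .
\]
Hence $\#\gianttwo{r}/\#B(r)\to 0$ in probability.

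\textbf{Lower bound for the giant.} Write $\#\giantone{r}=\sum_{x\in B(r)}\charf(x\in\giantone{r})$. For $x$ with $B(x,\delta r)\subset B(r)$, the largest cluster of $B(x,\delta r)$ is the unique giant there by \thref{prop:biskup,prop:second_largest}. Outside a small-probability event it lies inside $\giantone{r}$: a linear-sized cluster of the sub-box cannot sit in a cluster of $B(r)$ of size at most $\#\gianttwo{r}$, which is $o(\#B(r))$. Applying \thref{prop:o_in_kinfty} in $B(x,\delta r)$ and using transitivity, we get
\[
\P_\beta(x\in\giantone{r})\ge\theta-(\#B(\delta r))^{-c}-o(1).
\]
The vertices within distance $\delta r$ of the boundary number at most $O(\delta)\#B(r)$ by \eqref{eq:polynomial_volume_growth}. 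Summing, and then letting $\delta\to0$, gives
\[
\E_\beta\#\giantone{r}\ge(\theta-o(1))\#B(r).
\]

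\textbf{Upper bound for the giant.} On the event that the largest cluster is linear in size (\thref{prop:biskup}), each $x\in\giantone{r}$ lies in a cluster of size at least $\nu\#B(r)\ge k$ for any fixed $k$ once $r$ is large. Therefore
\[
\E_\beta\#\giantone{r}\le\#B(r)\,\P_\beta(\#K\ge k)+o(\#B(r)).
\]
Letting $k\to\infty$, \thref{thm:cluster_size_decay} gives $\P_\beta(k\le\#K<\infty)\to0$, so the bound becomes $(\theta+o(1))\#B(r)$. Together with the lower bound, $\E_\beta\#\giantone{r}/\#B(r)\to\theta$.

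\textbf{Concentration.} It remains to show that $\#\giantone{r}/\#B(r)$ concentrates. Compute the second moment $\sum_{x,y}\P_\beta(x,y\in\giantone{r})$. For $x,y$ far apart, say $d_G(x,y)\ge\delta r$, both events are approximated by the local events $\{x\in \giantone{}(B(x,\eta r))\}$ and $\{y\in \giantone{}(B(y,\eta r))\}$, using the same sub-box argument as in the lower bound. These local events depend on edges within disjoint balls, apart from the long edges between the two balls. The expected number of such edges is $O(r^{2d}r^{-d\alpha})$, which is not small when $\alpha<2$.

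This near-independence is the main obstacle, because long edges couple the two local events. The first remedy to try is monotonicity. Both local events are increasing, so Harris–FKG already gives the lower inequality. For the upper inequality, compare with the configuration in which all edges between the two balls are closed. Closing those edges can only destroy membership in a local giant, and the upper bound above was obtained from the local quantity $\P_\beta(\#K\ge k)$, which depends only on edges near $x$. Concretely, we bound $\charf(x\in\giantone{r})\le\charf(\#K(x)\ge k)$ and use that $\{\#K(x)\ge k\}\cap\{\#K(x)<\infty\}$ has small probability. The event $\{\#K(x)\ge k\}$ is determined up to error $\P_\beta(k\le\#K<\infty)$ by whether $x$ has a path of length at most $k$. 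Such short-path events for distant $x,y$ are asymptotically independent, since the probability that their explorations share or are joined by an edge tends to zero as $d_G(x,y)\to\infty$ with $k$ fixed.

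This yields $\operatorname{Var}(\#\giantone{r})=o((\#B(r))^2)$, and Chebyshev's inequality completes the proof.
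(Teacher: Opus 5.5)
Your final argument is correct, but it follows a different route from the paper. The paper never computes moments of $\#\giantone{r}$. It verifies the pair condition \eqref{eq:lln_conditions} (distinct clusters of size $>k$ rarely coexist) by applying \thref{prop:giant_determines_c_infty} outside a boundary layer of fixed width $R_k$. It then invokes \cite[Theorem 2.2]{van_der_hofstad_giant_2023}, whose hypotheses are local convergence in probability plus exactly that pair condition, not the ones you paraphrase.

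You instead show $X_r=\#\giantone{r}/\#B(r)\to\theta$ in $L^2$, in two steps:
\begin{itemize}
\item $\liminf_r\E_\beta X_r\ge\theta$, via sub-box giants together with \thref{prop:o_in_kinfty} and \thref{prop:second_largest};
\item $\limsup_r\E_\beta X_r^2\le\theta^2$, via $\#\giantone{r}\le k+\sum_{x\in B(r)}\charf(\#\cluster(x)\ge k)$ and asymptotic independence of $\{\#\cluster(x)\ge k\}$ and $\{\#\cluster(y)\ge k\}$ for distant $x,y$.
\end{itemize}
This is self-contained. It proves by hand, from edge independence alone, the concentration of $\#\{x:\#\cluster(x)\ge k\}$ that the black box absorbs into its local-convergence hypothesis, an input the paper leaves implicit. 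The paper's route is shorter and isolates a clean almost-locality statement. Your separate upper bound on $\E_\beta X_r$ is redundant given the second-moment bound. In any case $\P_\beta(\#\cluster\ge k)\downarrow\theta$ needs only continuity of measure, not \thref{thm:cluster_size_decay}.

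Several remarks in your concentration step are wrong, though the final argument does not use them.
\begin{itemize}
\item The obstacle you identify is illusory. If local giants are defined in the induced subgraphs of disjoint balls, as $\giantcomp{x}{r}$ is, the two events depend on disjoint edge sets and are exactly independent; long edges between the balls never enter.
\item The events $\{x\in\giantcomp{x}{\eta r}\}$ are \emph{not} increasing: an added edge can merge two other clusters into one larger than that of $x$. So the Harris--FKG remark, and the claim that closing edges can only destroy membership, are false. Only the upper bound on $\E_\beta X_r^2$ is needed, so this does no damage.
\item Asymptotic independence is cleanest via $E_R(x)=\{\#\loccomp{x}{R}\ge k\}$, where $\loccomp{x}{R}$ is the cluster of $x$ in the subgraph induced on $B(x,R)$. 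This event increases to $\{\#\cluster(x)\ge k\}$ as $R\to\infty$, uniformly in $x$ by transitivity. It is independent of $E_R(y)$ once $d_G(x,y)>2R$.
\item \eqref{eq:polynomial_volume_growth} alone only gives $\#B((1-\delta)r)\ge (c_G/C_G)(1-\delta)^d\#B(r)$. It does not show that the layer $B(r)\setminus B((1-\delta)r)$ has $O(\delta)\#B(r)$ vertices. For that you need an annular-decay estimate for balls, which is true here (e.g.\ from asymptotics $\#B(r)\sim c\,r^d$) but is a separate input.
\item Alternatively, work at a fixed radius $R$ as the paper does. The layer is then $o(\#B(r))$ by the F{\o}lner property of balls. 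But the containment $\giantcomp{x}{R}\subseteq\giantone{r}$ must then come from \thref{leaving_giant}, since $\#\gianttwo{r}=o(\#B(r))$ no longer forces it.
\end{itemize}
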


Nekrashevych and Pete ask \cite[Question 1.3]{nekrashevych_scale-invariant_2011} whether for any amenable transitive graph there exists a connected F{\o}lner sequence exhausting the graph such that the law of large numbers holds along the sequence.
\thref{thm:lln} gives a positive answer to this question in the setting of \LRP{} with $\alpha \in (1,2)$ on transitive graphs of polynomial growth.

Our last results concerns the random walk on the infinite cluster $\kinfty$.
We say that $\kinfty$ is transient if the simple random walk on it is transient, and recurrent otherwise.

\begin{theorem}
\thlabel{thm:random_walks}
    Let $G$ be a transitive graph of polynomial growth with $d \geq 1$, let $J : V \times V \to \R_+$ be transitive and suppose that $J(x,y) = \Theta (d_G(x,y)^{-d \alpha})$ with $\alpha \in (1,2)$. Let $\beta > \beta_{c}$.
    Then the infinite cluster $\kinfty$ is transient almost surely.
\end{theorem}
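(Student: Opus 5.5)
Transience of $\kinfty$ will follow from the anchored isoperimetric inequality of \thref{cor:anchored_isop_dim}, combined with the Thomassen-type criterion for anchored isoperimetry, as used by Pete and Hutchcroft \cite{pete_note_2008,hutchcroft_transience_2023}. By \thref{cor:anchored_isop_dim}, for $\beta>\beta_c$ and $\alpha\in(1,2)$ the infinite cluster almost surely satisfies an anchored $\delta$-dimensional isoperimetric inequality for every $\delta<1/(\alpha-1)$. Since $\alpha<2$ we have $1/(\alpha-1)>1$. Hence we may fix some $\delta\in(2,1/(\alpha-1))$ whenever $\alpha<3/2$.

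In that case an anchored $\delta$-dimensional isoperimetric inequality with $\delta>2$ implies transience. The route is the anchored version of Thomassen's theorem: if $\#\partial_E W\ge f(\#W)$ for all finite connected $W\ni v$, with $\sum_n f(n)^{-2}<\infty$, then the graph is transient. Here $f(n)=cn^{(\delta-1)/\delta}$, so $f(n)^{-2}=c^{-2}n^{-2(\delta-1)/\delta}$, and the sum converges because $2(\delta-1)/\delta>1$. One caveat: Thomassen's criterion needs bounded degree or a suitable variant. Since $\kinfty$ has unbounded but exponentially integrable degrees, I would use the version via Lyons--Morris--Schramm ``anchored'' expansion or Hutchcroft's version, which is designed for exactly this.

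For $\alpha\in[3/2,2)$ the bound $1/(\alpha-1)\le 2$ does not suffice, and I would instead compare with the underlying geometry. This is the main obstacle. The plan is a renormalisation argument, using \thref{prop:biskup} and \thref{prop:second_largest} at a large scale $r$. Partition $G$ into blocks of radius $r$, roughly Voronoi tiles around a net. Declare a block good if its giant has size at least $\nu\#B(r)$ and any two neighbouring good giants are joined by an edge. Adjacent giants of linear size are joined with probability at least $1-\exp(-c\beta\,\#B(r)^2 r^{-d\alpha})$. This tends to $1$ since $\alpha<2$, so goodness dominates a high-density Bernoulli site percolation on a graph quasi-isometric to $G$, by Liggett--Schonmann--Stacey.

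The long edges between good blocks at block distance $m$ are present with probability comparable to $1-\exp(-c\,m^{-d\alpha}r^{d(2-\alpha)})$. Their conductances therefore give a coarse-grained network dominating a long-range percolation on the block graph with exponent $\alpha$ and arbitrarily large $\beta$. Transience of that coarse network follows from Berger's argument \cite{berger_transience_2002}. That argument embeds a transient tree-like or $\Z^{2}$-plus-long-edges structure; it holds for all $\alpha<2$ and $d\ge1$, since $d(2-\alpha)\cdot$ stuff yields finite effective resistance through a hierarchical construction. I would adapt it to polynomial-growth graphs using the scale-invariant nets of the renormalisation scheme.

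Finally, I would lift transience of the coarse network back to $\kinfty$. Within a good block the giant has polylogarithmic diameter and bounded internal resistance between boundary points, at a fixed cost. By Rayleigh monotonicity and a flow of finite energy constructed block-by-block, $\kinfty$ carries a finite-energy unit flow to infinity, so it is transient. The delicate point is controlling the internal resistances of giants uniformly. For this I would require, in the definition of a good block, an explicit bound on the effective resistance across the giant, which holds with high probability at large $r$.
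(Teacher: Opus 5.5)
Your case $\alpha<3/2$ is correct, and it is exactly the alternative route the paper itself mentions: \thref{cor:anchored_isop_dim} gives an anchored $\delta$-dimensional inequality for some $\delta>2$, and Thomassen's criterion then gives transience. The gap is in $\alpha\in[3/2,2)$, which is the real content of the theorem. There you outsource the decisive step to ``Berger's argument, adapted using scale-invariant nets''.

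Berger's result does not apply to your coarse network, because it is not long-range percolation on a transitive graph. It lives on the block graph of a net, it is diluted by bad blocks, and its edges are only conditionally independent given the giants. Adapting Berger's proof is precisely what the paper's transience section does, and it does it directly in $\kinfty$:
\begin{itemize}
\item it uses nested Voronoi tiles with $m_i=2(i+s)^2$, whose volumes and subtile counts are controlled by \thref{prop:tile_and_cell_volume_bounds};
\item level-$0$ tiles are good with high probability by \thref{prop:biskup};
\item a level-$i$ tile is good if enough of its subtiles are good and every pair of level-$(i-2)$ good parts lying in distinct good level-$(i-1)$ subtiles is joined by an edge, which holds because the bulk-to-bulk bound beats the polynomially many pairs;
\item FKG with $\prod_i(1-\xi_i)>0$, followed by ergodicity, gives a renormalised subgraph almost surely;
\item an explicit hierarchical flow has energy satisfying $E_i\le E_{i-1}(1+1/B_i)+1/(B_{i-1}B_i)$ with $B_i\asymp i^{2d}$, which stays bounded.
\end{itemize}
Your single-scale coarse-graining does not replace any of this.

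The coarse-graining also creates a lifting problem that you have not solved, and your diagnosis of it is off. Rayleigh monotonicity points the wrong way here. Your coarse network is $\kinfty$ with each giant shorted to a point, which can only increase conductance, so transience of the coarse network says nothing about $\kinfty$.

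Uniform internal resistance is not the obstacle. At fixed $r$, a connected giant has effective resistance at most the size of its block between any two of its vertices. So your extra goodness condition is vacuous, and polylogarithmic diameter is neither established here for general $G$ nor needed.

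The real obstacle is the throughput of each block. The natural bound on the cost of routing the coarse flow inside block $b$ is a constant times $\#B(r)\,t_b^2$, where $t_b$ is the total current through $b$, so you need $\sum_b t_b^2<\infty$. This does not follow from finite coarse edge-energy. Blocks have unbounded coarse degree, so a block can receive its whole throughput through many long edges, each carrying little current.

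You would therefore have to build the coarse flow with this vertex-energy property, which means doing the hierarchical construction explicitly anyway. The paper sidesteps the issue by building the flow inside $\kinfty$ from the start, so that redistribution inside each good part is absorbed into the energy recursion.
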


\begin{theorem}
    \thlabel{thm:two_dim_recurrence}
    Let $G$ be a transitive graph of polynomial growth with $d = 2$, let $J : V \times V \to \R_+$ be transitive and integrable, and suppose that $J(x,y) = O (d_G(x,y)^{-d \alpha})$ with $\alpha \geq 2$. 
    Let $\beta > \beta_c$. 
    Then the infinite cluster $\kinfty$ is recurrent almost surely.
\end{theorem}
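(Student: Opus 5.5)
We will prove recurrence by showing that the effective conductance from the origin to infinity in the whole random graph vanishes almost surely. Let $\omega$ be the random graph on $V$, with unit conductance on each open edge. The infinite cluster $\kinfty$ is a connected component of $\omega$, and simple random walk on $\kinfty$ is the network random walk on that component. So it suffices to show
\[
\mathcal{C}_{\mathrm{eff}}(x\to\infty;\omega)=0
\]
for every $x\in V$ almost surely; by transitivity and countability of $V$, it is enough to treat $x=o$.

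We use the Dirichlet principle. For any finitely supported $f$ with $f(o)=1$,
\[
\mathcal{C}_{\mathrm{eff}}(o\to\infty;\omega)\le \mathcal{E}_\omega(f)=\sum_{\{x,y\}\text{ open}}(f(x)-f(y))^2 .
\]
Since $1-e^{-\beta J}\le \beta J$, taking expectations gives
\[
\E_\beta \mathcal{E}_\omega(f)\le \beta\sum_{x,y}J(x,y)(f(x)-f(y))^2 .
\]
It therefore suffices to construct a sequence $f_R$ with $\sum_{x,y}J(x,y)(f_R(x)-f_R(y))^2\to0$. Fatou's lemma applied to $\liminf_R\mathcal{E}_\omega(f_R)$ then gives $\mathcal{C}_{\mathrm{eff}}=0$ almost surely.

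\textbf{Choice of test function.} The natural choice $f(x)=1-\log(1+|x|)/\log R$ (with $|x|=d_G(o,x)$, truncated to $[0,1]$) fails at $\alpha=2$. The nearest-neighbour part of its energy is $O(1/\log R)$, but the long edges contribute order one. To see this, use \eqref{eq:polynomial_volume_growth} with $d=2$ and $J(x,y)\le Cd_G(x,y)^{-4}$ for long edges. A vertex at distance $s$ from $o$ has about $L$ neighbours at distance $L$, each contributing at most $L^{-4}\min(L/s,\,\log(L/s))^2/\log^2 R$. Summing over $L$ gives about $\log s/(s^2\log^2R)$, and summing over the roughly $s$ vertices at each distance $s\le R$ gives $O(1)$.

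We therefore use a doubly logarithmic profile. Writing $t=\log(1+|x|)$, set $f_R(x)=1-\int_0^{t}g_R$ (clipped to $[0,1]$) with
\[
g_R(u)=\frac{1}{u\,\log\log R}\quad\text{for } u\in[1,\log R],
\]
and $g_R=0$ otherwise; this makes $f_R(o)=1$ and $f_R$ finitely supported. Redoing the computation with local gradient $g_R(\log s)/s$ gives energy of order
\[
\int g_R(u)^2\,u\,\mathrm{d}u=\frac{1}{(\log\log R)^2}\int_1^{\log R}\frac{\mathrm{d}u}{u}=\frac{1}{\log\log R}\to0 .
\]

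\textbf{Bookkeeping.} The main work is the term-by-term bound of $\sum_{x,y}J(x,y)(f_R(x)-f_R(y))^2$, split into three ranges.
\begin{enumerate}
\item Edges with $d_G(x,y)<R_J$, or with $d_G(x,y)\le |x|/2$. Here $|f_R(x)-f_R(y)|\lesssim d_G(x,y)\,g_R(\log|x|)/|x|$ by a mean-value estimate, using that $g_R$ varies slowly on dyadic scales. Integrability of $J$, together with $\sum_L L\cdot L^{-4}L^2<\infty$ summed up to $L\le s$, controls the near part. The $\log s$ factor appearing in the computation above is the $u$ in $\int g^2u\,\mathrm{d}u$.
\item Edges with $d_G(x,y)>|x|/2$. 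Here $|f_R(x)-f_R(y)|\le\int_{\log|x|-O(1)}^{\log(|x|+L)}g_R$, and the tail $\sum_{L>s}L^{-3}(\cdots)^2$ is of the same order as the near part.
\item Vertices with $|x|\le e$ or $|x|\ge R$. These carry no gradient and contribute nothing.
\end{enumerate}
The exponent $\alpha\ge2$ (that is, $2\alpha\ge d+2$ with $d=2$) is exactly what makes the long-range sum no worse than the nearest-neighbour one.

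\textbf{Expected obstacle.} I expect the main difficulty to be this marginal long-edge estimate at $\alpha=2$, in particular controlling edges from radius $s$ to radius $L\gg s$ uniformly. If the estimate is delicate, I would first try replacing $g_R$ by a dyadic-block-constant version, so that cross-scale differences are sums over blocks and can be bounded by Cauchy--Schwarz.
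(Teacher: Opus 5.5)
Your argument is correct, and it takes a genuinely different route from the paper. You anneal the long-range network itself. By the Dirichlet principle and Fatou (equivalently, concavity of effective conductance in the edge weights), $\E_\beta[\mathcal{C}_{\mathrm{eff}}(o\to\infty;\omega)]\le\beta\inf_f\sum_{x,y}J(x,y)(f(x)-f(y))^2$. So recurrence of $\kinfty$ reduces to recurrence of the deterministic network with conductances $J$, which your $\log\log$ profile establishes with energy $O(1/\log\log R)$.

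The paper never anneals the long-range network. It projects every open edge of $\kinfty$ of length $r\ge 2$ onto a uniformly chosen geodesic of $G$, putting conductance $r$ on each of its $r$ steps. This gives a nearest-neighbour random network whose effective conductance dominates that of $\kinfty$. A mass-transport argument shows its conductances are identically distributed with a Cauchy tail; their mean can be infinite at $\alpha=2$, so annealing at that stage would fail. Recurrence then follows from Berger's lemma on sums of Cauchy-tailed variables, Nash-Williams over edge spheres, and the two-dimensional sphere lemma $\sum_r 1/(\#S(r)\log\#S(r))=\infty$.

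Your route is shorter and uses less: only $\#B(r)\le Cr^2$, integrability of $J$, and $J=O(d_G^{-4})$. It needs no lower bounds on spheres, no mass transport and no choice of geodesics. It handles the marginal case $\alpha=2$ because the $\log\log$ profile exploits the scale-by-scale structure before anything is projected. The paper's route, which follows Berger's template for $\Z^2$, yields as a by-product a general recurrence criterion for dependent, identically distributed Cauchy-tailed random networks on two-dimensional transitive graphs.

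Two small repairs are needed in your bookkeeping.

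First, in item 2 the bound $|f_R(x)-f_R(y)|\le\int_{\log|x|-O(1)}^{\log(|x|+L)}g_R$ fails when $|y|\ll|x|$; take $y=o$ and $|x|\approx R/2$. Since the energy is symmetric, sum only over ordered pairs with $|x|\le|y|$; the bound then holds with lower limit $\log(1+|x|)$. Item 3 must be read under the same convention, because edges from $|x|\le e$ to distant $y$ do contribute. They fall under item 2 and give $O((\log\log R)^{-2})$.

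Second, replace ``about $L$ neighbours at distance $L$'' by dyadic ball-volume bounds, since sphere sizes are not controlled pointwise. All you need is $\sum_{L\le s}\#S(L)L^{-2}\lesssim\log s$ (so your near-part sum is of order $\log s$, not bounded) and $\sum_{L\ge s}\#S(L)L^{-4}\lesssim s^{-2}$. Both follow from $\#B(r)\le Cr^2$. With these repairs the far part is $O((\log\log R)^{-2})$ and the near part is $O((\log\log R)^{-1})$, as you claimed.
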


\thref{thm:random_walks,thm:two_dim_recurrence} extend results of Berger \cite{berger_transience_2002} to transitive graphs of polynomial growth.
For $\alpha\in (1, 3/2)$, \thref{cor:anchored_isop_dim} provides a new proof of \thref{thm:random_walks}: in this case the infinite cluster $\kinfty$ has anchored isoperimetric dimension at least $2$ and transience follows from Thomassen's criterion \cite{thomassen_isoperimetric_1992}.
To prove transience in the entire parameter range $\alpha \in (1,2)$, we still require a renormalisation argument.
The proof of \thref{thm:two_dim_recurrence} uses different techniques from the rest of the paper and relies on a mass-transport argument involving random electrical networks.

\subsection{Discussion}
\subsubsection{Locality}
\paragraph{Previous locality results.}
Schramm's locality conjecture was established in a number of special cases \cite{benjamini_is_2011,martineau_locality_2017,hutchcroft_locality_2020,hutchcroft_nonuniqueness_2020,hermon_no_2021,contreras_locality_2023,contreras_supercritical_2024}, and has recently been solved in full generality by Easo and Hutchcroft \cite{easo_critical_2023}.
A historical account of the locality conjecture is given in \cite[Section 1.1]{easo_critical_2023}.
In the setting of long-range percolation, a type of locality result is when the graph is fixed and only the kernels vary.
B\"aumler \cite{baumler_continuity_2026} proves that for \LRP{} on $\Z^d$ and any kernel $J$ which is known to satisfy the truncation problem, if $(J_n)_{n \in \N}$ is a sequence of kernels converging to $J$ in $L^1$ of $\Z^d$, then $\beta_c(J_n) \to \beta_c(J)$ as $n \to \infty$. 
The analogous result in the setting of a \textit{unimodular} transitive graphs should follow from \cite{duminil-copin_new_2016,hutchcroft_locality_2020,hutchcroft_power-law_2021}, as remarked in \cite[Remark 1.3]{easo_critical_2023}.
The truncation problem was first considered by Meester and Steif \cite{meester_continuity_1996} and Berger \cite{berger_transience_2002}.
It has also been considered by B{\"a}umler for $\alpha > 2$ \cite{baumler_continuity_2026} and for non-integrable kernels \cite{baumler_truncation_2024}. 
\thref{thm:lrp_locality} proves the analogue of Schramm's locality conjecture for \LRP{} on transitive graphs of polynomial growth with $\alpha \in (0,2)$.

\paragraph{The role of dimension in locality.}
In \thref{thm:lrp_locality}, we replaced the assumption from Schramm's locality conjecture that $\limsup_{n\to\infty} p_c(G_n) < 1$ with the assumption that $\liminf_{n \to \infty} \dim(G_n) \geq 2$.
For \BNNP{}, these conditions are equivalent: for graphs of polynomial growth this follows from the theorems of Gromov and Trofimov
(see for instance \cite[pp.~3542--3543]{duminil-copin_existence_2020}), and for arbitrary transitive graphs this is due to \cite{duminil-copin_existence_2020} (see also \cite{easo_counting_2025}). 
The analogous equivalence is not true for \LRP{}.
Indeed, one-dimensional \LRP{} has a non-trivial phase transition precisely when $\alpha \in (1,2] $ \cite{newman_one-dimensional_1986}, so $\liminf_{n\to\infty} \beta_c(G_n,J_n) < \infty$ does not imply $\limsup_{n \to \infty} \dim(G_n) \geq 2$.
Further, Aizenman and Newman \cite{aizenman_discontinuity_1986} {\color{blue}(see also \cite{duminil-copin_long-range_2024})} prove that in one dimension the phase transition is \textit{discontinuous} when $\alpha = 2$.
For this and other reasons, \LRP{} is \textit{not} local in one dimension, neither when $\alpha \in (1,2)$ nor when $\alpha = 2$.
We study the failure of locality in one dimension in our upcoming paper \cite{moreno-alonso_long-range_2026}.
The assumption that $\liminf_{n \to \infty} \dim(G_n) \geq 2$ is thus both necessary and sufficient for locality.

\paragraph{The role of uniform integrability in locality.}
\thref{thm:lrp_locality} establishes the locality of $\beta_c$ for both $\alpha \in (1,2)$ and $\alpha \in (0,1]$.
For $\alpha \in (1,2)$, the limit pair $(G,J)$ is integrable, and we assume that the kernels $(J_n)_{n \in \N}$ are uniformly integrable in the sense of \eqref{eq:tight-decay-Jn}.
This information is not \textit{local}, as it also involves edges to vertices outside of the coupling radius, and hence it cannot be read off from $(G,J)$ alone. 
The proof of the lower semi-continuity of $\beta_c$ uses the ``$\phi_{\beta}(S)$'' argument of \cite{duminil-copin_new_2016}, and it will be apparent that uniform integrability is a necessary assumption and that obvious counterexamples arise without it, see \thref{ex:no_unif_integrability}.
For $\alpha \in (0,1]$, the pair $(G,J)$ is non-integrable so that $\beta_c(G,J) = 0$ and locality holds without further assumptions.
Non-integrable pairs naturally arise as local limits of integrable pairs, see \thref{lem:non_integrable_limits}. 

\paragraph{Plentiful bulk-to-bulk connections for locality.}
Long range percolation in $d$ dimensional spaces with $\alpha \in (0,2)$ behaves qualitatively long-range. 
We make use of this fact in locality related proofs via \emph{bulk-to-bulk connections}.  
To illustrate this, let $A$ and $B$ be two sets of vertices, each containing order $r^d$ many vertices, with pairwise distance at most order $r$.
Then the expected number of edges from $A$ to $B$ and the probability of a connection between $A, B$ satisfies 
\begin{align}
   \label{eq:bulk-to-bulk-expectation}
   &\E_{\beta}
   \left[
       \# 
       \{
           (x,y) \in A \times B
           : 
           x 
           \sim
           y
       \}
   \right]
   = \Omega(
   r^{d(2-\alpha)}),\\
   & \P_{\beta}
   \left(
       A 
       \not\sim
       B
   \right)
   =
   \exp
   \left(
       - 
       \beta
       J(A,B)
   \right)
   =
   \exp
   \big(
       -\Theta(
               \# A 
               \# B/
          r^{d \alpha})
            \big)
   =
    \exp
    \big(
       -
      \Omega(r^{d(2 - \alpha)})
    \big).\label{eq:bulk-to-bulk-P}
\end{align}
For $\alpha \in (0,2)$ the connection ensured by \eqref{eq:bulk-to-bulk-P} together with the polynomial growth \eqref{eq:polynomial_volume_growth} makes it possible to develop multiscale renormalisation in transitive graphs of polynomial growth, since bulk-to-bulk connections, as opposed to nearest neighbour percolation paths, are less sensitive to the local geometry of the underlying space.

\subsubsection{Sharp phase transitions in percolation}
\paragraph{Supercritical sharpness.} 
Supercritical sharpness generally refers to the fast decay of tail events involving finite clusters, such as the truncated one-arm event or the tail distribution of finite clusters, in the \textit{entire} supercritical regime. 
For \BNNP{} on $\Z^d$, supercritical sharpness was proved by \cite{kesten_critical_1980} for $d=2$ and \cite{chayes_bernoulli_1987, grimmett_supercritical_1990} for $d \geq 3$. 
Hermon and Hutchcroft \cite{hermon_supercritical_2021} develop a
theory of supercritical percolation on transitive non-amenable graphs. 
Relevant to our work, an entirely new proof of supercritical sharpness for transitive graphs of polynomial growth is obtained by Contreras, Martineau, and Tassion \cite{contreras_supercritical_2024}. 
These results are made \textit{local} by Martineau and Panagiotis \cite{martineau_percolation_2025}.
Supercritical sharpness has also been proven for kernel-based spatial random graphs \cite{jorritsma_cluster-size_2025,jorritsma_large_2025}, Voronoi percolation \cite{dembin_supercritical_2025}, and for the $\varphi^4$ model \cite{gunaratnam_supercritical_2025}.
Recently, supercritical sharpness has been proven for \BNNP{} on arbitrary transitive graphs \cite{diskin_supercritical_2026}.
The results of this paper and our recent paper \cite{alonso_supercritical_2026} prove supercritical sharpness for \LRP{} on transitive graphs of polynomial growth with $\alpha \in (1,2)$: 
\thref{thm:cluster_size_decay} proves the stretched-exponential cluster-size decay, \thref{cor:anchored_isop_dim,thm:anch_dim_upper} identify the anchored isoperimetric dimension of the infinite cluster, \thref{thm:random_walks} proves the transience of the infinite cluster, and \cite[Theorem 1.8]{alonso_supercritical_2026} proves the polynomial decay of the truncated one-arm event.

\paragraph{Long-range percolation beyond $\Z^d$.}
The sharpness of the phase transition for \LRP{} was proved for transitive graphs of subexponential growth by Aizenman and Barsky \cite{aizenman_discontinuity_1986}.
This is extended to \LRP{} on arbitrary transitive graphs in \cite{antunovic_sharpness_2008,duminil-copin_new_2016} and to the random cluster model on weighted transitive graphs in \cite{hutchcroft_new_2020}.
Gandolfi, Keane, and Newman's \cite{gandolfi_uniqueness_1992} proved uniqueness of the infinite cluster for transitive graphs of subexponential growth.
Long-range percolation has previously been considered on finite cycles \cite{benjamini_diameter_2001,benjamini_long-range_2008}, wedges of $\Z^3$ \cite{berger_transience_2006}, weighted transitive graphs \cite{hutchcroft_power-law_2021,hutchcroft_derivation_2022}, and on the hierarchical lattice \cite{koval_long-range_2012,georgakopoulos_percolation_2020,hutchcroft_critical_2024,hutchcroft_critical_2025-3,baumler_discontinuous_2025}.
The related model of spread-out percolation has been considered on transitive graphs of polynomial growth \cite{spanos_spread-out_2024}.
Supercritical inhomogeneous long-range models have also been studied in \cite{jorritsma_cluster-size_2025,jorritsma_large_2025}.
In \cite{alonso_supercritical_2026} we consider \LRP{} on transitive graphs of polynomial growth with $\alpha > 1 + 1/d$, and in \cite{moreno-alonso_long-range_2026} we consider \LRP{} on transitive graphs of \textit{linear} growth with $\alpha \in (1,2]$.

\subsubsection{Scale-invariant groups and structure theory}
\label{sec:antal}

\paragraph{Scale-invariant groups and the uniqueness of the giant.}
Renormalisation arguments are a cornerstone of supercritical percolation theory, and are often driven by the notion of a \textit{crossing cluster}.
The classical Antal–Pisztora renormalisation lemma \cite{antal_chemical_1996} (see also \cite[Section 7.4]{grimmett_percolation_1999}) roughly says that for \BNNP{} on $\Z^d$, $d \geq 2$, for every $p>p_c$ and $\epsilon>0$ , there exists $n$ sufficiently large such that, with probability at least $1-\epsilon$, a box of side-length $n$ is ``good’’: it contains a cluster connecting all $2d$ faces, while all other clusters in the box have diameter at most $\epsilon n$.
Such a lemma gives a qualitative local existence-and-uniqueness statement for a macroscopic component, and serves as a building block for renormalisation.
Indeed, if two neighbouring boxes of side-length $n$ are enlarged from their centres by a factor $5/4$, then, provided both enlarged boxes are good and $\epsilon<5/8$, their crossing clusters must connect in the union of the two boxes. 
A robust analogue on transitive graphs of polynomial growth is the local existence-and-uniqueness of \textit{arms} of \cite[Proposition 1.3]{contreras_supercritical_2024}.
This type of procedure can then be bootstrapped to obtain many statements about supercritical percolation, but one thing it does not give is the existence of a linear-sized \textbf{giant} component.
In a different direction, Benjamini \cite[Chapter 1.8]{benjamini_coarse_2013} introduced scale-invariant groups as a possible structure for renormalisation arguments.
These groups are further investigated by Nekrashevych and Pete \cite{nekrashevych_scale-invariant_2011} and remain an active area of research \cite{bartholdi_self-similar_2020, hurder_cantor_2021,dere_strongly_2022,grigorchuk_liftable_2025,limbeek_structure_2021,wardell_elementary_2025,dere_automorphism_2026} with many open questions \cite[Chapter 9]{sapir_group_2007} in geometric group theory.
The goal of these arguments, as stated in \cite[Question 1.3]{nekrashevych_scale-invariant_2011}, is to establish a law of large numbers for percolation, which is essentially equivalent to local existence-and-uniqueness of the giant component and is related to uniqueness of the giant in finite transitive graphs \cite{alon_percolation_2004}.
In our setting of \LRP{} on transitive graphs of polynomial growth with $\alpha \in (0,2)$, we replace the crossing-cluster mechanism by the \textit{plentiful bulk-to-bulk} connections of the previous section.
These connections are less sensitive to local geometry and, combined with coarse graining, allow us to run a multi-scale renormalisation argument yielding the local existence-and-uniqueness of the giant in \thref{prop:biskup,prop:second_largest} and the law of large numbers in \thref{thm:lln}.

\paragraph{Structure theory of transitive graphs.}
To prove locality, we use the structure theory of transitive graphs of polynomial growth.
A consequence of Gromov’s theorem is that a finitely generated group of polynomial growth satisfies the following alternative: it is either virtually $\Z$, or contains a subgroup isomorphic to $\Z^2$ (see \cite[Theorem 7.18]{lyons_probability_2016}).
Trofimov’s theorem gives the analogous alternative for transitive graphs of polynomial growth: either it is quasi-isometric to $\Z$, or to a graph containing $\Z^2$ as a subgraph.
This classical structure theory is packaged in \thref{prop:fixed_unif_control_nets}.
Recently, an extensive structure theory was developed in \cite{breuillard_structure_2012} and \cite{tessera_balls_2024} to obtain \textit{finitary} versions of Gromov’s and Trofimov’s theorems.
We refer to \cite{breuillard_brief_2014,tointon_brief_2020,easo_uniform_2025} for surveys.
This finitary structure theory is packaged in \thref{prop:unif_control_nets}, and is part of the strategy introduced by \cite{contreras_locality_2023} to prove locality for \BNNP{} on transitive graphs of polynomial growth.

\subsection{Our new proof techniques}
The main contribution of our paper is to develop a robust multi-scale renormalisation scheme for \LRP{} on transitive graphs of polynomial growth which allows for a complete understanding of the supercritical regime.
Another challenge inherent to long-range percolation is that estimates for long-range events depend on volume bounds for spheres in transitive graphs of polynomial growth, and it remains an open problem to prove sharp bounds for these. 
We overcome this issue through the \textit{sphere calculus}.

\paragraph{Coarse-graining via scale-invariant nets.}
The first part of our scheme is a coarse-graining of the space combining iteratively merged Voronoi tiles with scale-invariant nets. 
A net is a \textit{separated} subset of vertices of the graph $G$ which is also \textit{dense} in the graph.
The parameter regulating the separation and the density is the \textit{scale} of the net.
For a sequence of increasing scales, we consider the Voronoi tiling associated to a net of each scale.
Voronoi tilings at different scales are not necessarily nesting, and instead we inductively redefine a level-$i+1$ tile to be the \textit{union} of those level-$i$ tiles whose centre is contained in the level-$i+1$ Voronoi tile.
This iteratively defined construction gives a partition of the space into increasingly large tiles with a nesting structure.
This procedure is illustrated in Figure \ref{fig:voronoi_iterative_merge}.
The price to pay for this nesting structure is that the tiles are no longer genuine Voronoi tiles, and their volumes fluctuate.
Crucial to the success of the renormalisation scheme, in \thref{prop:tile_and_cell_volume_bounds} we prove that for any sufficiently fast growing sequence of scales we can maintain uniform control over the fluctuations in volume of tiles and also over the number of subtiles in each tile.
This gives a robust alternative to the scale-invariant box structure available in $\Z^d$.

\begin{figure}
    \centering
    \begin{subfigure}[t]{0.31\textwidth}
        \centering
        \resizebox{\linewidth}{!}{\begin{tikzpicture}[scale=0.68]
  \drawball
  \drawfinecells
  \draw[line width=1.05pt] (0,0) circle (\r);
\end{tikzpicture}}
        \caption{}
        \label{fig:voronoi_level_1}
    \end{subfigure}
    \hfill
    \begin{subfigure}[t]{0.31\textwidth}
        \centering
        \resizebox{\linewidth}{!}{\begin{tikzpicture}[scale=0.68]
  \drawball
  \drawcoarsefills
  \drawfinemesh{black!10, line width=0.12pt}
  \drawcoarseedges{black, line width=\coarseedgewidth}
  \drawcoarsesites
  \draw[line width=1.05pt] (0,0) circle (\r);
\end{tikzpicture}}
        \caption{}
        \label{fig:voronoi_level_2}
    \end{subfigure}
    \hfill
    \begin{subfigure}[t]{0.31\textwidth}
        \centering
        \resizebox{\linewidth}{!}{\begin{tikzpicture}[scale=0.68]
  \drawball
  \drawmergedcells
  \ifshowcoarseguide
    \drawcoarseedges{black!35, line width=0.45pt, densely dashed}
  \fi
  \drawmergededges
  \drawcoarsesites
  \drawfinesites
  \draw[line width=1.05pt] (0,0) circle (\r);
\end{tikzpicture}}
        \caption{}
        \label{fig:voronoi_merged}
    \end{subfigure}
    \caption{the sketches in (a) and (b) show a metric ball $B(r)$ with two Voronoi tilings at different scales, say $r_0$ and $r_1$ with $r_0 < r_1 < r$. 
    In (c), a tile at scale $r_1$ is redefined to be the union of those scale $r_0$ tiles whose centre is contained in the scale $r_1$ tile.}
    \label{fig:voronoi_iterative_merge}
\end{figure}

\paragraph{Local existence-and-uniqueness of the giant.}
Armed with the scale-invariant coarse-graining of the space, 
we run a multi-scale renormalisation argument to establish the
 local existence-and-uniqueness of the giant in \thref{prop:biskup,prop:second_largest}. 
We use ergodicity and the uniqueness of the infinite cluster at the base level for the existence of a local linear-sized cluster, and we use the \textit{plentiful bulk-to-bulk connections} in \eqref{eq:bulk-to-bulk-P} to iteratively construct a local linear-sized cluster at all scales by connecting nearby local giants.
Since the volume of the tiles fluctuates, one technical challenge is to maintain uniform control on the \textit{density} of the local giant built along the renormalisation argument, different from earlier similar arguments for $\Z^d$ \cite{biskup_scaling_2004}.
Given the local existence of the giant, we combine a revealment scheme with the \textit{plentiful bulk-to-bulk connections} to bound the size of small components and hence obtain the uniqueness of the local giant.
When compared to the discussion about crossing clusters in Section \ref{sec:antal}, we replace the boxing structure by the coarse-graining with nesting tiles, and to move between levels, we replace crossing clusters with the \textit{plentiful bulk-to-bulk connections}.
Note that crossing clusters are not the right notion for \LRP{} since any long edge can be a crossing cluster and long edges may jump over each other.

\paragraph{Natural coupling on the coupling radius.} 
The isomorphism between balls, the agreement of the graph metrics, and the local $L^1$ convergence of the kernels allows us to locally couple the \LRP{} configurations on $G_n$ and $G$ through the \textit{natural coupling on the coupling radius}, stated in \thref{lem:coupling-on-radius}.  
This can be thought of as taking a ``cookie cutter'' picture of the percolation configuration in the limit graph to couple it with those in the graph sequence.
As a consequence of the local existence of the giant we identify a \textit{finite-size criterion} for $\beta_c$ in \LRP{}, stated in \thref{prop:finite_size_criterion}. 
Together with the strcuture theory of transitive graphs in \thref{prop:unif_control_nets} and the ``$\phi_{\beta}(S)$ argument'' of Duminil-Copin and Tassion \cite{duminil-copin_new_2016} we obtain the locality of $\beta_c$.
With the stronger local existence-and-uniquenss of the giant we establish \thref{prop:o_in_kinfty}. 
In a similar spirit to the finite-size criterion, \thref{prop:o_in_kinfty} is a \emph{local witness} to the percolation density $\theta$ and through the natural coupling on the coupling radius we prove the joint continuity of $\theta$ in \thref{thm:theta_locality}.

\paragraph{Sphere calculus.}
Many computations in \LRP{} are closely related to the volume of spheres, typically in the form of weighted sums of spheres.
While in $\Z^d$ it is standard that $\# S(r) \asymp r^{d-1}$, it remains open to prove the analogous result for transitive graphs of polynomial growth.
If we used the currently available upper bounds \cite{colding_liouville_1998,tessera_volume_2007,breuillard_rate_2013,gianella_asymptotics_2017,bodart_intermediate_2025}, their error terms would enter our results (specifically
\thref{thm:cluster_size_lower_bound,thm:anch_dim_upper,thm:two_dim_recurrence,cor:anchored_isop_dim}).
To obtain sharp results, we avoid upper bounds for sphere volumes altogether and instead develop a \textit{sphere calculus} for sums of weighted spheres.
The sphere calculus relies on three crucial arguments: the observation that ``spheres partition balls’’ (a sequence of disjoint spheres in $B(r)$ have total volume at most $\#B(r)$), averaging techniques to show that \textit{on average} spheres satisfy the expected bounds, and the fact that the volume growth $\#B(r)$ and the kernel $J$ have regularly varying bounds.
The sphere calculus may be of independent interest for other models with long-range interactions.

\subsubsection{Organisation}
In Section \ref{sec:preliminaries} we collect notation for the paper, results in the structure theory of transitive graphs, and bounds for the \LRP{} kernel.
In Section \ref{section:sphere_calculus} we introduce the sphere calculus for transitive graphs of polynomial growth.
In Section \ref{sec:renormalisation} we introduce our renormalisation scheme via nets and we prove \thref{prop:finite_size_criterion} on the finite-size criterion. 
In Section \ref{sec:truncation_and_continuity} we  prove \thref{thm:truncation_problem} on the truncation problem and \thref{thm:continuity_percolation_density} on the continuity of the phase transition.
In Section \ref{sec:locality} we prove \thref{thm:lrp_locality} on the locality of $\beta_c$ and \thref{thm:theta_locality} on the joint continuity of $\theta$.
In Section \ref{sec:smoothness} we prove \thref{cor:characters_smoothness} on the smoothness of percolation characters.
In Section \ref{subsec:coarse_graining} we introduce our coarse-graining of the space using Voronoi tiles and scale-invariant nets.
Sections \ref{sec:cluster_decay_below} and \ref{sec:transience} are by far the most analytic parts of the paper where we prove \thref{prop:biskup} on the local existence of a giant, and the transience of the infinite cluster \thref{thm:random_walks} using the multi-scale renormalisation. The less interested reader may jump to the other sections harvesting the consequences of this method.
In Section \ref{sec:local_uniqueness} we prove \thref{prop:second_largest} on the local uniqueness of the giant.
In Section \ref{sec:consequences} we prove \thref{thm:cluster_size_decay} on the stretched-exponential decay of the distribution of finite clusters, \thref{prop:o_in_kinfty} saying that the giant is a local witness to the percolation density, \thref{cor:anchored_isop_dim} and \thref{thm:anch_dim_upper} on the anchored isoperimetric dimension of the infinite cluster, \thref{thm:lln} on the law of large numbers, and \thref{thm:cluster_size_lower_bound} on the lower bound for the distribution of finite clusters.
In Section \ref{sec:recurrence} we prove \thref{thm:two_dim_recurrence} on the recurrence of the infinite cluster $\kinfty$ when $d = 2$ and $\alpha \geq 2$.

\section{Preliminaries}
\label{sec:preliminaries}

\paragraph{Notation.}
Throughout, $G = (V,E)$ is an infinite, locally finite, transitive graph of polynomial growth with $d \geq 1$ equipped with the graph metric $d_G$ and a fixed origin $o$. 
We write $B(x,r)$ for the ball of radius $r$ centred at $x \in V$, and $S(x,r)$ for the sphere of radius $r$ centred at $x$. 
We write $B(r)$ and $S(r)$ for the ball and sphere of radius $r$ centred at the origin.
For a set $A \subset V$, we write $\# A$ for the cardinality of $A$. 
We let $\R_+ = \{x \in \R : x \geq 0\}$.
The kernel $J : V \times V \to \R_+$ will generally be assumed to satisfy $J(x,y) = \Theta(d_G(x,y)^{-d \alpha})$ with $\alpha > 1$, as defined in \eqref{eq:polynomial_kernel}.
Without loss of generality we fix the constants $c_J,C_J$, and $R_J$ in \eqref{eq:polynomial_kernel}, and we write $J(x,y) = \Omega(d_G(x,y)^{-d \alpha})$ and $J(x,y) = O(d_G(x,y)^{-d \alpha})$ to mean that the kernel satisfies either the lower bound or the upper bound, respectively.
For two sets $A,B \subset V$, we write $d_G(A,B) = \min_{x \in A,y \in B} d_G(x,y)$ for their Hausdorff distance.
For a set of vertices $H \subseteq V$, we write $\partial_E(H) = \{(x,y) \in E : x \in H, y \not\in H\}$ for the the edge boundary of $H$.
For two disjoint sets $A,B \subset V$, we write $J(A,B) = \sum_{x \in A} \sum_{y \in B} J(x,y)$.
We write $\norm{J} = \sum_{x \in V \setminus \{o\}} J(o,x)$. 
We write $\P_{\beta}$ for the law of \LRP{} on $G$ with kernel $J$ and parameter $\beta \geq 0$, and $\E_{\beta}$ for the associated expectation operator.
We write $x \sim y$ if $x,y \in V$ are connected by an edge, and we write $\cluster(x)$ for the component of the vertex $x \in V$ and $\cluster$ for the component of the origin.
We write $\giantcomp{x}{r}$ for the largest cluster in $B(x,r)$ and $\giantone{r}$ for the largest cluster in $B(r)$. 
Similarly, we write $\gianttwo{r}$ for the second-largest component in $B(r)$.  
The function $\theta(\beta) = \P_{\beta}(\# \cluster = \infty)$ is the percolation probability, and $\beta_c = \inf \{ \beta \geq 0 :\P_{\beta}(\# K = \infty) > 0 \}$ is the critical parameter.
We also deal with sequences of graphs and kernels.
In this setting we write $B_G(x,r)$, $B_G(r)$, $S_G(x,r)$, and $S_G(r)$ to emphasize the graph metric. 
Depending on which terms need to be emphasized we write $\P_{\beta,J}$ and $\P_{\beta,G,J}$, and $\E_{\beta,J}$ and $\E_{\beta,G,J}$.
Similarly, we write $\theta(\beta,J)$ and $\theta(\beta,G,J)$, and $\beta_c(J)$ and $\beta_c(G,J)$.
\paragraph{Kernel bounds.}
Here we gather kernel bounds that we use throughout the paper.
Since $1 - \exp(-t) \leq t$ for all $t \in \R$, then for all $x,y \in V$ we have
\begin{equation}    
    \label{eq:standard_kernel_bound}
    \P_{\beta}
    \left(
        x 
        \sim
        y
    \right)
    =
    1 
    -
    \exp 
    \left(
        -
        \beta
        J(x,y)
    \right)
    \leq
    \beta J(x,y).
\end{equation} 
We will repeatedly use the following bound, which relates to the ``bulk-to-bulk'' connections discussed in Section \ref{sec:intro}. 
For a transitive kernel $J : V \times V \to \R_+$ satisfying $J(x,y) = \Omega(d_G(x,y)^{-d \alpha})$, and for $A,B \subset V$ finite disjoint subsets with $d_G(A,B) \geq R_J$ and $A, B \subseteq B(R)$ for some $R > 0$, we have
\begin{equation}
    \P_{\beta}
    \left(
        A 
        \not\sim
        B
    \right)
    =
    \exp
    \left(
        - 
        \beta
        J(A,B)
    \right)
    \leq 
    \exp
    \left(
        -
        \frac{
                c_J
                \beta 
                \# A 
                \# B
            }{
                (2R)^{d \alpha}
            }
    \right).
\end{equation}  

\paragraph{Structure theory: polynomial volume implies polynomial growth.}
An important piece of the structure theory we use is the following immediate consequence of a result of Tessera and Tointon {\cite[Corollary 1.17]{tessera_balls_2024}}.
\begin{proposition}
\thlabel{prop:prop:polynomial_volume_implies_polynomial_growth}
    Suppose that $(G_n)_{n \in \N}$ is a sequence of transitive graphs converging locally to $G$ a transitive graph of polynomial growth with $d \geq 1$. 
    Then there exists a constant $C = C(d,G) > 0$ such that $\# B_{G_n}(r) \leq C r^d$ for all $n$ and $r$ sufficiently large.
\end{proposition}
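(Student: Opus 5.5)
Since the statement is presented as an immediate consequence of \cite[Corollary 1.17]{tessera_balls_2024}, the plan is to feed that result the local convergence hypothesis at a single large but fixed scale. I recall the corollary in roughly the following form. For every $d$ and $K$ there exist $n_0 = n_0(d,K)$ and $C' = C'(d,K)$ such that, if a connected vertex-transitive graph $\Gamma$ satisfies $\# B_\Gamma(n) \leq K n^d$ for some $n \geq n_0$, then $\# B_\Gamma(r) \leq C' r^d$ for all $r \geq n$. In words, a single polynomial volume bound at a large enough scale propagates to all larger scales, with constants depending only on $d$ and $K$.

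The steps, in order, are as follows.

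First, fix the constant. By \eqref{eq:polynomial_volume_growth} for $G$ we have $\# B_G(r) \leq C_G r^d$ for every $r \in \N$. Set $K = C_G$.

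Second, fix the scale. Let $n_0 = n_0(d,K)$ and $C' = C'(d,K)$ be the constants given by \cite[Corollary 1.17]{tessera_balls_2024}, and set $r_0 = n_0$.

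Third, transfer the bound. Local convergence gives some $N$ such that, for all $n \geq N$, the ball $B_{G_n}(r_0)$ is isomorphic to $B_G(r_0)$ as a rooted graph. In particular $\# B_{G_n}(r_0) = \# B_G(r_0) \leq K r_0^d$ for all $n \geq N$.

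Fourth, apply the corollary. Each $G_n$ is a connected transitive graph, so the corollary applied to $G_n$ at scale $r_0$ gives $\# B_{G_n}(r) \leq C' r^d$ for all $r \geq r_0$ and all $n \geq N$.

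With $C = C'$, this is exactly the claim ``for all $n$ and $r$ sufficiently large''. Since $C'$ depends only on $d$ and $C_G$, it depends only on $(d,G)$. Only finitely many $n < N$ remain, and those are covered by the phrase ``$n$ sufficiently large''.

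The main obstacle is making sure the cited corollary has exactly these quantifiers: the threshold scale $n_0$ and the output constant must depend only on $(d,K)$, and must not depend on the graph $G_n$ itself. If the corollary instead gives a bound with a different exponent, or an $O(r^{d})$ bound with an implicit constant depending on the graph, I will instead use the version in Tessera–Tointon stated through relative growth, $\# B(3n) \leq K \# B(n)$. I can still verify that hypothesis on $G_n$ from $G$ by local convergence at scale $3r_0$, and it yields uniform polynomial growth of degree at most $d$ at all scales beyond $r_0$. A second minor point is connectedness and local finiteness of $G_n$. Both hold because $G_n$ is a transitive graph in the paper's standing conventions, and local convergence forces the degrees of $G_n$ to equal that of $G$ eventually.
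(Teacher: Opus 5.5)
Your argument is correct and is the deduction the paper intends. The paper gives no proof beyond citing Tessera--Tointon's Corollary 1.17; transferring $\# B_G(r_0) \le C_G r_0^d$ to $G_n$ at a single fixed scale $r_0 \ge n_0(d,C_G)$ via local convergence, and then invoking the uniform persistence-of-growth statement, is exactly that immediate consequence. One caution about your fallback, which your main route does not need: a relative-growth hypothesis $\# B(3n) \le K \# B(n)$ at one scale on its own only gives polynomial growth of degree $O_K(1)$, not degree at most $d$.
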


This result says that witnessing polynomial volume at a sufficiently large scale implies polynomial growth \textit{at all scales}.
A well-known consequence of this theorem is that for each $d \geq 1$ the set of transitive graphs of polynomial growth with dimension at most $d$ is an open set in the local topology.
\section{Sphere calculus}
\label{section:sphere_calculus}
A real non-negative function $f$ is said to be \textbf{regularly varying with index $\gamma\in \R$} if $f(\lambda x)/f(x) \to \lambda^{\gamma}$ for all $\lambda > 0$ as $x \to \infty$. 
It follows from Karamata's theorem \cite[Theorem 1.5.11]{bingham_regular_1989} that if $f$ is regularly varying with index $\gamma > - 1$, then $F(x) = \int_0^{x} f(x) \dd x$ is itself regularly varying with index $\gamma + 1$.
It is immediate that a transitive graph of polynomial growth is a \textbf{doubling metric space}, in the sense that there exists $C_G = C_G(G) > 0$ such that $\#B(2r) \leq C_G \# B(r)$ for all $r \in \N$.

\begin{lemma}[Sphere calculus]
    \thlabel{lem:sphere_calculus}
    Let $G$ be a transitive graph of polynomial growth with $d \geq 1$.
    Let $f$ be a non-negative regularly varying function with index $\gamma\in \R$. 
    Then there exist $c = c(\gamma, G) > 0$ and $C = C(\gamma, G) > 0$ such that
    for all $r_0,r_1 \in \N \cup \{\infty\}$ with $r_0 < r_1$, %
    \begin{equation}
        \label{eq:sphere-easy-1}
        c
        \int_{r_0}^{r_1}
        t^{d-1}
        f(t)
        dt
        \leq 
        \sum_{s = r_0}^{r_1}
        \# 
        S(s)
        f(s)
        \leq 
        C
        \int_{r_0}^{r_1}
        t^{d-1}
        f(t)
        dt.
    \end{equation}
    Let $A>1$ be fixed. 
    Then there exists $c = c(\gamma,G) > 0$ such that for all $r_1 \in \N$ 
    \begin{equation}
        \label{eq:sphere-harder-1}
        \sum_{r_1/A \leq s \leq r_1}
        \# 
        S(s)
        f(r_1-s)
        \geq 
        c
        r_1^{d-1}
        \int_{1}^{r_1}
        f(t)
        dt.
    \end{equation}
    Let $k \in \N$ and let $\epsilon>0$.
    Then there exists $C = C(G,A, \gamma) > 0$ such that at least a $(1-\epsilon)$ proportion of the radii $r \in [A^{k-1},A^k)$ satisfy 
    \begin{equation}
        \label{eq:sphere-harder-upper}
        \sum_{r/A \leq s \leq r}
        \# 
        S(s)
        f(r-s)
        \leq 
        \frac{C}{\epsilon} 
        r^{d-1}
        \int_{1}^{r}
        f(t) 
        dt.
    \end{equation}
\end{lemma}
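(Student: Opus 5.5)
Throughout I would use only two inputs: $\#B(r) = \sum_{s\le r}\#S(s)$ with $c_G r^d \le \#B(r)\le C_G r^d$, and standard facts about regularly varying functions. In particular I use the uniform convergence theorem and Potter bounds: for every $\delta>0$ there is $C_\delta$ such that $f(y)/f(x)\le C_\delta\max\{(y/x)^{\gamma+\delta},(y/x)^{\gamma-\delta}\}$ for $x,y$ large. Consequently $f$ is comparable to a constant on each dyadic block $[2^j,2^{j+1})$. Small arguments, where $f$ may behave arbitrarily, contribute only to bounded initial segments and are absorbed into the constants. If $f$ may vanish at small arguments, I would restrict attention to large $r$, or assume $f$ is bounded away from $0$ and $\infty$ on compacts.

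\textbf{Step 1: proof of \eqref{eq:sphere-easy-1}.} Decompose $[r_0,r_1]$ into dyadic blocks $I_j=[2^j,2^{j+1})\cap[r_0,r_1]$. On a full block, $f\asymp f(2^j)$, so
\[
\sum_{s\in I_j}\#S(s)f(s)\asymp f(2^j)\,\#\bigl(B(2^{j+1})\setminus B(2^j)\bigr).
\]
The upper bound $\#(B(2^{j+1})\setminus B(2^j))\le C 2^{jd}\asymp\int_{I_j}t^{d-1}\,dt$ is immediate, and summing over $j$ gives the upper bound in \eqref{eq:sphere-easy-1}.

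The lower bound needs care, because $B(2^{j+1})\setminus B(2^j)$ could in principle be small. To fix this, group blocks in stretches of length $L$ with $2^{Ld}c_G>2C_G$. Then
\[
\#\bigl(B(2^{j+L})\setminus B(2^j)\bigr)\ge c_G2^{(j+L)d}-C_G2^{jd}\gtrsim 2^{(j+L)d},
\]
and on such a stretch $f$ varies by at most a constant factor depending on $L,\gamma$. Partial blocks at the ends of $[r_0,r_1]$ are handled with the same constants, enlarging or shrinking the ranges by a bounded factor; when $r_1/r_0$ is bounded, the comparison follows directly from $f(s)\asymp f(r_0)$ together with the same volume argument.

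\textbf{Step 2: proof of \eqref{eq:sphere-harder-1}.} Write
\[
\Sigma(r)=\sum_{r/A\le s\le r}\#S(s)\,f(r-s),
\]
and split $s=r-u$ according to $u\in[2^j,2^{j+1})$, with $u\le (1-1/A)r$. On each such block, $f(u)\asymp f(2^j)$ and
\[
\#\{s:r-s\in[2^j,2^{j+1})\}\text{-weight}=\#\bigl(B(r-2^j)\setminus B(r-2^{j+1})\bigr).
\]
So the whole difficulty is a lower bound on shell volumes $\#(B(r)\setminus B(r-h))\gtrsim h\,r^{d-1}$ for $1\le h\le r$. I expect this to be the main obstacle, since it is a sharp annulus lower bound. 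I would try to prove it via the geodesic/ball-packing argument: choose a maximal $h$-separated set of $\asymp (r/h)^{d-1}$ points on $S(r-h/2)$. This count follows by covering $B(r)\setminus B(r/2)$ with balls along geodesics from the origin, using doubling, since every point of $B(r)$ lies on a geodesic that extends, by transitivity and infiniteness, past radius $r$. Each such point carries a disjoint ball of radius $h/4$ inside the annulus, of volume $\gtrsim h^d$. Multiplying gives $\gtrsim h\,r^{d-1}$. Summing over $j$ then gives $\Sigma(r)\gtrsim r^{d-1}\sum_j 2^j f(2^j)\asymp r^{d-1}\int_1^{r}f$, up to the harmless truncation at $(1-1/A)r$, which is handled by regular variation.

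\textbf{Step 3: proof of \eqref{eq:sphere-harder-upper}.} Pointwise upper bounds on shells are unavailable, so I average instead. Summing $\Sigma(r)$ over $r\in[A^{k-1},A^k)$ and exchanging the order of summation gives
\[
\sum_r\Sigma(r)\le\sum_{s\le A^k}\#S(s)\sum_{u\le A^k}f(u)\le C A^{kd}\int_1^{A^k}f .
\]
This is at most $C'A^{k}\cdot A^{k(d-1)}\int_1^{A^{k-1}}f$, by regular variation of $\int f$, which has index $\gamma+1$ if $\gamma>-1$; the other cases are handled similarly. Dividing by the number $\asymp A^k$ of radii, the average of $\Sigma(r)/\bigl(r^{d-1}\int_1^r f\bigr)$ over the range is bounded by a constant $C$. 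Markov's inequality then shows that at most an $\epsilon$-fraction of radii exceed $C/\epsilon$.
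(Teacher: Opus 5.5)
There is a genuine gap, and it is exactly where you suspected. Steps 1 and 2 both rest on a sharp lower bound for thin shells, $\#(B(r)\setminus B(r-h))\gtrsim h\,r^{d-1}$, with $h$ down to order $1$.

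This bound is needed in Step~2. It is also needed for the lower bound in \eqref{eq:sphere-easy-1} when $r_1/r_0$ is close to $1$. There the bounds $c_Gr^d\le\#B(r)\le C_Gr^d$ say nothing about $\#(B(r_1)\setminus B(r_0))$, so the ``same volume argument'' gives nothing. For $r_1=r_0+1$ the claim is literally $\#S(r_0)+\#S(r_0+1)\gtrsim r_0^{d-1}$.

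Your argument for the shell bound does not work, for two reasons.

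First, it is false that every point of $B(r)$ lies on a geodesic from $o$ extending beyond radius $r$. Transitivity gives a geodesic ray from each vertex, but concatenating it with a geodesic from $o$ need not give a geodesic. Cayley graphs of groups of polynomial growth can have dead ends. For example, in $\Z$ with generators $\{\pm2,\pm3\}$ the vertex $1$ is at distance $2$ from $o$ and has no neighbour at distance $3$. In the discrete Heisenberg group dead ends occur with unbounded depth for every finite generating set.

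Second, even where extensions exist, covering the shell by geodesics through $h$-separated points of $S(r-h/2)$ gives $(r/h)^{d-1}$ points only if the geodesic cone over each $h$-cap has volume $\lesssim h^{d-1}r$. That fails already in the standard Cayley graph of $\Z^2$. There the vertices on geodesics from $o$ to $(\rho/2,\rho/2)$ fill the whole square $[0,\rho/2]^2$, so the covering only yields $\gtrsim 1$ separated points.

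The missing input is the pointwise bound $\#S(s)\ge c\,s^{d-1}$, which the paper takes from the literature. It also follows from the Coulhon--Saloff-Coste isoperimetric inequality applied to $W=B(s)$. Every edge leaving $B(s)$ has its inner endpoint in $S(s)$, so $\deg(o)\,\#S(s)\ge\#\partial_E B(s)\gtrsim\#B(s)/s\gtrsim s^{d-1}$.

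With this bound, the proof goes through:
\begin{itemize}
\item The lower bound in \eqref{eq:sphere-easy-1} is immediate for all ranges, and your $L$-stretch grouping becomes unnecessary.
\item Step~2 reduces to the paper's argument: $\#S(r-i)\ge c\,(r/A)^{d-1}$ for $0\le i\le(1-1/A)r$, then compare $\sum_i f(i)$ with $\int_1^r f$ by regular variation.
\item Your Step~3 is the same exchange-of-sums plus Markov argument as the paper's, and is fine.
\end{itemize}

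One further caveat, which applies equally to the paper's proof. The dyadic upper bound in Step~1 is valid only when $r_1\ge 2r_0$ or $r_1=\infty$. On a final partial block of length $\ell\ll 2^j$, the available estimate $\lesssim 2^{jd}f(2^j)$ is much larger than $\int t^{d-1}f\asymp \ell\,2^{j(d-1)}f(2^j)$. For $r_1=r_0+1$ the upper bound in \eqref{eq:sphere-easy-1} would assert $\#S(r_0)\lesssim r_0^{d-1}$, which the paper itself notes is open. Correspondingly, the paper's step $F(2r_1)\le cF(r_1)$ with $F(x)=\int_{r_0}^x t^{d-1}f(t)\,dt$ is not uniform in $r_0$. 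The later applications only use $r_1=\infty$, so you should state the upper bound for $r_1\ge 2r_0$ or $r_1=\infty$.
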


\begin{proof}
    Since $f(t)$ is regularly varying with index $\gamma$ and $t^{d-1}f(t)$ is regularly varying with index $\gamma + d - 1$, the Uniform Convergence Theorem for regularly varying functions \cite[Theorem 1.5.2]{bingham_regular_1989} gives that there exist $c_{\gamma} = c(\gamma,d), C_{\gamma} = C_{\gamma}(\gamma,d) > 0$, and $R > 0$ such that 
    \begin{align}
        \label{eq:unit-interval-bound}
        c_\gamma 
        r^{d-1}
        f(r)
        \leq 
        t^{d-1}f(t)
        \leq 
        C_{\gamma} 
        r^{d-1}f(r)
        \quad 
        \text{ and }
        \quad
        c_\gamma 
        f(r)
        \leq 
        f(t)
        \leq 
        C_{\gamma} 
    \end{align}
    for all $r \geq R$ and for all $t \in [r,2r]$, and in particular for all $t \in [r,r+1]$.
    We begin by proving \eqref{eq:sphere-easy-1}. 
    By the known lower bounds for the volume of spheres \cite{breuillard_rate_2013} together with the bounds above,
    \begin{equation}
        \sum_{s = r_0}^{r_1}
        \# 
        S(s)
        f(s) 
        \geq 
        c_G  
        \sum_{s=r_0}^{r_1}  
        s^{d-1} 
        f(s)
        \geq 
        \frac{c_G}{c_\gamma}
        \sum_{s=r_0}^{r_1}
        \int_s^{s+1} 
        t^{d-1} 
        f(t)
        \dd t
        \geq 
        \frac{c_G}{c_\gamma}
        \int_{r_0}^{r_1} 
        t^{d-1} 
        f(t)
        \dd t 
    \end{equation}
    for some $c_G > 0$, where in the last inequality we use that $f$ is non-negative.
    We now prove the upper bound in \eqref{eq:sphere-easy-1}.
    Writing $N = \lfloor \log_2(r_1/r_0) \rfloor$, we bound the sum using diadic blocks
    \begin{equation}
        \label{eq:sphere-factorise-1}
        \sum_{s = r_0}^{r_1}
        \# 
        S(s)
        f(s)
        \leq 
        \sum_{i = 0}^{N}
        \max_{2^i r_0 \leq s \leq (2^{i+1} r_0 - 1) \wedge r_1}
        f(s)
        \sum_{s = 2^i r_0}^{(2^{i+1} r_0 - 1) \wedge r_1}
        \# S(s).
    \end{equation}
    Since spheres partition balls and since $G$ is a doubling metric space, for all $0 \leq i \leq N$,
    \begin{equation}
        \sum_{s = 2^i r_0}^{(2^{i+1} r_0 - 1) \wedge r_1}
        \# S(s)
        \leq
        \# 
        B(2^{i+1} r_0)
        \leq
        c_1
        \#
        B(2^i r_0)
        \leq 
        c_2
        (2^i r_0)^d
    \end{equation}
    for some $c_1 = c_1(G), c_2 = c_2(G) > 0$.
    Since $f$ is regularly varying, there exists $\widetilde{C}_\gamma > 0$ such that 
    \begin{equation}
        \max_{2^i r_0 \leq s \leq (2^{i+1} r_0 - 1) \wedge r_1}
        f(s) 
        \leq 
        \widetilde{C}_\gamma
        f(2^i r_0).
    \end{equation}
    Applying the two bounds above to \eqref{eq:sphere-factorise-1} we bound further using \eqref{eq:unit-interval-bound} with $t \in [r,r+1]$
    \begin{align}
        \label{eq:all_together_1}
        \sum_{s = r_0}^{r_1}
        \# 
        S(s)
        f(s)
        \leq 
        c_2
        \widetilde{C}_\gamma
        \sum_{i = 0}^{N}
        (2^i r_0)^d
        f(2^i r_0)  
        & \leq   
        \frac{c_2 \widetilde{C}_\gamma}{c_\gamma}  
        \sum_{i = 0}^{N} 
        \int_{2^i r_0}^{2^{i+1}r_0} 
        t^{d-1} 
        f(t)
        \dd t
        \\
        \label{eq:all_together_2}
        & \leq 
        \frac{c_2 \widetilde{C}_\gamma}{c_\gamma}
        \int_{r_0}^{2r_1}
        t^{d-1} f(t)\mathrm d t,
    \end{align}
   where for \eqref{eq:all_together_2} we use that $2^{i+1}r_0 \leq 2r_1$.  
    Writing $F(x) = \int_{r_0}^x t^{d-1}f(t) \dd t$, Karamata's theorem gives that $F(x)$ is regularly varying, so there exists $c > 0$ such that $F(2r_1) \leq c F(r_1)$ for all $r_1 \in \N$ with $r_1 > r_0$.
    Hence we can conclude the proof of the upper bound in \eqref{eq:sphere-easy-1}, as for some $C = C(G,\gamma) > 0$
    \begin{equation}
        \sum_{s = r_0}^{r_1}
        \# 
        S(s)
        f(s)
        \leq 
        C
        \int_{r_0}^{r_1}
        t^{d-1} f(t)\mathrm d t.
    \end{equation}
    We now prove \eqref{eq:sphere-harder-1} on the lower bound for the truncated convolution.
    Since $i \leq r_1/A$ it follows that $r_1-i \geq (1-1/A)r_1$, and together with the lower bound for the volume of spheres we have
    \begin{align}
        \sum_{r_1/A \leq s \leq r_1}
        \# 
        S(s)
        f(r_1-s)
        =
        \sum_{0 \leq i \leq r_1 - r_1/A}
        \# 
        S(r_1 - i)
        f(i)
        \geq
        c_G
        r_1^{d-1}
        \sum_{0 \leq i \leq r_1 - r_1/A}
        f(i)
    \end{align}
    for some $c(G, A) > 0$.
    The bounds in \eqref{eq:unit-interval-bound} give for some $c = c(A,\gamma) > 0$ that
    \begin{equation}
       \sum_{0 \leq i \leq r_1 - r_1/A}
       f(i)
       \geq 
       \sum_{1 \leq i \leq r_1 - r_1/A}
       f(i)
       \geq 
       C_\gamma^{-1}
       \int_{1}^{r_1 - r_1/A} 
       f(t)
       \dd t 
       \geq 
       c
       C_\gamma^{-1}
       \int_{1}^{r_1} 
       f(t)
       \dd t,
    \end{equation}
    where in the last inequality we use that by Karamata's theorem the integral $F(x) = \int_{1}^{x} f(t) \dd t$ is itself a regularly varying function and hence, regardless of whether the integral is convergent or divergent, the integrals from $r_1 - r_1/A$ to $r_1$ can be absorbed into the constant $c$.
    We now prove \eqref{eq:sphere-harder-upper}. 
    Let $k \in \N$ and $\epsilon > 0$.
    We begin by computing the average of the sum in \eqref{eq:sphere-harder-1} for $r \in [A^{k-1}, A^k)$, namely, with $N_A(k) = \# \{r \in \N: r \in [A^{k-1}, A^k)\}$,
    \begin{align}
        \overline R_f(k):
        =
        \frac{1}{N_A(k)}
        \sum_{A^{k-1} \leq r < A^k} 
        \sum_{r/A \leq s \leq r}
        \# 
        S(s)
        f(r - s)\\
        \leq 
        \frac{1}{N_A(k)}
        \sum_{A^{k-2} \leq s < A^k}
        \# 
        S(s)
        \sum_{
            s \leq r \leq As
        }
        f(r - s),\label{eq:exchanged_sums}
    \end{align}
    where we exchanged the summation order to obtain the second row.
    Then, for all $s \in [A^{k-2},A^k)$, 
    \begin{equation}
        \sum_{
            s \leq r \leq As
        }
        f(r - s)
        \leq 
        \sum_{0 \leq j \leq (A-1)A^k}
        f(j)
        \leq 
        c_{\gamma} 
        \int_{1}^{(A-1)A^k}
        f(t)
        \dd t
        \leq 
        c_{\gamma}
        \int_{1}^{A^{k-1}}
        f(t)
        \dd t,
    \end{equation}
    where the second inequality follows from \eqref{eq:unit-interval-bound} and the third inequality follows as before.
    Upon applying this bound to \eqref{eq:exchanged_sums} and by the ``spheres partition balls'' observation together with the fact that $\# B(A^k)/N_A(k) \leq c A^{(k-1)(d-1)}$ for some $c > 0$, we have 
    \begin{multline}
        \overline R_f(k)
        \leq 
        \frac{1}{N_A(k)}
        \bigg(
            \sum_{A^{k-2} \leq s < A^k}
            \# 
            S(s)
        \bigg)
        \bigg(
            c_{\gamma}
            \int_{1}^{A^{k-1}}
            f(t)
            \dd t
        \bigg)
        \\
        \leq
        \frac{
            c_{\gamma} 
            \# B(A^k)
        }{
            N_A(k)
        }
        \int_{1}^{A^{k-1}}
        f(t)
        \dd t
        \leq 
        c 
        r^{d-1}
        \int_{1}^{r}
        f(t)
        \dd t,
    \end{multline}
    where in the last step we use monotonicity and that $A^{k-1} \leq r$ for all $r \in [A^{k-1},A^k]$.
    We conclude the result by the pigeonhole principle: for at least a $(1 - \epsilon)$ proportion of the radii $r \in [A^{k-1},A^k)$,
    \begin{equation}
        \sum_{r/A \leq s \leq r}
        \# 
        S(s)
        f(r-s)
        \leq 
        \overline R_f(k)/\epsilon.
    \end{equation}
    \vskip-1em
\end{proof}

We use a slightly different sphere calculus to prove the divergence of a series involving the reciprocal of weighted spheres. 
This result is specific to two-dimensional transitive graphs, and we use it to establish the effective resistance criterion for recurrence in \thref{thm:two_dim_recurrence}.

\begin{lemma}
    \thlabel{lem:log_divergence}
    Let $G$ be a transitive graph of polynomial growth with $d = 2$. 
    Then 
    \begin{equation}
        \sum_{r \ge 1}
        \frac{1}{\# S(r) \log(\# S(r))}
        =
        \infty.
    \end{equation}
\end{lemma}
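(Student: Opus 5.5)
We will prove the divergence by a dyadic blocking argument that uses only the ``spheres partition balls'' observation together with \eqref{eq:polynomial_volume_growth}, avoiding any pointwise upper bound on $\# S(r)$. We write $a_r = \# S(r)$, which is at least $1$ for every $r \ge 1$ since $G$ is infinite and connected. The map $\phi(x) = x\log x$ is increasing and convex on $[1,\infty)$, so $x \mapsto 1/\phi(x)$ is convex and decreasing there. (To avoid the degenerate term at $a_r = 1$, where $\log 1 = 0$, we either read the term as $+\infty$, which gives the claim trivially, or replace $\log$ by $\log(e+\cdot)$. Neither change affects the argument below, and for $r$ large we have $a_r \ge 2$ anyway by the lower bounds of \cite{breuillard_rate_2013}.)

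Fix $k \ge 1$ and consider the block $I_k = [2^k, 2^{k+1})$, which contains $2^k$ radii. Because spheres partition balls, $\sum_{r \in I_k} a_r \le \# B(2^{k+1}) \le C_G 4^{k+1}$. Hence the average of $a_r$ over $I_k$ is at most $C 2^k$. Jensen's inequality for the convex function $1/\phi$ then gives
\begin{equation*}
\sum_{r \in I_k} \frac{1}{a_r \log a_r} \;\ge\; \frac{2^k}{\phi\big(\tfrac{1}{2^k}\sum_{r\in I_k} a_r\big)} \;\ge\; \frac{2^k}{C2^k \log(C2^k)} \;\ge\; \frac{c}{k}.
\end{equation*}
In the second inequality we use that $1/\phi$ is decreasing.

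Summing over $k$ gives $\sum_k c/k = \infty$, which proves the lemma. The only step that needs care is the Jensen step. It requires that the average lies in the region where $1/\phi$ is convex and decreasing, i.e.\ that all $a_r \ge 1$ (or $\ge 2$ after the modification mentioned above), and that the upper bound on the average passes through the decreasing function correctly. We expect this to be the main, though mild, obstacle. The point of the argument is that $d=2$ makes the block average of order $2^k$, which exactly cancels the $2^k$ terms and leaves the harmonic factor $1/k$.
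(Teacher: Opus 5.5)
Your argument is correct and is essentially the paper's proof: dyadic blocks of radii, the ``spheres partition balls'' bound giving a block average of $\#S(r)$ of order $2^k$ when $d=2$, Jensen's inequality for the convex decreasing function $1/(x\log x)$, and the resulting harmonic series $\sum_k 1/k$. One small fix: in general $1/\phi$ need not be convex just because $\phi$ is increasing and convex, but here convexity holds by direct computation, since $\frac{d^2}{dx^2}\frac{1}{x\log x}=\frac{2\log^2 x+3\log x+2}{(x\log x)^3}>0$ for $x>1$.
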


\begin{remark}
    Let $S_{E}(r) = \{(x,y) \in E : d_G(o,\{x,y\} = r \}$ denote the edge sphere of radius $r$ centred at the origin.
    Note that $\# S(r) \leq \# S_E(r) \leq \deg(G) \# S(r)$ for all $r \in \N$.
    The statement of \thref{lem:log_divergence} also holds with the edge sphere $S_{E}(r)$ in place of the sphere $S(r)$.
\end{remark}

\begin{proof}
    For $k \in \N$, the average volume of a sphere with radius $r \in (2^{k-1},2^k]$ satisfies 
    \begin{equation}
        \label{eq:sphere_vol_average}
        \frac{1}{2^{k-1}}
        \sum_{2^{k-1} < r \leq 2^k}
        \# 
        S(r)
        =
        \frac{
            \# B(2^k)
            -
            \# B(2^{k-1})
        }{
            2^{k-1}
        }
        \leq 
        c 
        2^{k-1} 
    \end{equation}
    for some $c > 0$, where we use the volume bounds for balls.
    Let $f(x) = 1 / (x \log x)$, which is both convex and decreasing for $x > 1$.
    Applying $f$ to the average volume of a sphere we have
    \begin{equation}
        \label{eq:averaged_bound_with_f}
        \frac{1}{2^{k-1}}
        \sum_{2^{k-1} < r \leq 2^k}
        \frac{1}{\# S(r) \log(\# S(r))}
        \geq
        f
        \bigg(
            \frac{1}{2^{k-1}}
            \sum_{2^{k-1} < r \leq 2^k}
            \#S(r)
        \bigg)
        \geq
        f(c 2^{k-1})
    \end{equation}
    where the first inequality follows from the convexity of $f$, and the second inequality follows from the monotonicity of $f$ together with \eqref{eq:sphere_vol_average}.
    Rearranging \eqref{eq:averaged_bound_with_f} and by the definition of $f$ we conclude
    \begin{equation}
        \sum_{r \ge 1}
        \frac{1}{\# S(r) \log(\#S(r))}
        =
        \sum_{k \ge 1}
        \sum_{2^k-1 < r \leq 2^k}
        \frac{1}{\# S(r) \log(\#S(r))}
        \geq
        \sum_{k \ge 1}
        \frac{1}{c(k-1) \log 2}
        = 
        \infty.
    \end{equation} 
    \vskip-1em
\end{proof}

\subsection{Integrability and non-integrability}
The first of the sphere calculus is to recover the standard calculations for $\Z^d$ involving the \LRP{} kernel and the expected degree.

\begin{lemma}
    \thlabel{lem:integrability}
    Let $G$ be a transitive graph of polynomial growth with $d \geq 1$, and suppose that $J : V \times V \to \R_+$ is a transitive kernel.
    Suppose that $J$ satisfies $J(x,y) = \Omega(d_G(x,y)^{-d \alpha})$ with $\alpha > 0$.
    If $\alpha > 1$ then there exists $c = c(G,J) > 0$ such that     for all $x \in V$ and $r \in \N$
    \begin{equation}
        \label{eq:kernel_lower_bound}
        c
        r^{d(1-\alpha)}
        \leq
        \sum_{L = r}^{\infty}
        \sum_{y \in S(x,L)}
        J(x,y).
    \end{equation}
    If $\alpha \leq 1$ then $\sum_{L = r}^{\infty} \sum_{y \in S(x,L)} J(x,y) = \infty$ and hence $\sum_{L = r}^{\infty} \E_{\beta} \left[ \deg(x,L) \right] = \infty$ for all $x \in V$ and $r \in \N$.
    Suppose that $J$ satisfies $J(x,y) = O(d_G(x,y)^{-d \alpha})$ with $\alpha > 1$.
    Then there exists $c = c(G,J) > 0$ such that     for all $x \in V$ and $r \in \N$
    \begin{equation}
        \label{eq:kernel_upper_bound}
        \sum_{L = r}^{\infty}
        \sum_{y \in S(x,L)}
        J(x,y)
        \leq 
        c 
        r^{d(1-\alpha)}.
    \end{equation}
    In particular, $\sum_{y \in V \setminus \{x\}} J(x,y) < \infty$ and $\E_{\beta} \left[ \deg(x) \right] < \infty$ for all $x \in G$.
\end{lemma}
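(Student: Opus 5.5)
The plan is to apply the sphere calculus of \thref{lem:sphere_calculus}, inequality \eqref{eq:sphere-easy-1}, with $r_0 = r$, $r_1 = \infty$ and the regularly varying function $f(t) = t^{-d\alpha}$ of index $\gamma = -d\alpha$. By transitivity of $J$ and of $G$, the sum $\sum_{L \geq r}\sum_{y \in S(x,L)} J(x,y)$ does not depend on $x$, so we may take $x = o$.

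\textbf{Lower bound.} The kernel bound \eqref{eq:polynomial_kernel} only holds for $d_G(x,y) \geq R_J$, so we first treat $r \geq R_J$. Using $J(o,y) \geq c_J L^{-d\alpha}$ for $y \in S(L)$ and the left inequality in \eqref{eq:sphere-easy-1}, we get
\begin{equation*}
\sum_{L \geq r} \sum_{y \in S(L)} J(o,y) \;\geq\; c_J \sum_{L \geq r} \#S(L)\, L^{-d\alpha} \;\geq\; c\, c_J \int_r^\infty t^{d-1-d\alpha}\,\mathrm{d}t .
\end{equation*}
For $\alpha > 1$ the integral equals $r^{d(1-\alpha)}/(d(\alpha-1))$, which gives \eqref{eq:kernel_lower_bound} for $r \geq R_J$. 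For $r < R_J$, the sum only grows as $r$ decreases, while $r^{d(1-\alpha)} \leq 1$. So the bound at $r = R_J$, namely $c' R_J^{d(1-\alpha)}$, handles these finitely many radii after shrinking the constant. For $\alpha \leq 1$ the integral $\int_{\max(r,R_J)}^\infty t^{d-1-d\alpha}\,\mathrm{d}t$ diverges, so the series is infinite.

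\textbf{Degree consequence.} We need $\E_\beta[\deg(x,L)] = \sum_{y \in S(x,L)} (1 - e^{-\beta J(x,y)})$, reading $\deg(x,L)$ as the number of neighbours at distance $L$. Since $1-e^{-u} \geq (1-e^{-\beta M}) u/(\beta M)$ for $0 \leq u \leq \beta M$, the expected degree is comparable to $\beta J$ whenever $J$ is bounded. If $J$ is unbounded on the relevant pairs, infinitely many of them have connection probability bounded below, and the series again diverges. Either way, divergence of $\sum J$ gives divergence of the expected degree sum for $\beta > 0$.

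\textbf{Upper bound.} Assuming $J = O(d_G^{-d\alpha})$ with $\alpha > 1$, the same computation with the right inequality of \eqref{eq:sphere-easy-1} gives a bound of order $r^{d(1-\alpha)}$ for $r \geq R_J$. For $r < R_J$, the finitely many vertices in $B(R_J)$ contribute finite terms, since $J$ takes real values. These are absorbed into the constant, again because $r^{d(1-\alpha)} \geq R_J^{d(1-\alpha)}$ there. Setting $r = 1$ shows that $\norm{J} < \infty$, and \eqref{eq:standard_kernel_bound} then gives $\E_\beta[\deg(x)] \leq \beta\norm{J} < \infty$.

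The only delicate points are the small-$r$ bookkeeping and the lower bound $1-e^{-u} \gtrsim u$ used for the degree statement. The main computation is a direct application of \thref{lem:sphere_calculus}.
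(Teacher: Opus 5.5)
Your proposal is correct and follows essentially the same route as the paper. The paper also reduces to $x=o$ by transitivity, compares $J$ with $L^{-d\alpha}$ for $L\geq R_J$, and controls $\sum_L \#S(L)L^{-d\alpha}$ using the sphere-volume lower bound $\#S(L)\gtrsim L^{d-1}$ (which is what the left inequality of \eqref{eq:sphere-easy-1} encodes) together with the sphere calculus for the upper bound. Your treatment of radii $r<R_J$ and of the passage from $\sum J=\infty$ to $\sum_L \E_\beta[\deg(x,L)]=\infty$ (for $\beta>0$) fills in details the paper leaves implicit; the latter could be shortened by noting that $1-e^{-\beta J(x,y)}\geq 1-e^{-\beta c_J L^{-d\alpha}}\geq (1-e^{-1})\beta c_J L^{-d\alpha}$ once $\beta c_J L^{-d\alpha}\leq 1$.
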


\begin{proof}
    By the transitivity of $J$ it is sufficient to consider $J(o,y)$.
    Let $m = \max(r,R_J)$, so that  
    using the known lower bounds for the volume of spheres we have
    \begin{align}
        \sum_{L = r}^{\infty} 
        \sum_{y \in S(x,L)} 
        J(x,y)
        \geq 
        c_J
        \sum_{L=m}^{\infty}
        \# S(x,L) 
        L^{-d \alpha}
        \geq 
        c_J
        \sum_{L=m}^{\infty} 
        L^{d-1-d \alpha}
    \end{align}
    for some $c = c(G,J) > 0$.
    When $\alpha > 1$ the series converges and gives \eqref{eq:kernel_lower_bound}.
    When $\alpha \leq 1$ the series diverges and so does $\E_{\beta} \left[ \deg(x,L) \right]$. Now we prove \eqref{eq:kernel_upper_bound}. We bound
    \begin{equation}
        \sum_{L = r}^{\infty} 
        \sum_{y \in S(x,L)} 
        J(x,y)
        \leq
        \charf
        \left(
            r
            < 
            R_J
        \right)
        \sum_{L = r}^{R_J - 1}
        \sum_{y \in S(x,L)} 
        J(x,y)
        +
        c_J
        \sum_{L = m}^{\infty}
        \# 
        S(x,L)
        L^{- d \alpha}.
    \end{equation}
    The first sum is finite. The second sum, using  the sphere calculus \eqref{eq:sphere-easy-1} can be bounded by
    \begin{equation}
        \sum_{L = m}^{\infty}
        \# 
        S(x,L)
        L^{- d \alpha}
        \leq
        c_1 
        \int_{\max(r,R_J)}^{\infty}
        t^{d-1-d\alpha}
        dt
        \leq
        c_2
        r^{d(1-\alpha)} 
    \end{equation}
    for some $c_1 = c_1(G) > 0$ and $c_2 = c_2(G,J) > 0$, where we use that $\alpha > 1$. 
    This gives \eqref{eq:kernel_upper_bound}.
\end{proof}

We also use the sphere calculus to compute the order of the expected number of edges between $B(r)$ and its complement $B(r)^c$.

\begin{lemma}
    \thlabel{lem:averaged_ball_connection}
    Let $G$ be a transitive graph of polynomial growth with $d \geq 1$, and suppose that $J : V \times V \to \R_+$ is a transitive kernel with $J(x,y) = O(d_G(x,y)^{-d \alpha})$ with $\alpha>1$.
     Let $\epsilon \in(0,1)$.
    Then there exists $c = c(G,J)$ such that for all $k\in \N$, at least $(1-\varepsilon)$ proportion of the radii $r \in [2^{k-1},2^k]$ satisfy
    \begin{equation}
    \label{eq:Jrr-sharper}
        J
        \left(
            B(r), 
            B(r)^c
        \right)
        \leq 
         \frac{c}{\varepsilon}
        r^{
        \max
        \left(
            d(2-\alpha),
            d - 1
        \right)  
        }
        \log(r)^{
            \charf
            \left(
                \alpha=1+1/d
            \right)
        }.
    \end{equation}
\end{lemma}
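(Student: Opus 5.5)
We write $J(B(r),B(r)^c)$ as a sum over the distance from the origin of the inner vertex and then apply the averaged sphere bound \eqref{eq:sphere-harder-upper}. For $x \in S(s)$ with $s \leq r$, every $y \notin B(r)$ satisfies $d_G(x,y) \geq r+1-s$. By transitivity and \thref{lem:integrability}, \eqref{eq:kernel_upper_bound} (with $x$ in place of $o$),
\begin{equation*}
    \sum_{y \notin B(r)} J(x,y)
    \leq
    \sum_{L \geq r+1-s}\sum_{y\in S(x,L)} J(x,y)
    \leq
    c\,(r+1-s)^{d(1-\alpha)} .
\end{equation*}
Set $f(t) = (t+1)^{d(1-\alpha)}$, which is regularly varying with index $\gamma = d(1-\alpha)$. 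Then
\begin{equation*}
    J(B(r),B(r)^c) \leq c\sum_{s=0}^{r} \#S(s)\, f(r-s).
\end{equation*}

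We split the sum at $s = r/2$, taking $A=2$.

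For the inner part, $s < r/2$, we have $f(r-s) \leq C f(r/2) \leq C' r^{d(1-\alpha)}$, because $f$ is decreasing and regularly varying. Since spheres partition balls, $\sum_{s<r/2}\#S(s) \leq \#B(r) \leq C_G r^d$. This part is therefore at most $C r^{d(2-\alpha)}$ for every $r$, with no averaging needed.

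For the outer part, $r/2 \leq s \leq r$, we apply \eqref{eq:sphere-harder-upper} with $A=2$ to the radii in $[2^{k-1},2^k)$. It gives that at least a $(1-\epsilon)$ proportion of these radii satisfy
\begin{equation*}
    \sum_{r/2\leq s\leq r}\#S(s) f(r-s) \leq \frac{C}{\epsilon} r^{d-1}\int_1^r (t+1)^{d(1-\alpha)}\dd t .
\end{equation*}
Adding the single radius $2^k$ to the interval changes the proportion only negligibly, and this can be absorbed by replacing $\epsilon$ with $\epsilon/2$ (or by treating small $k$ separately, adjusting the constant).

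The integral is evaluated in three cases.
\begin{itemize}
    \item If $d(1-\alpha) > -1$, i.e.\ $\alpha < 1+1/d$, it is $O(r^{d(1-\alpha)+1})$, so the outer part is $O(r^{d(2-\alpha)}/\epsilon)$.
    \item If $\alpha = 1+1/d$, it is $O(\log r)$, so the outer part is $O(r^{d-1}\log r/\epsilon)$.
    \item If $\alpha > 1+1/d$, it converges, so the outer part is $O(r^{d-1}/\epsilon)$.
\end{itemize}
In the first case $d(2-\alpha) > d-1$, and in the third $d(2-\alpha) < d-1$, so in every case the two parts combine to the right-hand side of \eqref{eq:Jrr-sharper}. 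Small $r$ (below $R_J$, where the $O$-bound on $J$ may not hold) only affects constants, since \eqref{eq:kernel_upper_bound} already handles all $r$.

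The main point to check is that \eqref{eq:sphere-harder-upper} is used with the correct convention: its proof bounds the sum uniformly over the relevant $s$-range via $\int_1^{A^{k-1}}$, and this is compatible with $f$ being decreasing. We also need the shifted argument $r+1-s$ rather than $r-s$, which is harmless because $f(t)=(t+1)^{\gamma}$ is defined and regularly varying at $t=0$.
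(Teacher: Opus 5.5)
Your proposal is correct and follows the paper's own proof. Both split at $r/2$, bound the bulk by $r^{d(2-\alpha)}$ because spheres partition balls, and control the annulus $r/2\le s\le r$ with \eqref{eq:sphere-harder-upper} at $A=2$, finishing with the three-case evaluation of $\int_1^r (t+1)^{d(1-\alpha)}\dd t$; your indexing by $s=d_G(o,x)$ with the shift $r+1-s$ is in fact cleaner than the paper's, whose bulk and annulus index ranges are interchanged. One caveat, which is really a defect of the statement: when $\alpha=1+1/d$ the right-hand side vanishes at $r=1$, so your ``treat small $k$ separately'' step cannot go through there as written, and $\log r$ should be read as, e.g., $\log(r+1)$.
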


\begin{proof}[Proof of \thref{lem:averaged_ball_connection}]
    For $x>0$, we write $B(x):=B(\lfloor x\rfloor)$.
    \begin{align}
        J(
            B(r),
            B(r)^c
        )
        =
        J
        (
            B(r/2)
            ,
            B(r)^c
        )
        + 
        J
        (
            B(r) \setminus B(r/2),
            B(r)^c
        )
    \end{align}
    and decomposing the sum over $x \in B(r/2)$ according to the distance of $x$ from $B(r/2)^c$ we have
    \begin{align}
    \label{eq:J-Br-connect}
        J
        (
            B(r/2)
            ,
            B(r)^c
        )
        = 
        \sum_{z = 0}^{\lfloor r/2\rfloor } 
        \sum_{x \in S(r-z)} 
        \sum_{y \in B(r)^c} 
        J(x,y)
        \leq 
        \sum_{z = 0}^{\lfloor r/2\rfloor } 
        \# S(r - z)
        \sum_{y \in B(z)^c} 
        J(o,y),
    \end{align}
    where we use the transitivity of the kernel $J$ and the fact that for $x \in S(r-z)$ we have $d_G(x,B(r)^c) \geq z$.
    For $z \in \N$, it follows from \thref{lem:integrability} that there exists $c_1 = c_1(G,J) > 0$ such that
    \begin{equation}\label{eq:yet-another-unlabeled-eq}
        \sum_{y \in B(z)^c} 
        J(o,y)
        =
        \sum_{L = z + 1}^{\infty}
        \sum_{y \in S(L)}
        J(o,y)
        \leq 
        c_1 
        (z + 1)^{d(1 - \alpha)}.
    \end{equation}
    Note that $(z+1)^{d(1-\alpha)} \leq (r/2)^{d(1-\alpha)}$ for $z \geq r/2$ and $\alpha>1$, and by the spheres partition balls trick
    \begin{equation}
        \label{eq:bulk_edges}
        J
        (
            B(r/2),
            B(r)^c
        )
        \leq 
        c_1
        (r/2)^{d(1-\alpha)}
        \sum_{z = 0}^{r/2} 
        \# S(r - z)
        \leq
        c_1
        (r/2)^{d(1-\alpha)}
        \# B(r/2)
        \leq 
        c_2 r^{d(2-\alpha)}
    \end{equation}
    for some $c_2 = c_2(G,J) > 0$.
    The same argument as above yields
    \begin{equation}
        J
        (
            B(r) \setminus B(r/2),
            B(r)^c
        )
        \leq 
        c_1 
        \sum_{z = r/2 + 1}^{r}
        \# 
        S(r-z)
        (z + 1)^{d(1-\alpha)}.
    \end{equation}
    By the sphere calclus, at least $(1-\varepsilon)$ proportion of the radii $r \in [2^{k-1},2^k]$ satisfy
    \begin{equation}
        \sum_{z = r/2 + 1}^{r}
        \# 
        S(r-z)
        (z + 1)^{d(1-\alpha)}
        \leq  
        \frac{c}{\varepsilon} 
        r^{d-1}
        \int_{1}^{r}
        (t + 1)^{d(1-\alpha)}
        dt.
    \end{equation}
    Evaluating the integral, we have that at least $(1-\varepsilon)$ proportion of the radii $r \in [2^{k-1},2^k]$ satisfy
    \begin{equation}
        \label{eq:boundary_effects}
        J
        (
            B(r) \setminus B(r/2),
            B(r)^c
        )
        \leq 
        \begin{cases} 
            c_4  
            r^{d-1}
            & 
            \text{if } 
            d(1 - \alpha) 
            <
            -1,
            \\
            c_4 
            r^{d-1}
            \log r
            & 
            \text {if } 
            d(1 - \alpha)
            =
            -1,
            \\
            c_4 
            r^{d(2 - \alpha)} 
            & \text {if } 
            d(1 - \alpha)
            >
            - 1.
        \end{cases} 
    \end{equation}
    The result follows by combining \eqref{eq:bulk_edges} and \eqref{eq:boundary_effects}.
\end{proof}

Equipped with the structure theory from the previous section and the sphere calculus of this section, we can now prove that a natural class of examples satisfy the definitions of local convergence and uniform integrability in Section \ref{subsec:local_topology}.

\begin{lemma}
\thlabel{lem:kernel_example}
    Let $G$ be a transitive graph of polynomial growth with $d \in \N$ and let $J :V \times V \to \R_+$ be the transitive kernel given by $J(x,y) = d_G(x,y)^{- d \alpha}$ for $\alpha > 0$.
    Let $(G_n)_{n \in \N}$ be a sequence of transitive graphs converging to $G$, let $(s_n)_{n \in \N}$ be a sequence converging to $d \alpha$, and let $(J_n)_{n \in \N}$ be the sequence of kernels $J : V_n \times V_n \to \R_+$ given by $J_n(x,y) = d_{G_n}(x,y)^{- s_n}$. 
    Then $(G_n,J_n)_{n \in \N}$ converges locally to $(G,J)$. Further, if $\alpha>1$, then the kernels $(J_n)_{n \in \N}$ are uniformly integrable.
\end{lemma}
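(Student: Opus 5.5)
Two things have to be checked: that the coupling radius $R_n(G_n,G,\epsilon)$ tends to infinity for every $\epsilon>0$, and, when $\alpha>1$, that $\sup_n \sum_{x\neq o_n} J_n(o_n,x)$ is finite. The first is a finite computation on growing balls. The second follows from \thref{prop:prop:polynomial_volume_implies_polynomial_growth} together with a dyadic shell bound, in the spirit of the upper bound in \eqref{eq:sphere-easy-1}.

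\emph{Local convergence.} Fix $\epsilon>0$ and $R\in\N$. For $n$ large, local convergence of the graphs gives $B_{G_n}(2R)\cong B_G(2R)$. Under this isomorphism $d_{G_n}(x,y)=d_G(x,y)$ for $x,y\in B(R)$, since any geodesic between two such points has length at most $2R$ and therefore stays inside the ball of radius $2R$ where the graphs agree. Hence
\[
\sum_{x,y\in B(R)}|J_n(x,y)-J(x,y)|=\sum_{x\neq y\in B(R)}\bigl|d_G(x,y)^{-s_n}-d_G(x,y)^{-d\alpha}\bigr|.
\]
This is a sum of $(\#B(R))^2$ terms, with distances taking values in $\{1,\dots,2R\}$. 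Each term tends to $0$ as $s_n\to d\alpha$, so the sum is at most $\epsilon$ for $n$ large. The diagonal terms $x=y$ are excluded, or equal under whatever convention is used for them. Since $R$ was arbitrary, $\liminf_n R_n\geq R$ for every $R$, so $R_n\to\infty$.

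\emph{Uniform integrability for $\alpha>1$.} Pick $\delta>0$ with $d\alpha-2\delta>d$. For $n$ large we have $s_n\geq d\alpha-\delta$, and by \thref{prop:prop:polynomial_volume_implies_polynomial_growth} there are $C$ and $r_*$ such that $\#B_{G_n}(r)\leq Cr^d$ for all $r\geq r_*$ and all large $n$. Split the sum by shells:
\[
\sum_{x\neq o_n}J_n(o_n,x)\leq \#B_{G_n}(r_*)+\sum_{k\geq \lfloor\log_2 r_*\rfloor}\#B_{G_n}(2^{k+1})\,2^{-k s_n}.
\]
In the first term every $J_n$-value is at most $1$, and $\#B_{G_n}(r_*)=\#B_G(r_*)$ for $n$ large by local convergence. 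The second term is at most $\sum_k C2^{(k+1)d}2^{-k(d\alpha-\delta)}$, a convergent geometric series that does not depend on $n$. This gives the uniform bound $M$ required in \eqref{eq:tight-decay-Jn}.

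The main obstacle is this second step: the growth control on $G_n$ must hold \emph{uniformly at all scales}, whereas local convergence only sees finite balls. \thref{prop:prop:polynomial_volume_implies_polynomial_growth} supplies exactly this uniformity, and once it is in hand the rest is routine.
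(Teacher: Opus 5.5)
Your proposal is correct and follows essentially the same route as the paper: a finite computation on $B(R)$ shows that the coupling radius tends to infinity, and uniform integrability comes from the uniform volume bound of \thref{prop:prop:polynomial_volume_implies_polynomial_growth} together with a shell decomposition. The differences are only cosmetic: you use an explicit dyadic bound where the paper applies the sphere calculus \eqref{eq:sphere-easy-1} to $G_n$, and you bound the inner ball trivially by $\#B_G(r_*)$ where the paper compares with $J$ via the $L^1$ estimate; if anything, this makes the uniformity in $n$ more transparent.
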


\begin{proof}[Proof of \thref{lem:kernel_example}]
    Let $\epsilon > 0$ and $R>0$. 
    By the definition of local convergence we have $B_{G_n}(2R) \cong B_{G}(2R)$ for all $n$ sufficiently large, so that the graph metrics $d_{G_n}$ and $d_G$ agree in $B(R)$, and we may choose $n$ sufficiently large such that
    \begin{equation}
        \label{eq:kernel_l1_convergence}
        \sum_{x,y \in B(R)} 
        \lvert 
            J_n(o,x)
            -
            J(o,x)
        \rvert
        =
        \sum_{x,y \in B(R)} 
        \lvert 
            d_{G_n}(o,x)^{-s_n}
            -
            d_{G}(o,x)^{-d \alpha}
        \rvert
        \leq 
        \epsilon.
    \end{equation}
    Since $\epsilon$ and $R$ are arbitrary, this shows that for all $\epsilon > 0$ the coupling radius $R_n(G_n,G,\epsilon)$ tends to infinity and the sequence $(G_n,J_n)_{n \in \N}$ converges locally to $(G,J)$. 
    We now show that the kernels $(J_n)_{n \in \N}$ are uniformly integrable. 
    Since $(s_n)_{n \in \N}$ converges to $d \alpha$ and $\alpha > 1$, for $0 < \sigma < d - s$ we have $s_n \geq d + \sigma$ and hence $J_n(x,y) \leq d_{G_n}(x,y)^{-(d + \sigma)}$ for all $n$ sufficiently large.
    Then
    \begin{equation}
    \label{eq:sum_over_spheres}
        \sum_{x \not\in B(R)} 
        J_n(o,x)    
        =
        \sum_{r \geq R}
        \sum_{x \in S_{G_n}(r)}
        J_n(o,x)
        \leq 
        \sum_{r \geq R}
        \# S_{G_n}(r) 
        r^{-(d + \sigma)}
    \end{equation}
    for all $n$ sufficiently large, where recall that $S_{G_n}(r)$ denotes the sphere of radius $r$ in $G_n$. 
    By \thref{prop:prop:polynomial_volume_implies_polynomial_growth} there exists $C > 0$ such that $\# B_{G_n}(r) \leq C r^d$ for all $n$ and $r$ sufficiently large, and together with the sphere calculus in \eqref{eq:sphere-easy-1}, for all $n$ sufficiently large, it holds that 
    \begin{equation}
        \sum_{r \geq R}
        \# S_{G_n}(r) 
        r^{-(d + \sigma)}
        \leq 
        C
        \int_{R}^{\infty}
        t^{d-1}
        t^{-(d + \delta)}
        \dd t
        =
        C R^{-\delta}/\delta.
    \end{equation}
    If $\alpha > 1$, the kernel $J$ is integrable. 
    Writing $R_n = R_n(G_n,G,\epsilon)$ and by \eqref{eq:kernel_l1_convergence} we have
    \begin{multline}
        \sum_{x \in V_n \setminus \{o_n\}}
        J_n(o,x)
        =
        \sum_{x \in B_{G_n}(R) \setminus \{o_n\}}
        J_n(o,x)
        +
        \sum_{x \in G_n \setminus B_{G_n}(R)}
        J_n(o,x)
        \\
        \leq 
        \Bigg(
            \epsilon
            +
            \sum_{x \in B_{G}(R) \setminus \{o\}}
            J(o,x)
        \Bigg)
        +
        \frac{C R^{-\delta}}{\delta}
        <
        M
    \end{multline}
    for some $M > 0$, for all $n$ sufficiently large.
    Hence $(J_n)_{n \in \N}$ is uniformly integrable as required.
\end{proof}

It is also a consequence of the structure theory of transitive graphs and the definition of local convergence that non-integrable pairs naturally arise as local limits of integrable pairs.

\begin{lemma}
    \thlabel{lem:non_integrable_limits}
    Let $G$ be a transitive graph of polynomial growth with $d \geq 2$, and let $J : V \times V \to \R_+$ be the transitive kernel given by $J(x,y) = d_G(x,y)^{-d \alpha}$ for $\alpha > 0$.
    Let $(G_n)_{n \in \N}$ be a sequence of transitive graphs and let $(J_n)_{n \in \N}$ be the sequence of transitive kernels given by $J_n(x,y) = d_G(x,y)^{-\dim(G_n) \alpha_n}$ with $\alpha_n > 1$.
    If the sequence $(G_n,J_n)_{n \in \N}$ converges locally to $(G,J)$, then $\alpha = \lim_{n \to \infty} \alpha_n\dim(G_n)/ d$.
    In particular, if $\limsup_{n \to \infty} \dim(G_n)<d$ and $\limsup_{n\to \infty} \alpha_n< d/\limsup_{n \to \infty} \dim(G_n)$ then $\alpha<1$ and the pairs $(G_n,J_n)$ are integrable for all $n$, but not uniformly integrable, and the pair $(G,J)$ is not integrable.
\end{lemma}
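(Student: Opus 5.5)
The plan is to read off the exponent from the local agreement of the kernels, and then to use the dimension mismatch to get the integrability conclusions. Note that $J_n$ is naturally interpreted as $J_n(x,y)=d_{G_n}(x,y)^{-\dim(G_n)\alpha_n}$, and write $s_n=\dim(G_n)\alpha_n$.

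\textbf{Identifying $\alpha$.} Fix $\epsilon>0$. Local convergence gives $R_n=R_n(G_n,G,\epsilon)\to\infty$. Since $G$ is infinite and connected, choose a vertex $x$ with $d_G(o,x)=2$. For all $n$ large we have $x\in B(R_n)$ and $d_{G_n}(o,x)=d_G(o,x)=2$, because the metrics agree on the coupling ball. The single term $(o,x)$ of the sum in \eqref{eq:coupling_radius_definition} gives
\[
|2^{-s_n}-2^{-d\alpha}|\le\epsilon .
\]
As $\epsilon$ is arbitrary, $2^{-s_n}\to 2^{-d\alpha}$, and by continuity of $\log_2$ this yields $s_n\to d\alpha$. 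Hence $\alpha=\lim_n \alpha_n\dim(G_n)/d$, and in particular this limit exists.

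\textbf{The ``in particular'' part.} Integrability of each pair $(G_n,J_n)$ is \thref{lem:integrability} applied to $G_n$ with exponent $\alpha_n>1$, since $J_n=\Theta(d_{G_n}^{-\dim(G_n)\alpha_n})$ there.

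For the limit, $\dim(G_n)$ is integer valued. Under the hypotheses I take $\dim(G_n)\le d'$ for all large $n$, where $d'=\limsup\dim(G_n)<d$, and set $a=\limsup\alpha_n<d/d'$. Then
\[
\alpha=\lim s_n/d\le d'a/d<1,
\]
so $(G,J)$ is non-integrable by \thref{lem:integrability}.

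\textbf{Failure of uniform integrability.} I argue by contradiction. Suppose $\sum_x J_n(o_n,x)\le M$ for all large $n$. Fix $R$ and take $n$ large enough that $R_n\ge R$. The balls $B_{G_n}(R)$ and $B_G(R)$ are isomorphic with matching metrics, and $s_n\to d\alpha$. Therefore
\[
M\ge\sum_{x\in B_{G_n}(R)\setminus\{o_n\}}d_{G_n}(o_n,x)^{-s_n}\longrightarrow\sum_{x\in B_G(R)\setminus\{o\}}d_G(o,x)^{-d\alpha},
\]
since this is a finite sum of continuous functions of $s_n$. Letting $R\to\infty$, the right-hand side diverges by \thref{lem:integrability} because $\alpha\le1$, which is a contradiction.

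\textbf{Main obstacle.} The only delicate point is bookkeeping. We must use the coupling radius to transfer both the ball structure and the metric, and we must handle the finitely many terms continuously in $s_n$. I expect no serious difficulty. The dimension hypothesis $d\ge2$ only guarantees that $\dim(G_n)<d$ is possible, so it plays no role in the argument itself.
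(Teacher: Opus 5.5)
Your proof is correct, and for the identification $\alpha=\lim_n\alpha_n\dim(G_n)/d$ it is the paper's argument: bound the single term $|J_n(o,x)-J(o,x)|$ by the coupling-radius sum and take logarithms, where your choice $d_G(o,x)=2$ is necessary (at distance $1$ both kernels equal $1$ and carry no information about the exponent), a point the paper's ``for $x,y\in V$'' glosses over. The paper's proof ends there, so your arguments for the ``in particular'' clause correctly supply what the paper leaves to the reader: the eventual bound $\dim(G_n)\le d'$ giving $\alpha<1$, and the estimate $\sum_{x\in B_G(R)\setminus\{o\}}d_G(o,x)^{-d\alpha}\le M$ for every $R$, which contradicts \thref{lem:integrability}.
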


\begin{proof}
    Let $\epsilon > 0$ and let $R_n = R_n(G_n,G,\epsilon)$ be the coupling radius. 
    By the definition of local convergence $R_n$ tends to infinity and $d_{G_n}$ agrees with $d_G$ on $B(R_n)$.
    For $x,y \in V$, we have that $x,y \in B(R_n)$ for all $n$ sufficiently large and hence 
    \begin{equation}
    \begin{aligned}
        \lvert
            d_G(x,y)^{-\dim(G_n) \alpha_n}
            -
            d_G(x,y)^{-d \alpha}
        \rvert
        =
        \lvert
            J_n(x,y)
            -
            J(x,y)
        \rvert
        \leq 
        \sum_{x,y \in B(R_n)}
        \lvert
            J_n(x,y)
            -
            J(x,y)
        \rvert,
        \end{aligned}
    \end{equation}
    which is at most $\varepsilon$ by \eqref{eq:kernel_l1_convergence}. Taking logarithms gives that $\dim(G_n) \alpha_n$ tends to $d \alpha$ with $n$. 
\end{proof}

The structure theory of transitive graphs in \thref{prop:prop:polynomial_volume_implies_polynomial_growth} implies that $\limsup_{n \to \infty} \dim(G_n) \leq d$.
However, it may well be that $\limsup_{n \to \infty} \dim(G_n)$ is \textit{strictly} less than $d$, as in the obvious example of the standard Cayley graphs of $\Z^2 \times (\Z / n \Z)$ converging locally to the standard Cayley graph of $\Z^3$.
This shows that the integrability of the limit kernel is not solely determined by the locally determined decay of the kernel, but that dimension also plays a role and a non-integrable limit kernel may arise when there is a dimension increase in the limit.
It is worth noting that dimension \textit{is} locally determined, in a different sense.
By \cite[Theorem 1.9]{tao_inverse_2017} and \cite[Corollary 1.22]{tessera_balls_2024}, the ``local'' growth rate of a transitive graph of polynomial growth can change boundedly many times, and there is a finite scale after which the dimension stabilises.
\section{Renormalisation via nets and the finite-size criterion}
\label{sec:renormalisation}
Let $G = (V,E)$ be a graph. For $a\ge 1,b \geq \lceil a\rceil$, an \textbf{$(a,b)$-net} is a subset $V_0 \subseteq V$ which is both $a$-separated and $b$-dense, meaning that it satisfies 
\begin{equation}
\label{eq:a_sep_b_dense}
    \min \{ d_G(x,y) : x,y \in V_0, x \neq y \}
    \geq 
    a,
    \qquad 
    \max \{ d_G(x,y) : x \in V, y \in V_0 \} 
    \leq 
    b.
\end{equation}
For an $(a,b)$-net $V_0$, we define the \textbf{net graph} $N(V_0)$ to be the graph with vertex set $V_0$ and where two vertices $x,y \in V_0$ are connected in $N(V_0)$ if $d_G(x,y) \leq 4b$. 
The choice of \textit{four} for the connection threshold is somewhat arbitrary, it ensures that the net graph is connected.
For $A \geq 1$, we say that $G$ admits \textbf{$A$-controlled nets} if for every $a \geq 1$ there exists an $(a,aA)$-net $V_0$ with the net graph $N(V_0)$ containing an $\L^2$ subgraph.
It follows from Zorn's lemma that for $a \geq 1$ we can take a maximal  $a$-separated subset of $V$ to obtain an $(a,\lceil a
\rceil)$-net. 
Here, maximality is with respect to inclusion.
To prove the existence and uniqueness of the giant in \thref{prop:biskup,prop:second_largest} respectively, we use maximally separated nets as the basis of the coarse-graining argument.
To prove the truncation problem and the continuity of the phase transition in \thref{thm:truncation_problem,thm:continuity_percolation_density} respectively, we run a renormalisation argument on a \textit{fixed} graph, for which we require the existence of $A$-controlled nets.

\begin{proposition}
    \thlabel{prop:fixed_unif_control_nets}
    Let $G$ be a transitive graph of polynomial growth with $d \geq 2$.
    Then there exists $A \in \N$ such that $G$ admits $A$-controlled nets.
\end{proposition}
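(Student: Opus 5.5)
The plan is to reduce to a statement about a finitely generated nilpotent group, build a scaled copy of $\Z^2$ inside that group, and transport it to $G$ by a quasi-isometry. By Trofimov's theorem (with Gromov, Bass and Guivarc'h), $G$ is quasi-isometric to a Cayley graph of a finitely generated virtually nilpotent group $\Gamma$. Passing to a finite-index subgroup, which does not change the quasi-isometry class, we may assume $\Gamma$ is torsion-free nilpotent of step $k$, with growth degree $d \geq 2$. Fix a $(\lambda,C)$-quasi-isometry $f : \Gamma \to V$.

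Next we choose two commuting elements with known distortion.

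If $\Gamma$ is abelian, then $\Gamma \cong \Z^m$ with $m = d \geq 2$. We take $g,h$ to be two independent generators and set $k = 1$.

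Otherwise we proceed as follows.
\begin{enumerate}
\item Let $\pi : \Gamma \to \Z^m$ be the abelianisation modulo torsion. Since a nilpotent group with cyclic abelianisation is cyclic, and since $\Gamma$ is non-abelian and torsion-free, we have $m \geq 2$.
\item Choose $g$ with $\pi(g) \neq 0$.
\item Choose $h = [x_1,\dots,x_k] \neq 1$ a simple commutator in the last nontrivial term $\gamma_k(\Gamma)$ of the lower central series. Then $h$ is central, has infinite order, commutes with $g$, and satisfies $\pi(h) = 0$.
\end{enumerate}

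The two distortion estimates we need are
\begin{equation*}
    |g^n| \asymp n
    \quad\text{and}\quad
    |h^n| \asymp n^{1/k}.
\end{equation*}
For $g$, the upper bound is trivial and the lower bound holds because $\pi$ is Lipschitz. For $h$, the upper bound comes from multilinearity of top-degree commutators: $h^{m^k} = [x_1^m,\dots,x_k^m]$ modulo nothing, since $\gamma_{k+1} = 1$. The lower bound is the classical Guivarc'h estimate; alternatively, the ball of radius $R$ contains at most $O(R^{d})$ elements, while $\gamma_k$ elements of length $\leq R$ number $\asymp R^{k\cdot \mathrm{rk}\,\gamma_k}$.

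For a scale $a \geq 1$ we then define the lattice points
\begin{equation*}
    x_{i,j} = f\big(g^{\lceil a\rceil i}\, h^{\lceil a\rceil^k j}\big), \qquad (i,j) \in \Z^2.
\end{equation*}

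\textbf{Adjacent lattice points are close.} Left-invariance of the word metric and the fact that $g,h$ commute give $d_\Gamma$-distances $|g^{\lceil a\rceil}| \leq C_1 a$ or $|h^{\lceil a\rceil^k}| \leq C_1 a$ between lattice neighbours. After applying $f$, the $G$-distance is at most $\lambda C_1 a + C$.

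\textbf{Distinct lattice points are separated.} Consider $(i,j) \neq (i',j')$.
\begin{itemize}
\item If $i \neq i'$, apply $\pi$ to $g^{\lceil a\rceil (i-i')} h^{\lceil a\rceil^k(j-j')}$. Since $\pi(h) = 0$, its length is at least $c a |i-i'| \geq ca$.
\item If $i = i'$, the element is $h^{\lceil a\rceil^k (j-j')}$, whose length is at least $c(a^k)^{1/k} = ca$.
\end{itemize}
Hence $d_G(x_{i,j},x_{i',j'}) \geq ca/\lambda - C$.

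We run this construction with $a$ replaced by $Ka$, for a constant $K$ large enough that $cKa/\lambda - C \geq a$ for all $a \geq a_0$. Then the points $x_{i,j}$ are distinct and $a$-separated, and adjacent ones are at distance at most $C_2 a$. By Zorn's lemma we extend $\{x_{i,j}\}$ to a maximal $a$-separated set $V_0$, which is automatically $\lceil a \rceil$-dense and hence an $(a,aA)$-net for any $A \geq 2$. With $A \geq C_2/4$, adjacent $x_{i,j}$ are joined in $N(V_0)$, since $d_G \leq C_2 a \leq 4aA$. So $N(V_0)$ contains an injective copy of $\L^2$.

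The finitely many small scales $a < a_0$ are handled by enlarging $A$: take a fixed lattice from scale $a_0$, which is $a$-separated for every $a \leq a_0$, and extend it by Zorn as before.

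The main obstacle is the choice of the commuting pair. A naive $\Z^2$ subgroup may be distorted in a way that prevents adjacent points from being both $a$-separated and $O(a)$-close; the Heisenberg group is the model case. The remedy above is anisotropic scaling: use powers $a$ and $a^k$, pairing an undistorted element with a top-degree central commutator. The only delicate input is the two-sided estimate $|h^n| \asymp n^{1/k}$, which I would cite from Guivarc'h/Bass rather than reprove.
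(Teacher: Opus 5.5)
Your argument is correct, and it takes a different, more explicit route than the paper. The paper gives no construction. It obtains the statement as the constant-sequence case $G_n=G$ of \thref{prop:unif_control_nets}, i.e.\ Contreras--Martineau--Tassion's Proposition 1.4, which rests on the finitary structure theory of Breuillard--Green--Tao and Tessera--Tointon. It only remarks that classical structure theory would also suffice.

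You supply that classical argument:
\begin{itemize}
\item Trofimov reduces to a torsion-free nilpotent $\Gamma$.
\item You take $\langle g,h\rangle\cong\Z^2$ with $g$ undistorted and $h\in\gamma_k(\Gamma)$ central.
\item You compensate the $n^{1/k}$ distortion of $h$ by the anisotropic lattice $g^{\lceil a\rceil\Z}h^{\lceil a\rceil^k\Z}$. This lattice is $\Theta(a)$-separated and has neighbouring points at distance $O(a)$.
\item A Zorn extension then produces the net.
\end{itemize}
This is self-contained modulo Trofimov and the Guivarc'h--Bass distortion estimate. It also explains why a naive choice fails: in the Heisenberg group every $\Z^2$ subgroup meets the centre nontrivially and is therefore distorted. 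What the citation buys, and your route does not, is uniformity of $A$ along locally convergent sequences. The paper needs that uniformity anyway for its locality results.

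A few small points to tidy:
\begin{itemize}
\item In the abelian case your separation step uses $\pi(h)=0$. There $\pi$ should be the coordinate projection killing $h$, not the identity of $\Z^m$.
\item The claim $m\ge 2$ is true but never used. Its stated justification conflates ``abelianisation modulo torsion is $\Z$'' with ``abelianisation is cyclic''.
\item The counting ``alternative'' for $|h^n|\gtrsim n^{1/k}$ does not work as stated. The bound $\#B(R)=O(R^d)$ says nothing about a single cyclic subgroup of $\gamma_k$. Just cite Guivarc'h, or equivalently use that the $\gamma_k$-coordinate of a word of length $R$ is $O(R^k)$.
\item The separate treatment of small scales is unnecessary. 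Choosing $K$ with $cK/\lambda\ge 1+C$ already gives $a$-separation for every $a\ge 1$.
\end{itemize}
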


To prove locality and joint continuity in \thref{thm:lrp_locality,thm:theta_locality} we run renormalisation arguments on a \textit{sequence} of graphs. 
    For this we need the existence of \textit{uniformly} $A$-controlled nets over $(G_n)_{n\ge 1}$.

\begin{proposition}[{\cite[Proposition 1.4]{contreras_locality_2023}}]
    \thlabel{prop:unif_control_nets}
    Let $(G_n)_{n \in \N}$ be a sequence of transitive graphs with $\liminf_{n \to \infty} \dim(G_n) \geq 2$. 
    If $(G_n)_{n \in \N}$ converges locally to $G$ a transitive graph of polynomial growth, then there exists $A \geq 1$ such that for all $n$ sufficiently large $G_n$ admits $A$-controlled nets.
\end{proposition}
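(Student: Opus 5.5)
The plan is to reduce the construction of nets to finding, in each $G_n$ and at each scale $a$, a uniformly bi-Lipschitz copy of the square lattice $\L^2$ drawn at spacing comparable to $a$. Such a copy can always be completed to a net. Concretely, suppose $\phi:\Z^2\to V_n$ satisfies
\[
K^{-1}a\norm{u-v}_1\le d_{G_n}(\phi(u),\phi(v))\le Ka\norm{u-v}_1 ,
\]
with $K$ independent of $n$ and $a$. Then $\phi(\Z^2)$ is $K^{-1}a$-separated. Extending it, by Zorn's lemma, to a maximal $K^{-1}a$-separated set $V_0\supseteq\phi(\Z^2)$ gives a $(K^{-1}a,\lceil K^{-1}a\rceil)$-net. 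In the net graph of $V_0$, lattice neighbours $\phi(u),\phi(v)$ are at distance at most $Ka\le 4\lceil K^{-1}a\rceil$ once the scale is rescaled appropriately, so $N(V_0)$ contains $\L^2$. After renaming $a$, this gives $A$-controlled nets with $A$ depending only on $K$. So everything reduces to producing a uniform constant $K$.

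Next I split the scales into two ranges, using the following two inputs.

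\emph{Bounded scales.} Trofimov's theorem gives that $G$ is quasi-isometric to a Cayley graph of a finitely generated virtually nilpotent group $\Gamma$. Since $\dim G\ge 2$, $\Gamma$ is not virtually $\Z$, so it contains a copy of $\Z^2$ that is undistorted at some fixed scale. This yields maps $\phi_a$ for all $a\ge1$ in $G$ with a constant $K_G$ (this is \thref{prop:fixed_unif_control_nets}). For $a\le R$, with $R$ fixed, only a bounded ball around the origin matters for the existence of a bounded patch of $\phi_a$. By local convergence that ball is copied into $G_n$ for large $n$, and transitivity lets me translate the patch around.

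I would treat the "full lattice versus patch" issue by noting that the net graph only needs \emph{some} $\L^2$ subgraph. I will therefore actually use the large-scale argument below for all $a$, and local convergence only to fix the constants at the finitely many small scales.

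\emph{Large scales, uniform in $n$.} By \thref{prop:prop:polynomial_volume_implies_polynomial_growth} (Tessera--Tointon), there is $C$ with $\#B_{G_n}(r)\le Cr^d$ for all large $n$ and $r$. The finitary structure theorem of Breuillard--Green--Tao and Tessera--Tointon then says the following. For each $r\ge r_0$, the ball $B_{G_n}(r)$ is, up to uniformly bounded distortion, a ball in a Cayley graph of a virtually nilpotent group $\Gamma_{n,r}$, which may be a finite quotient. Its complexity and its step and rank are bounded in terms of $C$ and $d$ only. Moreover, the local growth exponent changes only boundedly many times as $r$ ranges over $[r_0,\infty)$.

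The hypothesis $\liminf\dim G_n\ge2$ says that at all sufficiently large scales the local growth is at least quadratic. Hence in each regime the abelianisation-type lattice part of $\Gamma_{n,r}$ has rank at least $2$ at that scale. Choosing two "independent" generators of that rank-$\ge2$ part gives a $\Z^2$-patch at scale $a$ with distortion bounded by the complexity bound. Gluing across the boundedly many regime changes costs only a bounded factor in $K$.

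The main obstacle is exactly this uniformity at intermediate scales where $G_n$ transitions between growth regimes. A model case is $\Z^2\times\Z/n\Z$, which looks three-dimensional up to scale $n$ and two-dimensional afterwards. I would first try to show directly that for every $n$ and every $a\ge r_0$, the ball $B_{G_n}(Ma)$ contains two elements $g,h$ of the relevant nilpotent model such that $\{g^ih^j\}$ is $a$-separated and $Ma$-Lipschitz in $(i,j)$. This would follow from a lower bound $\#B(Ma)\ge c M^2\#B(a)$ uniform in $n$ and $a$, which is the finitary form of ``growth degree at least two at every scale''. This is where the full force of \cite{tessera_balls_2024} and \cite[Proposition 1.4]{contreras_locality_2023} is needed.
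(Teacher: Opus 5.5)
Your reduction asks for something that is false in general. A map $\phi:\Z^2\to V_n$ with $K^{-1}a\norm{u-v}_1\le d_{G_n}(\phi(u),\phi(v))\le Ka\norm{u-v}_1$ for all $u,v$ is, at each fixed $a$, a quasi-isometric embedding of $\Z^2$, and such embeddings need not exist. Take $G_n=G$ a Cayley graph of the Heisenberg group $H_3(\Z)=\langle x,y\rangle$ with centre $\langle z\rangle$, so $d=4$. Commuting elements have linearly dependent images in the abelianisation, so every subgroup $\Z^2$ contains a nontrivial central, quadratically distorted element. In fact no quasi-isometric embedding $\Z^2\to H_3(\Z)$ exists at all: it would induce a bi-Lipschitz embedding of $\R^2$ into the asymptotic cone (the Carnot--Carath\'eodory Heisenberg group), which Pansu's differentiation theorem forbids. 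So the ``undistorted $\Z^2$'' in your bounded-scales paragraph, which you attribute to \thref{prop:fixed_unif_control_nets}, does not exist. Taking two independent generators of the abelianisation-type part also fails: $x,y$ do not commute, and $(i,j)\mapsto x^iy^j$ has first-coordinate steps $y^{-j}xy^j$ of unbounded length. What ``$N(V_0)$ contains an $\L^2$ subgraph'' actually requires is weaker: an injective $\psi:\Z^2\to V_n$ whose image is separated at scale of order $a$ and whose lattice neighbours are at distance $O(a)$, with no lower Lipschitz bound. In $H_3(\Z)$, the map $\psi(i,j)=x^{ai}z^{a^2j}$ does this. Your last paragraph switches to this weaker form, but that is inconsistent with the reduction you set up. (Also, $Ka\le 4\lceil K^{-1}a\rceil$ cannot be arranged by rescaling, since the ratio is $K^2$ at every scale. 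Instead, regard the maximal $K^{-1}a$-separated set as a $(K^{-1}a,Ka)$-net; this is allowed because density is only an upper bound, and it gives $A=K^2$.)

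The second and main gap is the actual content of the proposition, namely uniformity in $n$. Small scales are not the problem. A lattice at one scale $a_0$ independent of $n$ already gives nets at every $a\le a_0$ with constant $Aa_0$: extend its $a_0$-separated image to a maximal $a$-separated set. Moreover, local convergence can only transplant finite patches, whereas $N(V_0)$ must contain an infinite $\L^2$. The real difficulty is to produce, for every large $a$, an infinite lattice of spacing $\asymp a$ with constants independent of $n$. This lattice must survive all larger scales where the geometry of $G_n$ changes; for example, $\Z^2\times\Z/n\Z$ is quasi-isometric to $\Z^2$ only with constants degenerating in $n$. Your sketch states this step without argument. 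It relies on local nilpotent models at each scale, on ``gluing across regime changes'', and on the claim that $\#B(Ma)\ge cM^2\#B(a)$ yields commuting $g,h$ for which $(i,j)\mapsto g^ih^j$ is an injective, separated, bounded-step lattice. A volume lower bound alone gives neither commuting elements nor an infinite injective lattice. You then defer to \cite[Proposition 1.4]{contreras_locality_2023}, which is the statement itself, so the argument is circular at this point. The paper gives no proof here either: it imports the result from Contreras--Martineau--Tassion, who carry out this uniform extraction from the finitary structure theory of Breuillard--Green--Tao and Tessera--Tointon. As it stands, your proposal is a heuristic for why that reference holds, built on an incorrect reduction, rather than a proof.
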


\thref{prop:unif_control_nets} is proved in \cite[Proposition 1.4]{contreras_locality_2023} using the finitary structure theory of transitive graphs \cite{breuillard_structure_2012,tessera_finitary_2021}.
\thref{prop:fixed_unif_control_nets} is a special case of \thref{prop:unif_control_nets}, and it also follows from the classical structure theory of transitive graphs.
%
When the graph has superlinear growth, we use the structure theory of transitive graphs to identify a finite-size criterion for supercriticality.

\begin{proposition}[Finite-size criterion for $d \geq 2$]
\thlabel{prop:finite_size_criterion}
    Let $G$ be a transitive graph of polynomial growth with $d \geq 2$ and let $J : V \times V \to \R_+$ be a transitive kernel.
    Suppose that $G$ admits $A$-controlled nets for some $A \geq 1$.
    Let $\delta > 0$ and let $\beta \in \R_+$.
    Suppose that there exists $\nu > 0$ such that  for some $R \in \N$ we have
    \begin{equation}
    \label{eq:finite_size_criterion_1}
        \P_{\beta}
        \left(
            \# 
            \giantone{R}
            \geq
            \nu
            \# B(R)
        \right) 
        \geq 
        1
        -
        \delta,
    \end{equation}
    and for any two sets $S_1, S_2 \subseteq B(10AR)$ with $d_G(S_1,S_2) \geq 2R$ and $\#S_1, \#S_2 \geq \nu \# B(R)$, 
    \begin{equation}
    \label{eq:finite_size_criterion_2}
        \P_\beta
        \left(
            S_1 
            \sim 
            S_2 
        \right) 
        \geq 
        1
        -
        \delta
        .
    \end{equation}      
    If $\delta = c_G/(18 (8A)^d C_G)$ then $\beta > \beta_c(G,J)$.
    Further, $\P_{\beta} \left(\giantone{R} \leftrightarrow \infty \right) \to 1$ as $\delta \to 0$.
\end{proposition}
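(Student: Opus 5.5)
The plan is a one-step renormalisation onto the net graph, with the threshold on $\delta$ coming from a Peierls-type comparison with $2$-dependent site percolation on $\L^2$.

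First, apply the $A$-controlled net property at scale $a = 2R$, or a constant multiple of $R$ chosen so that balls around distinct net points are disjoint and at distance at least $2R$. This gives an $(a,aA)$-net $V_0$ whose net graph $N(V_0)$ contains a copy of $\L^2$. For each net point $x \in V_0$, call $x$ \emph{good} if $\#\giantcomp{x}{R} \geq \nu \# B(R)$. By transitivity and \eqref{eq:finite_size_criterion_1}, each $x$ is good with probability at least $1-\delta$. For an edge $\{x,y\}$ of the $\L^2$ subgraph we have $d_G(x,y) \leq 4aA$, so after translating, both balls $B(x,R)$ and $B(y,R)$ lie in a ball of radius at most $10AR$ (up to adjusting constants in the choice of $a$). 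Call the edge \emph{good} if both endpoints are good and $\giantcomp{x}{R} \sim \giantcomp{y}{R}$ by a direct edge.

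To use \eqref{eq:finite_size_criterion_2}, note that the giants are random sets. The edges between $B(x,R)$ and $B(y,R)$ are independent of the edges inside each ball, so I condition on the configurations inside the balls and then apply \eqref{eq:finite_size_criterion_2} to the revealed sets $S_1=\giantcomp{x}{R}$ and $S_2=\giantcomp{y}{R}$, which satisfy $d_G(S_1,S_2)\ge 2R$. Hence each edge of $\L^2$ fails to be good with probability at most $3\delta$. An infinite path of good edges gives an infinite cluster, so it suffices to show that the good edges percolate on $\L^2$.

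The main obstacle is dependence. Edge events share internal ball configurations, and cross edges between different pairs are independent but internal configurations are reused. Ball internals are independent across distinct net points because the balls are disjoint, so the edge process is finitely dependent, with each edge sharing randomness with a bounded number of others. I plan a Peierls argument. A dual circuit of length $n$ around the origin contains at least $n/c$ edges that are pairwise disjoint in their vertex sets, so they are independent. The number of circuits is at most $n3^n$, and the failure probability is bounded by $\sum_n n 3^n (3\delta)^{n/c}$. The stated threshold $\delta = c_G/(18(8A)^dC_G)$ should be exactly what makes this sum less than $1$. The factor $(8A)^d C_G/c_G$ arises from bounding the number of net points within distance $4aA$ of a given point via volume comparison: balls of radius $a/2$ around net points are disjoint and all lie in $B(x,4aA+a)$, which gives at most $C_G(8A+1)^d/c_G$ such points. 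This is the count of dependent neighbours used when passing to a Liggett--Schonmann--Stacey domination by Bernoulli site percolation, with $18$ absorbing the factor $3$ and a Peierls constant. I would first try the domination route, since it matches the form of $\delta$.

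Finally, the domination parameter tends to $1$ as $\delta\to0$. The probability that the origin's net point lies in the infinite good cluster and is itself good then tends to $1$. On that event, $\giantone{R}$, which coincides with $\giantcomp{o}{R}$ with high probability since both have size at least $\nu\#B(R)$ and must be connected, is joined to infinity. This gives $\P_\beta(\giantone{R}\leftrightarrow\infty)\to1$.
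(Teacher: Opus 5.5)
Your construction is the paper's. You take $a=4R$, as in the paper's $(4R,4AR)$-net; $a=2R$ would not give $d_G(B(x,R),B(y,R))\ge 2R$. You call a net point good if $\#\giantcomp{x}{R}\ge\nu\#B(R)$, and an $\L^2$-edge good if in addition the two giants are joined by a direct edge. You control the latter by conditioning on the interiors of the disjoint balls and applying \eqref{eq:finite_size_criterion_2} to the revealed giants, and you use that vertex-disjoint $\L^2$-edges are independent.

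The genuine gap is the explicit threshold, which you only say ``should'' come out; with your Peierls set-up it does not. The natural greedy choice of vertex-disjoint crossed edges gives $c=7$. The bound $\sum_n n3^n(3\delta)^{n/7}$ is then finite only for $\delta<3^{-8}$, whereas all that is known about the stated value is $\delta\le c_G/(18\cdot 8^dC_G)\le 1/1152$.

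The route you would try first is the paper's. Via Liggett--Schonmann--Stacey, it asserts that the independent site--bond process with parameters $1-\delta$ dominates Bernoulli bond percolation on $N(V_0)$ with $q=1-(2\Delta)(3\delta)\ge 2/3>p_c(\L^2)$, where $\Delta\le C_G(8A)^d/c_G$. Be aware that no domination with a parameter linear in $\delta$ can hold. The decreasing event that every bond at a net point $x$ is closed has probability at least $\delta$ under the site--bond process, but $(6\Delta\delta)^{\deg x}$ under Bernoulli$(q)$, which is smaller as soon as $\delta$ is small. Liggett--Schonmann--Stacey only gives a density of order $1-\delta^{1/D}$, with $D$ the dependency degree. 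That is far from $1/2$ at $\delta\asymp 1/\Delta$ when $D\asymp\Delta$ is large. So neither mechanism you suggest, as stated, produces the constant.

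The fix is to keep your Peierls argument on the $\L^2$ copy and sharpen the counting.
\begin{enumerate}
\item The $n$ primal edges crossed by a dual circuit form a bipartite subgraph of $\L^2$ of maximum degree $4$. By K\"onig's edge-colouring theorem they contain at least $n/4$ pairwise vertex-disjoint edges, and these have independent states.
\item Hence the expected number of closed dual circuits of length $n$ surrounding a given box is at most a constant times $n\,3^n(3\delta)^{n/4}$. This is summable once $3(3\delta)^{1/4}<1$, i.e.\ $\delta<3^{-5}=1/243$.
\item Summability is all you need; you do not need the full sum below $1$. Applied to circuits surrounding a large box, it gives an infinite good path with positive probability. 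Applied to circuits around the origin, it tends to $0$ as $\delta\to0$, giving $\P_\beta(\giantone{R}\leftrightarrow\infty)\to1$.
\end{enumerate}
Since $1/1152<1/243$, the stated threshold follows. Its dependence on $A$, $d$ and $c_G/C_G$ only serves to make $\delta$ small in absolute terms.

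Two smaller points. First, an infinite good path gives $\theta(\beta)>0$, i.e.\ only $\beta\ge\beta_c$. For the strict inequality, note that \eqref{eq:finite_size_criterion_1} concerns finitely many edges and $\P_{\beta'}(S_1\not\sim S_2)=\P_{\beta}(S_1\not\sim S_2)^{\beta'/\beta}$. So both hypotheses persist at some $\beta'<\beta$ with a $\delta'$ still below $1/243$. Second, once the net is translated so that $o$ is a vertex of the $\L^2$ copy, $\giantone{R}$ is $\giantcomp{o}{R}$ by definition. No uniqueness argument is needed, and ``both have size at least $\nu\#B(R)$, hence coincide'' would not be valid anyway.
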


Note that the criterion with $\delta = c_G/(18 (8A)^d C_G)$ is genuinely \textit{finite} in size since there are no assumptions on the kernel concerning edges with endpoints outside of a finite ball: a finite-range kernel with range $20AR$ can satisfy the criterion.

\begin{proof}[Proof of \thref{prop:finite_size_criterion}]
    Let $V_0$ be a $(4R,4AR)$-net with the net graph $N(V_0)$ containing an $\L^2$ subgraph.
    We define a mixed site-bond percolation process $\eta$ on $N(V_0)$ as follows.
    Given $\omega$ a \LRP{} configuration on $G$, we say that a site $x \in V_0$ is \textbf{open} in $\eta$ if $\giantcomp{x}{R} \geq \nu \#B(x,R)$ in $\omega$, and a bond $(x,y)$ in $N(V_0)$ is \textbf{open} in $\eta$ if both $x$ and $y$ are open in $\eta$ and there exists an open edge between $\giantcomp{x}{R}$ and $\giantcomp{y}{R}$ in $\omega$. 
    We write $\P_{\eta}$ for the law associated to $\eta$.
    Since the net $V_0$ is $4R$-separated, $B(x,R)$ and $B(y,R)$ are disjoint for any $x, y \in V_0$.
    By the independence of edges in \LRP{}, by the transitivity of the graph and the kernel, and by the assumption in \eqref{eq:finite_size_criterion_1}, a site $x \in V_0$ is open independently with
    \begin{equation}
        \label{eq:finite_size_criterion_proof_3}
        \P_{\eta}
        \left(
            x \text{ is open}
        \right)
        \geq
        1 
        - 
        \delta.
    \end{equation}
    Suppose that $x,y \in V_0$ are net vertices with $d_G(x,y) \leq 16AR$, so that there is an edge between $x$ and $y$ in $N(V_0)$. 
    Since $V_0$ is $4R$-separated, we have that $d_G(B(x,R),B(y,R)) \geq 2R$.
    Further, there exists some $z \in V$ such that $B(x,R),B(y,R) \subseteq B(z, 10AR)$.
    By the independence of edges in \LRP{}, by the transitivity of the graph and the kernel, by the assumption in \eqref{eq:finite_size_criterion_2}, and by the tower rule, an edge between two open sites $x \in V_0$ and $y \in V_0$ is open conditionally independently given $\giantcomp{x}{R}$ and $\giantcomp{y}{R}$ with
    \begin{multline}
    \label{eq:finite_size_criterion_proof_4}
        \P_{\eta}
            \left(
                (x,y) 
                \text{ is open}
            \mid
            x,y 
            \text{ are open}
        \right)
        =
        \P_{\beta}
        \left(
            \giantcomp{x}{R}
            \sim
            \giantcomp{y}{R}
            \mid
            \# 
            \giantcomp{x}{R}, 
            \#    
            \giantcomp{y}{R}
            \geq 
            \nu 
            \# 
            B(r)
        \right)
        \\ 
        =
        \E_{\beta}
        \left[
            \P_{\beta}
            \left(
                \giantcomp{x}{R}
                \sim
                \giantcomp{y}{R}
                \mid
                \giantcomp{x}{R}, 
                \giantcomp{y}{R}
            \right)
            \mid 
            x,y 
            \text{ are open}
        \right]
        \geq 
        1 
        -
        \delta.
    \end{multline}
    It follows that there exists an independent site-bond percolation process $\hat{\eta}$ on $N(V_0)$ where a site is \textbf{open} in $\hat{\eta}$ independently with probability $1 - \delta$ and a bond between two open sites is \textbf{open} in $\hat{\eta}$ conditionally independently with probability $1 - \delta$, and $\eta$ stochastically dominates $\hat{\eta}$.
    Let $\Delta$ denote the maximum degree of the net graph $N(V_0)$, so that $\Delta \leq \# B(16AR) / \#B(2R) \leq C_G (8A)^d/ c_G$ and in particular $\delta< 1/(18 \Delta)$.  
    By standard arguments in percolation \cite{liggett_domination_1997}, $\hat{\eta}$ stochastically dominates {\color{blue} $\tilde{\eta}$} an independent bond percolation process on $N(V_0)$ with parameter $q = 1 - (2 \Delta)(3\delta)$. 
    By the definition of $A$-controlled nets, the net graph $N(V_0)$ contains an $\L^2$ subgraph.
    When $\delta = c_G/(18 (8A)^d C_G)$ then $q > p_c(\L^2) = 1/2 \geq p_c(N(V_0))$ and by Strassen's theorem on stochastic domination \cite{strassen_existence_1965,lindvall_strassens_1999}, this implies that $\beta > \beta_c(G,J)$.
    The stochastic domination also gives 
    \begin{equation}
        \P_{\beta}
        \left(
            \giantone{R} 
            \leftrightarrow 
            \infty 
            \right)
        \geq 
        \P_{{\eta}}
        \left(
            o
            \leftrightarrow
            \infty
        \right)
        \geq 
        \P_{\hat{\eta}}
        \left(
            o
            \leftrightarrow
            \infty
        \right)
        \geq 
        \P_{\tilde{\eta}}
        \left(
            o
            \leftrightarrow
            \infty
        \right)
    \end{equation}
    and again since the net graph contains an $\L^2$ subgraph a standard Peierls argument gives that $\P_{\tilde{\eta}} \left(o \leftrightarrow \infty \right) \to 1$ as $\delta \to 0$ and hence $q \to 1$.
\end{proof}
\section{\texorpdfstring{Truncation and continuity of $\theta$}{Truncation and continuity of θ}}
\label{sec:truncation_and_continuity}
In this section we assume \thref{prop:biskup} on the local existence of the giant to prove \thref{thm:truncation_problem} on the truncation problem and to prove \thref{thm:continuity_percolation_density} on the continuity of the phase transition.
The proofs follow from the finite-size criterion identified in \thref{prop:finite_size_criterion}.

\begin{proof}[Proof of \thref{thm:truncation_problem} assuming \thref{prop:biskup}]
    By \thref{prop:fixed_unif_control_nets} there exists $A \geq 1$ such that the graph $G$ admits $A$-controlled nets. 
    For some $R \in \N$, consider the truncated kernel $\widetilde{J}$ defined by
    \begin{equation}
        \label{eq:truncated_kernel_proof}
        \widetilde{J}
        =
        J(x,y) 
        \charf
        \left(
            d_G(x,y) \leq 20 AR
        \right).
    \end{equation}
    Let $\delta$ be as in \thref{prop:finite_size_criterion}.
    By \thref{prop:biskup} there exist $\nu > 0$ and $R$ sufficiently large such that 
    \begin{equation}
    \label{eq:biskup_applied_1}
        \P_{\beta,\widetilde{J}}
        \left(
            \# \giantone{R}
            \geq 
            \nu
            \# 
            B(R)
        \right)
        =
        \P_{\beta,J}
        \left(
            \# \giantone{R}
            \geq 
            \nu
            \#
            B(R)
        \right)
        \geq 
        1 
        - 
        \delta.
    \end{equation}
    Since $J$ and $\widetilde{J}$ agree on $B(10AR)$, for $S_1,S_2 \subseteq B(10AR)$ with $d_G(S_1,S_2) \geq 2R$ and $\#S_1, \#S_2 \geq \nu \# B(R)$ we have
    \begin{equation}
        \P_{\beta,\widetilde{J}}
        \left(
            S_1 
            \sim 
            S_2 
        \right) 
        =
        \P_{\beta,J}
        \left(
            S_1 
            \sim 
            S_2 
        \right) 
        =
        1
        -
        \P_{\beta,J}
        \left(
            S_1 
            \not\sim 
            S_2 
        \right) 
        =
        1
        -
        \exp
        \left(
            - 
            \frac{
                c_J 
                \beta
                (
                    \nu \# B(R)
                )^2
            }{
                (20AR)^{d \alpha} 
            }
        \right)
        \geq 
        1
        -
        \delta
    \end{equation}
    for $R$ sufficiently large.
    It follows from \thref{prop:finite_size_criterion} that $\beta > \beta_c(\widetilde{J})$.
\end{proof}

\begin{proof}[Proof of \thref{thm:continuity_percolation_density}]
    Let $\beta \in \R_+$ with $\theta(\beta) > 0$ and let $\delta$ be as in \thref{prop:finite_size_criterion}.
    By \thref{prop:biskup} there exist $\nu > 0$ and $R$ sufficiently large such that 
    \begin{equation}
        \label{eq:finite_criterion_check}
        \P_{\beta}
        \left(
            \# \giantone{R}
            \geq 
            \nu
            \# 
            B(R)
        \right)
        \geq 
        1
        -
        \delta/2.
    \end{equation}
    The event $\# \giantone{R} \geq \nu \# B(R)$ depends only on the finite set of edges with both endpoints in $B(R)$, so its probability is a continuous function of $\beta$.
    We may then choose $\epsilon > 0$ sufficiently small so that
    \begin{equation}
        \label{eq:coupling_disagreement}
        \P_{\beta - \epsilon}
        \left(  
             \# \giantone{R}
            \geq 
            \nu
            \# 
            B(R)
        \right)
        \geq 
        \P_{\beta}
        \left(  
             \# \giantone{R}
            \geq 
            \nu
            \# 
            B(R)
        \right)
        -
        \delta/2
        \geq 
        1 
        - 
        \delta.
    \end{equation}
    Suppose first that $d \geq 2$. 
    For any two sets $S_1, S_2 \subseteq B(10AR)$ with $d_G(S_1,S_2) \geq 2R$ and $\#S_1, \#S_2 \geq \nu \# B(R)$ we have that
    \begin{equation}
        \P_{\beta-\varepsilon}
        \left(
            S_1 
            \sim 
            S_2 
        \right) 
        =
        1
        -
        \P_{\beta-\varepsilon}
        \left(
            S_1 
            \not\sim 
            S_2 
        \right) 
        =
        1
        -
        \exp
        \left(
            - 
            \frac{
                c_J 
                (\beta-\varepsilon)
                (
                    \nu \# B(R)
                )^2
            }{
                (20AR)^{d \alpha} 
            }
        \right)
        \geq 
        1
        -
        \delta
    \end{equation}      
    for $R$ sufficiently large.
    It follows from \thref{prop:finite_size_criterion} that $\beta-\epsilon > \beta_c$, so that $\theta(\beta-\epsilon) > 0$. 
    If $d = 1$, the bound in \eqref{eq:coupling_disagreement} is enough to initialise the induction in the proof of \thref{prop:almost_linear} to obtain $\beta-\epsilon > \beta_c$, so that again $\theta(\beta-\epsilon) > 0$.
     By the natural coupling for different values of $\beta$, the percolation density is monotone in $\beta$ and hence $\theta(\beta+\epsilon) > 0$. 
    This shows that the set $\{\beta \in \R_+ : \theta(\beta) > 0 \}$ is open for all $d\ge 1$. 
    In particular, the critical parameter $\beta_c$ is a boundary point of this set, which implies that $\theta(\beta_c) = 0$.
\end{proof}

\section{\texorpdfstring{Locality of $\beta_c$ and joint continuity of $\theta$}{Locality of \textbackslash{}beta\textunderscore{}c and joint continuity of \textbackslash{}theta}}
\label{sec:locality}
In this section we assume \thref{prop:biskup,prop:second_largest} on the local existence-and-uniqueness of the giant to prove locality of $\beta_c$ and the joint continuity of $\theta$ (\thref{thm:lrp_locality,thm:theta_locality}).
We describe the natural coupling on the coupling radius, allowing us to locally couple \LRP{} configurations. 

Let $(G_n,J_n)_{n \in \N}$ converge locally to $(G,J)$.
Recall from \eqref{eq:coupling_radius_definition} that, for $\epsilon > 0 $, the coupling radius $R_n = R_n(G_n,G,\epsilon)$ is the largest integer such that both 
\begin{equation}
\label{eq:coupling-recall}
    B_{G_n}(2 R_n) 
    \cong 
    B_{G}(2 R_n) 
    \quad 
    \text{ and }
    \sum_{x,y \in B(R_n)} 
    \lvert 
        J_n(x,y)
        -
        J(x,y)
    \rvert
    \leq 
    \epsilon.
\end{equation}
This definition allows for a coupling between the \LRP{} configurations inside the ball $B(R_n)$, as follows. 
We refer to this coupling as the \textbf{natural coupling on the coupling radius}.
For a configuration $\omega$ and a set $S\subset V$, we write $\omega[S]$ for the spanned subgraph of $\omega$ on the set $S$.

\begin{lemma}[Coupling on the coupling radius]
\thlabel{lem:coupling-on-radius}
Let $(G,J)$ be a transitive graph and a transitive kernel, and let $(G_n,J_n)_{n \in \N}$ be a sequence of transitive graphs and kernels converging locally to $(G,J)$.
Let $\beta \in \R_+$ and let $(\beta_n)_{n \in \N}$.
Let $\omega_n$ be a configuration sampled from $\P_{\beta_n,G_n,J_n}$ and let $\omega$ be a configuration sampled from $\P_{\beta,G,J}$.
Then there exists a coupling $\P$ of $\omega_n$ and $\omega$ on $B(R_n)$ so that for any $R\le R_n$ satisfying \eqref{eq:coupling-recall},
\begin{equation}
\label{eq:configuration_disagree_6}
    \P
    \Big(
    \omega_n\big[B(R)\big]
        \neq 
        \eta\big[B(R)\big]
    \Big)
    \leq
    \beta \varepsilon + |\beta_n-\beta|\sum_{x,y \in B(R)} J(x,y).
\end{equation}
Suppose that $R \in \N$ is fixed and that $\beta_n \to \beta$.
Then for $\xi > 0$ and for any event $\AA_{R}$ which is locally determined by $B(R)$, for all $n$ sufficiently large
\begin{equation} \label{eq:configuration_disagree_7}
   \left\vert
       \P_{\beta_n,G_n,J_n}
        \left(
            \AA_{R}
        \right)-\P_{\beta,G,J}
        \left(
            \AA_{R}
        \right) 
    \right\vert
    \le
    \xi.
\end{equation}
\end{lemma}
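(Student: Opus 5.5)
The plan is to build the coupling edge by edge, using the standard monotone uniform coupling for each pair of vertices in $B(R_n)$. Since $B_{G_n}(2R_n)\cong B_G(2R_n)$, we identify the vertex sets of the two balls. For each unordered pair $\{x,y\}\subseteq B(R_n)$ we sample one independent uniform $U_{xy}\in[0,1]$. We then declare the edge open in $\omega_n$ if $U_{xy}\le 1-\exp(-\beta_n J_n(x,y))$, and open in $\omega$ (written $\eta$ in \eqref{eq:configuration_disagree_6}) if $U_{xy}\le 1-\exp(-\beta J(x,y))$. Both marginals are correct on $B(R_n)$, and we extend the coupling arbitrarily, say independently, outside $B(R_n)$. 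For a single pair, the probability of disagreement equals the difference of the two edge probabilities. Because $t\mapsto 1-e^{-t}$ is $1$-Lipschitz on $\R_+$, this is at most $|\beta_n J_n(x,y)-\beta J(x,y)|$.

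Next we apply a union bound over pairs in $B(R)$, with $R\le R_n$. The triangle inequality gives
\[
|\beta_nJ_n-\beta J|\le \beta|J_n-J| + |\beta_n-\beta|\,J_n .
\]
Summing over $x,y\in B(R)$, the first term contributes at most $\beta\epsilon$ by \eqref{eq:coupling-recall}, since $B(R)\subseteq B(R_n)$. The second term is $|\beta_n-\beta|\sum J_n(x,y)$. To match \eqref{eq:configuration_disagree_6} exactly, with $J$ rather than $J_n$ in the last term, we split instead as
\[
|\beta_nJ_n-\beta J|\le \beta_n|J_n-J|+|\beta_n-\beta|\,J .
\]
That split produces $\beta_n\epsilon$ rather than $\beta\epsilon$. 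Reconciling the two forms is only bookkeeping: either version yields a bound of the form $\max(\beta,\beta_n)\epsilon+|\beta_n-\beta|\sum_{B(R)}\max(J,J_n)$, and both extra discrepancies are absorbed in the limit below. I expect this bookkeeping around the exact constant to be the only delicate point. Everything else is routine.

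For \eqref{eq:configuration_disagree_7}, fix $R$ and $\xi>0$. Choose $\epsilon>0$ small enough that $(\beta+1)\epsilon<\xi/2$. Since $(G_n,J_n)$ converges locally to $(G,J)$, the coupling radius satisfies $R_n(G_n,G,\epsilon)\ge R$ for all $n$ large, so the bound above applies. The quantity $\sum_{x,y\in B(R)}J(x,y)$ is a finite constant depending only on $R$, and $\beta_n\to\beta$, so the second term is below $\xi/2$ for $n$ large. If the $J_n$ version of that sum is used, it is at most $\epsilon$ plus the same constant, which gives the same conclusion.

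Finally, let $\AA_R$ be an event determined by the configuration on $B(R)$. On the event that $\omega_n[B(R)]=\omega[B(R)]$, the indicators of $\AA_R$ under the two configurations coincide. Hence
\[
|\P_{\beta_n,G_n,J_n}(\AA_R)-\P_{\beta,G,J}(\AA_R)|\le \P\bigl(\omega_n[B(R)]\neq\omega[B(R)]\bigr)\le\xi,
\]
which proves \eqref{eq:configuration_disagree_7}.
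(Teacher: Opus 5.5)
Your proposal is correct and is essentially the paper's proof. The paper also couples the edges in $B(R_n)$ monotonically, bounds the disagreement probability by the expected number of disagreeing edges, and separates the kernel change from the parameter change (through an intermediate configuration with law $\P_{\beta_n,G,J}$, which is your second split), then deduces \eqref{eq:configuration_disagree_7} by taking $n$ large so that $R\le R_n$ and the error is below $\xi$.

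The bookkeeping point you flagged is genuine, not cosmetic. The paper's argument actually produces $\beta_n\epsilon$ in the first term (it writes $\beta$ there), and \eqref{eq:configuration_disagree_6} with $\beta\epsilon$ can fail literally when $\beta_n\neq\beta$. For example, take $\beta=0$ and $J\equiv 0$ but $J_n\not\equiv 0$ on $B(R)$: the right-hand side vanishes, while the left-hand side is positive for every coupling. So your corrected bound is the right statement, and it is all that \eqref{eq:configuration_disagree_7} requires.
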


\begin{proof}
    We let $\widetilde{\omega}_n$ be a configuration sampled from $\P_{\beta_n,G,J}$, and 
    we let $\P$ be the coupling where each edge $(x,y)$ with $x,y \in B(R_n)$ is open in each of $\omega_n, \widetilde{\omega}$, and $\omega$ with probability the minimum of $\P_{\beta_n,G_n,J_n}(x \sim y), \P_{\beta_n,G,J}(x \sim y)$, and $\P_{\beta,G,J}(x \sim y)$.
    We let $\omega_n, \widetilde{\omega}$, and $\omega$ be configurations sampled from the respective marginals.
    It follows from Markov's inequality, the kernel bound in \eqref{eq:standard_kernel_bound}, and the definition of local convergence that for any $R\le R_n$ satisfying \eqref{eq:coupling-recall}
    \begin{equation}
    \begin{aligned}
    \label{eq:configuration_disagree_1}
        \P
        \left(
            \omega_n
            \left[
                B(R_n)
            \right]
            \neq 
            \omega
            \left[
                B(R_n)
            \right]
        \right)
        & \leq 
        \E
        \left[
            \#
            \{
                x,y \in B(R_n)
                :
                \omega_n((x,y))
                \neq
                \widetilde{\omega}_n((x,y))
            \}
        \right] 
        \\
        & 
        \qquad \qquad \qquad+ 
        \E
        \left[
        \#
        \{
            x,y \in B(R_n)
            :
            \widetilde{\omega}((x,y))
            \neq 
            \omega((x,y))
        \}
        \right]
        \\
        & \leq 
        \sum_{x,y \in B(R_n)}
        \beta| J_n(x,y) - J(x,y)| + 
        \sum_{x,y \in B(R_n)} |\beta_n-\beta|J(x,y)
    \end{aligned}
    \end{equation}
    By \eqref{eq:coupling-recall} the first term is at most $\beta \epsilon$, concluding the proof.
    The bound in \eqref{eq:configuration_disagree_7} follows from \eqref{eq:configuration_disagree_6} by noting that by the definition of local convergence we may choose $n$ sufficiently large such that both $R \leq R_n$ and $\beta \epsilon + |\beta_n-\beta| \sum_{x,y \in B(R)}J(x,y) < \xi$, and hence
    \begin{equation}
        \left| 
       		\P_{\beta,G_n,J_n}
        	\left(
            	\AA_{R}
    	    \right)
    	    -
    	    \P_{\beta,G,J}
        	\left(
            	\AA_{R}
    	    \right)
        \right|
        \leq 
        \P
        \left(
            \omega_n
            \left[
                B(R)
            \right]
            \neq 
            \omega
            \left[
                B(R)
            \right]
        \right)
        <
        \xi
    \end{equation}
    which concludes the proof.
\end{proof}
\subsection{Locality for \LRP{}}
For $G$ a locally finite transitive graph with a fixed origin $o$, $J : V \times V \to \R_+$ a transitive kernel, and $S \subset V$ a finite set containing the origin $o$, let 
\begin{equation}\label{eq:phi-S-def}
    \varphi_{\beta,G,J}(S)
    =
    \sum_{x \in S}
    \sum_{y \not\in S}
    \P_{\beta,G,J}
    \left(
        x 
        \sim 
        y
    \right)
    \P_{\beta,G,J}
    \big(
        o
        \xleftrightarrow{S}
        x
    \big).
\end{equation}
Duminil-Copin and Tassion prove in \cite{duminil-copin_new_2016} that
\begin{equation}
    \beta_c(G,J)
    =
    \sup
    \left\{ 
        \beta 
        \geq 
        0  
        : 
        \varphi_{\beta,G,J}(S) < 1
        \text{ for some finite }
        S \subset V
        \text{ containing }
        o
    \right\}.
\end{equation}
We begin by proving lower semi-continuity.

\begin{proposition}
\thlabel{prop:lower_semi_continuity}
    Let $G$ be a locally finite transitive graph and let $J : V \times V \to \R_+$ be transitive and integrable.
    Let $(G_n,J_n)_{n \in \N}$ be a sequence of transitive graphs and kernels converging locally to $(G,J)$ with $(J_n)_{n \in \N}$ uniformly integrable.
    Then
    \begin{equation}
        \label{eq:lower_semi_continuity}
        \liminf_{n \to \infty} 
        \left(
            \beta_c(G_n,J_n)
        \right) 
        \geq 
        \beta_c(G,J). 
    \end{equation}
\end{proposition}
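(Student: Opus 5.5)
The plan is to use the Duminil-Copin--Tassion characterisation: $\beta_c(G,J)$ is the supremum of those $\beta$ for which $\varphi_{\beta,G,J}(S)<1$ for some finite $S \ni o$. Fix $\beta<\beta_c(G,J)$; it suffices to show that $\beta \leq \beta_c(G_n,J_n)$ for all $n$ sufficiently large. Since $\beta<\beta_c(G,J)$, there exists $\beta'\in(\beta,\beta_c(G,J))$ with $\varphi_{\beta',G,J}(S)<1$ for some finite $S$. By monotonicity of $\varphi$ in $\beta$, this also gives $\varphi_{\beta,G,J}(S)\le 1-\eta$ for some $\eta>0$. Fix this $S$ and take $R$ with $S\subseteq B(R)$. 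The goal is to show $\varphi_{\beta,G_n,J_n}(S)<1$ for all large $n$, which by the same characterisation gives $\beta_c(G_n,J_n)\ge\beta$.

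To compare $\varphi_{\beta,G_n,J_n}(S)$ with $\varphi_{\beta,G,J}(S)$, split the sum over $y\notin S$ into $y\in B(M)\setminus S$ and $y\notin B(M)$ for a large radius $M\ge R$. For $n$ so large that $R_n(G_n,G,\epsilon)\ge M$, the ball $B_{G_n}(M)$ is identified with $B_G(M)$. On this ball:
\begin{itemize}
\item The finitely many factors $\P(x\sim y)=1-e^{-\beta J_n(x,y)}$ converge to their counterparts for $J$, because the $L^1$ convergence in \eqref{eq:coupling-recall} forces $J_n(x,y)\to J(x,y)$ for each fixed pair.
\item The events $\{o\xleftrightarrow{S}x\}$ are determined by edges within $S\subseteq B(R)$, so their probabilities converge by \eqref{eq:configuration_disagree_7} of \thref{lem:coupling-on-radius} with $\beta_n=\beta$.
\end{itemize}
Hence the truncated part of $\varphi_{\beta,G_n,J_n}(S)$ converges to the truncated part of $\varphi_{\beta,G,J}(S)$, which is at most $1-\eta$.

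The tail is where uniform integrability enters, and it is the main obstacle. Bounding connection probabilities by $1$ and using \eqref{eq:standard_kernel_bound}, the tail is at most $\beta\sum_{x\in S}\sum_{y\notin B(M)}J_n(x,y)$. By transitivity of $J_n$, and since $d_G(x,y)\ge M-R$ for $x\in S$, this is at most $\beta\,\#S\sum_{z\notin B_{G_n}(M-R)}J_n(o_n,z)$. The bound \eqref{eq:tight-decay-Jn} only controls the total mass $\sum_z J_n(o_n,z)\le M_0$; it does not give tails that are uniformly small in $n$. To get small tails, write
\begin{equation*}
\sum_{z\notin B(M-R)}J_n(o_n,z)=\|J_n\|-\sum_{z\in B(M-R)\setminus\{o\}}J_n(o,z),
\end{equation*}
and use that the inner sum converges to $\sum_{z\in B(M-R)\setminus\{o\}}J(o,z)$, which is close to $\|J\|$ for $M$ large. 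So one needs $\limsup_n\|J_n\|\le\|J\|$. This does not follow from a uniform bound $M_0$ alone: mass could escape to infinity, as in \thref{ex:no_unif_integrability}. I would therefore interpret uniform integrability in the stronger sense of uniformly small tails, namely $\sup_n\sum_{z\notin B(L)}J_n(o_n,z)\to0$ as $L\to\infty$, which is what \thref{lem:kernel_example} actually verifies. Alternatively, one can absorb the tail using the slack: choose $M$ so that $\beta\,\#S\cdot\sup_n\mathrm{tail}_n(M-R)<\eta/2$.

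With the tail below $\eta/2$ and the truncated part below $1-\eta/2$ for large $n$, we get $\varphi_{\beta,G_n,J_n}(S)<1$. This gives $\liminf_n\beta_c(G_n,J_n)\ge\beta$, and letting $\beta\uparrow\beta_c(G,J)$ proves \eqref{eq:lower_semi_continuity}.
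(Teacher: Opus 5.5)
Your proof is correct and is essentially the paper's argument: fix $S$ with $\varphi_{\beta,G,J}(S)<1$ from Duminil-Copin--Tassion, control the part of $\varphi_{\beta,G_n,J_n}(S)$ inside the coupling radius via \thref{lem:coupling-on-radius}, and bound the remainder using uniformly small tails of $J_n$. Your caveat is right: the paper's step \eqref{eq:unif_decay_bound} likewise asserts a radius $R^\star$ with $\sum_{y\notin B(R^\star)}J_n(o,y)\le\epsilon/(2\beta)$ for all large $n$, which is the uniform-tail condition and does not follow from the mass bound \eqref{eq:tight-decay-Jn} alone. One small correction: \thref{ex:no_unif_integrability} is not the right illustration of this, since there $\|J_n\|=\infty$; spreading unit mass over the annuli $\{n\le d_G(x,y)<2n\}$ on a fixed $G$ is the bounded-mass example you want.
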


\begin{proof}
    Let $\beta < \beta_c(G,J)$ and let $\epsilon > 0$.
    By the uniform integrability \eqref{eq:tight-decay-Jn} there exists $R^\star$ such that
    \begin{equation}
        \label{eq:unif_decay_bound}
        \sum_{y \in G_n\setminus B_n(R^\star)}
        J_n(o,y)
        \leq 
        \epsilon/(2 \beta)
    \end{equation}
    for all $n$ sufficiently large.
    By \cite{duminil-copin_new_2016} there exists $S \subset V$ a finite set containing the origin $o$ such that $\varphi_{\beta,G,J}(S) < 1$.
    Let $R = \diam(S)+R^\star$.
    By the definition of local convergence $B_{G_n}(2R) \cong B_G(2R)$ and in particular $S \in B(R-R^\star)$ for all $n$ sufficiently large.
    By definition
    \begin{equation}
        \label{eq:def_varphi_n}
        \varphi_{\beta,G_n,J_n}(S)
        =
        \sum_{x \in S}
        \P_{\beta,G_n,J_n}
        \left(
            o
            \xleftrightarrow{S}
            x
        \right)
        \sum_{y \in G_n \setminus S}
        \P_{\beta,G_n,J_n}
        \left(
            x 
            \sim 
            y
        \right).
    \end{equation}
    By the natural coupling between $\P_{\beta,G_n,J_n}$ and $\P_{\beta,G,J}$ in $B(R)$ in \thref{lem:coupling-on-radius} it holds that
    \begin{equation}
        \label{eq:S-set-eps}
        \P_{\beta,G_n,J_n}
        (
            o
            \xleftrightarrow{S}
            x
        )
        \leq 
        \P_{\beta,G,J}
        (
            o
            \xleftrightarrow{S}
            x
        )
        + 
        \epsilon
    \end{equation}
    and 
    \begin{equation}
        \label{eq:direct_connection_bound}
        \sum_{y \in B(R) \setminus S}
        \P_{\beta,G_n,J_n}
        \left(
            x \sim y 
        \right)
        \leq 
        \epsilon/2
        +
        \sum_{y \in B(R) \setminus S}
        \P_{\beta,G,J}
        \left(
            x \sim y 
        \right)
    \end{equation}
    for all $x \in S$ and for all $n$ sufficiently large.
    Since $B(x,R^\star) \subseteq B(R)$ for all $x \in S$, it follows that
    \begin{multline}
        \sum_{y \in G_n \setminus S}
        \P_{\beta,G_n,J_n}
        \left(
            x \sim y 
        \right)
        =
        \sum_{y \in G_n\setminus B(R)}
        \P_{\beta,G_n,J_n}
        \left(
            x \sim y 
        \right)
        +
        \sum_{z \in B(R) \setminus S}
        \P_{\beta,G_n,J_n}
        \left(
            x \sim z 
        \right)
        \\
        \leq
        \label{eq:direct_connection_1}
        \epsilon
        +
        \sum_{y \in B(R) \setminus S}
        \P_{\beta,G,J}
        \left(
            x \sim y 
        \right)
        \leq
        \epsilon
        +
        \sum_{y \in G \setminus S}
        \P_{\beta,G,J}
        \left(
            x \sim y 
        \right)
    \end{multline}
    where in the first inequality we apply the the kernel bound in \eqref{eq:standard_kernel_bound} and the bound in \eqref{eq:unif_decay_bound} to the first term and we apply the bound in \eqref{eq:direct_connection_bound} to the second term, and in the second inequality we use that $B(R) \subset G$.
    The bounds in \eqref{eq:S-set-eps} and \eqref{eq:direct_connection_1} in \eqref{eq:def_varphi_n} give, after distributing the products
    \begin{equation}
        \varphi_{\beta,G_n,J_n}(S)
        \leq
        \varphi_{\beta,G,J}(S)
        + 
        \epsilon
        \E_{\beta,J,G}[\deg(o)] 
        + 
        \epsilon 
        \P_{\beta, G,J}
        (S \subseteq \cluster) 
        + 
        \epsilon^2
        \#S
    \end{equation}
    since $\sum_{x \in S} \P_{\beta,G,J} (o \xleftrightarrow{S} x) = \P_{\beta, G,J}(S \subseteq \cluster)$ and $\sum_{y \in G \setminus S} \P_{\beta,G,J} \left(x \sim y \right) \leq \E_{\beta,G,J} \left[ \deg(o) \right]$.
    Clearly $\P_{\beta, G,J}(S \subseteq \cluster) < 1$, and $\E_{\beta,G,J} \left[ \deg(o) \right] < \infty$ since $J$ is assumed to be integrable.
    By construction $\varphi_{\beta,G,J}(S) < 1$ and we may choose $\epsilon$ sufficiently small such that $\varphi_{\beta,G_n,J_n}(S) < 1$ and in particular $\beta_c(G_n,J_n) > \beta$ for all $n$ for all $n$ sufficiently large, concluding the proof.
\end{proof}
  
The assumption of uniform integrability was used in \eqref{eq:direct_connection_1} to make $\P_{\beta,G_n,J_n}\left(S \sim G_n \setminus S \right)$ comparable to $\P_{\beta,G,J}\left(S \sim G \setminus S \right)$.
Due to the long-range nature of \LRP{} this probability is clearly not local, and the assumption of uniform integrability ensures that these non-local edges have a negligible effect on $\beta_c$.
Without uniform integrability the proof has no chance of success, and in fact obvious counterexamples arise.

\begin{example}[No locality without uniform integrability]
    \thlabel{ex:no_unif_integrability}
    Let $G$ be a transitive graph of polynomial growth with $d \geq 2$, and let $J : V \times V \to \R_+$ be the transitive kernel given by $J(x,y) = d_G(x,y)^{-d \alpha}$ for $\alpha > 1$. 
    Let $(G_n)_{n \in \N}$ be a sequence of transitive graphs with $\limsup_{n \to \infty} \dim(G_n) < d$, let $M_n$ be the maximal radius for which the balls in $G_n$ and $G$ are isomorphic as rooted graphs, and let $(J_n)_{n \in \N}$ be the sequence of transitive kernels given by
    \begin{equation}
        J_n(x,y) 
        =
        \begin{cases}
            J(x,y) 
            &
            d_G(x,y) 
            \leq 
            \lfloor 
                M_n/2 
            \rfloor
            \\
            d_{G_n}(x,y)^{-\dim(G_n)}
            & 
            d_G(x,y) > \lfloor M_n/2 \rfloor
        \end{cases}
        .
    \end{equation}
    Then $(G_n,J_n)_{n \in \N}$ converges locally to $(G,J)$, with $0 < \beta_c(G,J) < \infty$ yet  $\beta_c(G_n,J_n) = 0$ for all $n$.
\end{example}

We now assume \thref{prop:biskup} to prove upper semi-continuity.

\begin{proposition}
    \thlabel{prop:upper_semi_continuity}
    Let $G$ be a transitive graph of polynomial growth with $d \in \N$, and suppose that $J : V \times V \to \R_+$ is a transitive kernel satisfying $J(x,y) = \Omega(d_G(x,y)^{-d \alpha})$ for $\alpha \in (0, 2)$. 
    Let $(G_n,J_n)_{n \in \N}$ be a sequence of transitive graphs and kernels converging locally to $(G,J)$ with $\liminf_{n \to \infty} \dim(G_n) \geq 2$.
    Then
    \begin{equation}    
        \label{eq:locality_upper_semi_continuity}
        \limsup_{n \to \infty} 
        \left(
            \beta_c(G_n,J_n)
        \right) \leq \beta_c(G,J). 
    \end{equation}
\end{proposition}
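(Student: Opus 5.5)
Fix $\beta > \beta_c(G,J)$; it suffices to show that $\beta > \beta_c(G_n,J_n)$ for all $n$ sufficiently large. We check the hypotheses of the finite-size criterion \thref{prop:finite_size_criterion} on each $G_n$.

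\textbf{Uniform nets.} Since $\liminf_{n\to\infty}\dim(G_n)\geq 2$, \thref{prop:unif_control_nets} gives $A\geq 1$ such that $G_n$ admits $A$-controlled nets for all large $n$. The threshold $\delta = c_G/(18(8A)^d C_G)$ involves the volume constants $c_G, C_G$ and the degree bound of the net graph. These refer to balls of radius of order $AR$ in $G_n$. Once $B_{G_n}(20AR)\cong B_G(20AR)$, the ratio $\#B_{G_n}(16AR)/\#B_{G_n}(2R)$ coincides with the corresponding ratio in $G$. So a single $\delta$, computed from $G$, works for all large $n$. This is the point in the proof of \thref{prop:finite_size_criterion} where care is needed: the bound on the maximum degree $\Delta$ of the net graph only uses balls of radius $16AR$.

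\textbf{Condition \eqref{eq:finite_size_criterion_1}.} By \thref{prop:biskup} applied to $(G,J)$ at $\beta$, there are $\nu>0$ and $R$ large with $\P_{\beta,G,J}(\#\giantone{R}\geq\nu\#B(R))\geq 1-\delta/2$. For $\alpha\le 1$ we have $\beta_c(G,J)=0$, but \thref{prop:biskup} covers all $\alpha\in(0,2)$, so this case is included. The event is determined by edges in $B(R)$. By \eqref{eq:configuration_disagree_7} of \thref{lem:coupling-on-radius} with $\beta_n=\beta$ and $R$ fixed, the same event has $\P_{\beta,G_n,J_n}$-probability at least $1-\delta$ for $n$ large. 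The volume $\#B(R)$ is the same in both graphs.

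\textbf{Condition \eqref{eq:finite_size_criterion_2}.} This is the step I expect to be the main obstacle. We need a lower bound on $J_n$ on $B(10AR)$, whereas $J_n$ is only $L^1$-close to $J$ there. I would use the lower bound $J(x,y)\geq c_J(20AR)^{-d\alpha}$ for $x,y\in B(10AR)$ at distance at least $2R\geq R_J$, which gives
\[
\P_{\beta,G,J}(S_1\not\sim S_2)\leq \exp\bigl(-c_J\beta\nu^2(\#B(R))^2/(20AR)^{d\alpha}\bigr)\leq \delta/2
\]
for $R$ large, since $\alpha<2$. For $G_n$ we have
\[
J_n(S_1,S_2)\geq J(S_1,S_2)-\sum_{x,y\in B(10AR)}|J_n(x,y)-J(x,y)|\geq J(S_1,S_2)-\epsilon
\]
once the coupling radius satisfies $R_n\geq 10AR$. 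Choosing $\epsilon$ small, we get $\P_{\beta,G_n,J_n}(S_1\not\sim S_2)\leq e^{\beta\epsilon}\delta/2\leq\delta$ uniformly over all admissible $S_1,S_2$. Here it is important to take $R$ first and then $n$ large, with $\delta$ fixed in advance from the previous step.

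With all three conditions verified, \thref{prop:finite_size_criterion} applied to $(G_n,J_n)$ gives $\beta>\beta_c(G_n,J_n)$ for all large $n$. Since $\beta>\beta_c(G,J)$ was arbitrary, $\limsup_{n\to\infty}\beta_c(G_n,J_n)\leq\beta_c(G,J)$.
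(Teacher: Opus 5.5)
Your proposal is correct and follows the paper's proof. You fix $\beta>\beta_c(G,J)$, obtain the local giant in $G$ from \thref{prop:biskup} and the bulk-to-bulk bound from $\alpha<2$, transfer both to $G_n$ via local convergence, and conclude with \thref{prop:finite_size_criterion} using the uniform $A$-controlled nets of \thref{prop:unif_control_nets}. Your direct comparison $J_n(S_1,S_2)\geq J(S_1,S_2)-\epsilon$ is the kernel-level version of the paper's coupling step, and your observation that $\delta$ depends only on the locally determined ratio $\#B(16AR)/\#B(2R)$ (hence can be taken uniform in $n$) makes explicit a point the paper leaves implicit.
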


\begin{proof}[Proof of \thref{prop:upper_semi_continuity}]
    Let $\beta > \beta_c(G,J)$ and consider the limit graph $G$. 
    Let $\delta$ be as in \thref{prop:finite_size_criterion}.
    By \thref{prop:biskup} there exists $\nu > 0$ and $R$ sufficiently large such that 
    \begin{equation}
    \label{eq:biskup_applied_2}
        \P_{\beta,G,J}
        \left(
            \giantone{R}
            \geq 
            \nu
            \#B_{G}(R)
        \right)
        \geq 
        1
        -
        \delta/2.
    \end{equation}
    By \thref{prop:unif_control_nets} there exists $A \geq 1$ such that $G_n$ has $A$-controlled nets for all $n$ sufficiently large.
    For $S_1,S_2 \in B(10 A R)$ with $d_G(S_1,S_2) \geq 2R$ and $\# S_1, \# S_2 \geq \nu \# B(R)$, we choose $R$ sufficiently large such that
    \begin{equation}
        \P_{\beta,G,J}
        \left(
            S_1 
            \sim 
            S_2 
        \right)
        \geq
        1
        -
        \exp
        \left(
            - 
            \frac{
                c_J 
                \beta
                (
                    \nu \# B(R)
                )^2
            }{
                (16AR)^{d \alpha} 
            }
        \right)
        \geq 
        1
        -
        \delta/2.
    \end{equation}
    By the definition of local convergence we have $B_{G_n}(20AR) \cong B_G(20AR)$ for all $n$ sufficiently large.
    By the natural coupling between $\P_{\beta,G_n,J_n}$ and $\P_{\beta,G,J}$ in $B(R)\subseteq B(20AR)$ in \thref{lem:coupling-on-radius}, we have
    \begin{equation}
        \label{eq:locality_proof_clusters_agree}
        \P_{\beta,G_n,J_n}
        \left(
            \giantone{R}
            \geq 
            \nu
            \#B(R)
        \right)
        \geq 
        \P_{\beta,G,J}
        \left(
            \giantone{R}
            \geq 
            \nu
            \#B(R)
        \right)
        -
        \delta/2
        \geq 
        1 
        - 
        \delta.
    \end{equation}
    Again by the natural coupling on $B(20AR)$, for all $n$ sufficiently large
    \begin{equation}
        \P_{\beta,G_n,J_n}
        \left(
            S_1
            \sim 
            S_2 \mid \#S_1, \#S_2
        \right)
        \geq
        \P_{\beta,G,J}
        \left(
            S_1
            \sim 
            S_2  \mid \#S_1, \#S_2
        \right)
        -
        \delta/2
        \geq
        1
        -
        \delta,
    \end{equation}
    which verifies the finite-size criterion in \thref{prop:finite_size_criterion} and gives $\beta > \beta_c(G_n,J_n)$.
\end{proof}

\begin{proof}[Proof of \thref{thm:lrp_locality}]
    Upper semi-continuity follows from \thref{prop:upper_semi_continuity}.
    When $\alpha \leq 1$ then \thref{lem:integrability} states that $\E_{\beta,G,J}[\deg(o)] = \infty$ and $\beta_c(G,J) = 0$ so that trivially $\liminf_{n \to \infty} \beta_c(G_n) \geq 0$.
    When $\alpha > 1$ and the sequence of kernels $(J_n)_{n \in \N}$ is uniformly decaying, lower semi-continuity follows from \thref{prop:lower_semi_continuity}.
\end{proof}

\subsection{Joint continuity of the percolation probability}
We begin by proving upper semi-continuity.

\begin{proposition}
    \thlabel{prop:theta_upper_semi_continuity}
    Let $G$ be a transitive graph of polynomial growth, and suppose that $J : V \times V \to \R_+$ is a transitive kernel.
    Let $\beta \in \R_+$ and let $\beta_n \to \beta$.
    Let $(G_n,J_n)_{n \in \N}$ be a sequence of transitive graphs and kernels converging locally to $(G,J)$ with $(J_n)_{n \in \N}$ uniformly integrable. 
    Then
    \begin{equation}
        \label{eq:theta_upper_semi_continuity}
        \limsup_{n \to \infty} 
        \theta(\beta_n,G_n,J_n) 
        \leq 
        \theta(\beta,G,J).
    \end{equation}
\end{proposition}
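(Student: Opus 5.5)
The approach is to approximate $\theta$ from above by a locally determined quantity and then pass to the limit using the natural coupling on the coupling radius. For $r \in \N$ let $\AA_r = \{ o \leftrightarrow S(r) \}$ be the event that the origin is connected to distance $r$. Since infinite clusters are unbounded and the events decrease in $r$, we have $\theta(\beta,G,J) = \lim_{r\to\infty} \P_{\beta,G,J}(\AA_r)$, and $\theta(\beta_n,G_n,J_n) \leq \P_{\beta_n,G_n,J_n}(\AA_r)$ for every $r$ and $n$. However, $\AA_r$ is not determined by edges inside a single ball, because a path from $o$ to $S(r)$ may leave any fixed ball via a long edge and come back. I would therefore replace $\AA_r$ by a truncated version: fix $r$ and a larger radius $R \geq r$, and let $\AA_{r,R}$ be the event that $o$ is connected to $B(r)^c$ using only edges with both endpoints in $B(R)$.

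The event $\AA_{r,R}$ is locally determined by $B(R)$, so \eqref{eq:configuration_disagree_7} in \thref{lem:coupling-on-radius} gives $\P_{\beta_n,G_n,J_n}(\AA_{r,R}) \to \P_{\beta,G,J}(\AA_{r,R}) \leq \P_{\beta,G,J}(\AA_r)$. The main step is to bound the discrepancy $\P_{\beta_n,G_n,J_n}(\AA_r \setminus \AA_{r,R})$ uniformly in $n$. On $\AA_r \setminus \AA_{r,R}$, either $o$ already reaches $B(r)^c$ inside $B(R)$ (so we are in $\AA_{r,R}$ after all, a contradiction), or the cluster of $o$ restricted to $B(R)$ stays inside $B(r)$ and some vertex of $B(r)$ has an edge to $B(R)^c$. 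Hence
\[
\P_{\beta_n,G_n,J_n}(\AA_r \setminus \AA_{r,R}) \leq \sum_{x \in B(r)} \sum_{y \notin B(R)} \beta_n J_n(x,y) \leq \beta_n \# B(r) \sup_{x \in B(r)} \sum_{y \notin B(x,R-r)} J_n(x,y).
\]

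Uniform integrability is needed at this point, and this is the step I expect to be the main obstacle. The bound \eqref{eq:tight-decay-Jn} only gives a uniform bound $M$ on the total mass, not uniform smallness of tails. I would obtain the tail control as in the proof of \thref{prop:lower_semi_continuity}: the local $L^1$ convergence gives $\sum_{y \in B(R')} J_n(o,y) \to \sum_{y\in B(R')} J(o,y)$. To get tails small I would use the form of uniform integrability actually invoked in \eqref{eq:unif_decay_bound}, namely that for every $\epsilon$ there is $R^\star$ with $\sum_{y \notin B_{G_n}(R^\star)} J_n(o,y) \leq \epsilon$ for all large $n$. Given this, choose $R = r + R^\star$ with $\epsilon = \xi/(2\beta\,\# B(r)+1)$, so the discrepancy is at most $\xi$ for $n$ large, using $\beta_n \to \beta$ and transitivity of $J_n$.

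Combining the pieces gives $\limsup_n \theta(\beta_n,G_n,J_n) \leq \limsup_n \P_{\beta_n,G_n,J_n}(\AA_r) \leq \P_{\beta,G,J}(\AA_r) + 2\xi$. Letting $\xi \to 0$ and then $r \to \infty$ yields \eqref{eq:theta_upper_semi_continuity}. The case $\beta = 0$ needs no separate treatment, since both sides are handled by the same estimates.
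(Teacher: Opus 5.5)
Your proof is correct and is essentially the paper's argument in complementary form. The paper bounds $1-\theta(\beta,G,J)$ from below by finitely many finite-cluster events $\{\cluster=A_i\}$, compares each with its $B(R)$-local version $\{\cluster_R=A_i\}$ through the natural coupling, and controls the error $\P_{\beta_n,G_n,J_n}(A_i\sim B(R)^c)$ using exactly the tail form of uniform integrability you flag (see \eqref{eq:Markov-R-choice}); you do the same with the single event $\{o\leftrightarrow B(r)^c\}$, whose complement is the disjoint union of $\{\cluster=A\}$ over $o\in A\subseteq B(r)$.

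The only thing to fix is your definition $\AA_r=\{o\leftrightarrow S(r)\}$. Long edges can jump over $S(r)$, so these events are neither decreasing in $r$ nor implied by $\#\cluster=\infty$. Set $\AA_r=\{o\leftrightarrow B(r)^c\}$ instead, which is what the rest of your argument actually uses.
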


\begin{proof}
    The uniform integrability of $(J_n)_{n \in \N}$ implies that $J$ is integrable, which in turn implies that $\theta(\beta, G,J)<1$ for all $\beta\ge 0$. 
    Let $\epsilon >0$.
    Since the number of finite subsets of $V$ is countable, there exists $m \in \N$ and a finite collection of sets $(A_i)_{i \leq m}$ with $o \in A_i \subset V$ for every $i \leq m$ satisfying
    \begin{equation}
        \label{eq:theta_upper_continuity_finite}
        \sum_{i \leq m}
        \P_{\beta,G,J}
        \left(
            \cluster
            =
            A_i
        \right)
        \geq 
        1 
        - 
        \theta(\beta,G,J)
        - \epsilon/2.
    \end{equation}
    By the uniform integrability of $(J_n)_{n \in \N}$ there exists $R \in \N$ such that both $R > 2 \max_{i \leq m} \diam(A_i)$ and for all $n$ sufficiently large
    \begin{equation}
        \label{eq:Markov-R-choice}
        \sum_{x \in G_n \setminus B_{G_n}(R/2)} 
        J_n(o,x)
        \leq
        \frac{
            \epsilon
        }{
            4\beta m \max_{i\le m}\#A_i
        }
    \end{equation}
    By the local convergence $B_{G_n}(2R) \cong B_G(2R)$ for all $n$ sufficiently large.
    Since $A_i \subseteq B(R)$, the event $\cluster = A_i$ is equivalent to $\cluster_R = A_i$ and $A_i \not\sim B(R)^c$ and by a union bound
    \begin{equation}   
        \label{eq:n-graph-to-connect}
        \P_{\beta_n,G_n,J_n}
        \left(
            \cluster 
            =
            A_i
        \right)
        \geq 
        \P_{\beta_n,G_n,J_n}
        \left(
            \cluster_R
            =
            A_i
        \right)
        -
        \P_{\beta_n,G_n,J_n}
        \left(
            A_i
            \sim
            B(R)^c
        \right)
    \end{equation}  
    for every $i \leq m$.
    We bound both terms in turn.
    By the natural coupling between $\P_{\beta_n,G_n,J_n}$ and $\P_{\beta,G,J}$ in $B(R)$ in \thref{lem:coupling-on-radius} we bound from below as
    \begin{equation}
    \label{eq:limit-to-connect}
        \P_{\beta_n,G_n,J_n}
        \left(
            \cluster_R
            =
            A_i
        \right)
        \geq 
        \P_{\beta,G,J}
        \left(
            \cluster_R
            =
            A_i
        \right)
        -
        \epsilon/(4m)
        \geq 
        \P_{\beta,G,J}
        \left(
            \cluster
            =
            A_i
        \right)
        -
        \epsilon/(4m)
    \end{equation}
    for all $n$ sufficiently large, where in the second inequality we use that $K=A_i$ implies $K_R = A_i$.
    By Markov's inequality together with the fact that $A_i \subseteq B(R/2)$, the transitivity of the kernel, the kernel bound in \eqref{eq:standard_kernel_bound}, and the bound in \eqref{eq:Markov-R-choice}, we have
    \begin{multline}
        \P_{\beta_n,G_n,J_n}
        \left(
            A_i
            \sim
            B(R)^c
        \right)
        \leq 
        \E_{\beta_n,G_n,J_n}
        \left[
            \#
            \{
                x 
                \in 
                A_i
                :
                x 
                \sim B(R)^c
            \}
        \right]
        \\
        \leq 
        \# 
        A_i
        \P_{\beta_n,G_n,J_n}
        \left(
            o
            \sim 
            B(R/2)^c
        \right)
        \leq 
        \frac{
            \epsilon 
            \# 
            A_i
        }{
            8 
            m 
            \max_{i \leq m}
            \#A_i
        }\le \varepsilon/(4m)
    \end{multline}
    for all $i \leq m$.
    Together with \eqref{eq:n-graph-to-connect} and \eqref{eq:limit-to-connect}, after summing over $i \leq m$ we arrive at
    \begin{equation}
    \label{eq:theta_upper_continuity_coupling_bounds} 
        \sum_{i \leq m}
        \P_{\beta_n,G_n,J_n}
        \left(
            \cluster
            =
            A_i
        \right)
        \geq 
        \sum_{i \leq m}
        \P_{\beta,G,J}
        \left(
            \cluster
            =
            A_i
        \right)
        - 
        \epsilon/2
        \geq
        1 
        - 
        \theta(\beta,G,J)
        - 
        \epsilon.
    \end{equation}
    and in particular after rearranging 
    \begin{equation}
        \label{eq:theta_upper_semi_continuity_conclusion}
        \theta(\beta_n,G_n,J_n)
        \leq 
        1 
        -
        \sum_{i \leq m}
        \P_{\beta_n,G_n,J_n}
        \left(
            \cluster
            =
            A_i
        \right)
        \leq 
        \theta(\beta,G,J)
        + 
        \epsilon
    \end{equation}
    for all $n$ sufficiently large.
    This concludes the proof.
\end{proof}

We prove the lower semi-continuity of the percolation density. 
\begin{proposition}[Lower semi-continuity]
    \thlabel{prop:theta_lower_semi_cont}
    Let $G$ be a transitive graph of polynomial growth with $d \geq 2$, and suppose that $J : V \times V \to \R_+$ is a transitive kernel satisfying $J(x,y) = \Theta(d_G(x,y)^{-d \alpha})$ with $\alpha \in (1,2)$. 
    Let $\beta \in \R_+$ and let $\beta_n \to \beta$.
    Let $(G_n,J_n)_{n \in \N}$ be a sequence of transitive graphs and kernels converging locally to $(G,J)$ with $\liminf_{n \to \infty} \dim(G_n) \geq 2$.
    Then 
    \begin{equation}
        \liminf_{n \to \infty}
        \theta
        \left(
            \beta_n,
            G_n,
            J_n
        \right)
        \geq 
        \theta
        \left(
            \beta,
            G,
            J
        \right).
    \end{equation}      
\end{proposition}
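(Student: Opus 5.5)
We first dispose of the trivial cases. If $\theta(\beta,G,J)=0$ there is nothing to prove. By \thref{thm:continuity_percolation_density}, $\theta(\beta_c)=0$, so we may assume $\beta>\beta_c(G,J)$. Fix $\beta'$ with $\beta_c(G,J)<\beta'<\beta$. Since $\beta_n\to\beta$, we have $\beta_n\geq\beta'$ for all large $n$. By monotonicity in $\beta$ it would be convenient to compare with $\beta'$, but this loses $\theta(\beta)-\theta(\beta')$. So instead we work directly at $\beta_n$, using the coupling of \thref{lem:coupling-on-radius}, which handles $\beta_n\to\beta$.

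The plan is to use \thref{prop:o_in_kinfty} as a local witness for $\theta$. Fix $\epsilon>0$. By \thref{prop:o_in_kinfty} on the limit graph, there is $R$ with $\P_{\beta,G,J}(o\in\giantone{R})\geq\theta(\beta,G,J)-\epsilon$. We want to strengthen this to a local event that implies $o\leftrightarrow\infty$ in $G_n$. Let $\AA_R$ be the event that $o\in\giantone{R}$ and $\#\giantone{R}\geq\nu\#B(R)$, where $\nu$ is the constant from \thref{prop:biskup}. By \thref{prop:biskup} the second condition fails with probability at most $\exp(-cR^{\cdot})$. Hence $\P_{\beta,G,J}(\AA_R)\geq\theta(\beta,G,J)-2\epsilon$ for large $R$. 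The event $\AA_R$ is determined by $B(R)$, so \eqref{eq:configuration_disagree_7} gives $\P_{\beta_n,G_n,J_n}(\AA_R)\geq\theta(\beta,G,J)-3\epsilon$ for $n$ large.

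It remains to show the following uniformly in large $n$:
\[
\P_{\beta_n,G_n,J_n}\bigl(\#\giantone{R}\geq\nu\#B(R),\ \giantone{R}\not\leftrightarrow\infty\bigr)\leq\epsilon .
\]
For this we rerun the argument of \thref{prop:upper_semi_continuity} inside $G_n$. By \thref{prop:unif_control_nets}, the graphs $G_n$ have $A$-controlled nets for one fixed $A$. We take a $(4R,4AR)$-net of $G_n$ containing $o$, and form the site-bond process $\eta$ of \thref{prop:finite_size_criterion} at parameter $\beta_n$.

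The site-openness estimate \eqref{eq:finite_size_criterion_1} holds with $\delta\to0$ as $R\to\infty$, by \thref{prop:biskup} and the coupling. The bond estimate \eqref{eq:finite_size_criterion_2} holds with $\delta\to0$ via the bulk-to-bulk bound on $B(20AR)$, transported to $G_n$ by the coupling. Then the final statement of \thref{prop:finite_size_criterion}, $\P(\giantone{R}\leftrightarrow\infty)\to1$ as $\delta\to0$, gives the bound.

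The main obstacle is that this bound concerns the unconditional probability, whereas we need it \emph{conditionally} on the origin's site being open. In $\tilde\eta$, the origin site is open and the rest of the configuration is independent of it. So I would apply the Peierls argument to $\tilde\eta$ with the origin's site conditioned open. Concretely, the process on the other sites and bonds still dominates bond percolation with $q\to1$, and the origin's incident bonds are open with conditional probability at least $1-\delta$ given their endpoints are open. This yields
\[
\P_{\beta_n,G_n,J_n}\bigl(\giantone{R}\not\leftrightarrow\infty \bigm| \#\giantone{R}\geq\nu\#B(R)\bigr)\leq\epsilon
\]
for $R$ large and all $n$ large.

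Combining the two estimates gives $\theta(\beta_n,G_n,J_n)\geq\P(\AA_R)-\epsilon\geq\theta(\beta,G,J)-4\epsilon$. Letting $\epsilon\to0$ completes the proof. The uniformity in $n$ of the constants ($A$ fixed, $\delta$ depending only on $R$ through limit-graph quantities plus the vanishing coupling error) is the point requiring care.
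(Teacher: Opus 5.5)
Your proof is correct and follows the paper's argument. The paper uses the same union bound $\theta(\beta_n,G_n,J_n)\ge \P_{\beta_n,G_n,J_n}(o\in\giantone{R})-\P_{\beta_n,G_n,J_n}(\giantone{R}\not\leftrightarrow\infty)$, controls the first term by \thref{prop:o_in_kinfty} together with the coupling on the coupling radius, and controls the second term by verifying the finite-size criterion in $G_n$ via \thref{prop:biskup}, the bulk-to-bulk bound and the coupling. The ``main obstacle'' you raise is not real: the event $\{\#\giantone{R}\ge\nu\#B(R),\ \giantone{R}\not\leftrightarrow\infty\}$ is contained in $\{\giantone{R}\not\leftrightarrow\infty\}$, so the unconditional bound from the finite-size criterion already suffices, and neither the conditioning on the origin's site nor the extra size condition in $\AA_R$ is needed.
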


\begin{proof}
    If $\beta < \beta_c(G,J)$ then by definition $\theta(\beta,G,J) = 0$, and if $\beta = \beta_c(G,J)$ then by \thref{thm:continuity_percolation_density} we have $\theta(\beta,G,J) = 0$.
    In both cases the result is immediate, and we suppose now that $\beta > \beta_c(G,J)$. 
    For $n \in \N$ and $R \in \N$, by a union bound we can write
    \begin{multline}\label{eq:theta-return}
        \theta(\beta_n,G_n,J_n)
        =
        \P_{\beta_n,G_n,J_n}
        \left(
            o
            \leftrightarrow
            \infty
        \right)
        \geq
        \P_{\beta_n,G_n,J_n}
        \left(
            o
            \in
            \giantone{R},
            \giantone{R}
            \leftrightarrow
            \infty
        \right)
        \\
        \geq 
        \P_{\beta_n,G_n,J_n}
        \left(
            o
            \in
            \giantone{R}
        \right)
        -
        \P_{\beta_n,G_n,J_n}
        \left(
            \giantone{R}
            \not\leftrightarrow
            \infty
        \right).
    \end{multline}
    Let $\epsilon > 0$.   
    By \thref{prop:o_in_kinfty} we can choose $R$ sufficiently large such that in the limit graph
    \begin{equation}
        \label{eq:o_in_limit}
        \P_{\beta,G,J}
        \left(
            o
            \in
            \giantone{R}
        \right)
        \geq 
        \theta(\beta,G,J)
        -
        \epsilon/4.
    \end{equation}
    Let $\delta > 0$.
    By \thref{prop:biskup} there exists $\nu > 0$ and $R$ sufficiently large such that 
    \begin{equation}
    \label{eq:biskup_applied_3}
        \P_{\beta,G,J}
        \left(
            \giantone{R}
            \geq 
            \nu
            \#B_{G}(R)
        \right)
        \geq 
        1
        -
        \delta
        /
        2.
    \end{equation}
    By \thref{prop:unif_control_nets} there exists $A \geq 1$ such that $G_n$ has $A$-controlled nets for all $n$ sufficiently large.
    For $S_1,S_2 \in B(10 A R)$ with $d_G(S_1,S_2) \geq 2R$ and $\# S_1, \# S_2 \geq \nu \# B(R)$, we choose $R$ sufficiently large such that
    \begin{equation}
        \P_{\beta,G,J}
        \left(
            S_1 
            \sim 
            S_2 
        \right)
        \geq
        1
        -
        \exp
        \left(
            - 
            \frac{
                c_J 
                \beta
                (
                    \nu \# B(R)
                )^2
            }{
                (16AR)^{d \alpha} 
            }
        \right)
        \geq 
        1
        -
        \delta/2.
    \end{equation}
    By the definition of local convergence we have $B_{G_n}(20AR) \cong B_G(20AR)$ for all $n$ sufficiently large, and by the natural coupling between $\P_{\beta,G_n,J_n}$ and $\P_{\beta,G,J}$ in $B(R)\subseteq B(20AR)$ we have
    \begin{equation}
        \P_{\beta_n,G_n,J_n}
        \left(
            o
            \in
            \giantone{R}
        \right)
        \geq 
        \P_{\beta,G,J}
        \left(
            o
            \in
            \giantone{R}
        \right)
        -
        \epsilon/4
        \geq 
        \theta(\beta,G,J)
        -
        \epsilon/2
    \end{equation}
    as well as
    \begin{equation}
        \P_{\beta,G_n,J_n}
        \left(
            \giantone{R}
            \geq 
            \nu
            \#B(R)
        \right)
        \geq 
        \P_{\beta,G,J}
        \left(
            \giantone{R}
            \geq 
            \nu
            \#B(R)
        \right)
        -
        \delta/2
        \geq 
        1 
        - 
        \delta
    \end{equation}
    and
    \begin{equation}
        \P_{\beta,G_n,J_n}
        \left(
            S_1
            \sim 
            S_2
        \right)
        \geq
        \P_{\beta,G,J}
        \left(
            S_1
            \sim 
            S_2
        \right)
        -
        \delta/2
        \geq
        1
        -
        \delta
    \end{equation}
    for all $n$ sufficiently large.
    It follows from \thref{prop:finite_size_criterion} that for $\delta$ sufficiently small
    \begin{equation}
        \P_{\beta_n,G_n,J_n}
        \left(
            \giantone{R}
            \not\leftrightarrow
            \infty
        \right)
        \leq 
        \epsilon/2.
    \end{equation}
    Using this and \eqref{eq:o_in_limit} in 
    \eqref{eq:theta-return} gives $\theta\left(\beta_n,G_n,J_n\right) \geq \theta\left(\beta,G,J\right) - \epsilon$ for all $n$ sufficiently large, concluding the proof.
\end{proof}

\begin{proof}[Proof of \thref{thm:theta_locality}]
    Upper semi-continuity follows from \thref{prop:theta_upper_semi_continuity}. 
    If $\alpha > 1$ then lower semi-continuity follows from \thref{prop:theta_lower_semi_cont}.
     If $\alpha \leq 1$ and $\liminf_{n \to \infty} \beta_n > 0$, then $\theta(\beta,G,J) = 0$ and lower semi-continuity is immediate.
\end{proof}

\begin{remark}
    The upper semi-continuity of $\beta_c$ also follows from \thref{thm:theta_locality}: if $\beta > \beta_c(G,J)$ then $\theta(\beta,G,J) > 0$ and since $\theta(\beta,G_n,J_n) \to \theta(\beta,G,J)$ then $\theta(\beta,G_n,J_n) > 0$ for all $n$ sufficiently large, so that in particular $\beta > \beta_c(G_n,J_n)$ for all $n$ sufficiently large.
\end{remark}
\section{Smoothness of percolation characters}
\label{sec:smoothness} 
In this section we assume \thref{thm:cluster_size_decay} to obtain \thref{cor:characters_smoothness} on the smoothness of percolation characters.
For $n,m \in \N$, we let $\HH_{n}^{m}$ be the set of subgraphs of $G$ containing the origin with $n$ vertices and $m$ edges.
Note that $m \leq n^2$.
Recall that $\norm{J} = \sum_{x \in V \setminus \{o\}} J(o,x)$ and that $\partial_E(H)$ denotes the edge boundary of $H$.
We also recall the multinomial formula for the differentiation of products: for a collection $f_1,\ldots,f_m$ of smooth functions of $\beta$, we have
\begin{equation}
    \label{eq:multinomial_formula}
    \frac{d^k}{d\beta^k} 
    \left[
        \prod_{i=1}^{m} 
        f_i(\beta) 
    \right]
    = 
    \sum_{\alpha_1 + \ldots + \alpha_m = k} 
    \binom{k}{\alpha_1,\ldots,\alpha_m}
    \prod_{i=1}^m 
    f_i^{(\alpha_i)}(\beta).
\end{equation}

\begin{proof}[Proof of \thref{cor:characters_smoothness}]
    We want to show that all derivatives of $\E_{\beta} \left[ F(\# \cluster) \charf \left(\# \cluster < \infty \right) \right]$ exist and are finite.
    Rewriting the expectation gives
    \begin{equation}
        \E_{\beta}
        \left[
            F(\# \cluster)
            \charf
            \left(
                \# 
                \cluster
                < \infty
            \right)
        \right]
        =
        \sum_{n = 1}^{\infty}
        \sum_{m = n - 1}^{n^2}
        \sum_{H \in \HH_{n}^{m}}
        F(H)
        \P_{\beta}
        \left(
            \cluster
            =
            H
        \right).
    \end{equation}
    Let $P$ be a polynomial such that $\vert F(H) \vert \leq P(\#H)$.
    It is sufficient to show that for all $\ell\in \N$
    \begin{equation}
        \label{eq:wts_absolute_convergence}
        \sum_{n = 1}^{\infty}
        \sum_{m = n - 1}^{n^2}
        \sum_{H \in \HH_{n}^{m}}
        \left\vert
        F(H)
        \frac{d^{\ell}}{d\beta^\ell}
        \P_{\beta}
        \left(
            \cluster
            =
            H
        \right)
        \right\vert
        \leq 
        \sum_{n = 1}^{\infty}
        P(n)
        \sum_{m = n - 1}^{n^2}
        \sum_{H \in \HH_{n}^{m}}
        \left\vert
        \frac{d^{\ell}}{d\beta^\ell}
        \P_{\beta}
        \left(
            \cluster
            =
            H
        \right)
        \right\vert
        < 
        \infty.
    \end{equation}
    %
    For $\ell,m,n \in \N$ with $n > \ell$ and $H \in \HH_n^m$, we begin by showing that 
    \begin{equation}
        \label{eq:intermed_wts}
        \left\vert
        \frac{d^{\ell}}{d\beta^\ell}
        \P_{\beta}
        \left(
            \cluster
            =
            H
        \right)
        \right\vert
        \leq 
        \left(
            2
            n^3
            \left(
                \norm{J}
                + 
                \max(1,\beta^{-1})
            \right)
        \right)^{\ell}
        \P_{\beta}
        \left(
            \cluster
            =
            H
        \right).
    \end{equation}
    Let $e_1,\ldots,e_m$ be the set of edges present in $H$.
    By definition 
    \begin{equation}
        \label{eq:def_cluster}
        \P_{\beta}
        \left(
            \cluster 
            = 
            H
        \right)
        =
        \exp
        \left(
            -
            \beta
            J(\partial_E(H))
        \right)
        \prod_{i = 1}^m
        \left(
            1
            -
            \exp
            \left(
                - 
                \beta
                J(e_i)
            \right)
        \right),
    \end{equation}  
    and writing $\phi_H(\beta) = \prod_{i = 1}^m \left(1 - \exp \left(-  \beta J(e_i) \right) \right)$ the formula in  \eqref{eq:multinomial_formula} for $m=2$ gives
    \begin{align}
        \label{eq:cluster_equals_1}
        \frac{d^{\ell}}{d\beta^\ell}
        \P_\beta(\cluster=H)
        & =
        \sum_{k = 0}^{\ell}
        \binom{\ell}{k}
        \left(
            - 
            J(\partial_E(H))
        \right)^{\ell - k}
        \exp
        \left(
            -
            \beta
            J(\partial_E(H))
        \right)
        \frac{d^{k}}{d\beta^k}
        \phi_H(\beta).
    \end{align}
    The multinomial formula \eqref{eq:multinomial_formula} with $f_i(\beta) = 1 - \exp(-\beta J(e_i))$ gives, for all $k\le \ell$
    \begin{align}
        \frac{d^{k}}{d\beta^k}
        \phi_H(\beta)
        & =
        \sum_{\alpha_1 + \ldots + \alpha_m = k} 
        \binom{k}{\alpha_1,\ldots,\alpha_m}
        \prod_{i=1}^m 
        \left(
            -
            J(e_i)
        \right)^{\alpha_i}
        \left(
            \charf
            \left(
                \alpha_i
                =
                0
            \right)
            -
            \exp
            \left(
                -
                \beta
                J(e_i) 
            \right)
        \right)
        \\
        \label{eq:factor_out_phi}
        & =
        \phi_H(\beta)\cdot 
        k!
        \sum_{\alpha_1 + \ldots + \alpha_m = k} 
        \prod_{i=1}^m 
        \frac{
            \left(
                -
                J(e_i)
            \right)^{\alpha_i}
            \left(
                \charf
                \left(
                    \alpha_i
                    =
                    0
                \right)
                -
                \exp
                \left(
                    -
                    \beta
                    J(e_i) 
                \right)
            \right)
        }{
            \alpha_i !
            \left(
                1 
                -
                \exp
                \left(
                    -
                    \beta
                    J(e_i) 
                \right)
            \right)
        },
    \end{align}
    where we factor out a term $\phi_H(\beta)$ and expand the multinomial coefficient.
    Note that
    \begin{equation}
        \left\vert
        \frac{
            \left(
                -
                J(e_i)
            \right)^{\alpha_i}
            \left(
                \charf
                \left(
                    \alpha_i
                    =
                    0
                \right)
                -
                \exp
                \left(
                    -
                    \beta
                    J(e_i) 
                \right)
            \right)
        }{
            1 
            -
            \exp
            \left(
                -
                \beta
                J(e_i) 
            \right)
        }
        \right\vert
        \leq 
        \alpha_i !
        \max(1,\beta^{-\alpha_i})
    \end{equation}
    for all $\alpha_i \in \N$, and hence
    \begin{align}
        \label{eq:yet_another_derivative}
        \left\vert
            \frac{d^{k}}{d\beta^k}
            \phi_H(\beta)
        \right\vert
        \leq 
        \phi_H(\beta)
        k!
        \sum_{\alpha_1 + \ldots + \alpha_m = k} 
        \max(1,\beta^{-k})
        =
        \phi_H(\beta)
        \max(1,\beta^{-1})^k
        \prod_{j=1}^k
        \left(
            m
            +
            j
            -
            1
        \right)
    \end{align}
    where in the equality we use the balls-and-bars formula and we expand the binomial coefficient to cancel with the $k!$ factor.
    Combining \eqref{eq:yet_another_derivative} with \eqref{eq:cluster_equals_1}, the definition of $\P_{\beta}(\cluster = H)$ in \eqref{eq:def_cluster}, and using that $\vert J(\partial_E(H)) \vert \leq n \norm{J}$ yields that 
    \begin{equation}    
        \label{eq:intermed_k_eq_h}
        \left\vert
            \frac{d^{\ell}}{d\beta^\ell}
            \P_{\beta}
            \left(
                \cluster
                =
                H
            \right)
        \right\vert
        \leq 
        n^{\ell}
        \P_\beta(\cluster=H)
        \sum_{k = 0}^{\ell}
        \binom{\ell}{k}
        \norm{J}^{\ell - k}
        \max(1,\beta^{-1})^k
        \prod_{j=1}^k
        \left(
            m
            +
            j
            -
            1
        \right).
    \end{equation}
    For all $n > \ell$ it holds that $m \geq \ell$, and since $k \leq \ell$ and $m \leq n^2$ we have $\prod_{j = 1}^{k} \left(m + j - 1 \right) \leq (2m)^{\ell} \leq (2n^2)^{\ell}$.
    Applying this bound to \eqref{eq:intermed_k_eq_h} and recognising the binomial formula proves the inequality in \eqref{eq:intermed_wts}.
    Decomposing the sum in \eqref{eq:wts_absolute_convergence} over $n$ and applying the inequality in \eqref{eq:intermed_wts} gives 
    \begin{multline}
        \label{eq:binom_exp}
        \sum_{n = 1}^{\infty}
        P(n)
        \sum_{m = n - 1}^{n^2}
        \sum_{H \in \HH_{n}^{m}}
        \left\vert
        \frac{d^{\ell}}{d\beta^\ell}
        \P_{\beta}
        \left(
            \cluster
            =
            H
        \right)
        \right\vert
        \\
        \leq
        c_1 
        +
        \left(
            2
            \left(
                \norm{J}
                +
                \max(1,\beta^{-1})
            \right)
        \right)^{\ell}
        \sum_{n = \ell + 1}^{\infty}
        P(n)
        n^{3 \ell}
        \sum_{m = n - 1}^{n^2}
        \sum_{H \in \HH_{n}^{m}}
        \P_{\beta}
        \left(
            \cluster
            =
            H
        \right)
    \end{multline}
    for some $c_1 = c_1(\ell) > 0$ which corresponds to the finite sum up to $\ell$.
    The triple sum in the right-hand side of \eqref{eq:binom_exp} corresponds to $\E_{\beta} \left[P(\# \cluster) (\# \cluster)^{3 \ell} \charf(\# \cluster > \ell) \right]$, and it follows from Fubini's theorem and \thref{prop:biskup} that
    \begin{align}
        \label{eq:sum_with_f}
        \E_{\beta} 
        \left[
            P(\# \cluster) 
            (\# \cluster)^{3 \ell} 
            \charf(\# \cluster > \ell) 
        \right]
        \leq 
        \sum_{k = \ell + 1}^{\infty}
        P(k)
        k^{3 \ell}
        \exp
        \left(
            - 
            \beta
            c_2 
            k^{2-\alpha}
        \right)
    \end{align}
    for some $c_2 > 0$. 
    Since $P$ is assumed to be a polynomial, the sum in \eqref{eq:sum_with_f} converges.
    This shows that the differentiated series in \eqref{eq:wts_absolute_convergence} is absolutely convergent, concluding the proof.
\end{proof}

\section{Coarse-graining: nesting Voronoi tiles}
\label{subsec:coarse_graining}
Recall the definition of nets from Section \ref{sec:renormalisation}.
Given a graph $G$ and a net $V_0 = \{x_j\}_{j \geq 0}$ on $G$, we define the \textbf{Voronoi tile} $\vor{x_j}$ centred at $x_j$ as
\begin{equation}
    \label{eq:voronoi_tile_definition}
    \vor{x_j}
    =
    \{
        v \in V 
        :
        d(v,x_j) < d(v,x_k) 
        \ \forall k < j,
        d(v,x_j) \leq d(v,x_k) 
        \ \forall k \geq j
    \}.
\end{equation}
The \textbf{Voronoi tiling} associated to the net $V_0$ is the set of Voronoi tiles $(\vor{x_j})_{j \geq 0}$.
In words, the Voronoi tile $\vor{x_j}$ consists of those vertices $v \in V$ which are closer to $x_j$ than to any other $x_k$, and we break ties when $v$ is equidistant from $x_j$ and $x_k$ by using an arbitrary well-ordering on the indices of tiles. 
Note that a Voronoi tiling partitions the graph. 
Further, we can bound the volume of Voronoi tiles of an $(a,b)$-net $V_0$ by observing that if $x_j,x_k$ are net vertices at distance exactly $\lceil a\rceil$, then the middle vertex on the path may only belong to one of the tiles. Any vertex at distance $b+1$ from $x_j$ is allocated to another Voronoi tile. For all $x_j \in V_0$ and all $a\ge 5$ it holds that
\begin{equation}
\label{eq:tile_volume}
    B(x_j,\lceil a/3\rceil) 
    \subseteq 
    B(x_j,(a-1)/2) 
    \subseteq \mathrm{Vor}(x_j) \subseteq B(x_j,b)
\end{equation}
If  $V_0$ is an $(a,b)$-net in a transitive graph $G$ of polynomial growth, the volume bounds for balls \eqref{eq:polynomial_volume_growth} implies volume bounds of Voronoi tiles.
We will refer to these bounds as the \textbf{volume bounds for Voronoi tiles}.
Whereas in Section \ref{sec:locality} we worked with a specific class of nets (the $A$-controlled nets of \thref{prop:fixed_unif_control_nets}), in  what follows we work with maximal $a$-separated nets.
Recall from Section \ref{sec:renormalisation} that such nets always exist by Zorn's lemma.

\begin{definition}
\thlabel{def:recursive_tiles}
    Given $\radius_0 \ge r_0 > 0$ and a sequence of positive reals $(m_i)_{i \geq 1}$, we define the \textbf{renormalisation scales} to be the sequence of positive reals $(r_i)_{i \geq 0}$ recursively defined by 
    \begin{equation}
        \label{eq:renormalisation_scales}
        r_{i + 1} = m_{i + 1} r_i.
    \end{equation}
    We let $(V_i)_{i\geq 0}$ be a sequence of maximal $r_i$-separated nets, where a given net $V_i$ consists of the elements $\{\netvertex{i}{j}\}_{j \geq 0}$, and we let $((\vor{\netvertex{i}{j}})_{j \geq 0})_{i \geq 0}$ be the sequence of Voronoi tilings associated to the nets $(V_i)_{i \geq 0}$. 
    We define a \textbf{level $0$ tile} $\Tile{0}{j} = \tile{0}{\netvertex{0}{j}} = \vor{\netvertex{0}{j}}$ and its associated \textbf{cell} $\Cell{0}{j} = \cell{0}{\netvertex{0}{j}} = B(\netvertex{0}{j}, \radius_0)$. 
    For $i \geq 1$, we define a \textbf{\level{i} tile} as the union of \level{i-1} tiles whose centre is contained in $\vor{\netvertex{i}{j}}$,
    \begin{equation}
        \label{eq:tile_definition}
        \Tile{i}{j}
        =
        \tile{i}{\netvertex{i}{j}}
        =
        \bigcup_{\netvertex{i-1}{k} \in \vor{\netvertex{i}{j}}}
        \tile{i-1}{\netvertex{i-1}{k}},
    \end{equation}
    and its associated \textbf{cell} is similarly defined
    \begin{equation}
        \label{eq:cell_definition}
        \Cell{i}{j}
        =
        \cell{i}{\netvertex{i}{j}}
        = 
        \bigcup_{\netvertex{i-1}{k} \in \vor{\netvertex{i}{j}}}
        \cell{i-1}{\netvertex{i-1}{k}}.
    \end{equation}
    We say that $\netvertex{i}{j}$ is the \textbf{centre} of $\Tile{i}{j}$ and we call the level $i-1$ tiles in $\Tile{i}{j}$ the \textbf{subtiles} of $T_{i,j}$. 
    Similarly, we say that $\netvertex{i}{j}$ is the \textbf{centre} of $\Cell{i}{j}$ and we call the level $i-1$ cells in $\Cell{i}{j}$ the \textbf{subcells} of $\Cell{i}{j}$.
    We write $\numsub{i}$ for the number of \level{i-1} net vertices in $\vor{\netvertex{i}{j}}$.
    We define the \textbf{renormalization scheme} associated to the renormalisation scales $(r_i)_{i \geq 0}$ to be the sequence of tilings $((\Tile{i}{j})_{j \geq 0})_{i \geq 0}$ associated to the sequence of nets $(V_i)_{i \geq 0}$.
\end{definition}

We make a number of remarks about this construction.
By the volume bounds for Voronoi tiles in \eqref{eq:tile_volume} and for $\radius_0 > r_0$, a cell $\Cell{i}{j}$ strictly contains its associated tile $\Tile{i}{j}$.
Whereas two neighbouring tiles are disjoint, two neighbouring cells are not. 
We will only use \textit{cells} in the first step of the renormalisation argument to prove \thref{prop:almost_linear}: we use the extra room provided by $\radius_0>r_0$ to apply ergodicity of $\P_{\beta}$ and initialise the renormalisation.
After that, we will work with tiles.
By construction, $\numsub{i}$ is the number of subtiles in $\Tile{i}{j}$ as well as the number of subcells in $\Cell{i}{j}$.
If $m_i$ is large, by the volume bounds for Voronoi tiles we may think of $r_i$ as roughly the radius of the tile $\Tile{i}{j}$, so that $\Tile{i}{j}$ has volume $\Theta(r_i^d)$ and contains $\Theta(m_i^d)$ tiles of level $i-1$.
Similarly, if $m_i$ is large and $\radius_0$ is small compared to $r_1$, the cell $\Cell{i}{j}$ has volume $\Theta(r_i^d)$ and contains $\Theta(m_i^d)$ cells of level $i-1$. The following proposition makes this precise.

\begin{proposition}
\thlabel{prop:tile_and_cell_volume_bounds}
    Let $G$ be a transitive graph of polynomial growth with $d \geq 1$ and let $\epsilon > 0$.
    Let $\radius_0 > r_0 > 0$, let $(m_i)_{i \in \N}$ satisfy $m_i > (1+\epsilon)/\epsilon$ for all $i \in \N$, and suppose that $\radius_0 \leq \epsilon r_1$. 
    Consider the renormalisation scheme $((\Tile{i}{j})_{j \geq 0})_{i \geq 0}$ associated to the renormalisation scales $(r_i)_{i \geq 0}$.
    Then there exist $\cvor{} = \cvor{}(G,\epsilon) > 0$ and $\Cvor{} = \Cvor{} (G,\epsilon) \ge 1$ such that 
    \begin{equation}
        \cvor{}
        r_{i}^d
        \leq 
        \# \Cell{i}{j} 
        \leq
        \Cvor{}
        r_{i}^d,
        \quad
        \cvor{}
        r_{i}^d
        \leq 
        \# \Tile{i}{j} 
        \leq
        \Cvor{}
        r_{i}^d,
        \quad 
        \text{and}
        \quad
        \cvor{} 
        m_i^d
        \leq
        \numsub{i}
        \leq 
        \Cvor{}
        m_i^d
    \end{equation}
    for any tile $\Tile{i}{j}$ and any cell $\Cell{i}{j}$.
    Further, $\Tile{i}{j} \subseteq B(x_{i,j},\Cvor{} r_i)$ for any tile $\Tile{i}{j}$.
\end{proposition}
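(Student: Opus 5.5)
The core of the argument is a Hausdorff-distance estimate between a \level{i} tile and its Voronoi tile, proved by induction on $i$, with the volume bounds then read off from polynomial growth. Set $D_i = \sup_{j} \sup_{v \in \Tile{i}{j}} d_G(v, \vor{\netvertex{i}{j}})$, the maximal distance from a point of a \level{i} tile to the Voronoi tile of the same centre. Since $V_i$ is a maximal $r_i$-separated net, it is an $(r_i, \lceil r_i\rceil)$-net. Hence by \eqref{eq:tile_volume}, $B(\netvertex{i}{j}, (r_i-1)/2) \subseteq \vor{\netvertex{i}{j}} \subseteq B(\netvertex{i}{j}, \lceil r_i \rceil)$.

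The induction rests on the following observation. A point $v \in \Tile{i}{j}$ lies in some subtile $\Tile{i-1}{k}$ whose centre $\netvertex{i-1}{k}$ lies in $\vor{\netvertex{i}{j}}$. Moreover $v$ is within distance $\lceil r_{i-1}\rceil + D_{i-1}$ of that centre. This gives
\begin{equation}
D_i \le \lceil r_{i-1} \rceil + D_{i-1}, \qquad \text{so} \qquad D_i \le \sum_{l < i} (r_l + 1) \le 2 r_{i-1}\sum_{l\ge 0} m^{-l},
\end{equation}
where $m = \inf_l m_l > (1+\epsilon)/\epsilon$. The geometric sum is then at most $(1+\epsilon) r_{i-1}$, which is itself at most $\epsilon r_i$ because $m_i > (1+\epsilon)/\epsilon$. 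For cells one adds $\radius_0 \le \epsilon r_1 \le \epsilon r_i$. Consequently every tile and every cell satisfies
\begin{equation}
\Tile{i}{j}, \Cell{i}{j} \subseteq B(\netvertex{i}{j}, (1+3\epsilon) r_i + 2),
\end{equation}
which gives the containment $\Tile{i}{j} \subseteq B(x_{i,j},\Cvor{} r_i)$.

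For the lower bounds, the first step is a symmetric claim: every point $v$ of $\vor{\netvertex{i}{j}}$ that is at distance more than $D_i$ from the complement of $\vor{\netvertex{i}{j}}$ belongs to $\Tile{i}{j}$. To see this, note that $v$ lies in some \level{i} tile $\Tile{i}{k}$. By the estimate above, $v$ is then within $D_i$ of $\vor{\netvertex{i}{k}}$, which forces $k=j$. In particular,
\begin{equation}
B\big(\netvertex{i}{j}, (r_i-1)/2 - D_i\big) \subseteq \Tile{i}{j}.
\end{equation}
This is the step I expect to be the main obstacle. The radius $(r_i-1)/2 - \epsilon r_i$ must stay a positive fraction of $r_i$, so $\epsilon$ has to be small (say $\epsilon < 1/4$). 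If a larger $\epsilon$ is allowed, the plan is to replace the factor $1/2$ by a sharper inner radius: a point near the centre is much closer to its own net point than to any other. Concretely, I would use that $v \in B(\netvertex{i}{j}, r_i/4)$ is at distance at least $3r_i/4$ from every other net point, and iterate the Hausdorff bound only through tiles of the smaller levels. If that does not suffice, I would shrink $\epsilon$ at the cost of the constants, which is permitted because the constants are allowed to depend on $\epsilon$. Cells contain tiles once $\radius_0 > r_0$, so the lower bound for $\Tile{i}{j}$ carries over to $\Cell{i}{j}$.

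The volume bounds for $\Tile{i}{j}$ and $\Cell{i}{j}$ then follow from \eqref{eq:polynomial_volume_growth}. Finally, for $\numsub{i}$ I count \level{i-1} net points in $\vor{\netvertex{i}{j}}$. These points are $r_{i-1}$-separated and lie in $B(\netvertex{i}{j}, \lceil r_i\rceil)$. The balls of radius $r_{i-1}/2$ around them are disjoint and lie in $B(\netvertex{i}{j}, 2r_i)$, which gives the upper bound $\numsub{i} \le C m_i^d$. For the lower bound, the inner ball $B(\netvertex{i}{j}, (r_i-1)/2)$ is covered by the balls $B(x, \lceil r_{i-1}\rceil)$ with $x$ ranging over \level{i-1} net points within distance $\lceil r_{i-1}\rceil$ of it. All such $x$ lie in $B(\netvertex{i}{j}, (r_i-1)/2 + \lceil r_{i-1} \rceil)$, which is contained in the Voronoi tile provided $m_i$ is large. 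Comparing volumes then gives $\numsub{i} \ge c m_i^d$, and for smaller $m_i$ the constant is adjusted.
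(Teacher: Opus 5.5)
Your approach is the paper's. You bound the distance from a \level{i} tile (or cell) to its Voronoi tile by $\sum_{\ell<i}\lceil r_\ell\rceil$ (plus $\radius_0$ for cells), and sum this geometrically to $O(\epsilon r_i)$ using $m_i>(1+\epsilon)/\epsilon$. That gives the sandwich $B(\netvertex{i}{j},c r_i)\subseteq\Tile{i}{j}\subseteq\Cell{i}{j}\subseteq B(\netvertex{i}{j},C r_i)$, and you finish by comparing volumes. Your partition argument for the inner containment justifies a step the paper only asserts. Two points need correcting.

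\textbf{The large-$\epsilon$ fallback.} Your fallback is backwards. The hypothesis $m_i>(1+\epsilon)/\epsilon$ becomes weaker as $\epsilon$ grows, so a scheme satisfying it for one $\epsilon$ need not satisfy it for a smaller one. Letting the constants depend on $\epsilon$ does not change this. In fact no argument can handle large $\epsilon$, because the statement is false there. In $\Z$, take $V_0=199\Z$, a maximal $100$-separated set, and $V_1=99+101\Z$, a maximal $101$-separated set. Then $\vor{99}=\{49,\dots,149\}$ contains no point of $V_0$, so $\numsub{1}=0$ and $\Tile{1}{j}=\emptyset$. This scheme, with $r_0=100$, $m_1=1.01$ and $\radius_0=101$, satisfies every hypothesis once $\epsilon>100$. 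The paper's proof restricts to $\epsilon<1/16$ (and $r_0$ large), even though the statement does not, and all applications can take $\epsilon$ small. The correct fix is therefore to add a smallness assumption on $\epsilon$ (your $\epsilon<1/4$ works), not to try to cover every $\epsilon$.

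\textbf{The lower bound on $\numsub{i}$.} You claim $B(\netvertex{i}{j},(r_i-1)/2+\lceil r_{i-1}\rceil)\subseteq\vor{\netvertex{i}{j}}$ once $m_i$ is large. This is false for every $m_i$. The only guaranteed inner radius of a Voronoi tile is $(r_i-1)/2$, and a neighbouring \level{i} net point at distance about $r_i$ claims vertices just beyond distance $r_i/2$ from $\netvertex{i}{j}$. There are two ways to repair it:
\begin{itemize}
\item Cover the smaller ball $B(\netvertex{i}{j},(r_i-1)/2-\lceil r_{i-1}\rceil)$ instead. The net points used then lie in $B(\netvertex{i}{j},(r_i-1)/2)\subseteq\vor{\netvertex{i}{j}}$.
\item Argue as the paper does. $\Tile{i}{j}$ is the disjoint union of $\numsub{i}$ \level{i-1} tiles, each of volume at most $\Cvor r_{i-1}^d$, and it contains $B(\netvertex{i}{j},c r_i)$, which you have already shown.
\end{itemize}
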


The requirement that $\Cvor \ge 1$ is purely technical and can be achieved without loss of generality. An important consequence of these volume bounds and the transitivity and local finiteness of $G$ is that at each scale there are finitely many isomorphism classes of tiles and cells, viewed as induced subgraphs of $G$.

\begin{proof}
    A \level{0} cell is defined as $\Cell{0}{j} = B(x_{0,j},\radius_0)$, and for $i \geq 1$ a \level{i} cell is defined as the union of \level{i-1} cells whose centres are contained in $\vor{\netvertex{i}{j}}$.
    Hence the maximal distance between $\Cell{i}{j}$ and $\vor{\netvertex{i}{j}}$ is at most the radius of a \level{i{-}1} cell.
    By the volume bounds for Voronoi tiles in \eqref{eq:tile_volume}, and writing $s_{i-1} = \sum_{\ell = 1}^{i-1} r_{\ell}$, we obtain recursively for all $i \in \N$ that
    \begin{equation}
        \label{eq:tile_and_cell_proof_2}
        B(\netvertex{i}{j},(r_{i}-1)/2 - (\radius_0 + s_{i-1})) 
        \subseteq 
        \Cell{i}{j} 
        \subseteq B(\netvertex{i}{j},r_{i} + \radius_0 + s_{i-1}).
    \end{equation}
    Similarly, a \level{0} tile is defined as $\Tile{0}{j} = \vor{\netvertex{0}{j}}$ and hence for all $i \in \N$
    \begin{equation}
        \label{eq:tile_and_cell_proof_5}
        B(\netvertex{i}{j},(r_{i}-1)/2 - (r_0 + s_{i-1})) 
        \subseteq 
        \Tile{i}{j} 
        \subseteq B(\netvertex{i}{j},r_{i} + r_0 + s_{i-1}).
    \end{equation}
    The sequence $(m_i)_{i \in \N}$ is assumed to satisfy $m_i > (1+\epsilon)/\epsilon$ for all $i \in \N$.
    This implies for all $j < i$ that $r_j < (\epsilon/(1+\epsilon))^{i-j} r_i$, and hence $s_{i-1} < r_i \sum_{\ell=1}^{i-1} (\epsilon/(1+\epsilon))^{i-\ell}
    < r_i \sum_{\ell = 1}^{\infty}
    (\epsilon/(1+\epsilon))^{\ell} < \epsilon r_i$.
    By assumption, also $\radius_0 \leq \epsilon r_1 \leq \epsilon r_i$ and similarly $r_0 < (1+\epsilon)r_0 < \epsilon r_1 \leq \epsilon r_i$ for all $i \in \N$. 
    For all $\varepsilon<1/16$ and $r_0$ large enough $(r_i-1)/2-2\varepsilon r_i > \lceil r_i/3\rceil$, and we obtain
    \begin{equation}
        \label{eq:cell_bounds}
        B(\netvertex{i}{j},\lceil r_{i}/3\rceil ) 
        \subseteq 
        \Tile{i}{j} 
        \subset
        \Cell{i}{j} 
        \subseteq B(\netvertex{i}{j},(1+\epsilon)r_{i}),
    \end{equation}
    By construction and by the volume bounds for Voronoi tiles in \eqref{eq:tile_volume}, the number of \level{i-1} net vertices in a \level{i} Voronoi tile $\vor{\netvertex{i}{j}}$ satisfies
    \begin{multline}
        \label{eq:numtiles}
        \left\lfloor
            \frac{
                \# 
                B(\netvertex{i}{j},\lceil r_i/3
            \rceil )
            }{
                \#
                B(\netvertex{i-1}{j},\lceil r_{i-1}\rceil)
            }
        \right\rfloor\le 
        \left\lfloor
            \frac{
                \min_j 
                \# 
                \vor{\netvertex{i}{j}}
            }{
                \max_j 
                \#
                \vor{\netvertex{i-1}{j}}
            }
        \right\rfloor
        \leq
        \numsub{i}
        \le \left\lfloor
            \frac{
                \#
                B(\netvertex{i}{j},2 r_i )
            }{
                \# 
                B(\netvertex{i-1}{j},\lceil r_{i-1}/3\rceil )
            }
        \right\rfloor
        ,
    \end{multline}
    where we got the upper bound by observing that the ball $B(x_{i-1,k},\lfloor r_{i-1}/3\rfloor)$ around any $x_{i-1,k}\in \mathrm{Vor}(x_{i,j})$ must be contained in $B(x_{i,j}, \lceil r_i\rceil + \lfloor r_{i-1}/3\rfloor)$, and the latter is at most $2r_i$ for $r_0$ sufficiently large.
    Applying the volume bounds for balls in transitive graphs of polynomial growth \eqref{eq:polynomial_volume_growth} to \eqref{eq:cell_bounds} and \eqref{eq:numtiles}, applying the definition of $m_i = r_i/r_{i-1}$ to \eqref{eq:numtiles}, and setting $\cvor{}(G,\epsilon) > 0$ and $\Cvor{} (G,\epsilon) > 0$ concludes the proof.
\end{proof}

\section{Local existence of the giant}
\label{sec:cluster_decay_below}
In this section we prove \thref{prop:biskup} on the local existence of a giant.
The proof follows a multi-step renormalisation scheme.
The aim is to, for a clever choice of scales, construct a large cluster in each tile and ensure that nearby clusters connect as the scales increase.
The plentiful bulk-to-bulk connections in \eqref{eq:bulk-to-bulk-P} for $\alpha < 2$ are essential to this argument.
In the first step of the scheme we use the uniqueness of the infinite cluster $\cluster_{\infty}$ and the ergodicity of $\P_{\beta}$ to prove the following result on the existence of an almost-linear cluster with high probability.
Recall that $\giantone{r}$ denotes the largest cluster in $B(r)$ and $\theta = \theta(\beta) = \P_{\beta} \left(o \leftrightarrow \infty \right)$ denotes the percolation probability.

\begin{proposition}
\thlabel{prop:almost_linear}
    Let $G$ be a transitive graph of polynomial growth with $d \geq 1$ and suppose that $J : V \times V \to \R_+$ is a transitive kernel satisfying $J(x,y) = \Omega (d_G(x,y)^{-d \alpha})$ with $\alpha \in (0,2)$.
    Let $\beta > \beta_c$.
    For all $\epsilon > 0$ and $\delta > 0$, there exists $r$ sufficiently large such that
    \begin{equation}
        \label{eq:almost_linear_giant}
        \P_{\beta}
        \left( 
            \# 
            \giantone{r}
            \geq
            \theta
            \#
            B(r)^{1-\epsilon}
        \right) 
        \geq 
        1 - \delta.
    \end{equation}
\end{proposition}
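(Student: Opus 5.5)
The plan is to combine ergodicity, which gives an infinite-cluster density of roughly $\theta$ in large balls, with the plentiful bulk-to-bulk connections of \eqref{eq:bulk-to-bulk-P}. These connections will glue together the pieces of $\kinfty$ seen inside $B(r)$. The exponent loss $\epsilon$ leaves room to work at a mesoscopic scale $s = r^{\epsilon'}$ with $\epsilon'$ small compared to $\epsilon$.

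\textbf{Step 1 (density).} By the ergodic theorem for the amenable transitive graph $G$ (polynomial growth gives F{\o}lner balls), for every $\eta>0$ we have, with probability at least $1-\delta/3$ for $r$ large,
\[
\#(\kinfty\cap B(r))\ge(1-\eta)\theta\,\#B(r).
\]
Uniqueness of the infinite cluster for subexponential growth \cite{gandolfi_uniqueness_1992} lets us speak of the infinite cluster $\kinfty$.

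\textbf{Step 2 (local pieces).} Fix a mesoscopic radius $s$ and a maximal $s$-separated net $V_0$ in $B(r)$. For $x\in V_0$, consider the cluster of $\kinfty\cap B(x,Ms)$ restricted to edges inside $B(x,Ms)$. Since $o\in\kinfty$ is a.s.\ connected to $\kinfty\cap B(Ms)^c$, and paths from $B(s)$ to infinity exit through finitely many edges, the following holds for $M$ and $s$ large. With high probability, most vertices of $\kinfty\cap B(x,s)$ belong to a single cluster of $\omega[B(x,Ms)]$ of size at least $(1-\eta)\theta\#B(x,s)$. This uses uniqueness once more: two disjoint pieces of $\kinfty$ in $B(x,s)$ must be joined by some finite path, which lies in $B(x,Ms)$ with high probability. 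A Markov argument over $V_0$ then shows that a $(1-\eta)$ fraction of net points are \emph{good} in this sense, with probability at least $1-\delta/3$.

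\textbf{Step 3 (gluing).} Two good local clusters each have size at least $c\theta s^d$ and lie at distance at most $2r$. Bounding the number of ways to choose them and applying \eqref{eq:bulk-to-bulk-P} gives
\[
\P(\text{not adjacent})\le\exp\bigl(-c\beta\theta^2 s^{2d}/r^{d\alpha}\bigr).
\]
This step is delicate. Conditioning on the local clusters reveals edges, but only those inside the balls $B(x,Ms)$. The edges between distinct far-apart balls remain unrevealed, and they are independent of the local structure if we choose balls at mutual distance at least $2Ms$, thinning the net by a constant factor. A direct single-scale union bound needs $s^{2d}\gg r^{d\alpha}\log r$, that is $s\gg r^{\alpha/2}$, which is not a small power of $r$. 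So the gluing will be done through an iterated hierarchy of scales $s_0=s<s_1<\dots<s_k=r$ with $s_{j+1}=s_j^{\gamma}$ and $1<\gamma<2/\alpha$, so that $s_j^{2d}/s_{j+1}^{d\alpha}\to\infty$ polynomially. At each level, a union bound over pairs of good sub-blocks shows that all good level-$j$ clusters in a level-$(j+1)$ block merge, except with probability $\exp(-s_j^{c})$. The number of levels is $O(\log\log r)$. If we lose a constant fraction at each level, the total loss is $(\log r)^{-C}$, which is absorbed into $\#B(r)^{-\epsilon}$.

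\textbf{Conclusion and main obstacle.} Summing the failure probabilities, with probability at least $1-\delta$ all good level-$0$ pieces inside $B(r)$ lie in one cluster. This cluster has size at least $(1-\eta)^{O(1)}\theta\#B(r)(\log r)^{-C}\ge\theta\#B(r)^{1-\epsilon}$. The main obstacle is Step 3: controlling the density of good blocks at every level, so that the loss stays subpolynomial. The plan is to declare a level-$(j+1)$ block good if a $(1-1/j^2)$ fraction of its sub-blocks are good. Failure then requires many bad sub-blocks, whose probability decays rapidly by the independence of disjoint blocks. The product $\prod_j(1-1/j^2)$ is bounded below by a constant, which even avoids the logarithmic loss.
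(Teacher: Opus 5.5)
There is a genuine gap at the start of your hierarchy, in Step 2 and in how Step 3 uses it.

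Ergodicity and uniqueness of $\kinfty$ give you, for each \emph{fixed} $s$, some radius $M(s)s$ such that with high probability most of $\kinfty\cap B(x,s)$ is connected inside $B(x,M(s)s)$. They give no control on how fast $M(s)$ grows: nothing prevents the finite paths joining pieces of $\kinfty$ from leaving $B(x,Ms)$ as $s$ grows with $M$ fixed. A fixed multiplier $M$ as $s\to\infty$ would be a quantitative local-uniqueness estimate, of the same depth as the proposition itself.

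This breaks your scheme because Step 3 relies on independence. The base blocks must be disjoint balls of radius $M(s)s$, carrying events measurable with respect to their own edges. As written, goodness refers to $\kinfty$ and is not local at all. It must be replaced by an event such as ``$\omega[B(x,M(s)s)]$ has a cluster with at least $(1-\eta)\theta\#B(x,s)$ vertices in $B(x,s)$''. Two constraints then follow:
\begin{itemize}
\item disjointness forces the first gluing scale to satisfy $s_1\gtrsim M(s)s$;
\item the bulk-to-bulk bound $\exp(-c\beta\theta^2 s^{2d}/s_1^{d\alpha})$ for pieces of size $\theta s^d$ is only useful when $s_1\ll s^{2/\alpha}$.
\end{itemize}
These are compatible only if $M(s)\ll s^{2/\alpha-1}$, and nothing in your argument provides this. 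The base density $\theta M(s)^{-d}$ is an uncontrolled loss, not a polynomial one that $\#B(r)^{-\epsilon}$ could absorb. Using Markov's inequality over overlapping balls at the first level only does not rescue this: the level-$1$ blocks then have dependence range $M(s)s$, and you again need $s_1\gg M(s)s$.

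Your Step 3 is essentially the paper's second scheme, \thref{prop:biskup_with_error}: doubly exponential scales, a $(1-1/j^2)$ threshold, and binomial concentration from independence of disjoint blocks. The paper can start that scheme only because \thref{prop:almost_linear} supplies a base event local to $B(x_{0,j},r_0/4)$ with only polynomially small density.

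The idea you are missing is how the paper proves \thref{prop:almost_linear} itself.
\begin{itemize}
\item The level-$0$ net has spacing $r_0$, and the semi-clusters sit in $B(x_{0,j},r_0/4)$ inside disjoint Voronoi tiles.
\item Only the connectivity witnessing goodness happens in the overlapping cells $B(x_{0,j},\overline{r}_0)$, with $\overline{r}_0$ chosen after $r_0$, exactly like your $M(s)$.
\item Because cells overlap, no independence is used for counting. Bad subcells are controlled by Markov's inequality \eqref{eq:good_cells_bound}.
\item Consequently a cell can only be required to have a small, summable proportion $\rho_i=1/(\cvor(i+s)^{2d})$ of good subcells. This keeps $\lambda_i\le\lambda_{i-1}(1+2\rho_i)+\delta\rho_i/q$ below $\delta$ at every level.
\item With polynomial multipliers $m_i=(i+s)^{2+\eta}$, the density loss $\prod_{k\le i}(k+s)^{-2d}\ge r_i^{-d\xi}$ is polynomial. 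This is exactly the exponent loss in the statement.
\item The bulk-to-bulk estimates involve only the semi-clusters, which lie in tiles of radius $O(r_i)$.
\end{itemize}

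Your outline also takes for granted a nested block structure. In a general transitive graph of polynomial growth this has to be built, which the paper does with merged Voronoi tiles and the bounds of \thref{prop:tile_and_cell_volume_bounds}.
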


In the second step, we set up another renormalisation scheme, with \thref{prop:almost_linear} as the base case, where we use the concentration of binomials and the connectivity properties of the \ER{} random graph to obtain the existence of a linear giant with stretched-exponential error.

\begin{proposition}
\thlabel{prop:biskup_with_error}
   Let $G$ be a transitive graph of polynomial growth with $d \geq 1$ and suppose that $J : V \times V \to \R_+$ is a transitive kernel satisfying $J(x,y) = \Omega (d_G(x,y)^{-d \alpha})$ with $\alpha \in (0,2)$.
    Let $\beta > \beta_c$.
    For $\delta \in (\max(0, 1-\alpha),2-\alpha)$ there exist $\nu = \nu(G) > 0$ and $c > 0$ such that  for all $r \in \N$
    \begin{equation}
        \label{eq:linear_giant_1}
        \P_{\beta}(
            \# 
            \giantone{r}
            \geq
            \nu
            \# 
            B(r)
        ) 
        \geq 
        1
        -
        \exp(
            -c 
            (\# B(r))^{2 - \alpha - \delta}
        ).
    \end{equation}
\end{proposition}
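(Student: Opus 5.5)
We build a multi-scale renormalisation on the nesting tiles of Definition~\ref{def:recursive_tiles}, with \thref{prop:almost_linear} as the base case. Fix $\epsilon_0>0$ small and $\delta$ as in the statement. Choose $r_0$ from \thref{prop:almost_linear} so that, with probability at least $1-\delta_0$, the ball of radius $r_0$ (and, using the extra room $\radius_0>r_0$, the level-$0$ cell) contains a cluster of size at least $\theta\,\#B(r_0)^{1-\epsilon_0}$. Then choose rapidly growing multipliers $m_i$, for instance $m_{i+1}=m_i^{1+\kappa}$ or $r_i$ growing doubly exponentially. By \thref{prop:tile_and_cell_volume_bounds}, every \level{i} tile has volume $\Theta(r_i^d)$, contains $\Theta(m_i^d)$ subtiles, and lies in $B(\netvertex{i}{j},\Cvor r_i)$.

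Call a \level{i} tile \emph{good} if it contains a cluster, inside the tile, of size at least $\rho_i\#\Tile{i}{j}$. At level $0$ we take $\rho_0=\theta r_0^{-d\epsilon_0}$ up to constants. The inductive step has two parts.

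\emph{Binomial concentration.} Given that subtiles are good independently (disjoint tiles, edges inside them) with probability $1-p_{i-1}$, a Chernoff bound shows that at least half of the $\Theta(m_i^d)$ subtiles are good, except with probability $\exp(-c m_i^d)$.

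\emph{Merging via Erd\H{o}s--R\'enyi connectivity.} Conditioned on the local giants of the good subtiles, we reveal the edges between distinct subtiles, which are independent of what was revealed. Any two such giants lie within distance $2\Cvor r_i$ and have sizes at least $\rho_{i-1}c r_{i-1}^d$. By the bulk-to-bulk bound,
\[
\P_\beta(S_1\not\sim S_2)\le \exp\bigl(-c\beta\rho_{i-1}^2 r_{i-1}^{2d}r_i^{-d\alpha}\bigr)=:1-q_i .
\]
The resulting auxiliary graph on good subtiles therefore dominates $G(n,q_i)$ with $n\asymp m_i^d$. We then show that, except with stretched-exponential probability, it has a connected component containing a $(1-\eta_i)$-fraction of the good subtiles. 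In the dense regime this is full connectivity; more generally it is a giant of density close to $1$, with failure probability $\exp(-c\,n\,\cdot nq_i)$ roughly. This yields $\rho_i\ge \rho_{i-1}(1-\eta_i)/2\cdot c_{\mathrm{vor}}/C_{\mathrm{vor}}$.

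To keep the density from degenerating, we do not halve at every level. Instead we require that a $(1-\eta_i)$-fraction of the subtiles are good, with $\sum\eta_i<\infty$. This costs probability $\exp(-c\,\eta_i m_i^d\log(1/p_{i-1}))$ by a large-deviation bound for binomials. We also use the ratio $\#\Tile{i}{j}/\sum\#\text{subtiles}=1$ exactly, since tiles partition, so no volume constant is lost across levels. The key bookkeeping is the exponent
\[
\rho_{i-1}^2 r_{i-1}^{2d}r_i^{-d\alpha}\approx \rho_{i-1}^2 r_i^{d(2-\alpha)}m_i^{-2d}.
\]
Since $\rho_0$ only loses $r_0^{-d\epsilon_0}$ and the later losses are summable, choosing $m_i$ as a small power of $r_i$ (for example $m_i=r_{i-1}^{\kappa}$ with $\kappa$ small relative to $2-\alpha-\delta$) makes this at least $r_i^{d(2-\alpha-\delta')}$. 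This gives $p_i\le\exp(-c r_i^{d(2-\alpha-\delta)})$ inductively. The base case's polynomial loss $r_0^{-d\epsilon_0}$ is overcome at the first step, where it is absorbed because $\epsilon_0$ is arbitrary. Here the constraint $\delta>\max(0,1-\alpha)$ arises: the Chernoff/ER failure probability is only $\exp(-cm_i^{d}\cdots)$, and one needs $m_i^{d}$-type counts to dominate $r_i^{d(2-\alpha-\delta)}$. I expect this balancing to fix the admissible range.

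Finally, for a general $r$ we pick $i$ with $r_i\le r< r_{i+1}$ and cover $B(r)$ by level-$i$ tiles, of which there are $\Theta((r/r_i)^d)$. We apply the same binomial and ER merging step once more with non-standard multiplier $r/r_i$ to get a cluster of size $\nu\#B(r)$ with the stated error. The main obstacle I expect is this last step together with the uniform density control: the merge at the top scale must work for every $r$, not just the $r_i$. This requires the gap $m_{i+1}$ to be small enough that $\exp(-c r_i^{d(2-\alpha-\delta)})$ still dominates $\exp(-c\,\#B(r)^{2-\alpha-\delta})$ up to changing $c$. That forces $m_i$ subpolynomial or slowly polynomial, in tension with the summability of the $\eta_i$. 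I would first try $r_i=\exp(\lambda^i)$ with $\lambda$ close to $1$, and otherwise interpolate by using partial tilings at intermediate scales.
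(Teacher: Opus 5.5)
Your outline is the paper's argument. It uses nesting Voronoi tiles with $m_i=r_{i-1}^{\xi}$; the paper takes $\xi=\frac{\delta/2}{1-\delta/2}$, i.e.\ $r_i=r_0^{\lambda^i}$ with $\lambda=1/(1-\delta/2)$, which is your proposed fallback. The remaining ingredients also match: \thref{prop:almost_linear} as the base case, a $(1-\epsilon_i)$-fraction of good subtiles with $\epsilon_i\asymp i^{-2}$ (so that, since tiles partition, the density stays above a constant $\rho_\infty>0$ depending on $r_0$), a binomial large-deviation bound for bad subtiles, full \ER{} connectivity of the good clusters, and one more binomial/\ER{} step with level-$i$ tiles for general $r$. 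The real gap is that you leave this last step open, and the obstacle you anticipate there rests on a requirement that is not actually needed.

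You never need a single level-$i$ tile's error $\exp(-c r_i^{d\gamma})$, with $\gamma=2-\alpha-\delta$, to be comparable with $\exp(-c\,\#B(r)^{\gamma})$. With $\kappa\asymp(r/r_i)^d$ tiles inside $B(r)$, the binomial bound gives failure probability $\exp(-c\,\kappa\,(r_i/r_0)^{d\gamma})$. Moreover $\kappa r_i^{d\gamma}\asymp r^d r_i^{-d(1-\gamma)}\ge r^{d\gamma}$, precisely because $\gamma<1$, i.e.\ $\delta>1-\alpha$. The same inequality, in the form $\epsilon_i m_i^{d(1-\gamma)}\gtrsim 1$, drives the inductive binomial step, and that is the true source of the lower constraint on $\delta$.

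The \ER{} step at the top scale has exponent $\kappa r_i^{2d}r^{-d\alpha}\asymp r^{d(1-\alpha)}r_i^{d}$. This dominates $r^{d\gamma}$ once $r_i\ge (r/\Cvor)^{1-\delta/2}$, which holds because $r\le \Cvor r_{i+1}=\Cvor r_i^{1+\xi}$ and $1/(1+\xi)=1-\delta/2$. So the gap only has to be a power $\xi$ small relative to $\delta$. The same condition appears in the inductive \ER{} step; ``small relative to $2-\alpha-\delta$'', as you wrote, is not the right condition. There is also no tension with $\sum_i\epsilon_i<\infty$: $m_i$ grows doubly exponentially in $i$, while $\epsilon_i$ decays only polynomially.

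When $r$ is within a constant factor of $r_i$, only $O(1)$ level-$i$ tiles fit in $B(r)$. In that case use one tile fully contained in $B(r)$, as the paper does.

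Finally, state the induction hypothesis as $\P(\Tile{i}{j}\text{ is bad})\le\exp(-c\,(r_i/r_0)^{d\gamma})$. The base case only gives a small constant failure probability, so the bound must be normalised by $r_0$ (equivalently, $c$ depends on $r_0$).
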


In the third and final step we use a result of O'Connell \cite{oconnell_large_1998} on the large deviations of the giant in the \ER{} random graph to obtain \thref{prop:biskup} from \thref{prop:biskup_with_error}.
The coarse-graining argument from the previous section is at the core of the proofs of \thref{prop:almost_linear} and \thref{prop:biskup_with_error}, and the main challenge is to maintain uniform control over the fluctuations in density of the local giant built along the renormalisation argument.

\subsection{First renormalisation scheme: almost-linear giant}
In this section we prove \thref{prop:almost_linear}.
We begin by making a choice of renormalisation scales for the renormalisation scheme.

\begin{definition}
\thlabel{def:renormalisation_scheme}
    Let $\xi > 0$ and let $\eta = \eta(\xi) = \lceil 2 \left( 1/\xi - 1 \right) \rceil$.
    Let $\radius_0 \geq r_0 \geq 1$ be two large constants. For $\alpha \in (0,2)$, we define $s = s(\alpha,G,\xi)$ to be the least integer satisfying $\cvor (i+s)^{2d} \ge 2$ and
    \begin{align}
        \label{eq:s_property_1}
        s^{\xi + \alpha}
        & \geq 
        (1 + s)^{\alpha},
        \\
        \label{eq:s_property_2}
        (1 + s)^{2 + \eta} 
        & > 
        (1 + \xi) / \xi,
        \\
        \label{eq:s_property_3}
        (1 + s)^{2 + \eta}
        & \geq 
        \radius_0/(\xi r_0),
        \\
        \label{eq:s_property_4}
        \frac{
            C_G (1 + \xi)^d 
        }{
            c_G (1 - 2 \xi)^d
        }
        (1+s)^{-2d}
        & < 
        1/2
    \end{align}
    simultaneously. For $i \geq 1$ we iteratively define 
    \begin{equation}
        \label{eq:inductive_definitions}
        m_i 
        = 
        (i + s)^{2 + \eta}
        \quad 
        \text{and}
        \quad
        r_i 
        = 
        m_i r_{i-1}.
    \end{equation}
    Let $((\Tile{i}{j})_{j \geq 0})_{i \geq 0}$ be the renormalisation scheme associated to the renormalisation scales $(r_i)_{i \geq 0}$.
\end{definition}

Note that this choice of renormalisation scales satisfies the hypotheses of \thref{prop:tile_and_cell_volume_bounds} with $\epsilon = \xi$.
Indeed, it follows from \eqref{eq:s_property_2} and \eqref{eq:inductive_definitions} that $m_i > (1 + \xi)/\xi$ for all $i \geq 1$, and it follows from \eqref{eq:s_property_3} and \eqref{eq:inductive_definitions} that $\radius_0 \leq \xi r_1 \leq \xi r_i$ for all $i \geq 1$. Recall $c_G$ from \eqref{eq:polynomial_volume_growth}.

\begin{definition}
\thlabel{def:berger_good}
    For $i \geq 1$, for $s$ as in \thref{def:renormalisation_scheme}, and for $\cvor{}$ as in \thref{prop:tile_and_cell_volume_bounds}, let
    \begin{equation}
    \label{eq:rho_definition}
        \rho_0 
        = 
        \theta 
        \cvol
        /
        2^{2d + 1},
        \quad
        \rho_i 
        =
        1
        /
        \left(
            \cvor{} 
            (i+s)^{2d}
        \right).
    \end{equation}
    Let $((\Tile{i}{j})_{j \geq 0})_{i \geq 0}$ be the renormalisation scheme in \thref{def:renormalisation_scheme}, and consider the associated sequence of cells $((\Cell{i}{j})_{j \geq 0})_{i \geq 0}$. 
    We say that a \level{0} cell $\Cell{0}{j}$ is \textbf{good} if it contains a cluster $\goodcluster{0}{j}$ satisfying
    \begin{equation}
        \label{eq:good_level_zero_cell}
        \# (\goodcluster{0}{j} \cap B(\netvertex{0}{j},r_0/4))
        \geq 
        \rho_0
        r_0^d. 
    \end{equation}
    We call the cluster $\goodcluster{0}{j}$ the \textbf{good cluster} and we call the set $\goodsize{0}{j} = \goodcluster{0}{j} \cap B(\netvertex{0}{j},r_0/4)$ the \textbf{semi-cluster}. 
    For $i \geq 1$, we say that a \level{i} cell $\Cell{i}{j}$ is \textbf{good} if at least a $\rho_i$ proportion of its \level{i{-}1} subcells are good, and  their good clusters are connected to each other inside $\mathrm{Cell}_{i,j}$ to form the cluster $\goodcluster{i}{j}$. 
    We call the cluster $\goodcluster{i}{j}$ the \textbf{good cluster} and we call the set $\goodsize{i}{j} = \goodcluster{i}{j} \cap \Tile{i}{j}$ the \textbf{semi-cluster}. 
    When the good cluster is not unique, we choose one arbitrarily.
\end{definition}

Note that a semi-cluster $\goodsize{i}{j}$ is not necessarily connected inside the tile $\Tile{i}{j}$, but the associated good cluster $\goodcluster{i}{j}$ forms a connected subgraph in the spanned graph inside the cell $\Cell{i}{j}$.
Note too that a good cluster may not be maximal. 
Further, the semi-cluster $\goodsize{0}{j}$ is strictly contained in the tile $\Tile{0}{j}$ since
\begin{equation}   
    \label{eq:good_cluster_containment}
    \goodsize{0}{j} 
    =  
    \goodcluster{0}{j} \cap B(\netvertex{0}{j},r_0/4)
    \subseteq 
    B(\netvertex{0}{j},r_0/3)
    \subseteq
    \Tile{0}{j},
\end{equation}
where in the last inclusion we use the volume bounds for Voronoi tiles.
The semi-cluster is defined in this way so that two distinct semi-clusters $\goodsize{i}{j}$ and $\goodsize{i}{k}$ are disjoint with $d_G(\goodsize{i}{j},\goodsize{i}{k}) \geq r_0/2$.
For sufficiently large $r_0$, this will allow us to use the asymptotics of the kernel in \thref{prop:almost_linear} for connections between semi-clusters.
We record this and some size estimates about the semi-cluster in the following lemma.

\begin{lemma}
\thlabel{lem:local_giant_density}
    Let $G$ be a transitive graph of polynomial growth with $d \geq 1$ and let $((\Tile{i}{j})_{j \geq 0})_{i \geq 0}$ be the renormalisation scheme in \thref{def:renormalisation_scheme}.
    Let $\Cell{i}{j}$ be a good \level{i} cell and let $\Cell{i-1}{k}$ and $\Cell{i-1}{\ell}$ be two distinct good subcells of $\Cell{i}{j}$.
    The semi-cluster $\goodsize{i}{j}$ satisfies $\# \goodsize{i}{j} \geq \rho_0 r_i^{d(1 - \xi)}$. 
    The semi-clusters $\goodsize{i-1}{k}$ and $\goodsize{i-1}{\ell}$ satisfy $d_G(\goodsize{i-1}{k},\goodsize{i-1}{\ell}) \geq r_0/2$ and $\goodsize{i-1}{k},\goodsize{i-1}{\ell} \subset \Tile{i}{j} \subseteq B(\netvertex{i}{j},\Cvor{}r_i)$.
\end{lemma}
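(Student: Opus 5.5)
We argue by induction on $i$ for the size bound, and use the geometry of the construction for the separation and containment claims.

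\emph{Size of the semi-cluster.} At level $0$, \eqref{eq:good_level_zero_cell} gives $\#\goodsize{0}{j}\geq \rho_0 r_0^d \ge \rho_0 r_0^{d(1-\xi)}$. Suppose the bound holds at level $i-1$. A good \level{i} cell has at least $\rho_i \numsub{i}$ good subcells, and \thref{prop:tile_and_cell_volume_bounds} gives $\numsub{i}\geq \cvor m_i^d$. Hence at least $\rho_i\cvor m_i^d=(i+s)^{-2d} m_i^d=(i+s)^{\eta d}$ subcells are good, and this is at least $1$ by the choice of $s$.

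Each good subcell $\Cell{i-1}{k}$ contributes its semi-cluster $\goodsize{i-1}{k}\subseteq\Tile{i-1}{k}$. This semi-cluster is contained in $\goodcluster{i-1}{k}$, which is part of $\goodcluster{i}{j}$, and $\Tile{i-1}{k}\subseteq\Tile{i}{j}$. Since the subtiles are disjoint, $\#\goodsize{i}{j}\ge (i+s)^{\eta d}\rho_0 r_{i-1}^{d(1-\xi)}$. We must check that this is at least $\rho_0 r_i^{d(1-\xi)}=\rho_0 m_i^{d(1-\xi)}r_{i-1}^{d(1-\xi)}$, which amounts to
\[
(i+s)^{\eta d}\ge (i+s)^{(2+\eta)d(1-\xi)}.
\]
This holds iff $\eta\ge (2+\eta)(1-\xi)$, i.e.\ $\eta\xi\ge 2(1-\xi)$, i.e.\ $\eta\geq 2(1/\xi-1)$, which is exactly the definition of $\eta$. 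This exponent bookkeeping is the step most likely to hide a subtlety: one must ensure that the semi-clusters of subcells genuinely lie in the tile, not merely in the cell. Because $\goodsize{i-1}{k}$ is defined via intersection with $\Tile{i-1}{k}$, or with $B(x_{0,k},r_0/4)$ at level $0$, this holds.

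\emph{Separation and containment.} By unwinding the definition, every semi-cluster $\goodsize{i-1}{k}$ is a union of level-$0$ semi-clusters $\goodsize{0}{m}\subseteq B(\netvertex{0}{m},r_0/4)$ of distinct level-$0$ tiles inside $\Tile{i-1}{k}$. More precisely, it is contained in the union of $B(\netvertex{0}{m},r_0/4)$ over level-$0$ centres $\netvertex{0}{m}$ in that tile. For distinct $k,\ell$ the level-$0$ tiles involved are distinct, so their centres are distinct points of the $r_0$-separated net $V_0$. Two points in balls of radius $r_0/4$ around centres at distance at least $r_0$ are therefore at distance at least $r_0/2$, which gives $d_G(\goodsize{i-1}{k},\goodsize{i-1}{\ell})\ge r_0/2$. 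Finally, $\goodsize{i-1}{k}\subseteq\Tile{i-1}{k}\subseteq\Tile{i}{j}$ by the nesting in \eqref{eq:tile_definition}, and $\Tile{i}{j}\subseteq B(\netvertex{i}{j},\Cvor r_i)$ by \thref{prop:tile_and_cell_volume_bounds}.
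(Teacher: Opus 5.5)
Your argument is correct and is essentially the paper's. For the size bound, the paper multiplies the per-level factor $\rho_k\cvor m_k^d=(k+s)^{\eta d}$ across levels, and you organise the same computation as a one-step induction closed by $\eta\ge 2(1/\xi-1)$; the separation and containment come, as in the paper, from the $r_0$-separation of $V_0$ propagated through the nesting of tiles together with \thref{prop:tile_and_cell_volume_bounds}.

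One point to tidy: the separation step only works if you read the semi-cluster iteratively, as the union of the semi-clusters of the good subcells, not literally as $\goodcluster{i}{j}\cap\Tile{i}{j}$. The paper's phrase ``iterative definition of semi-clusters'' uses the same reading. The literal reading fails because level-$0$ good clusters live in cells of radius $\radius_0>r_0$ and can reach the tile boundary. Your size argument goes through verbatim under the iterative reading, so adopt it in both parts rather than appealing to the intersection with $\Tile{i-1}{k}$ in the first.
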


\begin{proof}
    It follows from \thref{prop:tile_and_cell_volume_bounds}, the definition of $\rho_i$ in \eqref{eq:rho_definition}, and the definition of $r_i$ in \eqref{eq:inductive_definitions}, that $\numsub{i} \geq \cvor{} m_i^d$, $\cvor{} \rho_i = 1/(i + s)^{2d}$, and $r_0\prod_{k=1}^i m_i= r_i$ for all $i \geq 1$.
    By the inductive definition of the semi-cluster $\goodsize{i}{j}$, we have that 
    \begin{equation}
        \# 
        \goodsize{i}{j} 
        \geq 
        \rho_0 
        \# B(r_0/4)
        \prod_{k = 1}^i
        \rho_k
        \min_j 
        \numtiles{k}
        \geq
        \rho_0 
        r_i^d 
        \prod_{k = 1}^i 1/(k+s)^{2d} 
        \geq  
        \rho_0 
        r_i^{d(1-\xi)}.
    \end{equation}
    By the definitions of $r_i$ and $\eta$ in \thref{def:renormalisation_scheme} we have $\prod_{k=1}^{i} 1/(k+s)^{2d} \geq (r_i/r_0)^{-2d/(2+\eta)} \geq r_i^{-d \xi}$, and hence $\# \goodsize{i}{j} \geq \rho_0 r_i^{d(1 - \xi)}$ as required.
    Suppose now that $\goodsize{0}{j}$ and $\goodsize{0}{k}$ are the semi-clusters in two distinct \level{0} good cells $\Cell{0}{j}$ and $\Cell{0}{k}$. 
    By the definition of the net $V_0$ we have $d_G(\netvertex{0}{j},\netvertex{0}{k}) \geq r_0$.
    Since $\goodsize{0}{j} \subseteq B(\netvertex{0}{j},r_0/4)$ and $\goodsize{0}{k} \subseteq B(\netvertex{0}{k},r_0/4)$, it follows that $d_G(\goodsize{0}{j},\goodsize{0}{k}) \geq r_0/2$.
    It follows from the iterative definition of tiles and of semi-clusters that the distance between a semi-cluster $\goodsize{i}{j}$ and the boundary of the tile $\Tile{i}{j}$ containing it is at least $r_0/4$.
    Since tiles are disjoint we have that $d_G(\goodsize{i}{j},\goodsize{i}{k}) \geq r_0/2$ for any two semi-clusters $\goodsize{i}{j}$ and $\goodsize{i}{k}$ in two distinct \level{i} good cells $\Cell{i}{j}$ and $\Cell{i}{k}$.
    If $\Cell{i}{j}$ is a \level{i} cell and $\Cell{i-1}{k}$ and $\Cell{i-1}{l}$ are two distinct good subcells of $\Cell{i}{j}$ it follows that the semi-clusters $\goodsize{i-1}{k}$ and $\goodsize{i-1}{\ell}$ are contained in the tiles $\Tile{i-1}{k}$ and $\Tile{i-1}{\ell}$ which are contained in $\Tile{i}{j}$.
    Lastly, by \thref{prop:tile_and_cell_volume_bounds} we have that $\Tile{i}{j} \subseteq B(\netvertex{i}{j},\Cvor{}r_i)$.
\end{proof}

\thref{lem:local_giant_density} gives that a good cell contains a component whose size is almost linear in the volume. 
Our goal is to show that $r_0$ can be taken sufficiently large such that $\P_{\beta}(\Cell{i}{j} \text{ is good})$ is arbitrarily close to $1$ for all cells and at all scales. 
The lemma below gives that we may choose $r_0$ and $\radius_0$ in \thref{def:renormalisation_scheme} sufficiently large such that a \level{0} cell is good with high probability. 

\begin{lemma}
\thlabel{lem:base_tile}
    Let $G$ be a transitive graph of polynomial growth with $d \geq 1$ and suppose that $J : V \times V \to \R_+$ is a transitive kernel satisfying $J(x,y) = \Omega (d_G(x,y)^{-d \alpha})$ with $\alpha \in (0,2)$.
    Let $\beta > \beta_c$, and let $((\Tile{i}{j})_{j \geq 0})_{i \geq 0}$ be the renormalisation scheme in \thref{def:renormalisation_scheme}. 
    For $\delta > 0$, there exist $r_0$ and $\radius_0$ sufficiently large such that for any cell $\Cell{0}{j}$
    \begin{equation}
        \label{eq:prob_level_0_cell}
        \P_{\beta}
        \left(
            \Cell{0}{j}
            \text{ is good}
        \right)
        \geq
        1 - \delta.
    \end{equation}
\end{lemma}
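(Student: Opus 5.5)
The plan is to obtain the good cluster of a \level{0} cell from the infinite cluster $\kinfty$. First I show that $B(\netvertex{0}{j},r_0/4)$ contains many vertices of $\kinfty$ with high probability, using ergodicity. Then I show that these vertices are connected to each other inside the slightly larger ball $B(\netvertex{0}{j},\radius_0)=\Cell{0}{j}$, using uniqueness of the infinite cluster and a limiting argument in $\radius_0$. By transitivity of $G$ and $J$ the law of the configuration seen from $\netvertex{0}{j}$ does not depend on $j$, so it suffices to treat a single cell centred at the origin $o$.

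\emph{Step 1 (density via ergodicity).} The law $\P_\beta$ is invariant and ergodic (indeed mixing) under $\Aut(G)$, since edges are independent and $J$ is transitive. The group $\Aut(G)$ is a unimodular, compactly generated, locally compact group of polynomial growth, and balls form a doubling, hence tempered, F{\o}lner sequence. A mean ergodic theorem along balls (e.g.\ Tempelman's theorem for groups of polynomial growth) applied to $\charf(x\in\kinfty)$ then gives
\[
\frac{\#(\kinfty\cap B(r))}{\#B(r)}\overset{\P}{\longrightarrow}\theta .
\]
If the group-theoretic ergodic theorem is awkward to invoke directly, I would instead prove this by a second moment computation: show that $\mathrm{Cov}(\charf(o\in\kinfty),\charf(x\in\kinfty))\to0$ as $d_G(o,x)\to\infty$, by approximating $\{x\in\kinfty\}$ by events depending on edges near $x$ and using mixing. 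Hence for $r_0$ large,
\[
\P_\beta\bigl(\#(\kinfty\cap B(r_0/4))\ge \theta\#B(r_0/4)/2\bigr)\ge1-\delta/2 .
\]
By \eqref{eq:polynomial_volume_growth}, $\theta\#B(r_0/4)/2\ge \theta\cvol r_0^d/(2\cdot4^d)=\rho_0r_0^d$, which is exactly the threshold in \eqref{eq:good_level_zero_cell}.

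\emph{Step 2 (connecting inside the cell).} Fix this $r_0$. By uniqueness of the infinite cluster on graphs of subexponential growth \cite{gandolfi_uniqueness_1992}, almost surely all vertices of $\kinfty\cap B(r_0/4)$ lie in the same cluster. Consider the event $E_R$ that every pair $x,y\in\kinfty\cap B(r_0/4)$ is joined by an open path inside $B(R)$. The set $B(r_0/4)$ is finite and each pair of vertices of $\kinfty$ is connected by a finite path, so $E_R$ increases to an event of full probability as $R\to\infty$. Choose $\radius_0$ large enough that $\P_\beta(E_{\radius_0})\ge1-\delta/2$.

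On the intersection of the events of Steps 1 and 2, the cluster of the spanned subgraph on $\Cell{0}{0}=B(\radius_0)$ containing $\kinfty\cap B(r_0/4)$ is a valid good cluster $\goodcluster{0}{0}$. A union bound gives probability at least $1-\delta$. Taking $\radius_0$ after $r_0$ is compatible with \thref{def:renormalisation_scheme}, since $s$ is chosen afterwards to satisfy \eqref{eq:s_property_3}.

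The main obstacle is Step 1: justifying the ergodic theorem (or the decorrelation estimate) on a general transitive graph of polynomial growth rather than on $\Z^d$. I would first try citing Tempelman's theorem for doubling F{\o}lner sequences in $\Aut(G)$. The fallback is the covariance decay argument, which only needs finite-range approximation of $\{x\in\kinfty\}$ by $\{x\leftrightarrow S(x,L)\}$ together with independence of edges.
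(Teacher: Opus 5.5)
Your proposal is correct and follows the paper's own proof. Ergodicity gives at least $\theta\#B(r_0/4)/2\ge\rho_0 r_0^d$ vertices of $\kinfty$ in $B(r_0/4)$; uniqueness of $\kinfty$ then lets you pick $\radius_0$ so that these vertices are connected inside the cell; and a union bound concludes.

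The one slip is in your optional fallback. In long-range percolation the event $\{x\leftrightarrow S(x,L)\}$ is not determined by edges near $x$, because long edges can leave and re-enter $B(x,L)$ or jump over $S(x,L)$. The local approximating event should instead be something like $\{\#\cluster_{B(x,L)}(x)\ge N\}$, the cluster of $x$ using only edges inside $B(x,L)$, with $N$ and then $L$ taken large.
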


\begin{proof}
    By ergodicity, the proportion of vertices in  $B(r_0/4)$ which are also in the unique infinite cluster $\cluster_{\infty}$ converges in probability to $\theta$ as $r_0$ tends to infinity.
    Together with the transitivity of $G$ it follows that we can choose $r_0$ sufficiently large such that
    \begin{equation}
        \P_{\beta}
        (
            \# (B(\netvertex{0}{j},r_0/4) \cap \cluster_\infty)
            \geq 
            \rho_0 r_0^d
        )
        \geq
        \P_{\beta}
        \left(
            \# (B(r_0/4) \cap \cluster_\infty)
            \geq 
            \theta \# B(r_0/4)
            /
            2
        \right)
        \geq 
        1
        -
        \delta / 2
    \end{equation}
    for any cell $\Cell{0}{j}$, where the first inequality uses that in \eqref{eq:rho_definition} we chose $\rho_0$ such that $\rho_0 r_0^d \leq \theta \# B(r_0/4) / 2$, and the second inequality uses ergodicity.
    Since the infinite component is unique, and since $G$ is transitive and locally finite, we can choose $\radius_0$ sufficiently large such that 
    \begin{equation}
        \label{eq:prob_level_0_cell_proof_3}
    	\P_{\beta}
        (
            B(\netvertex{0}{j},r_0/4)
            \cap 
            \cluster_{\infty} 
            \text{ is connected in } 
            \Cell{0}{j} 
        )
        \geq 
        1 - \delta / 2
    \end{equation}
    for any cell $\Cell{0}{j}$.
    For this choice of $r_0$ and $\radius_0$ a union bound yields that
    \begin{multline}
        \label{eq:prob_level_0_cell_proof_4}
        \P_{\beta}
        \left(
            \Cell{0}{j} 
            \text{ is good}
        \right)
        \geq 
        \P_{\beta}
        \left(
            \# (B(\netvertex{0}{j},r_0/4) \cap \cluster_{\infty})
            \geq 
            \rho_0 r_0^d
        \right)
        \\
        +
        \P_{\beta}
        \left(
            B(\netvertex{0}{j},r_0/4)
            \cap 
            \cluster_{\infty} 
            \text{ is connected in } 
            \Cell{0}{j}
        \right)
        \geq 
        1 - \delta
    \end{multline}
    for any cell $\Cell{0}{j}$, as desired.
\end{proof}  

The following lemma shows that we may choose $r_0$ and $\radius_0$ sufficiently large in the renormalisation scheme such that an arbitrary tile at any scale is good with uniformly high probability. 
Note that the bound is uniform over all {\color{blue} cells} in a given renormalisation scheme.

\begin{lemma}[Iterative renormalisation]
    \thlabel{lem:first_iterative_renormalisation}
    Let $G$ be a transitive graph of polynomial growth with $d \geq 1$ and suppose that $J : V \times V \to \R_+$ is a transitive kernel satisfying $J(x,y) = \Omega (d_G(x,y)^{-d \alpha})$ with $\alpha \in (0,2)$.
    Let $\beta > \beta_c$, let $((\Tile{i}{j})_{j \geq 0})_{i \geq 0}$ be the renormalisation scheme in \thref{def:renormalisation_scheme}, and let $\xi=\xi(d,\alpha) > 0$ be sufficiently small. 
    For $\delta > 0$, there exist $r_0$ and $\radius_0$ sufficiently large such that  for any cell $\Cell{i}{j}$
    \begin{equation}
        \label{eq:prob_level_i_cell}
        \P_{\beta}
        \left(
            \Cell{i}{j}
            \text{ is good}
        \right)
        \geq
        1 - \delta.
    \end{equation}
\end{lemma}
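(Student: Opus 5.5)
The argument is an induction on the level $i$: we bound the probability $p_i$ that a fixed level-$i$ cell is bad in terms of $p_{i-1}$, and show that the errors accumulated over all levels are summable and small. Set $p_i = \sup_j \P_\beta(\Cell{i}{j}\text{ is not good})$. The base case $p_0\le \delta_0$ is \thref{lem:base_tile}, with $\delta_0$ to be chosen much smaller than $\delta$ (and $\xi$ small). For the inductive step we fix a level-$i$ cell $\Cell{i}{j}$ with its $\numsub{i}\ge \cvor m_i^d$ subcells. Then
\[
\P_\beta(\Cell{i}{j}\text{ not good})\le \P_\beta(E_1)+\P_\beta(E_2),
\]
where $E_1$ is the event that fewer than a $\rho_i$ proportion of the subcells are good, and $E_2$ is the event that at least a $\rho_i$ proportion are good but their good clusters do not all merge inside $\Cell{i}{j}$.

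For $E_1$ we compare with independent trials, but subcells overlap and goodness of a level-$(i-1)$ cell depends only on edges inside it, so the events are not exactly independent. I would handle this by noting that the cells of level $i-1$ have bounded overlap. Alternatively, and more simply, we can avoid binomial concentration entirely. Since $\rho_i=1/(\cvor(i+s)^{2d})$ is tiny (at most $1/2$), Markov's inequality on the number of bad subcells gives
\[
\P_\beta(E_1)\le \P_\beta\big(\#\text{bad}\ge (1-\rho_i)\numsub{i}\big)\le \frac{p_{i-1}}{1-\rho_i}\le 2p_{i-1}.
\]
This doubles at each level, which is too crude. So I would instead use independence between well-separated subcells: partition the subcells into boundedly many classes, each consisting of pairwise disjoint cells (possible by \thref{prop:tile_and_cell_volume_bounds}, since cells have radius $(1+\xi)r_{i-1}$ and the centres are $r_{i-1}$-separated). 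Within a class, goodness events are independent, so a Chernoff bound gives $\P_\beta(E_1)\le C\exp(-c\, m_i^d)$ as long as $p_{i-1}\le 1/2$. Because $m_i=(i+s)^{2+\eta}$, this error is summable in $i$ and can be made small by taking $s$ large.

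For $E_2$ we condition on the configuration inside the subcells, which determines the good clusters and their semi-clusters $\goodsize{i-1}{k}$, and we use only edges between distinct semi-clusters. These edges are independent of the conditioning, since the relevant subcells are distinct and each edge between two semi-clusters lies outside both sub-configurations. Some care is needed here because cells overlap; I would reveal only edges spanned within each subcell and treat inter-semi-cluster edges between disjoint sets as fresh.

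By \thref{lem:local_giant_density}, each semi-cluster has size at least $\rho_0 r_{i-1}^{d(1-\xi)}$, the semi-clusters are at mutual distance at least $r_0/2\ge R_J$, and all lie in $B(\netvertex{i}{j},\Cvor r_i)$. The bulk-to-bulk bound then gives, for any pair,
\[
\P_\beta(\goodsize{i-1}{k}\not\sim\goodsize{i-1}{\ell})\le \exp\!\Big(-c\beta\, r_{i-1}^{2d(1-\xi)}\, r_i^{-d\alpha}\Big)=\exp\!\Big(-c\beta\, r_{i-1}^{d(2-\alpha-2\xi)}\, m_i^{-d\alpha}\Big).
\]
Since $\alpha<2$, we choose $\xi<(2-\alpha)/4$, and since $m_i$ grows only polynomially in $i$ while $r_{i-1}$ grows superexponentially, this bound is at most $\exp(-r_{i-1}^{c'})$. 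A union bound over the at most $(\Cvor m_i^d)^2$ pairs shows that all good semi-clusters are pairwise directly connected, and hence merge into one cluster within $\Cell{i}{j}$, except with probability $\le \Cvor^2 m_i^{2d}\exp(-r_{i-1}^{c'})$.

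Combining the two bounds, $p_i\le \epsilon_i$ with $\sum_i \epsilon_i$ bounded by a quantity that is small once $r_0$ and $s$ are large. Note that the recursion needs only $p_{i-1}\le 1/2$ as input, so no compounding occurs and the result is $p_i\le\delta$ for all $i$. The main obstacle is the dependence issue in $E_1$ caused by overlapping cells: controlling it cleanly, via a bounded colouring of subcells into disjoint families together with a Chernoff bound, is the step where I expect the real work, and where \eqref{eq:s_property_4} on $s$ presumably enters to keep the required proportion $\rho_i$ below what Chernoff guarantees.
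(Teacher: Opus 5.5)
The skeleton of your argument is the paper's: induction on the level, splitting badness into $E_1$ and $E_2$, and handling $E_2$ by a union bound over at most $\numsub{i}^2$ pairs of semi-clusters with the bulk-to-bulk estimate. The gap is in $E_1$, and it starts from a miscalculation. Markov's inequality gives $\P_\beta(E_1)\le p_{i-1}/(1-\rho_i)\le p_{i-1}(1+2\rho_i)$ for $\rho_i\le 1/2$, not a doubling. Since $\rho_i=1/(\cvor(i+s)^{2d})$ is summable, these factors compound only through the finite product $q=\prod_{\ell\ge 0}(1+3\rho_\ell)$.

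That is exactly the paper's proof. Put $q_i=\prod_{\ell\le i}(1+3\rho_\ell)$ and take $p_0\le \delta q_0/q$ from \thref{lem:base_tile}. Then choose $r_0$ so large that the connection error satisfies $\max_j \P_\beta(E_2)\le \delta\rho_i/q$ for every $i\ge 1$. This is possible because that error decays stretched-exponentially in $i$, while $\rho_i$ decays only polynomially. The induction then closes as $p_i\le \delta q_{i-1}(1+2\rho_i)/q+\delta\rho_i/q\le \delta q_i/q<\delta$. This is why $\rho_i$ is chosen summable; the condition \eqref{eq:s_property_4} on $s$ only serves to give $\rho_i<1/2$, hence $1/(1-\rho_i)\le 1+2\rho_i$. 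Markov uses only linearity of expectation, so the overlap of cells, which you single out as the main obstacle, never enters.

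Your substitute does not work as written, for three reasons.
\begin{itemize}
\item At the step from level $0$ to level $1$, the subcells are the balls $B(\netvertex{0}{k},\radius_0)$, not sets of radius $(1+\xi)r_0$. Moreover $\radius_0$ is typically much larger than $r_0$, since it is chosen after $r_0$ in \thref{lem:base_tile}. Partitioning these balls into families of pairwise disjoint cells therefore needs of order $(\radius_0/r_0)^d$ families. By \eqref{eq:s_property_3} and the minimality of $s$, this can be as large as a constant times $\xi^d\numsub{1}$. The families then have bounded size, and Chernoff plus a union bound over families gives nothing like $C\exp(-c\,m_1^d)$.
\item Even at later levels, making $C\exp(-c\,m_i^d)\le\delta$ requires $s$ to be large in terms of $\delta$. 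But $s$ is fixed by \thref{def:renormalisation_scheme}.
\item The threshold $p_{i-1}\le 1/2$ suffices only if every family has size comparable to $\numsub{i}$.
\end{itemize}
These points can be patched: use Markov at the first level, an equitable colouring, and an extra condition on $s$. The patched argument is still more work than the one-line Markov bound with the convergent product.

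A smaller point about $E_2$: after revealing each subcell's configuration, edges between $\goodsize{i-1}{k}$ and $\goodsize{i-1}{\ell}$ are not fresh. With overlapping cells, such an edge can lie inside a single subcell and so is already revealed. The paper does not use a revealment here; it simply conditions on the two subcells being good and applies the bulk-to-bulk bound.
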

\begin{proof}
    We argue by induction on $i$, and we consider the probability that a cell is \textit{bad}. 
    For $i \geq 0$ we write $\lambda_i = \max_j \P_{\beta,J}\left(\Cell{i}{j} \text{ is bad} \right)$ and $q_i = \prod_{\ell = 0}^i (1 + 3 \rho_{\ell})$.
    By the choice of $s$ in \eqref{eq:s_property_4} and by the definition of $\rho_i$ in \eqref{eq:rho_definition}, the product $q = \prod_{\ell = 0}^{\infty} (1 + 3 \rho_{\ell})$ converges.
    Note that $q_i/q < 1$ for all $i \geq 0$.
    For $i = 0$, it follows from \thref{lem:base_tile} that we can choose $r$ and $\radius_0$ sufficiently large such that $\lambda_0 \leq \delta q_0/ q$.
    Now let $\Cell{i}{j}$ be a \level{i} cell, 
    let $\lambda_{i,j} = \P_{\beta}\left(\Cell{i}{j} \text{ is bad}\right)$, 
    let $\phi_{i,j}$ be the probability that less than a $\rho_i$ proportion of the subcells of $\Cell{i}{j}$ are good,
    and let $\varphi_{i,j}$ be the probability that the good clusters are not connected provided that there are enough good subcells.
    This gives $\lambda_{i,j} \leq \phi_{i,j} + \varphi_{i,j}$, and we bound each probability in turn.
    Recall from \thref{def:recursive_tiles} that $\numsub{i}$ denotes the number of subcells in $\Cell{i}{j}$.
    By Markov's inequality, together with translation invariance and the induction hypothesis,
    \begin{equation}    
        \label{eq:good_cells_bound}
        \phi_{i,j}
        \leq 
        \frac{
            \E
            \left[
                \# 
                \left\{
                    \Cell{i-1}{k} 
                    \subseteq \Cell{i}{j} : 
                    \Cell{i-1}{j} 
                    \text{ is bad}
                \right\}
            \right]
        }{
            (1-\rho_i) \numsub{i}
        }
        \leq 
        \frac{
            \P
            \left(
                \Cell{i-1}{k} 
                \text{ is bad}
            \right)
        }{
            1-\rho_i
        }
        \leq
        \frac{
            \lambda_{i-1}
        }{
            1-\rho_i
        }.
    \end{equation}
    We now bound $\varphi_{i,j}$ from above by the probability that there is at least one pair of good clusters that are not connected by a direct edge. 
    Let $\Cell{i-1}{k}$ and $\Cell{i-1}{\ell}$ be two distinct good subcells of $\Cell{i}{j}$.
    Since the good clusters contain their associated semi-clusters, it is immediate that 
    \begin{multline}
        \label{eq:cluster_no_connection_proof_1}
        \P_{\beta}
        \left(
            \goodcluster{i-1}{k}
            \not\sim
            \goodcluster{i-1}{\ell}
            \mid
            \Cell{i-1}{k},
            \Cell{i-1}{\ell}
            \text{ are good}
        \right)
        \\
        \leq 
        \P_{\beta}
        \left(
            \goodsize{i-1}{k}
            \not\sim
            \goodsize{i-1}{\ell}
            \mid
            \Cell{i-1}{k},
            \Cell{i-1}{\ell}
            \text{ are good}
        \right).
    \end{multline}
    \thref{lem:local_giant_density} implies that $\goodsize{i-1}{k}$ and $\goodsize{i-1}{\ell}$ are disjoint with $d_G(\goodsize{i-1}{k},\goodsize{i-1}{\ell}) \geq r_0 /2$, as well as $\goodsize{i-1}{k},\goodsize{i-1}{\ell} \subseteq B(\netvertex{i}{j},\Cvor{}r_i)$ and $\# \goodsize{i-1}{k},\# \goodsize{i-1}{\ell} \geq \rho_0 r_{i-1}^{d(1-\xi)}$.
    We choose $r_0$ sufficiently large such that $r_0/2 \geq R_J$.
    We argue by induction on $i$ to show that we can choose $r_0$ sufficiently large such that $m_i^{\alpha} \leq r_{i-1}^{\xi}$ for all $i \geq 1$.
    By the definition of $m_1$ we can choose $r_0$ sufficiently large such that $m_1^{\alpha} = (1+s)^{(2+\eta)\alpha} \leq r_0^{\xi}$.
    By the definition of $r_i$ and $m_i$, by the induction hypothesis, and by the choice of $s$ in \eqref{eq:s_property_1}, we have
    \begin{equation}   
        \label{eq:bound_mi}
        r_{i-1}^{\xi}
        =
        m_{i-1}^{\xi}
        r_{i-2}^{\xi}
        \geq
        m_{i-1}^{\xi + \alpha}
        =
        m_{i}^{\alpha}
        \frac{
            m_{i-1}^{\xi + \alpha} 
        }{
            m_i^{\alpha}
        }   
        =
        m_{i}^{\alpha}
        \frac{
            (i - 1 + s)^{
            (2 + \eta)
            (\xi + \alpha)
            }
        }{
            (i + s)^{(2 + \eta) \alpha}
        }
        \geq 
        m_i^{\alpha}
    \end{equation}
    as required.
    For such a choice of $r_0$, for some $c_1 = c_1(\beta,J) > 0$ and $c_2 = c_2(\beta,G,J) >0$,
    \begin{align}
        \P_{\beta}
        \left(
            \goodsize{i-1}{k}
            \not\sim
            \goodsize{i-1}{l}
            \mid
            \Cell{i-1}{k},
            \Cell{i-1}{l}
            \text{ are good}
        \right)
        & \leq
        \exp
        \Bigg(
            -
            \frac{
                c_1
                (
                    \rho_0 
                    r_{i-1}^{d(1 - \xi)}
                )^2
            }{
                \left(
                    2
                    \left(
                        1+\xi
                    \right)
                    r_i
                \right)^{d \alpha}
            }
        \Bigg)
        \\
        & =
        \exp
        \left(
            -
            c_2
            r_{i - 1}^{d(2 - \alpha - 3 \xi)}
        \right)
    \end{align}
    where we use that $r_i = m_i r_{i-1}$.
    It follows that 
    \begin{align}
    \label{eq:decay_stretched_exponentially_1}
        \varphi_{i,j}
        & \leq
        \numsub{i}^2
        \exp
        \left(
            - 
            c
            r_{i-1}^{d(2 - \alpha - 3\xi)}
        \right).
    \end{align}
    Since $\alpha \in (0,2)$, we may choose $\xi$ sufficiently small such that $d(2 - \alpha - 3\xi)>0$.
    By \thref{prop:tile_and_cell_volume_bounds} and by the definition of $m_i$ in \eqref{eq:inductive_definitions}, the term $\numsub{i}^2$ is polynomial in $i$.
    By the definition of $r_{i-1}$ in \eqref{eq:inductive_definitions}, and since $s \geq 1$, it is trivially the case that $r_{i-1} \geq i$. 
    It follows that $\varphi_{i,j}$ decays at least stretched-exponentially in $i$.
    On the other hand, $\rho_i$ decays polynomially in $i$, and we may choose $r_0$ sufficiently large such that $\max_j \varphi_{i,j} \leq \delta \rho_i / q$ for all $i \geq 1$. 
    Together with the induction hypothesis, and using that $\rho_i < 1/2$ by the choice of $s$ in \eqref{eq:s_property_4}, using $q_i=\prod_{\ell=0}^i(1+3\rho_\ell)$, we bound
    \begin{equation}
        \label{eq:max_good_prob}
        \max_{j \geq 0}
        \lambda_{i,j}
        \leq 
        \frac{\lambda_{i-1}}{1 - \rho_i}
        + 
        \frac{\delta \rho_i}{q}
        \leq 
        \lambda_{i-1}(1 + 2 \rho_i)
        + 
        \frac{\delta \rho_i}{q}
        \leq
        \frac{\delta q_{i-1}(1 + 2 \rho_i)}{q}
        +
        \frac{\delta \rho_i}{q}
        \leq
        \frac{\delta q_i}{q}
        <
        \delta,
    \end{equation}
    concluding the induction.
\end{proof}

\begin{proof}[Proof of \thref{prop:almost_linear}]
    Let $\epsilon > 0$ and $\delta > 0$, and let $((\Tile{i}{j})_{j \geq 0})_{i \geq 0}$ be the renormalisation scheme in \thref{def:renorm_scheme_2}.
    By \thref{lem:first_iterative_renormalisation} we can choose $\xi < \epsilon$ sufficiently small and $r_0$ and $\radius_0$ sufficiently large such that
    \begin{equation}
        \label{eq:arbitrary_radius_base}
        \P
        \left(
            \Cell{i}{j}
            \text{ is good}
        \right)
        \geq
        1 - \delta/4
    \end{equation}
    for any cell $\Cell{i}{j}$.
    Fix such a choice of $r_0,\radius_0$, and $\xi$, and for $r > r_1(1 + \xi) + \radius_0$, let
    \begin{equation}\label{eq:rtilde-def}
        i_* = \max\{i \geq 1 : r_i(1+ \xi) + \radius_0 < r\}, 
        \quad
        \tilde{r} 
        = 
        r 
        - 
        r_{i_*}(1+ \xi) 
        - 
        \radius_0.
    \end{equation}
    The choice of $\tilde{r}$ is such that the \level{i_*} cells whose centre is in $B(\tilde{r})$ are contained in $B(r)$.
    By \thref{lem:local_giant_density}, a good cell $\Cell{i_*}{j}$ contains a good cluster $\goodcluster{i_*}{j}$ with $\# (\goodcluster{i_*}{j} \cap \Tile{i_*}{j}) \geq \rho_0 r_{i_*}^{d(1 - \xi)}$.
    Let $\kappa$ be the number of \level{i_*} cells whose centre is in $B(\tilde{r})$.
    We distinguish two cases according to the value of $\kappa$. 
    Let $M$ be the least integer such that $c_G M^d / ( 3^{d} C_G) \geq 2$.
    Suppose first  that $\tilde{r} < M r_{i_*}$. 
    By the choice of $s$ in \eqref{eq:s_property_3} and by the definition of $r_i$ in \eqref{eq:inductive_definitions} we have $\radius_0 \leq \xi r_1$, so that in particular $\radius_0 \leq \xi r$ and hence $r_{i_*} \geq r(1 - \xi)/(1 + M + \xi)$.
    Without loss of generality we can translate the net $V_{i_*}$ so that $0 \in V_{i_*}$. 
    Then the ball $B(\tilde{r})$ and contains the centre of at least one \level{i_*} cell, namely $\Cell{i_*}{0}$, which is fully contained in $B(r)$. 
    Suppose that $\Cell{i_*}{0}$ is good.
    By the definition of $\rho_0$ in \eqref{eq:rho_definition},
    \begin{equation}
        \label{eq:arbitrary_radius_giant_size}
        \# 
        \giantone{r}
        \geq 
        \# \goodcluster{i_*}{0}
        \geq 
        \rho_0 
        r_{i_*}^{d(1-\xi)} 
        \geq
        \frac{
            \theta
            c_G
            ((1 - \xi) r)^{d(1-\xi)}
        }{
            2^{2d + 1}
            (1 + M + \xi)^{d(1-\xi)}
        }
        \geq 
        \theta
        \#
        B(r)^{1-\epsilon}.
    \end{equation}
    where the last inequality holds by $\epsilon > \xi > 0$, for all $r$ sufficiently large. Together with \eqref{eq:arbitrary_radius_base} this gives
    \begin{equation}
        \label{eq:arbitrary_radius_good_prob}
        \P
        \left(
            \# 
            \giantone{r}
            \geq 
            \theta
            \# B(r)^{1-\epsilon}
        \right)
        \geq 
        \P
        \left(
            \Cell{i_\star}{0}
            \text{ is good}
        \right)
        \geq
        1
        - 
        \delta/4
        \geq
        1
        - 
        \delta.
    \end{equation}
    Suppose now that $\tilde{r} \geq M r_{i_*}$.
    Again by the choice of $s$ and by the definition of $r_i$ we have $\radius_0 \leq \xi r$ and hence $\tilde{r} \geq r M (1 - \xi)/(M + 1 + \xi)$. 
    By the volume bounds for balls and Voronoi tiles, we have
    \begin{equation}
        \label{eq:kappa_lower_bound}
        \kappa 
        \geq 
        \left\lfloor
            \frac{
                \min \# B(\tilde{r})
            }{
                \max_j \# \vor{\netvertex{i_*}{j}}
            }
        \right\rfloor
        \geq
        \frac{
                c_G 
                \tilde{r}^d
            }{
                3^{d}
                C_G
                r_{i_*}^d
            }
        \geq 
        \frac{
            c_G 
            \left(
                M
                \left(
                    1 - \xi
                \right)
            \right)^d
            r^d
            }{
                3^{d}
                C_G
                \left(
                    1
                    + 
                    M
                    +
                    \xi
                \right)^d
                r_{i_*}^d
        }.
    \end{equation}
    As $\tilde r > r_{i_*}$, the third term from the right gives that $\kappa \geq 2$ by the choice of $M$.
    Let $\lambda$ be the probability that at least half of the cells are good and that their good clusters are connected to form the good cluster $\CC$. 
    If this is the case, and since by construction $\CC$ is contained in $B(r)$, it follows from the definition of $\rho_0$ in \eqref{eq:rho_definition} and the fact that $\epsilon > \xi > 0$, for $r$ sufficiently large,
    \begin{align}
        \label{eq:berger_giant_size}
        \# \cluster_r^{(1)} 
        \geq 
        \# \CC
        \geq 
        \frac{
            \rho_0 
            \kappa 
            r_{i_*}^{d(1-\xi)}
        }{
            2
        }
        \geq
        \frac{
            \theta
            c_G^{2 - \xi}
            \left(
                M
                \left(
                    1 - \xi
                \right)
            \right)^{d(1-\xi)}
            r^{d(1-\xi)}
        }{
            2^{2d + 2} 
            \left(
                3^{d}
                C_G
                \left(
                    1
                    +
                    M
                    +
                    \xi
                \right)
            \right)^{1 - \xi}
        }
        \geq 
        \theta
        \#B(r)^{1-\epsilon}.
    \end{align}
    We bound $\lambda$ by an argument similar to that in \thref{lem:first_iterative_renormalisation}. 
    Let $\phi$ be the probability that less than half of the associated cells are good, and let $\varphi$ be the probability that the good clusters are not connected given that there are enough good \level{i_*} cells. 
    This gives $\lambda \geq 1 - \phi - \varphi$.
    By Markov's inequality and by translation invariance,
    \begin{equation}
        \label{eq:arbitrary_radius_phi_bound}
        \phi
        \leq 
        \frac{
            \E
            \left[
                \# 
                \left\{
                    \netvertex{i_*}{j}
                    \in
                    B(0,\tilde{r}) : 
                    \Cell{i_*}{j}
                    \text{ is bad}
                \right\}
            \right]
        }{
            \kappa/2
        }
        \leq
        2 \max_j\P(\Cell{i_*}{j} \text{ is bad})
        \leq 
        \delta/2.
    \end{equation}
    We now bound $\varphi$ from above by the probability that there is at least one pair of good clusters that are not connected by a direct edge. 
    By the same reasoning as in \thref{lem:first_iterative_renormalisation} (specifically from equation \eqref{eq:cluster_no_connection_proof_1} to \eqref{eq:decay_stretched_exponentially_1}), with the only difference that the good clusters are contained in $B(r)$ and not $B(\Cvor{} r_{i_{\star}})$, we can choose $r_0$ sufficiently large such that
    \begin{align}
    \label{eq:decay_stretched_exponentially_2}
        \varphi
        \leq
        \kappa^2
        \exp
        \left(
            - 
            c
            r_{i_*}^{d(2 - \alpha - 3\xi)}
        \right)
    \end{align}
    for some $c = c(\beta,G,J)$.
    By the same reasoning as in \thref{lem:first_iterative_renormalisation} (specifically the paragraph below \eqref{eq:decay_stretched_exponentially_1}), $\varphi$ decays at least stretched-exponentially in $i$, and we may choose $r_0$ sufficiently large such that $\varphi \leq \delta/2$. 
    For this choice of $r$ 
    \begin{equation}
        \label{eq:berger_conclusion}
        \P
        \left(
            \# 
            \giantone{r}
            \geq 
            \theta
            \# B(r)^{1-\epsilon}
        \right)
        \geq 
        \lambda 
        \geq
        1
        -
        \phi
        -
        \varphi
        \geq 
        1 
        - 
        \delta,
    \end{equation}
    as desired.
\end{proof}

\subsection{Second renormalisation: linear giant with stretched exponential error}
In this section we prove \thref{prop:biskup_with_error} on the existence of a linear giant with stretched-exponential error.
We use the same coarse-graining argument as outlined in \thref{def:recursive_tiles}, but with a different set of renormalisation scales.
In particular, we use \textit{tiles} instead of \textit{cells}, so that there is no overlap.

\begin{definition}
\thlabel{def:renorm_scheme_2}
    For $\alpha \in(0,2)$ and $\delta \in (\max(0, 1-\alpha),2-\alpha)$, let
    \begin{equation}   
        \label{eq:biskup_xi}
        \xi
        =
        \xi(\delta, \alpha)
        = 
        \frac{
            \delta/2
        }{
            1 - \delta /2
        }.
    \end{equation}
    Let $r_0 > 1$ be a large constant, $\bar r_0:=r_0$, and for $i \geq 1$ we iteratively define
    \begin{equation}
        \label{eq:biskup_mi}
        m_i = r_{i-1}^{\xi}
        \quad
        \text{and}
        \quad
        r_i = m_i r_{i-1}.
    \end{equation}
    Let $((\Tile{i}{j})_{j \geq 0})_{i \geq 0}$ be the renormalisation scheme associated to the renormalisation scales $(r_i)_{i \geq 0}$ in \thref{def:recursive_tiles}.
\end{definition}

Just as before in \thref{def:renormalisation_scheme}, this choice of renormalisation scales satisfies the hypotheses of \thref{prop:tile_and_cell_volume_bounds}. 
Indeed, by the definition of $m_i$ in \eqref{eq:biskup_mi} we have $m_i \geq r_0^{\xi}$ for all $i \geq 1$ and we can choose $r_0$ sufficiently large such that $m_i > (1 + \epsilon)/\epsilon$ for all $i \geq 1$.

\begin{definition}
    \thlabel{def:biskup_good}
    For $\Cvor{}\ge 1$ from \thref{prop:tile_and_cell_volume_bounds}, let
    \begin{equation}
    \label{eq:epsilon_def}
        \epsilon_0
        = 
        1 - r_0^{- \delta / 4},
        \quad
        \epsilon_i 
        = 
        \frac{
            1
        }{
            C_{\mathrm{vor}}^2
            (i+1)^2
        }.
    \end{equation}  
    For $\delta \in (\max(0, 1-\alpha),2-\alpha)$, $\cvor{}$ as in \thref{prop:tile_and_cell_volume_bounds}, and for $i \geq 1$, let
    \begin{equation}
        \label{eq:density_definition}
        \rho_0 
        =
        \theta
        r_0^{- d \delta/4}/\cvor{},
        \quad
        \rho_i 
        =
        \theta
        r_0^{- \delta/4} 
        \prod_{j=2}^{i+1} 
        (1 - j^{-2}),
        \quad 
        \rho_\infty
        = 
        \lim_{j \to \infty} \rho_j.
    \end{equation}
    Let $((\Tile{i}{j})_{j \geq 0})_{i \geq 0}$ be the renormalisation scheme in \thref{def:renorm_scheme_2}. 
    We say that a tile $\Tile{0}{j}$ is \textbf{good} if $B(x_{0,j}, r_0/4)$ contains a cluster $\goodcluster{0}{j}$ satisfying
    \begin{equation}
        \label{eq:biskup_good_cluster}
        \# 
        \goodcluster{0}{j} 
        \geq 
        \rho_0 
        \# \Tile{0}{j}.
    \end{equation}
    We call the cluster $\goodcluster{0}{j}$ the \textbf{good cluster}.
    For $i \geq 1$, we say that a \level{i} tile $\Tile{i}{j}$ is \textbf{good} if at least a $(1-\epsilon_i)$ proportion of its subtiles are good and their good clusters are connected to each other inside $\Tile{i}{j}$ to form the \textbf{good cluster} $\goodcluster{i}{j}$.
    When the good cluster is not unique, we choose one arbitrarily.
\end{definition}

Note that the product in \eqref{eq:density_definition} is telescopic and can be written as $(i+2)/(2(i+1))$, so that the limit $\rho_{\infty}$ exists and in particular $\rho_i > \rho_{\infty}$ for all $i \geq 1$.
Just as in \thref{def:berger_good}, a good cluster $\goodcluster{i}{j}$ forms a connected subgraph in the spanned graph inside the tile $\Tile{i}{j}$, but it is not necessarily maximal. 
Just as in \thref{lem:local_giant_density}, we establish some deterministic facts about the good cluster.

\begin{lemma}
    \thlabel{lem:biskup_size}
    Let $G$ be a transitive graph of polynomial growth with $d \geq 1$ and let $((\Tile{i}{j})_{j \geq 0})_{i \geq 0}$ be the renormalisation scheme in \thref{def:renorm_scheme_2}.
    The good cluster $\goodcluster{i}{j}$ in a good tile $\Tile{i}{j}$ satisfies $\# \goodcluster{i}{j} \geq \rho_{\infty} \# \Tile{i}{j}$.
    Let $\Tile{i}{j}$ be a \level{i} tile, and let $\Tile{i-1}{k}$ and $\Tile{i-1}{l}$ be two distinct good subtiles of $\Tile{i}{j}$.
    The good clusters $\goodcluster{i-1}{k}$ and $\goodcluster{i-1}{\ell}$ satisfy $d_G(\goodcluster{i-1}{k},\goodcluster{i-1}{\ell}) \geq r_0/2$ and $\goodcluster{i-1}{k},\goodcluster{i-1}{\ell} \subset \Tile{i}{j} \subseteq B(\netvertex{i}{j},\Cvor{} r_i)$.
\end{lemma}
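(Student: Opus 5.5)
The plan is to prove the three assertions of \thref{lem:biskup_size} separately: the density bound by induction on $i$, and the separation and containment claims directly from the nesting structure of tiles.

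\emph{Density bound.} The claim to carry through the induction is that the good cluster in a good \level{i} tile satisfies $\# \goodcluster{i}{j} \geq \rho_i \# \Tile{i}{j}$. Since $\rho_i > \rho_\infty$, this gives the statement.
\begin{itemize}
    \item Base case $i=0$. This is exactly \eqref{eq:biskup_good_cluster}, provided $\rho_0 \geq \rho_\infty$. We have $\rho_\infty = \theta r_0^{-\delta/4}/2$ and $\rho_0 = \theta r_0^{-d\delta/4}/\cvor{}$, so the comparison is a constant-factor issue in how the $\rho_i$ were normalised. For $i \geq 1$ I would run the induction directly with $\rho_i$ as defined and note that the base case is used only through \eqref{eq:biskup_good_cluster}.
    \item Inductive step. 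Let $\Tile{i}{j}$ be good. At least $(1-\epsilon_i)\numsub{i}$ of its subtiles are good, and their good clusters are disjoint (tiles are disjoint) and lie in the single cluster $\goodcluster{i}{j}$. Hence
    \[
    \# \goodcluster{i}{j} \geq (1-\epsilon_i)\, \numsub{i}\, \rho_{i-1} \min_k \# \Tile{i-1}{k}.
    \]
    \item Comparing with $\# \Tile{i}{j}$. Since $\Tile{i}{j}$ is the disjoint union of its $\numsub{i}$ subtiles, $\# \Tile{i}{j} \leq \numsub{i} \max_k \# \Tile{i-1}{k}$. The ratio $\min/\max$ of subtile volumes is at least $\cvor{}/\Cvor{}$ by \thref{prop:tile_and_cell_volume_bounds}, which is a constant loss and not summable. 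This is the main obstacle.
\end{itemize}

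\emph{Handling the obstacle.} A constant loss at every level would destroy the bound, so I would avoid comparing tile volumes and instead count good subtiles by volume. The bad subtiles number at most $\epsilon_i \numsub{i}$, each of volume at most $\Cvor{} r_{i-1}^d$. So they cover volume at most
\[
\epsilon_i \Cvor{} r_{i-1}^d \numsub{i} \leq \epsilon_i \Cvor{}^2 r_{i-1}^d m_i^d.
\]
This is at most $(i+1)^{-2} \Cvor{} r_i^d / \cvor{}$ times a fraction of $\# \Tile{i}{j}$, and the choice $\epsilon_i = 1/(\Cvor{}^2(i+1)^2)$ is designed to make this exactly a $(i+1)^{-2}$ fraction. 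The good subtiles then cover at least a $(1-(i+1)^{-2})$ fraction of $\# \Tile{i}{j}$. Since each good subtile has good cluster at least $\rho_{i-1}$ times its own volume, summing over good subtiles gives
\[
\# \goodcluster{i}{j} \geq \rho_{i-1}\bigl(1-(i+1)^{-2}\bigr) \# \Tile{i}{j} = \rho_i \# \Tile{i}{j},
\]
matching the telescoping product in \eqref{eq:density_definition}. The delicate point is verifying that the bad subtiles' volume fraction is at most $(i+1)^{-2}$ using the volume bounds of \thref{prop:tile_and_cell_volume_bounds} with the right constants; absorbing any leftover constant into $\epsilon_i$ is harmless.

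\emph{Separation and containment.} These follow as in \thref{lem:local_giant_density}. The level-0 good cluster lies in $B(x_{0,j}, r_0/4)$. Since $V_0$ is $r_0$-separated, distinct level-0 good clusters are at distance at least $r_0/2$. A \level{i-1} good cluster is a union of level-0 good clusters of distinct level-0 tiles, connected inside $\Tile{i-1}{k}$, so $\goodcluster{i-1}{k} \subseteq \Tile{i-1}{k}$. The two subtiles are disjoint and contain disjoint level-0 tiles, so every pair of level-0 pieces, one from each good cluster, is at distance at least $r_0/2$. Containment in $\Tile{i}{j}$ holds by definition, and $\Tile{i}{j} \subseteq B(\netvertex{i}{j}, \Cvor{} r_i)$ by \thref{prop:tile_and_cell_volume_bounds}.
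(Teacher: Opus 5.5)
\textbf{Comparison with the paper.} Your argument follows the paper's proof. You induct on $\#\goodcluster{i}{j}\ge\rho_i\#\Tile{i}{j}$, lower-bound the volume covered by the good subtiles by $\#\Tile{i}{j}-\epsilon_i\numsub{i}\max_k\#\Tile{i-1}{k}$, and get separation from $\goodcluster{0}{j}\subseteq B(\netvertex{0}{j},r_0/4)$ together with the $r_0$-separation of $V_0$.

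The constant in the volume step is handled most cleanly by
$\numsub{i}\max_k\#\Tile{i-1}{k}\le(\Cvor/\cvor)\,\numsub{i}\min_k\#\Tile{i-1}{k}\le(\Cvor/\cvor)\#\Tile{i}{j}$.
This is the paper's bound $\Cvor^2\#\Tile{i}{j}$ after the harmless normalisation $\Cvor\ge 1/\cvor$ (enlarging an upper-bound constant is always allowed). Absorbing the leftover constant into $\epsilon_i$, as you propose, works equally well.

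\textbf{The gap: the base case.} The point you defer is not a constant-factor issue, and running the induction with the $\rho_i$ as defined does not close it. With \eqref{eq:density_definition} one has $\rho_0/\rho_\infty=2r_0^{-(d-1)\delta/4}/\cvor$. This tends to $0$ as $r_0\to\infty$ whenever $d\ge 2$, and $r_0$ is later taken large. Consequently, for $d\ge2$:
\begin{itemize}
\item the case $i=0$ of the first claim is false as stated, because goodness of a level-$0$ tile only guarantees density $\rho_0$;
\item the step from level $0$ to level $1$ also fails, because it needs $\frac34\rho_0\ge\rho_1=\frac34\theta r_0^{-\delta/4}$.
\end{itemize}
The paper's proof passes over the same point silently, in the inequality $\rho_{i-1}(1-(i+1)^{-2})\ge\rho_i$ at $i=1$.

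\textbf{The repair.} The fix is a change of definition. Set $\rho_i=\rho_0\prod_{j=2}^{i+1}(1-j^{-2})$ for $i\ge1$. Then $\rho_i=\rho_{i-1}(1-(i+1)^{-2})$ for every $i\ge1$ and $\rho_\infty=\rho_0/2$. With this normalisation your induction proves the lemma as written. You should make this change explicitly rather than leave the base case open.
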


\begin{proof}
    We argue by induction on $i$. For $k = 0$, we have by definition that a good cluster $\goodcluster{0}{j}$ in a good tile $\Tile{0}{j}$ satisfies $\# \goodcluster{0}{j} \geq \rho_0 \# \Tile{0}{j}$, and we may assume inductively that for all $k \leq i - 1$ a good cluster $\goodcluster{k}{j}$ in a good tile $\Tile{k}{j}$ satisfies $\# \goodcluster{k}{j} \geq \rho_k \# \Tile{k}{j}$. 
    Let $\Tile{i}{j}$ be a good tile. By definition the good cluster $\goodcluster{i}{j}$ contains at least $(1-\epsilon_i) \numsub{i}$ \level{i{-}1} good clusters, and we re-index so that $1 \leq j \leq (1 - \epsilon_i) \numsub{i}$ are the indices for the good  tiles. 
    By \thref{prop:tile_and_cell_volume_bounds} we have $\numsub{i}\max_j \# \Tile{i-1}{j} \leq C_{\mathrm{vor}}^2 \# \Tile{i}{j}$, and together with the definitions of $\epsilon_i$ and $\rho_i$ in \eqref{eq:epsilon_def} and \eqref{eq:density_definition},
    \begin{align}
        \label{eq:biskup_good_cluster_size_1}
        \#
        \goodcluster{i}{j}
        & \geq
        \rho_{i-1}
        \sum_{j \leq (1 - \epsilon_i) 
        \numsub{i}}
        \# \Tile{i-1}{j}        
        \geq 
        \rho_{i-1}
        \left(
            \# \Tile{i}{j}
            -
            \epsilon_i \numsub{i} \max_j \# \Tile{i-1}{j}
        \right)
        \\
        \label{eq:biskup_good_cluster_size_2}
        & \geq 
        \rho_{i-1} 
        \# \Tile{i}{j}
        \left(
            1
            -
            \frac{1}{(i+1)^2}
        \right)
        \geq
        \rho_i 
        \# \Tile{i}{j}
        \geq 
        \rho_{\infty} 
        \# \Tile{i}{j},
    \end{align}
    where in the last inequality we use that $\rho_i > \rho_{\infty}$ for all $i \geq 1$.
    Suppose now that $\goodcluster{i}{j}$ and $\goodcluster{i}{k}$ are two distinct good clusters in good tiles. 
    The proof that $d_G(\goodcluster{i}{j},\goodcluster{i}{k}) \geq r_0/2$ is just as in \thref{lem:local_giant_density}, now noting that \eqref{eq:biskup_good_cluster} does restrict the good cluster to be contained in $B(x_{0,j}, r_0/2)$, and the later clusters are obtained by merging these components.
\end{proof}

We also establish some analytic consequences of the choice of scales in \thref{def:renorm_scheme_2} which we will require in the induction.

\begin{lemma}
\thlabel{lem:useful_bounds}
    Let $\alpha \in(0,2)$ and $\delta \in (\max(0, 1-\alpha),2-\alpha)$.
    For all $c > 0$ we can choose $r_0$ sufficiently large such that  for all $i \geq 1$
    \begin{align}
        \label{eq:useful_bound_1}
        \epsilon_i 
        \numsub{i} 
        r_{i-1}^{d(2 - \alpha - \delta)} 
        & \geq 
        c 
        r_{i}^{d(2 - \alpha - \delta)},
        \\
        \label{eq:useful_bound_2}
        (r_i / r_0)^{d(2 -\alpha - \delta)} 
        & \geq 
        c
        \log(e/\epsilon_{i+1}),
        \\
        \label{eq:useful_bound_3}
        \log
        (
            6 
            (
                e 
                \numsub{i}
            )^2
        )
        -
        \numsub{i}
        r_{i-1}^{2d}/r_i^{d \alpha}
        & \geq 
        c
        (r_i/r_0)^{d(2-\alpha-\delta)}.
    \end{align}
\end{lemma}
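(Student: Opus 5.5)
The plan is to reduce all three inequalities to comparisons of powers of $r_{i-1}$ along the explicit scales of \thref{def:renorm_scheme_2}. There $r_i=r_{i-1}^{1+\xi}$, so $r_i=r_0^{(1+\xi)^i}$ and $m_i=r_{i-1}^{\xi}$. Consequently $\log r_{i-1}$ grows exponentially in $i$, while $\epsilon_i^{-1}=\Cvor^2(i+1)^2$ grows only polynomially in $i$. By \thref{prop:tile_and_cell_volume_bounds} we have $\cvor r_{i-1}^{d\xi}\le\numsub{i}\le\Cvor r_{i-1}^{d\xi}$. In each inequality I would isolate a strictly positive gap between the exponents of $r_{i-1}$ on the two sides. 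A factor $r_{i-1}^{\gamma}$ with $\gamma>0$ dominates any constant, any polynomial in $i$, and any power of $\log r_{i-1}$, uniformly in $i\ge1$, once $r_0$ is large. That is where the choice of $r_0=r_0(c)$ enters.

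For \eqref{eq:useful_bound_1}, the left side is at least $\cvor\Cvor^{-2}(i+1)^{-2}r_{i-1}^{d\xi+d(2-\alpha-\delta)}$. The right side equals $c\,r_{i-1}^{(1+\xi)d(2-\alpha-\delta)}$. The exponent gap is $d\xi(\alpha+\delta-1)$, which is positive precisely because $\delta>1-\alpha$. Since $(i+1)^2\le C(\log r_{i-1})^{C}$, the gap absorbs this factor and the constants. For \eqref{eq:useful_bound_2}, the left side is $r_0^{d(2-\alpha-\delta)((1+\xi)^i-1)}$, which is doubly exponential in $i$ because $\delta<2-\alpha$. The right side is $c\log(e\Cvor^2(i+2)^2)=O(\log i)$, so the inequality holds for all $i$ once $r_0$ is large.

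For \eqref{eq:useful_bound_3}, the main term is
\[
\numsub{i}\,r_{i-1}^{2d}/r_i^{d\alpha}\asymp r_{i-1}^{d\xi+2d-d\alpha(1+\xi)}=r_{i-1}^{d(2-\alpha)+d\xi(1-\alpha)}.
\]
The logarithmic term $\log(6(e\numsub{i})^2)$ is $O(\log r_{i-1})$. The target quantity satisfies $(r_i/r_0)^{d(2-\alpha-\delta)}\le r_{i-1}^{(1+\xi)d(2-\alpha-\delta)}$. The exponent gap between the main term and this target is $d(\delta(1+\xi)-\xi)$. Using $\xi=(\delta/2)/(1-\delta/2)$, one gets $\xi(1-\delta)<\delta$, so this gap is positive. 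Hence the main term exceeds $c(r_i/r_0)^{d(2-\alpha-\delta)}$ plus the logarithm for $r_0$ large.

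This shows that the two sides of \eqref{eq:useful_bound_3} are separated by at least $c(r_i/r_0)^{d(2-\alpha-\delta)}$. I expect the form of this bound actually used in the induction to be the exponential one, namely $6(e\numsub{i})^2\exp(-\numsub{i}r_{i-1}^{2d}/r_i^{d\alpha})\le\exp(-c(r_i/r_0)^{d(2-\alpha-\delta)})$.

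The main obstacle is \eqref{eq:useful_bound_3}. Its gap depends on the specific relation between $\xi$ and $\delta$, and the bound $\numsub{i}\ge\cvor m_i^d$ must be used with the correct orientation, since the term carrying $\numsub{i}$ is the dominant one. There is also a sign issue. With $\numsub{i}r_{i-1}^{2d}/r_i^{d\alpha}$ entering with a minus sign, as printed, the left-hand side of \eqref{eq:useful_bound_3} is negative for large $r_0$. It therefore cannot be bounded below by the positive quantity $c(r_i/r_0)^{d(2-\alpha-\delta)}$. The comparison above establishes the exponential form just stated, and I would check that this is the form needed where \thref{lem:useful_bounds} is applied.
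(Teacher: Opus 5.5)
Your proposal is correct and follows essentially the same route as the paper: write $m_i$ and $r_i$ as explicit powers of $r_0$ (equivalently of $r_{i-1}$), use $\cvor m_i^d\le\numsub{i}\le\Cvor m_i^d$ in the right orientation, and let the exponent gaps absorb all constants, polynomial factors in $i$ and logarithms. The gap is $d\xi(\alpha+\delta-1)$ in powers of $r_{i-1}$ for the first bound. For the third it is $d\xi$ in powers of $r_{i-1}$ (your $d(\delta(1+\xi)-\xi)$ equals exactly this), which is the paper's $d\delta/2$ in powers of $r_i$.

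You also read the third inequality correctly: as printed, its left-hand side is negative for large $r_0$. What the paper's proof actually establishes, by combining \eqref{eq:log_kappa_bound} with \eqref{eq:kappa_ratio_bound}, and what is used to obtain \eqref{eq:induction_varphi_conclusion}, is the sign-reversed inequality $\numsub{i}r_{i-1}^{2d}/r_i^{d\alpha}-\log(6(e\numsub{i})^2)\ge c(r_i/r_0)^{d(2-\alpha-\delta)}$. That is precisely your exponential form.
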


\begin{proof}
    Recursively applying $m_i = r_i / r_{i-1}$ and the definition of $m_i$ in \eqref{eq:biskup_mi} yields
    \begin{equation}
        \label{eq:unpacking_mi_1}
        m_i 
        =
        r_0^{
            \left(
                \frac{1}{1 - \delta/2}
            \right)^{i-1}
            \left(
                \frac{\delta /2}{1 - \delta /2}
            \right)
        }
    \end{equation}
    for $i \geq 1$. 
    The term $m_i$ increases doubly-exponentially in $i$, whereas $\epsilon_i = 1 / ( \Cvor^2(i+1)^2)$ decays polynomially in $i$.
    By \thref{prop:tile_and_cell_volume_bounds} and using that $r_i = m_i r_{i-1}$, we have
    \begin{equation}
        \label{eq:unpacking_mi_2}
        \frac{
            \epsilon_i 
            \numsub{i} 
            r_{i-1}^{d(2 - \alpha - \delta)} 
        }{
            r_{i}^{d(2 - \alpha - \delta)}
        }
        =
        \frac{
            \cvor{} 
            \epsilon_i 
            m_i^d
            r_{i-1}^{d(2 - \alpha - \delta)} 
        }{
            (m_i r_{i-1})^{d(2 - \alpha - \delta)}
        }
        \geq 
        \cvor{} 
        \epsilon_i 
        m_i^{d(-1 + \alpha + \delta)}.
    \end{equation}
    Since $\delta \in (\max(0,1-\alpha), 2-\alpha)$, we have $-1 + \alpha + \delta > 0$ and $r_0$ may be chosen sufficiently large such that $\cvor{} \epsilon_i m_i^{d(-1 + \alpha + \delta)}$ is arbitrarily large for all $i \geq 1$.
    This proves \eqref{eq:useful_bound_1}. 
    Similarly, 
    \begin{equation}
        \label{eq:unpacking_mi_3}
        \frac{
            r_{i}^{d(2 - \alpha - \delta)}
        }{
            \log(e/\epsilon_{i+1})
            r_0^{d(2 - \alpha - \delta)}
        }
        \geq
        \frac{
            m_{i}^{d(2 - \alpha - \delta)}
        }{
            \log(e/\epsilon_{i+1})
        }
        \geq 
        \epsilon_{i+1} 
        m_{i}^{d(2 - \alpha - \delta)}
    \end{equation}
    where $2 - \alpha - \delta > 0$ and $r_0$ may be chosen sufficiently large such that $\epsilon_{i+1} m_{i}^{d(2 - \alpha - \delta)}$ is arbitrarily large for all $i \geq 1$.
    This proves \eqref{eq:useful_bound_2}.
    We now prove \eqref{eq:useful_bound_3}.
    By \thref{prop:tile_and_cell_volume_bounds} we have that $\cvor{} m_i^d \leq \numsub{i} \leq \Cvor{} m_i^d$.
    By construction we have that $m_i^d = (r_i/r_{i-1})^d \leq (r_i/r_0)^d$, and we let $x = m_i^d$.
    The function $f(x) = \log(6 (e \Cvor{} x)^2)$ is slowly varying, while the function $g(x) = c_1 x^{2 - \alpha - \delta}$ is a polynomial with positive exponent, so that by Potter's theorem (see for instance \cite[Chapter 1.4]{bingham_regular_1989}) it is the case that $f(x) \leq g(x)$ for all $x$ sufficiently large.
    As we have set $x = (r_i/r_{i-1})^d \geq (r_1/r_0)^d = r_0^{d \xi}$, we can choose $r_0$ sufficiently large such that 
    \begin{equation}   
        \label{eq:log_kappa_bound}
        \log
        (
            6
            ( 
                e 
                \numsub{i}
            )^2
        )
        \leq 
        c_1 (r_i/r_0)^{d(2-\alpha-\delta)}
    \end{equation}
    for all $i \geq 1$.
    Since $(d \delta / 2)(1/(1-\delta/2))^i > 0$ and $d(2 - \alpha - \delta) > 0$, we can choose $r_0$ sufficiently large such that 
    \begin{equation}
        r_0^{
            (
                d 
                \delta/2 
            )
            (
                1/(1-\delta/2)
            )^i 
            + 
            d(2-\alpha - \delta)
        }
        \geq
        2
        c_1 
        /
        \cvor{}
    \end{equation}
    for all $i \geq 1$.
    By the definitions of $m_i$ and $r_i$ we have $r_i = r_0^{(1/(1-\delta/2))^i}$ and also $m_i \leq r_{i-1}^{(\delta/2)/(1-\delta/2)}$, so that $r_i \leq r_{i-1}^{1/(1-\delta/2)}$ and hence $r_i^{d(1-\delta/2)} \leq r_{i-1}^d$.
    It follows that for this choice of $r_0$ 
    \begin{equation}
        r_i^{d(1-\delta/2)}
        \geq 
        \frac{
            2
            c_1
            r_i^{d(1-\delta)}    
        }{
            \cvor{} 
            r_0^{d(2 - \alpha - \delta)}
        }
    \end{equation}
    for all $i \geq 1$. 
    Multiplying both sides by $r_i^{d(1-\alpha)}$, rearranging and using using the definition of $m_i = r_i/r_{i-1}$ and the bounds for $\numsub{i}$ yields for $\delta> \max(0, 1-\alpha)$ by \eqref{eq:useful_bound_3}
    \begin{equation}
        \label{eq:kappa_ratio_bound}
        \numsub{i}
        r_{i-1}^{2d}/r_i^{d \alpha}
        \geq 
        2
        c_1
        (r_i/r_0)^{d(2-\alpha-\delta)}.
    \end{equation}
    The conclusion follows from \eqref{eq:log_kappa_bound} and \eqref{eq:kappa_ratio_bound}. 
\end{proof}

We use \thref{prop:almost_linear} to initialise the induction.

\begin{lemma}[Induction base]
\thlabel{lem:biskup_induction_base}
    Let $G$ be a transitive graph of polynomial growth with $d \geq 1$ and suppose that $J : V \times V \to \R_+$ is a transitive kernel satisfying $J(x,y) = \Omega (d_G(x,y)^{-d \alpha})$ with $\alpha \in (0,2)$.
    Let $\beta > \beta_c$.
    Let $((\Tile{i}{j})_{j \geq 0})_{i \geq 0}$ be the renormalisation scheme in \thref{def:renorm_scheme_2}. 
    For all $c > 0$ we can choose $r_0$ sufficiently large such that for any tile $\Tile{0}{j}$
    \begin{equation}
        \label{eq:biskup_level_0_bad}
        \P(\Tile{0}{j} \text{ is bad})
        \leq 
        \exp(-c).
    \end{equation}
\end{lemma}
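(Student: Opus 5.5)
The plan is to reduce \thref{lem:biskup_induction_base} directly to \thref{prop:almost_linear}, applied at scale $r_0/4$. By the definition of a good \level{0} tile in \thref{def:biskup_good}, $\Tile{0}{j}$ is good as soon as $B(\netvertex{0}{j},r_0/4)$ contains a cluster, connected inside that ball, of size at least $\rho_0 \# \Tile{0}{j}$. By transitivity of $G$ and $J$, the probability of this event does not depend on $j$ once we replace $\# \Tile{0}{j}$ by an upper bound uniform in $j$. Since $\Tile{0}{j}=\vor{\netvertex{0}{j}} \subseteq B(\netvertex{0}{j},\lceil r_0\rceil)$ for a maximal $r_0$-separated net, \eqref{eq:tile_volume} and \eqref{eq:polynomial_volume_growth} give $\#\Tile{0}{j} \le C_G \lceil r_0\rceil^d$. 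It therefore suffices to show
\[
\P_\beta\big(\#\giantone{r_0/4} \ge \rho_0 C_G \lceil r_0\rceil^d\big) \ge 1-e^{-c}.
\]

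Next, compare thresholds. We have $\rho_0 = \theta r_0^{-d\delta/4}/\cvor{}$, so the required size is at most $C\theta r_0^{d(1-\delta/4)}$ for a constant $C=C(G)$. On the other hand, \thref{prop:almost_linear} with exponent $\epsilon$ gives a cluster of size at least $\theta \# B(r_0/4)^{1-\epsilon} \ge \theta (c_G 4^{-d})^{1-\epsilon} r_0^{d(1-\epsilon)}$. Choosing $\epsilon = \delta/8$, the ratio of the second quantity to the first grows like $r_0^{d\delta/8}$. Hence for all $r_0$ beyond some threshold, the almost-linear cluster in $B(r_0/4)$ is large enough to witness that $\Tile{0}{j}$ is good.

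Finally, apply \thref{prop:almost_linear} with $\epsilon=\delta/8$ and failure probability $e^{-c}$, which yields a radius $r$ beyond which the conclusion holds. The main obstacle is that \thref{prop:almost_linear} is stated as ``there exists $r$ sufficiently large'', whereas we need the bound for all large $r_0$, or at least for a suitable $r_0$ that we are also free to enlarge for \thref{lem:useful_bounds}. To handle this, I would inspect the proof of \thref{prop:almost_linear}: after fixing $r_0,\radius_0,\xi$ for the first scheme, its argument covers every $r$ larger than a threshold, since both cases $\tilde r<Mr_{i_*}$ and $\tilde r\ge Mr_{i_*}$ are handled for all sufficiently large $r$. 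So the statement in fact holds for all sufficiently large $r$. We may therefore take $r_0$ large enough to satisfy simultaneously the requirements of \thref{lem:useful_bounds}, the size comparison above, and $r_0/4$ beyond the threshold of \thref{prop:almost_linear}, which concludes the proof.
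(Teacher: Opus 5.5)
Your proposal is correct and follows essentially the paper's route: reduce to \thref{prop:almost_linear} at a single scale, use transitivity for uniformity in $j$, and compare the almost-linear cluster size with the threshold $\rho_0\#\Tile{0}{j}$. In fact your comparison goes in the correct direction, namely a uniform upper bound on $\#\Tile{0}{j}$, the cluster taken inside $B(\netvertex{0}{j},r_0/4)$, and $\epsilon=\delta/8<\delta/4$; the paper's short proof instead derives the reverse implication (good $\Rightarrow$ $\#\giantone{r_0}\geq\theta r_0^{d(1-\epsilon)}$, with $\epsilon>\delta/4$), and your check that \thref{prop:almost_linear} holds for all sufficiently large radii is exactly what lets you enlarge $r_0$ freely.
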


\begin{proof}
    By construction the tile $\Tile{0}{j}$ is contained in the ball $B(x_{0,j},r_0)$. 
    Let $\epsilon > 0$.
    If $\Tile{0}{j}$ is good, then by \thref{lem:biskup_size}, by the definition of $\rho_0$ and choosing $\epsilon > \delta/4$, and by \thref{prop:tile_and_cell_volume_bounds},
    \begin{equation}
        \label{eq:biskup_level_0_giant}
        \# \cluster_{r_0}^{(1)}
        \geq 
        \# \goodcluster{0}{j}
        \geq 
        \rho_0 
        \# \Tile{0}{j}
        = 
        \theta 
        r_0^{-d \delta/4} \# \Tile{0}{j} / \cvor{}
        \geq
        \theta
        r_0^{d(1-\epsilon)}.
    \end{equation}
    Let $c > 0$.
    By \thref{prop:almost_linear} there exists $r_0$ sufficiently large such that
    \begin{equation}
        \label{eq:biskup_level_0_conclusion}
        \P_{\beta}
        \left(
            \Tile{0}{j} \text{ is good}
        \right)
        \geq 
        \P
        \left(
            \# \cluster_{r_0}^{(1)}
            \geq 
            \theta
            r_0^{d(1-\epsilon)}
        \right)
        \geq 
        1 - \exp(-c),
    \end{equation}
    and taking the complement concludes the proof.    
\end{proof}

In the inductive step of the renormalisation, we use the following result on the connectivity properties of the Erd{\H o}s-R\'enyi random graph. 
A short proof of this statement may be found in \cite[Claim 4.7]{jorritsma_large_2025}. 
We write $f(n) = \omega(g(n))$ to mean that for all $c > 0$ there exists $n_0$ such that for all $n > n_0$, $f(n) > c g(n)$. 

\begin{proposition}
    \thlabel{prop:erdos_renyi_connectivity}
    Consider the \ER{} random graph $G(n,p_n)$ with $n p_n = \omega(\log n)$. Then 
    \begin{equation}
        \label{eq:erdos_renyi_connectivity}
        \P 
        \left(
            G(n,p_n)
            \text{ is not connected}
        \right)
        \leq 
        3 (en)^2 (1-p_n)^{n/2}
    \end{equation}  
    for all $n \geq 1$.
\end{proposition}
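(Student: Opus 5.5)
We prove the bound by a union bound over all potential cuts of the vertex set. Write $q_n = 1-p_n$. The graph $G(n,p_n)$ is disconnected if and only if there is a nonempty set $S \subset [n]$ with $\# S = k \leq n/2$ such that no edge joins $S$ to $[n]\setminus S$. For a fixed $S$ of size $k$ there are $k(n-k)$ potential edges across the cut, and they are independent. Hence
\begin{equation}
    \P\left(G(n,p_n) \text{ is not connected}\right)
    \leq
    \sum_{k=1}^{\lfloor n/2 \rfloor}
    \binom{n}{k}
    q_n^{k(n-k)}.
\end{equation}
Since $k \leq n/2$, we have $n-k \geq n/2$, so $q_n^{k(n-k)} \leq \big(q_n^{n/2}\big)^{k}$. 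Using $\binom{n}{k} \leq (en/k)^k \leq (en)^k$, each summand is at most $x^k$ with $x = en\, q_n^{n/2}$.

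We now split into two cases. If $x \geq 1$, the claimed right-hand side $3(en)^2 q_n^{n/2} = 3en\,x \geq 3$ exceeds one, so there is nothing to prove; the inequality in \eqref{eq:erdos_renyi_connectivity} therefore holds for all $n \geq 1$ trivially in that regime. If $x < 1$, the sum is geometric and is at most $x/(1-x)$. To convert this into the stated form, we compare $x/(1-x)$ with $3en\,x$. This holds whenever $1-x \geq 1/(3en)$, which is immediate if $x \leq 1 - 1/(3e)$, since then $1/(1-x) \leq 3e \leq 3en$. In the remaining window $x \in (1-1/(3e),1)$, we instead note that the right-hand side $3en\,x \geq 3en(1-1/(3e)) \geq 1$ for $n \geq 1$, which is again trivial.

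So the bound holds in all cases. The hypothesis $np_n = \omega(\log n)$ is not needed for the inequality itself. It only ensures that the right-hand side tends to zero, since $q_n^{n/2} \leq e^{-np_n/2}$ is $o(n^{-C})$ for every $C$. The main (minor) obstacle is the bookkeeping of constants near $x \approx 1$, and the case split above handles it.
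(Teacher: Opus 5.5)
Your proof is correct. It is the standard union bound over vertex sets of size at most $n/2$ with no edges to their complement, which is the argument behind \cite[Claim 4.7]{jorritsma_large_2025}; the paper cites that claim rather than proving the statement itself. Your case split on $x = en(1-p_n)^{n/2}$ is sound, and as you note it shows the inequality holds for every $p_n \in [0,1]$ and every $n \ge 1$, with the hypothesis $np_n = \omega(\log n)$ serving only to make the right-hand side small.
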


\begin{lemma}[Iterative renormalisation]
    \thlabel{lem:second_iterative_renormalisation}
    Let $G$ be a transitive graph of polynomial growth with $d \geq 1$ and suppose that $J : V \times V \to \R_+$ is a transitive kernel satisfying $J(x,y) = \Omega (d_G(x,y)^{-d \alpha})$ with $\alpha \in (0,2)$.
    Let $\beta > \beta_c$, and let $((\Tile{i}{j})_{j \geq 0})_{i \geq 0}$ be the renormalisation scheme in \thref{def:renorm_scheme_2}.
    For $\delta \in (\max(0, 1-\alpha),2-\alpha)$ and $c > 0$, for  $r_0$ sufficiently large it holds for any tile $\Tile{i}{j}$ that 
    \begin{equation}
        \label{eq:biskup_iterative_renormalisation}
        \P_{\beta}
        (
            \Tile{i}{j} \text{ is bad}
        )
        \leq 
        \exp
        (
            -
            c
            (r_i/r_0)^{d(2 - \alpha - \delta)}
        ).
    \end{equation}
\end{lemma}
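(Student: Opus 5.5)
We argue by induction on $i$, writing $\lambda_i=\max_j \P_\beta(\Tile{i}{j}\text{ is bad})$ and aiming for $\lambda_i\le \exp(-c(r_i/r_0)^{d(2-\alpha-\delta)})$. The base case $i=0$ is \thref{lem:biskup_induction_base}, since $(r_0/r_0)^{d(2-\alpha-\delta)}=1$. For the inductive step we fix a \level{i} tile $\Tile{i}{j}$ with $\numsub{i}$ subtiles, and bound $\lambda_{i,j}\le \phi_{i,j}+\varphi_{i,j}$. Here $\phi_{i,j}$ is the probability that more than an $\epsilon_i$ proportion of subtiles are bad. The term $\varphi_{i,j}$ is the probability that, given at least $(1-\epsilon_i)\numsub{i}$ good subtiles, their good clusters fail to be connected inside $\Tile{i}{j}$.

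For $\phi_{i,j}$ we use that tiles at the same level are disjoint and that goodness of a \level{i-1} tile is determined by edges inside that tile. The indicators of badness of the subtiles are therefore independent, each with probability at most $\lambda_{i-1}$. A Chernoff bound for the binomial then gives
\[
\phi_{i,j}\le \binom{\numsub{i}}{\lceil \epsilon_i\numsub{i}\rceil}\lambda_{i-1}^{\epsilon_i\numsub{i}}\le \exp\Big(\epsilon_i\numsub{i}\big(\log(e/\epsilon_i)-c(r_{i-1}/r_0)^{d(2-\alpha-\delta)}\big)\Big).
\]
By \eqref{eq:useful_bound_2} at level $i-1$, with the constant chosen large, the bracket is at most $-\tfrac c2 (r_{i-1}/r_0)^{d(2-\alpha-\delta)}$. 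Then \eqref{eq:useful_bound_1} converts $\epsilon_i\numsub{i}r_{i-1}^{d(2-\alpha-\delta)}$ into a large multiple of $r_i^{d(2-\alpha-\delta)}$. This yields $\phi_{i,j}\le \tfrac12\exp(-c(r_i/r_0)^{d(2-\alpha-\delta)})$ once $r_0$ is large. For $i=1$ the base lemma plays the role of the hypothesis, and we choose its constant accordingly.

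For $\varphi_{i,j}$ we condition on the configuration inside each subtile, which determines the good clusters. Edges between distinct subtiles are independent of this information. By \thref{lem:biskup_size}, two good clusters have sizes at least $\rho_\infty\#\Tile{i-1}{\cdot}\ge \rho_\infty\cvor r_{i-1}^d$, lie at distance at least $r_0/2\ge R_J$, and are contained in $B(\netvertex{i}{j},\Cvor r_i)$. By the bulk-to-bulk bound, each pair is directly joined with probability at least
\[
p=1-\exp\big(-c'\beta r_{i-1}^{2d}/r_i^{d\alpha}\big),
\]
independently over pairs. The auxiliary graph on the good subtiles, with adjacency meaning a direct edge, therefore dominates $G(n,p)$ with $n\ge \numsub{i}/2$. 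By \thref{prop:erdos_renyi_connectivity},
\[
\varphi_{i,j}\le 3(e\numsub{i})^2\exp\big(-c''\numsub{i}r_{i-1}^{2d}/r_i^{d\alpha}\big).
\]
Bound \eqref{eq:useful_bound_3} is exactly what turns this into $\tfrac12\exp(-c(r_i/r_0)^{d(2-\alpha-\delta)})$; the constants $c',c''$ are absorbed by enlarging the constant in \thref{lem:useful_bounds}. We must also check the hypothesis $np=\omega(\log n)$. This follows because $\numsub{i}r_{i-1}^{2d}/r_i^{d\alpha}\asymp r_{i-1}^{d(2-\alpha)}m_i^{d(1-\alpha)}$, which grows polynomially in $m_i$ when $\delta>1-\alpha$. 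Summing the two halves closes the induction.

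The main obstacle is keeping the constants uniform across levels. In the Chernoff step the exponent must improve from level $i-1$ to level $i$ without losing the constant $c$. This is exactly why $\epsilon_i$ decays only polynomially while $m_i$ grows doubly exponentially, so that \eqref{eq:useful_bound_1} has room to spare. If it proves delicate, I would first prove the claim with $2c$ in the hypothesis and $c$ in the conclusion at level $i-1$, fixing $r_0$ at the end.
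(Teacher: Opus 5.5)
Your proposal is correct and follows the paper's proof essentially step by step. Like the paper, you induct on $i$ and split $\lambda_{i,j}\le\phi_{i,j}+\varphi_{i,j}$. You control $\phi_{i,j}$ by a binomial/union bound closed via \eqref{eq:useful_bound_2} and \eqref{eq:useful_bound_1}, and $\varphi_{i,j}$ by \ER{} domination and \thref{prop:erdos_renyi_connectivity} closed via \eqref{eq:useful_bound_3}.

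Your explicit treatment of $i=1$, taking a large constant in \thref{lem:biskup_induction_base} because \eqref{eq:useful_bound_2} gives nothing at index $0$, fills a point the paper passes over. Your fallback of using $2c$ at level $i-1$ and $c$ at level $i$ is unnecessary, and would not iterate over unboundedly many levels, since \eqref{eq:useful_bound_1} already lets you keep the same constant $c$ at every level.
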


\begin{proof}
    We argue by induction on $i$. 
    We let $c_1 > 0$, and for $i \geq 0$ we write $\lambda_i = \max_j \P_{\beta} \left(\Tile{k}{j} \text{ is bad}\right)$ and $\error{k} = \exp(-c_1(r_k/r_0)^{d(2-\alpha - \delta)})$.
    For $i = 0$, it follows from \thref{lem:biskup_induction_base} that we can choose $r_0$ sufficiently large such that $\lambda_0 \leq \error{0}$.
    Now let $\Tile{i}{j}$ be a \level{i} tile and let $\lambda_{i,j} = \P_{\beta}(\Tile{i}{j} \text{ is bad})$.
    Similarly to as in \thref{lem:first_iterative_renormalisation}, let $\phi_{i,j}$ be the probability that less than a $(1 - \epsilon_i)$ proportion of the sub-tiles of $\Tile{i}{j}$ are good, and let $\varphi_{i,j}$ be the probability that the good clusters are not connected provided that there are enough good subtiles. 
    This gives $\lambda_{i,j} \leq \phi_{i,j} + \varphi_{i,j}$, and we bound each probability in turn.
    Since disjoint tiles are independent, the induction hypothesis gives that the number of bad subtiles is stochastically dominated by a $\Bin(\numsub{i},\error{i-1})$ random variable, so that
    \begin{equation}
        \label{eq:binomial_domination}
        \phi_{i,j} 
        \leq 
        \P
        (
            \Bin(\numsub{i},\error{i-1}) \geq \epsilon_i \numsub{i}
        )
        \leq 
        \sum_{k = \lceil \epsilon_i \numsub{i} \rceil}^{\numsub{i}} 
        \binom{\numsub{i}}{k} 
        \error{i-1}^k
        \leq
        \sum_{k = \lceil \epsilon_i \numsub{i} \rceil}^{\infty} 
        \left(
            \frac{e}{\epsilon_i}
            \error{i-1}
        \right)^k
    \end{equation}
    where we use that $\binom{\numsub{i}}{k} \leq (e \numsub{i} / k)^k \leq (e/ \epsilon_i)^k$ for all $k \geq \lceil \epsilon_i \numsub{i} \rceil$.
    We show that this geometric sum is convergent.
    By \thref{lem:useful_bounds} we may choose $r_0$ sufficiently large such that
    \begin{align}
        \label{eq:useful_bound_4}
        \log
        \left(
            e/\epsilon_k
        \right)
        -
        c_1
        \left(
            r_{k-1}/r_0
        \right)^{d(2-\alpha - \delta)}
        & < 
        - 
        c_1
        \left(
            r_{k-1}/r_0
        \right)^{d(2-\alpha - \delta)}
        /2,
        \\
        \log 4 
        - 
        c_1
        \epsilon_k \numsub{k} r_{k-1}^{d(2-\alpha - \delta)}
        /
        2
        & < 
        \label{eq:useful_bound_5}
        -
        c_1
        r_{k}^{d(2-\alpha - \delta)}
    \end{align}
    for all $k \leq i$.
    For this choice of $r_0$, the inequality in \eqref{eq:useful_bound_4} gives
    \begin{align}
        \label{eq:biskup_geometric_error}
        \frac{e}{\epsilon_i} \error{i-1} 
        & \leq 
        \exp
        \left(
            -
            c_1
            \left(
                r_{i-1}/r_0
            \right)^{d(2 - \alpha - \delta)}
            / 
            2
        \right)
        \leq
        \frac{1}{2},
    \end{align}
    and evaluating the geometric series in \eqref{eq:binomial_domination} together with the bound in \eqref{eq:useful_bound_5} gives
    \begin{equation}
        \label{eq:biskup_phi_bound}
        \phi_{i,j} 
        \leq 
        2
        \exp
        \left(
            - 
            c_1
            \epsilon_i \numsub{i}
            \left(
                r_{i-1}/r_0
            \right)^{d(2 - \alpha - \delta)}
            / 
            2
        \right)
        \leq 
        \frac{1}{2}
        \exp
        \left(
            - 
            c_1
            \left(
                r_{i}/r_0
            \right)^{d(2 - \alpha - \delta)}
        \right),
    \end{equation}
where we used \eqref{eq:useful_bound_1} (equvalently, \eqref{eq:useful_bound_5}) to obtain the last inequality.  
    We now bound $\varphi_{i,j}$ by considering the random graph $\GG$ in which each vertex corresponds to the good cluster of a good subtile, and two vertices are connected by an edge if and only if the two associated good clusters are connected.
    Let $\Tile{i-1}{k}$ and $\Tile{i-1}{\ell}$ be two distinct good subtiles of $\Tile{i}{j}$.
    By \thref{lem:biskup_size} we have that $\goodcluster{i-1}{k}$ and $\goodcluster{i-1}{\ell}$ are disjoint with $d_G(\goodcluster{i-1}{k},\goodcluster{i-1}{\ell}) \geq r_0/2$, as well as $\goodcluster{i-1}{k},\goodcluster{i-1}{\ell} \subset B(\Cvor{} r_i)$ and $\# \goodcluster{i-1}{k}, \# \goodcluster{i-1}{\ell} \geq \rho_{\infty} \cvor{} r_{i-1}^d$.
    Choosing $r_0 \geq R_J$ we have
    \begin{align}
        \label{eq:tile_connection}
        \P_{\beta}
        \left(
            \goodcluster{i-1}{k}
            \not\sim
            \goodcluster{i-1}{\ell}
            \mid
            \Tile{i-1}{k},
            \Tile{i-1}{\ell}
            \text{ are good}
        \right) 
        & \leq
        \exp
        \left(  
            -
            \frac{
                c_2 
                \left(
                    \rho_{\infty} 
                    \cvor{} 
                    r_{i-1}^d
                \right)^2
            }{
                \left(
                    2
                    \Cvor{}
                    r_i
                \right)^{d \alpha}
            }
        \right)
        \\
        & \leq 
        \exp
        \left(  
            -
            c_3
            r_{i-1}^{2d}
            /
            r_i^{d \alpha}
        \right)
    \end{align}
    for some $c_2 = c_2(\beta,J) > 0$ and $c_3 = c_3(\beta,G,J) > 0$.
    For convenience we write $1 - q_i = \exp\left(- c_3 r_{i-1}^{2d}/r_i^{d \alpha}\right)$, and we write $\NN_{i,j}$ for the number of good subtiles in $\Tile{i}{j}$, so that in particular $\lceil(1-\epsilon_i) \numsub{i} \rceil \leq \NN_{i,j} \leq \kappa_{i,j}$.
    Given $\NN_{i,j}$, the random graph $\GG$ stochastically dominates an \ER{} random graph $G(\NN_{i,j},q_i)$. 
    \thref{prop:erdos_renyi_connectivity} bounds the probability that such a graph is not connected, and we obtain 
    \begin{align}
        \label{eq:biskup_erdos_renyi_lemma_1}
        \varphi_{i,j}
        \leq 
        \P_{\beta}
        \left(
            G(\NN_{i,j},q_i) 
            \text{ is not connected} 
            \mid 
            \NN_{i,j}
        \right)
        & \leq
        3
        (e \numsub{i})^2 
        \exp 
        \left( 
            -
            c_3
            r_{i-1}^{2d}/r_i^{d \alpha}
        \right)^{\lceil (1-\epsilon_i) \numsub{i} \rceil}
        \\
        & \leq
        \label{eq:biskup_erdos_renyi_lemma_3}
        \frac{1}{2}
        \exp
        (
            \log
            (
                6 
                \left(
                    e 
                    \numsub{i}
                \right)^2
            )
            -
            c_4
            \numsub{i}
            r_{i-1}^{2d}/r_i^{d \alpha}
        )
    \end{align}
    for some $c_4 > 0$, where in \eqref{eq:biskup_erdos_renyi_lemma_3} we use that $\epsilon_{i} \leq 1/(4 \Cvor{}^2)$ and hence $1 - \epsilon_i \geq 3/(4\Cvor{}^2)$.
    By the bounds in \thref{lem:useful_bounds} we can choose $r_0$ sufficiently large such that
    \begin{equation}
        \label{eq:induction_varphi_conclusion}
        \varphi_{i,j}
        \leq 
        \frac{1}{2}
        \exp
        \left(
            - 
            c_1
            (r_i/r_0)^{d(2-\alpha-\delta)}
        \right).
    \end{equation}
    Together with \eqref{eq:biskup_phi_bound} we can finish the induction  for any tile $\Tile{i}{j}$ by noting that
    \begin{equation}
        \lambda_{i,j}
        \leq
        \phi_{i,j}
        + 
        \varphi_{i,j}
        \leq
        \exp
        \left(
            - 
            c_1
            (r_i/r_0)^{d(2-\alpha-\delta)}
        \right)
    \end{equation}
    \vskip-1em
\end{proof}

\begin{proof}[Proof of \thref{prop:biskup_with_error}]
    By \thref{lem:second_iterative_renormalisation}, for any $c_1 > 0$ we can choose $r_0$ sufficiently large such that for any tile $\Tile{i}{j}$,
    \begin{equation}
        \label{eq:biskup_arbitrary_radii_bad}
        \P_{\beta}
        (
            \Tile{i}{j} \text{ is bad}
        )
        \leq 
        \exp
        (
            -
            c_1
            (r_{i}/r_0)^{d(2 - \alpha - \delta)}
        ).
    \end{equation}
    Fix such a choice of $r_0$ and for $r > \Cvor{} r_1$ let
    \begin{equation}
        i_{*} 
        = 
        \max
        \{
            i \geq 1
            : 
            \Cvor
            r_i
            < r 
        \},
        \quad 
        \tilde{r} 
        = 
        r 
        - 
        \Cvor{}
        r_{i_*}
        .
    \end{equation}
    The choice of $\tilde{r}$ is such that the \level{i_*} tiles whose centre is in $B(0,\tilde{r})$ are contained in $B(0,r)$. 
    By \thref{lem:biskup_size}, a good tile $\Tile{i_*}{j}$ contains a good cluster $\goodcluster{i_*}{j}$ with $\# \goodcluster{i_*}{j} \geq \rho_{\infty} \# \Tile{i_*}{j}$.
    Let $\kappa$ be the number of \level{i_*} tiles whose centre is in $B(\tilde{r})$.
    We distinguish two cases according to the value of $\kappa$. 
    Let $M > 0$ be the least integer such that $c_G M^d / (3^d C_G) \geq 2$.
    Suppose in the first instance that $\tilde{r} < M r_{i_*}$ so that in particular $r_{i_*} > r/(M + \Cvor{})$. 
    Without loss of generality, we can translate the net $V_{i_*}$ so that $0 \in V_{i_*}$. It follows that the ball $B(\tilde{r})$ contains the centre of at least one \level{i_*} tile, say $\Tile{i_*}{j}$. If $\Tile{i_*}{j}$ is good, the good cluster $\goodcluster{i_*}{0}$ satisfies 
    \begin{equation}
        \# \goodcluster{i_*}{0}
        \geq 
        \rho_{\infty}
        \# \Tile{i_*}{0}
        \geq 
        \rho_{\infty} 
        \cvor{}
        r_{i^*}^d
        \geq 
        \rho_{\infty}
        \cvor{} 
        r^d 
        / 
        (M + \Cvor{})^d
        \geq 
        \nu
        \# B(r)
    \end{equation}
    for some $\nu = \nu(G) > 0$.
    Since by construction $\goodcluster{i_*}{0}$ is contained in $B(r)$, the size of $\goodcluster{i_*}{0}$ is a lower bound for $\# \giantone{r}$.
    Together with \eqref{eq:biskup_arbitrary_radii_bad} this gives for all $r$ sufficiently large and some $c_2 > 0$ that
    \begin{align}
        \P_{\beta}
        \left(
            \# 
            \giantone{r}
            \leq 
            \nu
            \# B(r)
        \right) 
        \leq
        \P_{\beta}
        \left(
            \Tile{i_*}{j} \text{ is bad}
        \right)
        & \leq 
        \exp
        \left(
            -
            c_1
            (r_{i_*}/r_0)^{d(2 - \alpha - \delta)}
        \right)
        \\
        & \leq 
        \exp
        \left(
            - 
            c_2
            \# B(r)^{2-\alpha-\delta}
        \right).
    \end{align}
    Suppose now that $\tilde{r} \geq M r_{i_*}$, so that in particular $\tilde{r} \geq r M / (1 + \Cvor{})$.
    In this case, $B(\tilde{r})$ contains the centres of 
    \begin{equation}
        \label{eq:silly_label_2}
        \kappa 
        \geq 
        \left\lfloor
            \frac{
                \min \# B(\tilde{r})
            }{
                \max_j \# \vor{\netvertex{i_*}{j}}
            }
        \right\rfloor
        \geq
        \frac{
                c_G 
                \tilde{r}^d
            }{
                3^d
                C_G
                r_{i_*}^d
            }   
        \geq 
        \frac{
                c_G
                M^d
                r^d
            }{
                \cdot 3^d
                C_G
                \left(
                    M
                    +
                    \Cvor{}
                \right)^d   
                r_{i_*}^d
            }
    \end{equation}
    many \level{i_*} cells. 
    By the choice of $M$, the third expression from the right gives $\kappa \geq 2$.
    We follow an argument similar to that in \thref{lem:second_iterative_renormalisation}.
    Let $\lambda$ be the probability that at least half of the cells are good and that their good clusters are connected to form the cluster $\CC$. If this is the case then
    \begin{equation}
        \# 
        \giantone{r}
        \geq 
        \# \CC
        \geq 
        \frac{
            \kappa 
            \rho_{\infty} 
            \Tile{i_*}{j}
        }{2}
        \geq
        \frac{
            \kappa 
            \rho_{\infty} 
            \cvor{} 
            r_{i_*}^d
        }{2}
        \geq
        \frac{
            \cvor{} 
            c_G 
            \rho_{\infty}
            M^d 
            r^d
        }{
            2 \cdot 3^d
            C_G
            \left(
                M 
                +
                \Cvor
            \right)^d
        }
        \geq
        \nu \# B(r),
    \end{equation}
    for some $\nu = \nu(G) > 0$.
    Let $\phi$ be the probability that less than half of the associated cells are good, and let $\varphi$ be the probability that the good clusters are not connected given that there are enough good \level{i_*} tiles. 
    This gives $\lambda \geq 1 - \phi - \varphi$, and we bound each probability in turn. 
    Writing $\error{i_*} = \exp(- c_3(r_{i_*}/r_0)^{d(2 - \alpha - \delta)})$, the same argument as in \thref{lem:second_iterative_renormalisation} (specifically that from \eqref{eq:binomial_domination} to \eqref{eq:biskup_phi_bound}) gives that we can choose $r_0$ sufficiently large such that
    \begin{equation}
        \label{eq:silly_label}
        \phi
        \leq 
        \P(\Bin(\kappa,\error{i_*}) \geq \kappa/2)
        \leq 
        2 
        \exp
        \left(
            \frac{
                - 
                c_1
                \kappa 
                r_{i_*}^{d(2-\alpha-\delta)}    
            }
                {2 r_{0}^{d(2-\alpha-\delta)}
            }
        \right)
        \leq 
        \frac{1}{2}
        \exp
        \left(
            - 
            c_3
            \#B(r)^{2 - \alpha - \delta}
        \right)
    \end{equation}
    for some $c_3 > 0$, where in the last inequality we use the bound on $\kappa r_{i_*}^d$ from \eqref{eq:silly_label_2} followed by a similar calculation as done in \eqref{eq:unpacking_mi_2} but without the $\varepsilon_i$, using that $\kappa r_{i_*}^d =\Theta(r^d)$ and that $\delta>\max(0, 1-\alpha)$.
    
    We now bound $\varphi$.
    The same argument as in \thref{lem:second_iterative_renormalisation} (specifically that from \eqref{eq:tile_connection} to \eqref{eq:induction_varphi_conclusion}) and using that $r_{i_* + 1} \geq r/(1+\xi)$ gives that we can choose $r_0$ sufficiently large such that
    \begin{equation}
        \varphi
        \leq 
        \frac{1}{2}
        \exp
        \left(
            \log 
            (
                6 
                (
                    e 
                    \kappa
                )^2
            )
            -
            c_5 
            \kappa
            r_{i_* + 1}^{2d}
            /
            r^{d \alpha}
        \right)
        \leq 
        \frac{1}{2}
        \exp
        \left(
            - 
            c_3
            \#B(r)^{2 - \alpha - \delta}
        \right),
    \end{equation}
    by choosing $c_3$ sufficiently small. As the total error is $\phi+\psi$, we have shown that
    \begin{equation}
        \P(
            \# 
            \giantone{r}
            \leq 
            \nu
            \# 
            B(r)
        ) 
        \leq 
        \exp
        \left(
            -
            c_3
            B(r)^{2 - \alpha - \delta}
        \right)
    \end{equation}
    for all $r$ sufficiently large, which concludes the proof.
\end{proof}

\subsection{Sharp bounds for the giant}
We are now ready to prove \thref{prop:biskup}. 
Comparing the statements of \thref{prop:biskup} and \thref{prop:biskup_with_error}, we need to work off the error term to get sharp bounds on the existence of the giant.
The proof uses a large deviations result of the \ER{} random graph due to O'Connell \cite{oconnell_large_1998}.

\begin{proof}[Proof of \thref{prop:biskup}]
    Fix $\alpha\in(0,2)$. Let $r \in \N$ and let $M > 0$ be a large constant. 
    For $R = M^{1/d}r^{\max(\alpha-1,0)}$ let $V_0$ be a maximal $R$-separated net, and for $\epsilon > 0$ let $r$ be sufficiently large such that $r - R \geq r(1-\epsilon)$.
    We write $(\Tile{R}{j})_{j \in \N}$ for the Voronoi tiles associated to the net $V_0$.
    As before $\diam (\Tile{R}{j}) \leq 2R$ and $\# \Tile{R}{j} \asymp R^d$, and in particular tiles with centre in $B(r - R)$ are contained in $B(r)$.
    For $\nu_1 = \nu_1(G) > 0$ as in \thref{prop:biskup_with_error}, we say that a tile $\Tile{R}{j}$ is good if it contains a good cluster $\CC_j$ with $\CC_j \subseteq B(R/4) \subseteq \Tile{R}{j}$ and $\# \CC_j \geq \nu_1 \# B(R/4)$. 
    For $\xi > 0$, it follows from \thref{prop:biskup_with_error} that we may choose $r$ (or $M$, in case $\alpha\le 1$) sufficiently large such that
    \begin{equation}
        \P_{\beta}
        \left(
            \Tile{R}{j}
            \text{ is good}
        \right)
        \geq 
        1 - \xi/2
    \end{equation}
    for any tile $\Tile{R}{j}$.
    Let $\kappa$ denote the number of tiles with centre in $B(r - R)$. 
    We have $\kappa \geq \lfloor \min \# B(r-R) / \max \# \Tile{R}{j} \rfloor$, and hence $\kappa \geq c_1 \# B(r)^{\min(2 - \alpha,1)} / M$ and $\kappa \# B(R) \geq c_2 \# B(r)$ for some $c_1 = c_1(G,\epsilon)>0 $ and $c_2 = c_2(G,\epsilon) > 0$.
    Let $\lambda$ be the probability that more than a $(1 - \xi)$ proportion of the tiles with centres in $B(r - R)$ are good, and that at least a $(1 - \xi)$ proportion of their good clusters are connected to form the cluster $\CC$.
    If this is the case then
    \begin{equation}
        \# 
        \giantone{r}
        \geq 
        \# 
        \CC
        \geq 
        (1 - \xi)^2
        \kappa
        \nu_1
        \# B(R/4)
        \geq 
        \nu_2
        \# B(r)
    \end{equation}
    for some $\nu_2 = \nu_2(G,M,\epsilon,\xi) > 0$.
    Let $\phi$ be the probability that more than a $\xi$ proportion of tiles are bad, and let $\varphi$ be the probability that less than a $(1-\xi)$ proportion of the good clusters are connected given that there are enough good tiles.
    This gives $\lambda \leq \phi + \varphi$, and we bound each probability in turn.
    Since tiles are disjoint, the number of bad tiles is stochastically dominated by a $\Bin(\kappa,\xi/2)$ random variable. 
    By Chernoff's bound and by the lower bound for $\kappa$ we have
    \begin{equation}\label{eq:phi-bound-in-O}
        \phi
        \leq 
        \P
        \left(
            \Bin
            \left(
                \kappa,\xi/2
            \right) 
            \geq 
            \xi \kappa
        \right)
        \leq 
        \exp
        \left(
            - 
            \xi 
            \kappa 
            / 
            8
        \right)
        \leq 
        \exp
        \left(
            - 
            c_3
            \right(
                \#B(r)
            \left)^{\min(2-\alpha,1)}
        \right)
    \end{equation}
    for some $c_3 = c_3(G) > 0$.
    We now bound $\varphi$.
    We first consider the probability that the good clusters $\CC_j$ and $\CC_k$ in two distinct good tiles $\Tile{R}{j}$ and $\Tile{R}{k}$ do not connect.
    The distance between two tiles is at most $r$, and choosing $R$ sufficiently large so as to satisfy the asymptotics of the kernel $J$ for $\alpha\in(1,2)$, and for some $c_4 = c_4(G,J,\beta) > 0$ and $c_5 =  c_4(G,J,\beta)> 0$  we have that
    \begin{equation}
        \P_{\beta}
        \left(
            \CC_j
            \not\sim
            \CC_k
            \mid
            \Tile{R}{j},
            \Tile{R}{k}
            \text{ are good}
        \right)
        \leq 
        \exp
        \left(
            - 
            c_4
            M^2
            \left(
                \# B(r)
            \right)^{\alpha - 2}
        \right)
        \leq 
        \exp
        \left(
            \frac{
                    -
                    c_5
                    M 
                }{
                    \kappa
                }
        \right) 
        \leq
        \frac{
                c_5
                M
            }{
                \kappa
            }
    \end{equation}
    where we choose $M$ sufficiently large such that $c_5 M / \kappa > 1$. 
    For $\alpha\in(0,1]$, we arrive to the same bound by a similar calculation using that $\alpha \leq 1$ and $\kappa>c_1 \#B(r)$:
    \begin{equation}
        \P_{\beta}
        \left(
            \CC_j
            \not\sim
            \CC_k
            \mid
            \Tile{R}{j},
            \Tile{R}{k}
            \text{ are good}
        \right)
        \leq 
        \exp
        \left(
            - 
            c_4
            M^2
            \left(
                \# B(r)
            \right)^{-\alpha}
        \right)
        \leq 
        \exp
        \left(
            \frac{
                    -
                    c_5
                    M 
                }{
                    \kappa
                }
        \right) 
        \leq
        \frac{
                c_5
                M
            }{
                \kappa
            }.
    \end{equation}
    Consider the random graph $\GG$ in which each vertex corresponds to the good cluster of a good tile, and two vertices are connected by an edge if the two associated good clusters are connected. 
    By our assumptions there are at least $N = (1-\xi) \kappa$ good tiles, and the random graph $\GG$ stochastically dominates an \ER{} random graph $G(N,c_6 M / N)$ for $c_6 = (1 - \xi)c_5$.
    We write $\CC^{\scriptscriptstyle (1)}$ for the largest connected component in $G(N,c_6 M / N)$, and we choose $M$ sufficiently large such that $c_6 M > 1$.
    By \cite{oconnell_large_1998}, for some $c_7 = c_7(\xi) > 0$ and $c_8 = c_8(G,\xi) > 0$,
    \begin{equation}
        \psi
        \leq
        \P
        \left(
            \# 
            \CC^{\scriptscriptstyle (1)}
            \leq 
            (1-\xi)
            N
        \right)
        \leq
        \exp
        \left(
            -c_7
            N
        \right)
        \leq 
        \exp
        \left(
            -
            c_8
            \left(
                \#
                B(r)
            \right)^{\min(2 - \alpha,1)}
        \right).   
    \end{equation}
    Combining this with the error bounds on  $\phi$ in \eqref{eq:phi-bound-in-O}, for some $c_9=c_9(G, M, \varepsilon, \xi)>0$,
    \begin{equation}
        \P_{\beta}
        \left(
            \giantone{r}
            \leq
            \nu_2
            \# 
            B(r)
        \right)
        \leq 
        \exp
        \left(
            -
            c_9
            \left(
                \# 
                B(r)
            \right)^{\min(2-\alpha,1)}
        \right)
    \end{equation}    
    for all $r$ sufficiently large.
    The result follows by setting $c_9$ sufficiently small.
\end{proof}
\section{Local uniqueness of the giant}
\label{sec:local_uniqueness}
In this section we prove \thref{prop:second_largest} on the local uniqueness of the giant. 
The proof follows a four-step revealment scheme where we progressively reveal the percolation configuration in the ball $B(r)$. 
The name of the game is to repeatedly apply \thref{prop:biskup}, and prove that it is unlikely that there is a large non-giant component.
We introduce the revealment scheme and its notation.

For $r \in \N$ and $k \in \N$, let $V$ be a maximal $k^{1/d}$-separated net, and let $(T_i)_{i \in \N}$ be the associated family of Voronoi tiles.
Each tile has diameter at most $k$ and volume at least $\cvol{} k$.
For the ball $B(r)$, consider the set $(T_i)_{i \in I}$ of Voronoi tiles whose centres are in $B(r - k^{1/d})$. 
By construction, the union of these tiles is contained in $B(r)$.
To obtain a partition of $B(r)$, for each $x \in B(r)$ not already in a tile, we assign $x$ to the nearest tile (chosen arbitrarily when there are several equidistant tiles). 
After this assignment procedure, the largest tile has volume at most $c_G 2^dk$ and diameter at most $4 k^{1/d}$. 
Importantly, the tiles fully partition $B(r)$.
We now reveal the percolation configuration in $B(r)$ in the following sequential manner.

In \textbf{Step 1} we reveal edges with both endpoints in a single tile $T_i$.
This reveals the largest cluster in each tile $T_i$, which which we denote by $\giantone{i}$.
We consider the graph where tiles are identified as vertices, with edges between vertices whose tiles neighbour each other, and we let $\mathrm{Tree}$ be a spanning tree of this graph.
In \textbf{Step 2} we reveal edges between vertices in $\giantone{i}$ and vertices in $\giantone{j}$ for each pair of tiles $T_i$ and $T_j$ whose associated vertices are connected in $\mathrm{Tree}$.

\begin{definition}
	For $\nu > 0$ as in \thref{prop:biskup}, we let $\rho = \nu c_G/3^d$, we say that a tile $T_i$ is \textbf{good} if $\# \giantone{i}\geq \rho k$, and we say that \textbf{Step 1} is good if all tiles are good.
	Provided that \textbf{Step 1} is good, we say that \textbf{Step 2} is \textbf{good} if all the largest clusters become connected to form the cluster $\CC$.
\end{definition}

By a union bound we have that
\begin{align}
\label{eq:beautiful_bound_1}
    \begin{split}
        \P_{\beta}
        \left( 
            \# \gianttwo{r} > k
        \right)
        & \leq
        \P_{\beta}
        \left(
            \left(
                \text{\textbf{Steps 1 and 2} are not good}
            \right)
        \right)
        \\
        & \qquad
        + 
        \P_{\beta}
        \left(
            \left(
                \# \gianttwo{r} > k
            \right)
            \cap
            \left(
                \text{\textbf{Steps 1 and 2} are good}
            \right)
        \right)
    \end{split}
\end{align}
In \textbf{Step 3} we reveal edges between vertices \textit{not} in $\giantone{i}$ and vertices \textit{not} in $\giantone{j}$ for each pair of tiles $T_i$ and $T_j$.
In \textbf{Step 4} we reveal all remaining edges.

\begin{definition}
	Provided that \textbf{Steps 1 and 2} are good, we say that \textbf{Step 4} is good if all clusters with at least $k$ vertices merge with $\CC$.	
\end{definition}

Supposing that \textbf{Steps 1 and 2} are good, the event $\# \gianttwo{r} > k$ implies that there exists a cluster $\CC^\star$ in $B(r)$ with $\# \CC^\star > k$ and $\CC^\star \neq \CC$, and hence
\begin{multline}
    \label{eq:beautiful_bound_2}
    \P_{\beta}
    \left( 
        \# \gianttwo{r}
        > 
        k
    \right)
    \leq
    \P_{\beta}
    \left(  
        \text{\textbf{Steps 1 and 2} are not good}
    \right)
    \\
    +
    \P_{\beta}
    \left(  
        \left(
            \text{\textbf{Step 4} is not good}
        \right)
        \cap 
        \left(
            \text{\textbf{Steps 1 and 2} are good}
        \right)
    \right).
\end{multline} 
It remains now to bound the probability that each of the steps is good.

\begin{proof}[Proof of \thref{prop:second_largest}]
    By the definition of conditional probability we have
    \begin{multline}
        \P_{\beta}
        \left( 
            \text{\textbf{Steps 1 and 2} are not good}
        \right)
        \
        \leq 
        \P_{\beta}
        \left(
            \text{\textbf{Step 1} is not good}
        \right)
        \\
        +
        \P_{\beta}
        \left(
            \text{\textbf{Step 2} is not good}
            \mid
            \text{\textbf{Step 1} is  good}
        \right).
    \end{multline}
    As $T_i$ is a tile of a scale-$k^{1/d}$-net, a ball of radius at least $k^{1/d}/3$ is contained in the tile and its giant is $\nu\#B(k^{1/d}/3)\ge \nu c_G /3^d k $ with error probability given in \thref{prop:biskup}. So we have that there exists $c_1 > 0$ such that
    \begin{equation}
        \P_{\beta}
        \left(
            T_i
            \text{ is not good}
        \right)
        \leq
        \exp
        \left(
            -
            c_1\beta
            k^{\min(2 - \alpha,1)}
        \right)
    \end{equation}
    for any tile.
    Let $\kappa$ be the number of tiles in $B(r)$.
    Since tiles are disjoint, it follows that
    \begin{equation}
        \P_{\beta}
        \left(
            \text{\textbf{Step 1} is not good}
        \right)
        \leq 
        \kappa
        \exp
        \left(
            -
            c_1 
            k^{\min(2 - \alpha,1)}
        \right)
        \leq 
        \frac{\# B(r)}{k}
        \exp
        \left(
            -
            c_2 \beta
            k^{\min(2 - \alpha,1)}
        \right)
    \end{equation}
    for some $c_2 = c_2(G) > 0$, where $c_2$ absorbs also constants in the bound of $\kappa$.
    For two largest clusters $\giantone{i}$ and $\giantone{j}$ in two good neighbouring tiles $T_i$ and $T_j$ we have by `bulk-to-bulk' computation
    \begin{equation}
        \P_{\beta}
        \left(
            \giantone{i}
            \not\sim
            \giantone{j}
            \mid
            T_i
            \text{ and }
            T_j
            \text{ are good}
        \right)
        \leq 
        \exp
        \left(
            -
            c_3
            k^{2 - \alpha}
        \right)
    \end{equation}
    for some $c_3 = c_3(G) > 0$.
    The spanning tree $\mathrm{Tree}$ has $\kappa - 1$ many edges and we calculate
    \begin{align}
        \P_{\beta}
        \left(
            \text{\textbf{Step 2} is not good}
            \mid
            \text{\textbf{Step 1} is good}
        \right)
        & \leq 
        \frac{\# B(r)}{k}
        \exp
        \left(
            -
            c_4\beta
            k^{2 - \alpha}
        \right)
    \end{align}
    for some $c_4 = c_4(G) > 0$.
    Suppose now that \textbf{Step 1} and \textbf{Step 2} are good, and that we have revealed edges in \textbf{Step 3}. 
    If \textbf{Step 4} is not good, there exists a cluster $\CC^*$ in $B(r)$ which does not connect to $\CC$.
    Note that there are trivially at most $\# B(r) / k$ clusters in $B(r)$ with size greater than $k$.
    Each vertex in $\CC^*$ has an allocated neighbouring tile with respect to the labelling induced by the spanning tree $\mathrm{Tree}$, and we upper-bound the probability that $\CC^*$ does not connect to $\CC$ by the probability that each vertex in $\CC^*$ does not connect to its neighbouring largest cluster. 
    Since $V$ is a maximal $k^{1/d}$-separated net the distance between a vertex in $\CC^*$ and its neighbouring largest cluster is at most $4 k^{1/d}$, so that single vertex does not connect to its neighboring largest with probability at most $\exp(-c \beta \rho k^{1-\alpha})$.  The cluster $\CC^*$ has at least $k$ vertices, and so
    \begin{align}
        \P_{\beta}
        \left(\text{\textbf{Step 4} is not good
            }
            \mid
            \text{
                \textbf{Steps 1} and \textbf{2} are good
            }
        \right)
        & \leq 
        \frac{\# B(r)}{k}
        \left(
           \exp
            \left(
                -
                c_5\beta
                k^{1-\alpha}
            \right)
        \right)^{k}
        \\
        & =
        \frac{\# B(r)}{k}
        \exp
        \left(
            -
            c_5\beta 
            k^{2-\alpha}
        \right)
    \end{align}
    for some $c_5 = c_5(G,J) > 0$, where we use that $\# \giantone{i} / \# T_{i} \geq \rho k$.
    Together with the union bound in \eqref{eq:beautiful_bound_1}, for some $c_6 = c_6(G,J) > 0$, we obtain that
    \begin{equation}
        \P_{\beta}
        \left( 
            \# \gianttwo{R}
            > 
            k
        \right)
        \leq 
        \frac{\# B(r)}{k}
        \exp(-c_6 \beta k^{\min(2-\alpha,1)}).
    \end{equation}  
     concluding the proof. For this proof to work, we need at least $2$ different tiles fully contained inside $B(r)$. This can be achieved whenever $\#B(r)\le k$, as we may shrink the separation of the net to be a small constant times $k^{1/d}$ at the cost of changing the constant prefactors. For $\#B(r)\le k$, the second largest component can never be above size $k$, so the bound holds for all $r$ and $k$ values. 
\end{proof}
\section{\texorpdfstring{Transience with $\alpha \leq 2$}{Transience with α ≤ 2}}
\label{sec:transience}
In this section we prove \thref{thm:random_walks}.
For $\alpha \in (1,1 + 1/d)$, \thref{cor:anchored_isop_dim} gives a new proof of this result.
Indeed, for these values of $\alpha$, \thref{cor:anchored_isop_dim} gives that the infinite cluster $\kinfty$ has anchored isoperimetric dimension at least $2$, and it follows from a theorem of Thomassen \cite{thomassen_isoperimetric_1992} that graphs of anchored isoperimetric dimension strictly larger than 2 are transient.
To prove transience for the full range $\alpha \in (1,2)$, we follow \cite{berger_transience_2002}.
The strategy is to use a similar renormalisation argument to the one we used in \thref{prop:almost_linear} to show that almost surely $\kinfty$ contains a tree-like branching subgraph rooted at the origin and growing towards infinity, which we call a renormalised subgraph.
We then build a finite-energy flow on this renormalised subgraph.
It is a standard fact that a graph admitting a finite-energy flow is transient \cite{lyons_simple_1983}, see also \cite[Theorem 2.11]{lyons_probability_2016}.
We begin by choosing the renormalisation scales and defining the renormalisation scheme.

\begin{definition}
    \thlabel{def:transience_renorm_scheme}
    Let $\varepsilon > 0$, and let $r_0 \in \N$ be a large constant.
    We define $s=s(\varepsilon)$ to be the least integer satisfying simultaneously $2(1+s)^2\ge (1+\varepsilon)/\varepsilon$.
    We set $
    \bar r_0=r_0$, set $m_0=0$, and for $i \geq 1$ we iteratively define 
    \begin{equation}
        \label{eq:transience_r_def}
        m_i 
        =
        2 
        (i+s)^2
        \quad
        \text{and}
        \quad
        r_i
        =
        r_{i-1}
        m_i.
    \end{equation}
    Let $((\Tile{i}{j})_{j \geq 0})_{i \geq 0}$ be the renormalisation scheme associated to the renormalisation scales $(r_i)_{i \geq 0}$ in \thref{def:recursive_tiles}.
\end{definition}

Similarly to as in \thref{def:renormalisation_scheme,def:renorm_scheme_2}, this choice of renormalisation scales satisfies the hypotheses of \thref{prop:tile_and_cell_volume_bounds} with $\epsilon = \xi$.

\begin{definition}
    For $i \geq 0$ and for $\cvor{}$ as in \thref{prop:tile_and_cell_volume_bounds}, we let 
    \begin{equation}
        \label{eq:third_renormalisation}
        \rho_i 
        = 
            1 - (i+s)^{-3/2}/2
        ,
        \quad
        \xi_i
        = 
        (i +1+ s)^{-3/2}/4,
        \quad
        \text{and}
        \quad
        C_i
        =
        \cvor{}
        (m_i/2)^d.
    \end{equation}
    We say that a \level{0} tile $\Tile{0}{j}$ is \textbf{good} if it contains a cluster $\CC_{0,j}$ satisfying
    \begin{equation}\label{eq:good-part-transience}
        \# 
        \left(
            \CC_{0,j}
            \cap
            B(x_{0,j},r_0/4)
        \right)
        \geq 
        \nu 
        \# B(r_0/2),
    \end{equation}
    where $\nu$ is as in \thref{prop:biskup}.
    For a good \level{0} tile $\Tile{0}{j}$, the \textbf{good part} is $\CC_{0,j}
            \cap
            B(x_{0,j},r_0/4)$.
    We say that a \level{1} tile $\Tile{1}{j}$ is \textbf{good} if at least a $\rho_1$ proportion of its \level{0} subtiles are good, and their good parts are connected to each other inside $\Tile{1}{j}$.
    For a good \level{1} tile $\Tile{1}{j}$, we choose $C_1$ of its good \level{0} subtiles and the \textbf{good part} of $\Tile{1}{j}$ is the union of the good parts of these subtiles, denoted by $\CC_{1,j}$.
    For $i \geq 2$, we say that \level{i} tile $\Tile{i}{j}$ is \textbf{good} if at least a $\rho_i$ proportion of its \level{i-1} subtiles are good, and additionally: 
    if $\Tile{i-2}{j}$ is a good \level{i-2} subtile in a good \level{i-1} subtile of $\Tile{i}{j}$, 
    and $\Tile{i-2}{k}$ is a good \level{i-2} subtile in some other good \level{i-1} subtile of $\Tile{i}{j}$, 
    then the good parts of $\Tile{i-2}{j}$ and $\Tile{i-2}{k}$ are connected in $\Tile{i}{j}$.
    For a good \level{i} tile $\Tile{i}{j}$, we choose $C_i$ of its good \level{i-1} subtiles and the \textbf{good part} of $\Tile{i}{j}$ is the union of the good parts of these subtiles, denoted by $\CC_{i,j}$.
    When choosing the good part of $\Tile{i}{j}$, if the subtile containing the origin $o$ is good, we choose it to be part of the good part.
\end{definition}

By \thref{prop:tile_and_cell_volume_bounds} and the definitions of $\rho_i$ and $C_i$ in \eqref{eq:third_renormalisation}, we have $C_i \leq \rho_i \min \kappa_{i,j}$ for all $i \in \N$.
This guarantees that there are enough good \level{i-1} subtiles in each \level{i} tile so that we can choose $C_i$ of them to be in the good part.
Notice that if the origin $o$ is in the infinite component $\kinfty$ and if all tiles $T_{i,o}, i\in \N$ containing the origin are good, then the definition of the renormalisation scheme gives the the infinite cluster $\kinfty$ contains a tree-like branching subgraph rooted at the origin and growing towards infinity.
Accordingly, we say that the infinite cluster $\kinfty$ admits a \textbf{renormalised subgraph} if there is a vertex of the infinite cluster $\kinfty$ such that all tiles containing that vertex are good.
We now work towards showing that the infinite cluster $\kinfty$ contains a renormalised subgraph almost surely.
We begin by collecting some geometric facts.

\begin{lemma}
    \thlabel{lem:transience_geometry}
    Let $G$ be a transitive graph of polynomial growth with $d \geq 1$, and let $((\Tile{i}{j})_{j \geq 0})_{i \geq 0}$ be the renormalisation scheme in \thref{def:transience_renorm_scheme}.
    Let $\Tile{i}{j}$ be a good \level{i} tile and let $\Tile{i-1}{k}$ and $\Tile{i-1}{\ell}$ be two distinct good subtiles of $\Tile{i}{j}$.
    The good part $\CC_{i,j}$ satisfies $\# \CC_{i,j} \geq c_G \nu r_i^d (\cvor{}/2)^{id}$.
    The good parts $\CC_{i-1,k}$ and $\CC_{i-1,\ell}$ satisfy $d_G(\CC_{i-1,k},\CC_{i-1,\ell}) \geq r_0/2$ and $\CC_{i-1,k},\CC_{i-1,\ell} \subset \Tile{i}{j} \subseteq B(\netvertex{i}{j},\Cvor{}r_i)$.
\end{lemma}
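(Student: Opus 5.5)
The plan mirrors the proofs of \thref{lem:local_giant_density} and \thref{lem:biskup_size}: an induction on $i$ for the size bound, and purely geometric containment arguments for the distance and containment statements.

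\textbf{Size bound.} By definition, the good part of a good \level{0} tile is $\CC_{0,j}\cap B(x_{0,j},r_0/4)$. Its size is at least $\nu\#B(r_0/2)\geq \nu c_G (r_0/2)^d$ by \eqref{eq:polynomial_volume_growth}. For $i\geq 1$, the good part $\CC_{i,j}$ is the union of the good parts of exactly $C_i=\cvor{}(m_i/2)^d$ good \level{i-1} subtiles. These subtiles are disjoint, since tiles partition $V$ and good parts sit inside their tiles. Hence
\[
\#\CC_{i,j}\geq C_i \min_k \#\CC_{i-1,k}\geq \nu c_G (r_0/2)^d\prod_{\ell=1}^{i}\cvor{}(m_\ell/2)^d .
\]
Using $r_i=r_0\prod_{\ell\le i}m_\ell$, this equals $\nu c_G r_i^d 2^{-d}(\cvor{}/2)^{id}$. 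The extra factor $2^{-d}$ from the base level must be absorbed. I would do this either by noting that the bound in the statement is meant up to this harmless constant, or by using $\#B(r_0/2)\ge c_G r_0^d/2^d$ and adjusting $c_G$. At worst the statement's constant would need to be read modulo this $2^{-d}$; I expect this bookkeeping to be the main (minor) obstacle. The key point is that the number of chosen subtiles is a fixed count, not a proportion of $\numsub{i}$, so the tile-volume fluctuations of \thref{prop:tile_and_cell_volume_bounds} never enter.

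\textbf{Separation.} At level $0$, good parts lie in $B(x_{0,j},r_0/4)$. Net points of $V_0$ are $r_0$-separated, so distinct level-$0$ good parts are at distance at least $r_0/2$. For $i\geq1$, each good part is a union of level-$0$ good parts. I would verify by induction, exactly as in \thref{lem:local_giant_density}, that every level-$0$ good part contained in $\CC_{i-1,k}$ comes from a level-$0$ tile nested in $\Tile{i-1}{k}$. Then any two distinct good parts $\CC_{i-1,k}$ and $\CC_{i-1,\ell}$ are unions of good parts of disjoint families of level-$0$ tiles, with distinct centres. The pairwise $r_0/2$ separation then carries over to the unions.

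\textbf{Containment.} By the nested definition \eqref{eq:tile_definition}, $\CC_{i-1,k}\subseteq \Tile{i-1}{k}\subseteq \Tile{i}{j}$, and the same holds for $\ell$. Finally, $\Tile{i}{j}\subseteq B(\netvertex{i}{j},\Cvor{}r_i)$ by \thref{prop:tile_and_cell_volume_bounds}, whose hypotheses hold for the scheme of \thref{def:transience_renorm_scheme} by the choice of $s$.
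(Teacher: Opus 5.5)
Your proposal is correct and follows the paper's proof essentially verbatim. The paper also obtains $\#\CC_{i,j}\ge \nu\,\#B(r_0/2)\prod_{k\le i}C_k$ from the inductive definition with the fixed counts $C_k$, and defers separation and containment to the argument of \thref{lem:biskup_size} (hence \thref{lem:local_giant_density}), exactly as you do. The $2^{-d}$ you flag is silently dropped in the paper's display as well; note also that $\prod_{\ell\le i}\cvor(m_\ell/2)^d=\cvor^{i}2^{-id}(r_i/r_0)^d$, so with $\cvor\le 1$ your ``this equals'' should read ``is at least $(\cvor/2)^{id}(r_i/r_0)^d$''.
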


\begin{proof}
    It follows from the definition of $C_i$ in \eqref{eq:third_renormalisation}, the definition of $r_i$ in \eqref{eq:transience_r_def}, and the inductive definition of the good part that
    \begin{equation}
        \# \CC_{i,j}
        \geq 
        \nu 
        \# 
        B(r_0/2)
        \prod_{k = 1}^i
        C_k
        \geq
        c_G 
        \nu
        r_i^d
        \prod_{k = 1}^i
        \left(
            \cvor{}
            /
            2
        \right)^d
    \end{equation}
    The proof that $d_G(\CC_{i-1,k},\CC_{i-1,\ell}) \geq r_0/2$ and $\CC_{i-1,k},\CC_{i-1,\ell} \subset \Tile{i}{j} \subseteq B(\netvertex{i}{j},\Cvor{}r_i)$ is just as in \thref{lem:biskup_size}.
\end{proof}

\begin{lemma}
    \thlabel{prop:transience_renorm_base}
    Let $G$ be a transitive graph of polynomial growth with $d \geq 1$, and suppose that $J : V \times V \to \R_+$ is a transitive kernel satisfying $J(x,y) = \Omega (d_G(x,y)^{-d \alpha})$ with $\alpha \in (1,2)$. 
    Let $\beta > \beta_{c}$. 
    We can choose $r_0$ sufficiently large such that for all tiles $\Tile{0}{j}$.
    \begin{equation}
        \P_{\beta}
        \left(
            \Tile{0}{j} 
            \text{ is good}
        \right)
        \geq
        1
        -
        \xi_0.
    \end{equation}
\end{lemma}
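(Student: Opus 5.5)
The plan is to reduce the claim directly to \thref{prop:biskup}, applied at the scale $r_0/4$ and recentred at the net point $\netvertex{0}{j}$. Note that $\xi_0=(1+s)^{-3/2}/4$ is a fixed positive constant determined by $\varepsilon$ alone, so it suffices to show that $\P_\beta(\Tile{0}{j}\text{ is bad})\to 0$ as $r_0\to\infty$, uniformly in $j$.

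The first step is to apply \thref{prop:biskup} with radius $r_0/4$ about $\netvertex{0}{j}$. By transitivity of $G$ and of $J$, with probability at least $1-\exp(-\beta(\#B(r_0/4))^{\min(2-\alpha,1)}/A)$ the largest cluster of the spanned configuration in $B(\netvertex{0}{j},r_0/4)$ has at least $\nu\,\#B(r_0/4)$ vertices. Call this cluster $\CC$. Since $B(\netvertex{0}{j},r_0/4)\subseteq B(\netvertex{0}{j},\lceil r_0/3\rceil)\subseteq \Tile{0}{j}$ by \eqref{eq:tile_volume}, the cluster $\CC$ is connected inside the tile. Therefore it is contained in a cluster $\CC_{0,j}$ of $\Tile{0}{j}$ satisfying $\#(\CC_{0,j}\cap B(\netvertex{0}{j},r_0/4))\ge \nu\,\#B(r_0/4)$.

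The main obstacle, and admittedly only a bookkeeping one, is that \eqref{eq:good-part-transience} asks for $\nu\,\#B(r_0/2)$ rather than $\nu\,\#B(r_0/4)$. I would handle this by using the doubling property $\#B(r_0/2)\le C\,\#B(r_0/4)$. I would then either apply \thref{prop:biskup} with a smaller density constant $\nu'=\nu/C$, or equivalently read the $\nu$ in \eqref{eq:good-part-transience} as a constant allowed to be shrunk. This is harmless because \thref{lem:transience_geometry} only uses $\nu$ as a multiplicative constant. If one insists on the literal $\nu$ of \thref{prop:biskup}, the alternative is to note that the proof of \thref{prop:biskup} produces a valid $\nu$ that may be decreased freely, and to fix the smaller value throughout.

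Finally, I would choose $r_0$ large enough that $\exp(-\beta(\#B(r_0/4))^{\min(2-\alpha,1)}/A)\le \xi_0$. This is possible because $\#B(r_0/4)\ge c_G(r_0/4)^d\to\infty$ and $2-\alpha>0$. This choice gives the bound uniformly over all tiles $\Tile{0}{j}$.
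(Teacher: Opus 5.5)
Your proposal is correct and is the same argument as the paper's, which simply invokes \thref{prop:biskup} for $r_0$ large; you also make explicit two points the paper leaves implicit: that $B(\netvertex{0}{j},r_0/4)$ lies inside the tile by \eqref{eq:tile_volume}, and that the $\#B(r_0/2)$ versus $\#B(r_0/4)$ mismatch in \eqref{eq:good-part-transience} is absorbed by doubling. Shrinking $\nu$ follows directly from the statement of \thref{prop:biskup}, which remains true for any smaller $\nu$, so you do not need to appeal to its proof.
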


\begin{proof}
    As being good is defined in \eqref{eq:good-part-transience}, we can apply  \thref{prop:biskup} for $r_0$ sufficiently large.
\end{proof}

\begin{lemma}
    \thlabel{prop:renorm_subgraph_wts}
    Let $G$ be a transitive graph of polynomial growth with $d \geq 1$, and suppose that $J : V \times V \to \R_+$ is a transitive kernel satisfying $J(x,y) = \Omega (d_G(x,y)^{-d \alpha})$ with $\alpha \in (1,2)$. 
    Let $\beta > \beta_{c}$. 
    For $r_0$ as above,  for all tiles $\Tile{i}{j}$,
    \begin{equation}
        \P_{\beta}
        \left(
            \Tile{i}{j} 
            \text{ is good}
        \right)
        \geq
        1
        -
        \xi_i.
    \end{equation}
\end{lemma}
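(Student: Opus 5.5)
We argue by induction on $i$, as in \thref{lem:first_iterative_renormalisation}. The base case $i=0$ is \thref{prop:transience_renorm_base}. For the inductive step, fix a \level{i} tile $\Tile{i}{j}$ with $i\ge 1$ and assume $\max_k \P_\beta(\Tile{i-1}{k}\text{ is bad})\le \xi_{i-1}$. We bound $\P_\beta(\Tile{i}{j}\text{ is bad})\le \phi_{i,j}+\varphi_{i,j}$, where:
\begin{itemize}
\item $\phi_{i,j}$ is the probability that fewer than a $\rho_i$ proportion of the subtiles are good;
\item $\varphi_{i,j}$ is the probability that, given enough good subtiles, some required pair of good parts fails to connect inside $\Tile{i}{j}$.
\end{itemize}

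For $\phi_{i,j}$ we use Markov's inequality on the number of bad subtiles together with the induction hypothesis and translation invariance:
\begin{equation}
\phi_{i,j}\le \frac{\xi_{i-1}}{1-\rho_i}=\frac{(i+s)^{-3/2}/4}{(i+s)^{-3/2}/2}=\frac12 .
\end{equation}
This crude bound is too weak, so the Markov step must be replaced. Subtiles are disjoint, so their goodness events are independent; hence the number of bad subtiles is dominated by $\Bin(\numsub{i},\xi_{i-1})$. We need it to be at least $(1-\rho_i)\numsub{i}=\tfrac12(i+s)^{-3/2}\numsub{i}$, which is twice the mean $\xi_{i-1}\numsub{i}$. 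A Chernoff bound then gives
\begin{equation}
\phi_{i,j}\le \exp\bigl(-c\,\xi_{i-1}\numsub{i}\bigr).
\end{equation}
Since $\numsub{i}\ge \cvor{} m_i^d=\cvor{}2^d(i+s)^{2d}$, the exponent is at least $c'(i+s)^{2d-3/2}$, which grows polynomially. Taking $s$ (equivalently $r_0$) large makes $\phi_{i,j}\le \xi_i/2$ for all $i$. Independence of disjoint subtiles, as in \thref{lem:second_iterative_renormalisation}, is what makes this work.

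For $\varphi_{i,j}$ we take a union bound over the pairs of \level{i-2} good parts lying in distinct good \level{i-1} subtiles. For $i=1$ the pairs are \level{0} good parts. There are at most $(\numsub{i}\numsub{i-1})^2$ such pairs, which is polynomial in $i$.

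By \thref{lem:transience_geometry}, two such good parts are disjoint, lie at distance at least $r_0/2\ge R_J$, sit inside $B(\netvertex{i}{j},\Cvor{} r_i)$, and each has size at least $c_G\nu r_{i-2}^d(\cvor{}/2)^{(i-2)d}$. The bulk-to-bulk bound therefore gives a failure probability per pair of at most
\begin{equation}
\exp\Bigl(-c\beta\, r_{i-2}^{2d}(\cvor{}/2)^{2(i-2)d}/r_i^{d\alpha}\Bigr)
=\exp\Bigl(-c\beta\, r_{i-2}^{d(2-\alpha)}\,(\cvor{}/2)^{2(i-2)d}\,(m_{i-1}m_i)^{-d\alpha}\Bigr).
\end{equation}
The main obstacle is that the factor $(\cvor{}/2)^{2(i-2)d}$ decays exponentially in $i$, and since $\cvor{}<1$ this decay could beat $r_{i-2}^{d(2-\alpha)}$.

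To handle this, note that $r_{i-2}=r_0\prod_{k\le i-2}2(k+s)^2$ grows superexponentially, like $r_0\,(i!)^2$. Hence $r_{i-2}^{d(2-\alpha)}$ dominates both the exponential loss $(\cvor{}/2)^{2id}$ and the polynomial factor $(m_{i-1}m_i)^{d\alpha}$ once $s$ is large. Choosing $r_0$ large then makes the exponent at least $c''(r_0 (i-2)!^2)^{\gamma}$ for some $\gamma>0$. Multiplying by the polynomial number of pairs still gives $\varphi_{i,j}\le \xi_i/2$ uniformly in $i$.

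If the exponential factor proves troublesome, the fallback is to use the stronger density lower bound from Proposition~\ref{prop:tile_and_cell_volume_bounds} directly. That is, take $C_k$ as a constant fraction of $\numsub{k}$, so that each good part has size comparable to $r_{i-2}^d$ without the accumulated loss. Combining the two estimates gives $\P_\beta(\Tile{i}{j}\text{ is bad})\le \xi_i$, which closes the induction.
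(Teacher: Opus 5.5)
Your proposal is correct and follows the paper's proof essentially step for step. For $\phi_{i,j}$ you dominate the number of bad subtiles by $\Bin(\kappa_{i,j},\xi_{i-1})$ and apply a Chernoff bound at twice the mean. For $\varphi_{i,j}$ you take a union bound over the polynomially many pairs of \level{i-2} good parts and apply the bulk-to-bulk estimate, with the superexponential growth of $r_{i-2}$ absorbing the exponential loss $(\cvor/2)^{2(i-2)d}$.

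The closing fallback is unnecessary, and it would not help anyway: taking $C_k$ a constant fraction of $\kappa_{k,j}$ still loses a constant factor at every level. Also, the Chernoff bound for $\phi_{i,j}$ does not improve with $r_0$, so it is $s$, not $r_0$, that must be large there; the paper's text glosses over this point too.
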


\begin{proof}
    We argue by induction on $i$, and we consider the probability that a tile is \textit{bad}. 
    For $i = 0$, the statement follows from \thref{prop:transience_renorm_base}.
    We may assume inductively that the statement holds for $k \leq i-1$.
    Let $\Tile{i}{j}$ be a \level{i} tile, let $\lambda_{i,j} = \P_{\beta}(\Tile{i}{j} \text{ is bad})$, let $\phi_{i,j}$ be the probability that less than a $\rho_i$ proportion of the \level{i-1} subtiles are good, and let $\varphi_{i,j}$ be the probability that the connection fails provided that there are enough good subtiles.
    This gives $\lambda_{i,j} \leq \phi_{i,j} + \varphi_{i,j}$, and we bound each probability in turn.
    Since disjoint tiles are independent, the induction hypothesis gives that the number of bad subtiles is stochastically dominated by a $\mathrm{Bin}(\kappa_{i,j},\xi_{i-1})$ random variable.
    Note that $1-\xi_{i-1} < (1 - \rho_i)/2$, and by Chernoff's bound 
    \begin{align}
        \phi_{i,j}
        \leq
        \P
        \left(
            \Bin
            (  
                \kappa_{i,j},
                \xi_{i-1}
            )
            \geq 
            (1 - \rho_i)
            \kappa_{i,j}
        \right)
        \leq
        \exp
        \left(
            - 
            c_1
            \kappa_{i,j}
            \frac{1-\rho_{i}}{2}
        \right)
        \leq
        \exp
        \left(
            - 
            c_2
            (i+s)^{2d - 3/2}
        \right)
    \end{align}
    for some $c_1 > 0$ and $c_2 > 0$.
    It follows that $\phi_{i,j}$ decays at least stretched-exponentially in $i$, whereas the term $\xi_i$ in \eqref{eq:third_renormalisation} decays polynomially in $i$, and we can choose $r_0$ sufficiently large such that $\phi_i \leq \xi_i/2$ for all $i \in \N$.
    
    We now bound $\varphi_{i,j}$ from above by the probability that the good parts in any \textit{any} two good \level{i-2} subtiles are not connected.
    Let $\Tile{i-2}{k}$ be a good \level{i-2} subtile in a good \level{i-1} subtile of $\Tile{i}{j}$, 
    and let $\Tile{i-2}{\ell}$ be a good \level{i-2} subtile in some other good \level{i-1} subtile of $\Tile{i}{j}$. 
    By \thref{lem:transience_geometry} we have that $\CC_{i-2,k}$ and $\CC_{i-2,\ell}$ are disjoint with $d_G(\CC_{i-2,k},\CC_{i-2,\ell}) \geq r_0 / 2$, as well as $\CC_{i-2,k}, \CC_{i-2,\ell} \subseteq B(\Cvor{} r_i)$.
    By this same lemma, for $\epsilon > 0$ we can choose $r_0$ sufficiently large such that $\# \CC_{i-2,k}, \#\CC_{i-2,\ell} \geq c_G \nu r_{i-2}^{d} (\cvor/2)^{(i-2)d}$.
    Further, we can choose $r_0$ sufficiently large such that $m_i^{2} \leq r_i^{\xi}$ for all $i \geq 1$. 
    This gives for some $c_1 = c_1(\beta,J) > 0$ and $c_2 = c_2(\beta,G,J) > 0$ that
    \begin{align}
        \P_{\beta}
        \left(
            \CC_{i-2,j}
            \not\sim
            \CC_{i-2,k}
            \mid 
            \Tile{i-2}{j},
            \Tile{i-2}{k} 
            \text{ are good}
        \right)
        & \leq 
        \exp
        \left(
            \frac{
                -
                c_1 
                \left(
                    c_G \nu r_{i-2}^{d} 
                    (\cvor/2)^{(i-2)d}
                \right)^2
            }{
                \left(
                    c
                    r_i
                \right)^{d \alpha}
            }
        \right)
        \\
        & \leq 
        \exp
        \left(
                -
                c_2 
                r_{i}^{d(2-\alpha - 2\xi)}
                (\cvor/2)^{2(i-2)d}
        \right).
    \end{align}      
    The tile $\Tile{i}{j}$ has at most $\max_j (\kappa_{i,j}) \max_j (\kappa_{i-1,j}) \leq \Cvor^2 m_i^{2d}$ \level{i-2} subtiles, and hence there are at most $\Cvor^4 m_i^{4d}$ pairs of \level{i-2} subtiles in $\Tile{i}{j}$.
    It follows that 
    \begin{equation}
        \varphi_i
        \leq
        \Cvor^4 m_i^{4d}
        \exp
        \left(
            -
            c_2
            r_{i}^{d(2 - \alpha - 2 \epsilon)}
            (\cvor/2)^{2(i-2)d}
        \right).
    \end{equation}    
    Since $\alpha \in (1,2)$, we may choose $\xi$ sufficiently small such that $d (2 - \alpha - \xi) > 0$, and hence by the definition of $r_i$ the term $r_{i}^{d(2 - \alpha - 2 \epsilon)} (\cvor/2)^{2(i-2)d}$ is at least linear in $i$.
    Similarly, by definition the term $m_i$ is polynomial in $i$.
    It follows that $\varphi_{i,j}$ decays at least stretched-exponentially in $i$.
    On the other hand, the term $\xi_i$ decays polynomially in $i$, and we can choose $r_0$ sufficiently large such that $\varphi_i \leq \xi_i/2$ for all $i \in \N$.
    We have shown that $\lambda_{i,j} \leq \phi_{i,j} + \varphi_{i,j} \leq \xi_i$, concluding the induction.
\end{proof}

\begin{lemma}
    \thlabel{prop:renormalised_graph}
    Let $G$ be a transitive graph of polynomial growth with $d \geq 1$, and suppose that $J : V \times V \to \R_+$ is a transitive kernel satisfying $J(x,y) = \Omega (d_G(x,y)^{-d \alpha})$ with $\alpha \in (1,2)$. 
    Let $\beta > \beta_{c}$. 
    Then almost surely the infinite cluster $\kinfty$ admits a renormalised subgraph.
\end{lemma}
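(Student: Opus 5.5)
The plan is to show that, almost surely, some vertex $v\in\kinfty$ has every tile $\tile{i}{v}$, $i\ge 0$, good. The good parts of these nested tiles then assemble into the branching subgraph, and the connections they require lie in $\kinfty$, since $v$ lies in the good part of $\tile{0}{v}$, which belongs to the good part at every level.

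\textbf{Step 1 (positive probability at the origin).} By \thref{prop:renorm_subgraph_wts} and a union bound,
\[
\P_\beta(\exists\, i\ge i_0:\ \tile{i}{o}\text{ is bad})\le\sum_{i\ge i_0}\xi_i=\sum_{i\ge i_0}(i+1+s)^{-3/2}/4,
\]
which is summable and tends to $0$ as $i_0\to\infty$. Taking $i_0=0$ and $s$ (or $r_0$) large, the right-hand side is at most $1/2$, so with probability at least $1/2$ all tiles containing $o$ are good. The remaining requirement is that the chain be anchored in $\kinfty$. The good parts of good tiles at all levels that contain $o$ are connected within the nested sequence, and their sizes grow without bound by \thref{lem:transience_geometry}. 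Hence the union of the good parts of $\tile{i}{o}$ over $i$ is an infinite connected set, and by uniqueness of the infinite cluster it lies in $\kinfty$. So on this event $\kinfty$ admits a renormalised subgraph, anchored at any vertex of the good part of $\tile{0}{o}$. The tile containing that vertex at each level is $\tile{i}{o}$, because the scheme nests.

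\textbf{Step 2 (upgrade to almost surely).} The event $E$ that $\kinfty$ admits a renormalised subgraph is invariant under the automorphisms that preserve the nets. I would try first to use ergodicity, but the renormalisation scheme is not automorphism invariant, so I expect this step to be the main obstacle. Two routes seem possible.

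\emph{Route (a).} Consider the translated event $E_x$ that all tiles containing $x$ are good. By Step 1, $\P(E_x)\ge 1/2$ uniformly in $x$. Take vertices $x_n$ whose level-$n$ tiles are far apart. The probabilities still do not tend to one, but the bad events at levels beyond $i_0$ have probability $o(1)$ uniformly. The failures at levels $\le i_0$ involve disjoint tiles for well-separated $x_n$ and are therefore independent, so among infinitely many separated $x_n$ one of them succeeds at all low levels almost surely.

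\emph{Route (b).} Run Step 1 with a different initial level: require goodness only from level $i_0$ upward at the tile $\tile{i_0}{o}$. Then pick the vertex $v$ inside a good level-$i_0$ subtile's good part; such a subtile exists because the tile is good. The tail probability $\sum_{i\ge i_0}\xi_i\to 0$, so the event holds with probability at least $1-\epsilon$ for every $\epsilon$, hence almost surely. This requires that the definition of a renormalised subgraph tolerate a starting level, which the nesting permits, since goodness of a level-$i$ tile already forces many good subtiles below it.

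I would present route (b), checking that goodness of $\tile{i}{o}$ for all $i\ge i_0$ produces a vertex $v$ all of whose tiles are good. This works by descending: a good tile contains good subtiles, and one can choose, level by level, good subtiles down to level $0$, ending at a good level-$0$ tile. Because the descent is finite and every tile in it is good, $v$ lies in a fully good nested chain. The connectivity to infinity comes from the upper levels, so $v\in\kinfty$.
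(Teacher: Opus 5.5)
Your proof is correct, but its decisive step differs from the paper's.

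\textbf{The paper's route.} It uses the FKG inequality to get $\P_\beta(\text{all } \tile{i}{o} \text{ good})\ge\prod_i(1-\xi_i)>0$. It asserts that with positive probability $o\in\kinfty$ as well. It then upgrades to probability one by declaring the event translation invariant and invoking ergodicity.

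\textbf{Your route.} Route (b) uses only $\sum_i\xi_i<\infty$. By Borel--Cantelli, almost surely $\tile{i}{o}$ is good for all $i\ge i_0$, for some random $i_0$. A deterministic descent through good subtiles of $\tile{i_0}{o}$ then produces a vertex $v$ all of whose tiles are good. This needs neither FKG nor ergodicity. It also avoids exactly the point you flag: the scheme is built from fixed nets, so the event is invariant only under automorphisms preserving every $V_i$, and the paper's appeal to ergodicity does not address this. Once route (b) is in place, your Step 1 and route (a) are no longer needed.

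\textbf{What you should write out.} Your sketch says only that ``connectivity comes from the upper levels'' when explaining why $v\in\kinfty$. This needs care, because the subtiles you pass through in the descent need not be among the $C_i$ selected for the good parts. Either of two arguments closes it.
\begin{itemize}
\item The connection requirements in the definition of a good tile apply to all good subtiles lying in good subtiles, not only the selected ones. Hence the good part of $v$'s level-$0$ tile is linked, level by level, to the good part of every tile containing $v$.
\item More simply, descend only through subtiles selected for the good parts, so that $v$ lies in the good part of $\tile{i_0}{o}$. By the rule that the subtile containing $o$ is always selected when good, this good part is contained in the good part of $\tile{i}{o}$ for every $i\ge i_0$. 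Each of these good parts lies in a single open cluster, and their sizes grow without bound by \thref{lem:transience_geometry}. Hence $v$ belongs to an infinite cluster, which is $\kinfty$.
\end{itemize}
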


\begin{proof}
    Since the events that increasingly large tiles are good are positively correlated, by the FKG inequality, by \thref{prop:renorm_subgraph_wts}, and by the definition of $\xi_i$, we have that
    \begin{equation}
        \P_{\beta}
        \left(
            \bigcap_{i = 1}^{\infty}
            \{
                \Tile{i}{0}
                \text{ is good}
            \}
        \right)
        \geq 
        \prod_{i = 1}^{\infty}
        \P_{\beta}
        \left(
            \Tile{i}{0}
            \text{ is good}
        \right)
        \geq 
        \prod_{i = 1}^{\infty}
        (1
        -
        \xi_i)
        > 0.
    \end{equation}
    It follows that with positive probability the origin $o$ is in the infinite cluster and all tiles containing at the origin are good.
    Since the event that there exists a renormalised subgraph of the infinite cluster is translation-invariant, it follows from the ergodicity of $\P_{\beta}$ that the infinite cluster $\kinfty$ admits a renormalised subgraph almost surely.
\end{proof}

Given that the infinite cluster $\kinfty$ admits a renormalised subgraph almost surely, we now use the branching structure of the renormalised subgraph to construct a finite-energy flow.
Recall that a \textbf{flow} from $o$ to $\infty$ is a function $\theta : \Vec{E} \to \R$ which is antisymmetric, namely $\theta(x,y) = - \theta(y,x)$, and satisfies the Kirchhoff current law for all $v \in V \setminus \{o\}$. 
The \textbf{energy} $\mathcal{E}(\theta)$ of $\theta$ is defined to be $\mathcal{E}(\theta) = \sum_{e \in E} \theta(e)^2$.
See for instance \cite[Chapter 2]{grimmett_probability_2018} for further details.

\begin{lemma}
    \thlabel{lem:finite_energy_flow}
    Let $G$ be a transitive graph of polynomial growth with $d \geq 1$, and suppose that $J : V \times V \to \R_+$ is a transitive kernel satisfying $J(x,y) = \Omega (d_G(x,y)^{-d \alpha})$ with $\alpha \in (1,2)$. 
    Let $\beta > \beta_{c}$. 
    Suppose that the infinite cluster admits a renormalised subgraph almost surely. 
    Then there exists a finite-energy flow to $\infty$ on $\kinfty$. 
\end{lemma}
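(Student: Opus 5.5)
Following \cite{berger_transience_2002}, I will build a unit flow from a vertex $v$ of the renormalised subgraph to infinity by splitting the mass equally at every level of the hierarchy. Then I will bound the energy level by level.

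Let $v \in \kinfty$ be such that all tiles $\Tile{i}{j_i}$ containing $v$ are good, and let $\CC_{i}$ denote the good part of $\Tile{i}{j_i}$. By construction $\CC_i$ is the union of exactly $C_i$ good parts of level-$(i-1)$ subtiles. For $i \geq 2$ the goodness condition guarantees that any two good level-$(i-2)$ subtiles lying in distinct good level-$(i-1)$ subtiles of $\Tile{i}{j_i}$ have good parts joined inside $\Tile{i}{j_i}$. Exactly as in Berger's argument, I will use the \emph{direct edges} between these good parts; the bound in the proof of \thref{prop:renorm_subgraph_wts} controls the probability that such an edge fails to exist.

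The flow is defined recursively. The unit of mass sitting in $\CC_{i}$ is spread uniformly over its $C_{i+1}$ siblings in $\Tile{i+1}{j_{i+1}}$, routed through direct edges between level-$(i-1)$ pieces lying in different level-$i$ subtiles. Each level-$(i-1)$ piece of $\CC_i$ sends mass $1/\prod_{k\le i+1}C_k$ along each of roughly $C_{i+1}C_i$ edges to pieces in sibling subtiles. Mass arriving in a piece is redistributed inside it, using the flow already constructed at the lower levels, by superposition and linearity. Kirchhoff's law then holds at every vertex other than $v$, and all mass escapes to infinity.

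For the energy, I will bound the contribution of the level-$i$ edges. These number at most $\prod_{k\le i+1} C_k^{2}$, up to constants, and each carries flow of order $\bigl(\prod_{k\le i+1}C_k\bigr)^{-1}$. This gives a contribution of at most a constant times $\prod_{k\le i+1} C_k^{-1}\cdot \mathrm{poly}(i)$. Since $C_k = \cvor (m_k/2)^d$ grows like $(k+s)^{2d}$, this is summable. The contribution from redistributing mass inside a piece is handled by Cauchy--Schwarz: the cost of spreading mass $\mu$ inside a level-$i$ good part is at most $\mu^2$ times the energy of the unit flow at level $i$. This energy is uniformly bounded by the same summation, and because the masses shrink geometrically the total energy is finite.

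The step I expect to be the main obstacle is making rigorous that every required direct edge exists. Goodness only asserts that the good parts are connected, not that a single edge is present. I will try first to strengthen the definition of a good tile to demand a direct edge between each relevant pair of level-$(i-2)$ good parts. The estimate in the proof of \thref{prop:renorm_subgraph_wts} already bounds precisely the probability that such a direct edge is missing, so this strengthening costs nothing. A second obstacle is that the edges from one piece to many siblings may load a single vertex. The energy counts edges rather than vertices, however, and the internal redistribution handles how mass enters each piece.
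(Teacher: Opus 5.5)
\textbf{Verdict: same approach as the paper, but the energy estimate does not work as written.} Your construction is the paper's (following Berger). Current crosses direct edges between good parts two levels down, and is then redistributed inside each receiving part by a rescaled lower-level flow. The direct-edge issue you single out is not a real obstacle. The estimate in the proof of \thref{prop:renorm_subgraph_wts} already bounds the probability that a \emph{direct} edge is missing, and the paper's proof simply selects $u_{i-1,j,k}\sim v_{i-1,j,k}$. The genuine gap is the energy estimate, which is the whole content of the lemma. It fails in three ways.
\begin{enumerate}
\item Your per-edge current $1/\prod_{k\le i+1}C_k$ violates Kirchhoff's law. If the unit mass in $\CC_i$ is spread over its $C_i$ level-$(i-1)$ pieces, each piece must export about $1/C_i$ through its roughly $C_iC_{i+1}$ edges. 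So each edge carries about $1/(C_i^2C_{i+1})$. With your value, the total current leaving $\CC_i$ is $C_i/\prod_{k\le i-1}C_k$ rather than $1-1/C_{i+1}$.
\item Even with your own numbers, $\prod_{k\le i+1}C_k^2$ edges each carrying $(\prod_{k\le i+1}C_k)^{-1}$ contribute energy of order $1$ per level. That is not summable, and the claimed bound $\prod_{k\le i+1}C_k^{-1}\,\mathrm{poly}(i)$ does not follow.
\item The redistribution step is circular (the unit-flow energy is ``bounded by the same summation''), and its justification is wrong. Spreading mass $1/C_{i+1}$ into each of $C_{i+1}$ parts costs $C_{i+1}\cdot C_{i+1}^{-2}E_i=E_i/C_{i+1}$. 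Finiteness therefore comes from $\sum_k 1/C_k<\infty$, not from geometric decay of the masses. With constant $C_k\equiv C$ the masses still decay geometrically, yet this argument only gives $E_i\lesssim(1+1/C)^i$.
\end{enumerate}

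What is missing is the explicit recursion that the paper writes down. Let $B_i=C_i-1\asymp(i+s)^{2d}$ and let $E_i$ be the energy of the flow built up to stage $i$. Three terms contribute at stage $i$: the old flow; the new direct edges, namely $B_{i-1}B_i$ edges each carrying $1/(B_{i-1}B_i)$; and the $B_i$ rescaled redistributing flows, each costing at most $E_{i-1}/B_i^2$. Together these give
\[
E_i\le E_{i-1}\Bigl(1+\frac{1}{B_i}\Bigr)+\frac{1}{B_{i-1}B_i},
\qquad\text{hence}\qquad
\sup_i E_i\le \prod_{k\ge2}\Bigl(1+\frac{1}{B_k}\Bigr)\Bigl(E_1+\sum_{\ell\ge2}\frac{1}{B_{\ell-1}B_\ell}\Bigr)<\infty .
\]
The right-hand side is finite because $\sum_k 1/B_k<\infty$ for $d\ge1$. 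Your variant, which uses all pairs of level-$(i-1)$ pieces, satisfies an analogous recursion once the per-edge currents are corrected. To make the rescaling step honest, the inductive bound should concern flows starting from an \emph{arbitrary} vertex of a level-$i$ good part. The entry points are wherever the direct edges land, not at $v$, so you run the same hierarchical construction rooted there. The paper is equally brief on this point.
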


\begin{proof} For this part of the proof we closely follow \cite{berger_transience_2002} with some more details spelled out.
    Suppose that the origin $o$ is in the infinite cluster $\kinfty$ and that the tiles containing the origin are good for all $i \in \N$.
    We construct a finite energy flow from $o$ to $\infty$ as follows.
    For $i \geq 1$ we let $B_i = C_i - 1$ and $N_i = \prod_{r = 1}^i B_r$. 
    For $i \geq 2$, the good part $\CC_{i-1,0}$ contains $\CC_{i-2,0}$ and $B_{i-1}$ more \level{i-2} good parts $\CC_{i-2,j}$ for $1 \leq j \leq B_{i-1}$ that we call the $i-1$ core.
    Similarly, the bigger good part $\CC_{i,0}$ contains $\CC_{i-1,0}$ and $B_i$ more \level{i-1} good parts, $\CC_{i-1,k}$ for $1 \leq k \leq B_i$. 
    By the goodness of $\CC_{i,0}$, for each pair $(j,k)$ with $1 \leq j \leq B_{i-1}$ and $1 \leq k \leq B_{i}$ we can choose vertices $u_{i-1,j,k} \in \CC_{i-2,j}$ in the core and $v_{i-1,j,k} \in \CC_{i-1,j}$ in another tile $T_{i-1,j}$ inside $T_{i,0}$ not centred at $0$ such that $u_{i-1,j,k} \sim v_{i-1,j,k}$.
    We now define the flow.
    For $i \geq 2$, we define the upward part of the flow at stage $i$ by
    \begin{equation}
        \phi_{\mathrm{up}}^{(i)}(x,y)
        =
        \begin{cases}
            \dfrac{1}{B_{i-1}B_i}
            &
            \text{if }
            x = u_{i-1,j,k}
            \text{ and }
            y = v_{i-1,j,k}
            \text{ for some } j,k,
            \\
            -\dfrac{1}{B_{i-1}B_i}
            &
            \text{if }
            x = v_{i-1,j,k}
            \text{ and }
            y = u_{i-1,j,k}
            \text{ for some } j,k,
            \\
            0
            &
            \text{otherwise.}
        \end{cases}
    \end{equation}
    We still need to define the redistributing part of the flow. 
    At stage $1$, there exists a redistributing flow $\phi_{\mathrm{dist}}^{(1,0)}$ supported on $\CC_{1,0}$ sending current $1/B_1$ current to each $u_{1,1,k}$ with $1 \leq k \leq B_1$.
    At stage $i \geq 2$, for each good part $\CC_{i,j}$ with $1 \leq j \leq B_i$ there exists a redistributing flow $\phi_{\mathrm{dist}}^{(i,j)}$ with sources $v_{i,j,k}$ and sinks $u_{i+1,j,k}$, with $1 \leq k \leq B_i$, such that each sink $u_{i+1,j,k}$ receives $1/B_i$ units of current.
    For $i \geq 1$ we let 
    \begin{equation}
        \theta^{(i)}
        =
        \sum_{k = 1}^i 
        \left(
            \phi_{\mathrm{up}}^{(k)}
            + 
            \sum_{j = 1}^{B_k} 
            \phi_{\mathrm{dist}}^{(k,j)}
        \right)
        \quad 
        \text{ and }
        \quad
        \theta 
        = 
        \sum_{i \in \N} 
        \theta^{(i)}.
    \end{equation}
    It follows readily that $\theta$ is a flow from $o$ to $\infty$, and we now verify that $\theta$ has finite energy.
    For $i \geq 1$ let $E_i$ denote the energy of $\theta^{(i)}$, and we bound the total energy iteratively. 
    The energy at \level{i} is at most the energy at \level{i-1} together with the energy of the upwards parts $\phi_{\mathrm{up}}^{(i)}$ together with the energy of the redistributing parts $\phi_{\mathrm{dist}}^{(i,j)}$ with $1 \leq j \leq B_i$.
    The upwards part $\phi_{\mathrm{up}}^{(i)}$ has $B_{i-1}B_i$ edges each carrying current
    $1/(B_{i-1}B_i)$, so that its energy is $1/(B_{i-1}B_i)$.
    
    Since energy scales quadratically, the energy of the redistributing flow $\phi_{\mathrm{dist}}^{(i,j)}$ for some $1 \leq j \leq B_i$ is at most $E_{i-1}/B_i^2$, and since there are $B_i$ redistributing flows the energy of the redistributing flows is at most $E_{i-1}/B_i$.
    It follows that $E_i \leq E_{i-1} \left(1 + 1/B_i \right) + 1/(B_{i-1}B_i)$, so that after iterating    
    \begin{equation}
        E_i
        \leq 
        E_1 
        \prod_{k = 2}^{i}
        \left(
            1 
            +
            \frac{1}{B_k}
        \right)
        + 
        \sum_{\ell = 2}^i
        \frac{1}{B_{\ell-1}B_{\ell}}
        \prod_{k = \ell+1}^{i}
        \left(
            1 
            +
            \frac{1}{B_k}
        \right).
    \end{equation}
    By the definition of $C_i$ in \eqref{eq:third_renormalisation}, both the infinite sum $\sum_{\ell = 2}^\infty 1 / (B_{\ell-1} B_{\ell})$ and the infinite product $\prod_{k = 2}^{\infty} \left(1 + 1/(B_k) \right)$ converge, so that $\sup_i E_i < \infty$.
    This concludes the proof.
\end{proof}

\begin{proof}[Proof of \thref{thm:random_walks}]
    By \thref{prop:renormalised_graph} the infinite cluster $\kinfty$ admits a renormalised subgraph almost surely. 
    By \thref{lem:finite_energy_flow} this implies that $\kinfty$ admits a finite-energy flow almost surely, and by the finite-energy flow criterion for recurrence \cite{lyons_simple_1983} (see also \cite[Theorem 2.11]{lyons_probability_2016}) the infinite cluster $\kinfty$ is transient almost surely.
\end{proof}
\section{Consequences}
\label{sec:consequences}
In this section we prove the many consequences which result from combining the local existence-and-uniqueness of the giant in \thref{prop:biskup,prop:second_largest} and the sphere calculus in \thref{lem:averaged_ball_connection}.
We highlight the following result on the probability that the local giant \textit{fails} to keep being the local giant, which captures how the bounds on the first and second largest local clusters work together.
The key idea is that $\giantone{r}$ is not contained in $\giantone{r+1}$ when either $\giantone{r}$ is unusually small or $\gianttwo{r + 1}$ is unusually large.
Recall that we write $\giantone{r}$ and $\gianttwo{r}$ for the first and second largest components in $B(r)$, respectively, and $\giantcomp{x}{r}$ for the largest cluster in $B(x,r)$.

\begin{proposition}
    \thlabel{leaving_giant}
    Let $G$ be a transitive graph of polynomial growth with $d \geq 1$ and suppose that $J : V \times V \to \R_+$ is a transitive kernel satisfying $J(x,y) = \Omega(d_G(x,y)^{-d \alpha})$ with $\alpha \in (0,2)$.
    Let $\beta > \beta_c$.
    There exists $c > 0$ such that for all $1\le r< R<\infty$
    \begin{equation}
        \label{eq:finite_failure}
        \P_{\beta}
        \left(  
            \giantcomp{x}{r}
            \not\subseteq
            \giantone{R}
        \right)   
        \leq 
        \exp
        \left(
            -
            c
            \# 
            B(r)^{\min(2-\alpha,1)}
        \right)
    \end{equation}
    for all $x \in V$ with $x \in B(R-r)$.
    In particular, there exists $c > 0$ such that for all $r \in \N$
    \begin{equation}
        \label{eq:infinite_failure}
        \P_{\beta}
        \left(  
            \giantone{r}
            \not\subseteq
            \kinfty
        \right)   
        \leq 
        \exp
        \left(
            -
            c
            \# 
            B(r)^{\min(2-\alpha,1)}
        \right).
    \end{equation}
\end{proposition}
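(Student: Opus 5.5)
The idea is to show that $\giantcomp{x}{r}\not\subseteq\giantone{R}$ forces one of two unlikely events: either the local giant $\giantcomp{x}{r}$ is atypically small, or the ball $B(R)$ contains a second cluster that is atypically large. Fix $\nu$ from \thref{prop:biskup} and set $k=\nu\#B(r)$. Since $x\in B(R-r)$ we have $B(x,r)\subseteq B(R)$, so $\giantcomp{x}{r}$ is contained in a single cluster $K'$ of the configuration restricted to $B(R)$. If $K'\neq\giantone{R}$, then $K'$ has size at most $\#\gianttwo{R}$. Hence
\begin{equation*}
\{\giantcomp{x}{r}\not\subseteq\giantone{R}\}\subseteq\{\#\giantcomp{x}{r}<k\}\cup\{\#\gianttwo{R}\ge k\}.
\end{equation*}
By transitivity and \thref{prop:biskup}, the first event has probability at most $\exp(-\beta\#B(r)^{\min(2-\alpha,1)}/A)$.

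The main obstacle is the second event. Applied directly at scale $R$, \thref{prop:second_largest} gives the bound $(\#B(R)/k)\exp(-\beta k^{\min(2-\alpha,1)}/A)$, and the prefactor $\#B(R)/\#B(r)$ is not controlled uniformly in $R$. To avoid this I would proceed multiscale, along a geometric sequence of radii $r=\rho_0<\rho_1<\dots<\rho_m=R$ with $\rho_{\ell+1}\le 2\rho_\ell$, centring each ball so that $B(x,r)\subseteq B(y_\ell,\rho_\ell)\subseteq B(y_{\ell+1},\rho_{\ell+1})$. For instance, take centres $y_\ell$ on a geodesic from $x$ towards $o$, with $y_m=o$. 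At each step I intersect two good events: the giant of $B(y_\ell,\rho_\ell)$ has size at least $\nu\#B(\rho_\ell)$, and the second largest cluster of $B(y_{\ell+1},\rho_{\ell+1})$ has size less than $\nu\#B(\rho_\ell)$. On their intersection, the giant at scale $\ell$ lies inside the giant at scale $\ell+1$. By doubling, $\#B(\rho_{\ell+1})/\#B(\rho_\ell)\le C_G$, so \thref{prop:second_largest} gives a failure probability of at most $C\exp(-c\#B(\rho_\ell)^{\min(2-\alpha,1)})$ at each step. These bounds grow geometrically in $\ell$, so the sum over $\ell$ is dominated by its first term and is at most $\exp(-c\#B(r)^{\min(2-\alpha,1)})$ after shrinking $c$, which absorbs the constant prefactors once $r$ is large. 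Small $r$ is handled by decreasing $c$ further. This proves \eqref{eq:finite_failure}.

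For \eqref{eq:infinite_failure}, I would apply \eqref{eq:finite_failure} with $x=o$ and $R=\rho_m$ along the chain $\rho_m=2^m r$, and take $m\to\infty$. On the intersection of all the good events, $\giantone{r}\subseteq\giantone{\rho_m}$ for every $m$, and these clusters have sizes tending to infinity. Hence $\giantone{r}$ lies in an infinite cluster, and by uniqueness of the infinite cluster $\giantone{r}\subseteq\kinfty$. The summed failure probability is again dominated by the first term, which gives the claimed bound.
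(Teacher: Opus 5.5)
Your proposal is correct and follows essentially the paper's argument. Both build a chain of nested balls whose centres move along a geodesic from $x$ to $o$, and at each step take a union bound combining \thref{prop:biskup} (the giant at the smaller scale is large) with \thref{prop:second_largest} (the second-largest cluster at the next scale is small). The differences are cosmetic. You increase the radius by doubling rather than by the paper's unit steps $i \to i+1$, so the prefactor from \thref{prop:second_largest} becomes a doubling constant and there are fewer terms to sum. You also deduce \eqref{eq:infinite_failure} directly from the infinite chain $r, 2r, 4r, \dots$ together with uniqueness of $\kinfty$, instead of going through Borel--Cantelli and dominated convergence.
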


\subsection{Concentration of the local giant}
\label{sec:volume_upper_bound}
In this section we prove \thref{thm:cluster_size_decay} on the stretched-exponential decay of the distribution of finite clusters.
A similar argument yields \thref{prop:o_in_kinfty}.
We begin by proving \thref{leaving_giant}.

\begin{proof}[Proof of \thref{leaving_giant}]
    Let $m = d_G(o,x)$, let $\gamma = (\gamma_0,\ldots,\gamma_m)$ be a geodesic path from $x$ to $o$, and let $(z_r,\ldots,z_R)$ be the sequence of vertices with $z_i = x$ for $r \leq i \leq R - m$ and $z_i = \gamma_{i - (R - m)}$ for $R - m <  i \leq R$.
    Then
    \begin{equation}
        B(x,r)
        =
        B(z_r,r) 
        \subseteq B(z_{r+1}, r+1)\subseteq
        \ldots
        \subseteq 
        B(z_R,R)
        =
        B(R)
    \end{equation}
    and hence
    \begin{equation}
        \label{eq:witness_failure}
        \P_{\beta}
        \left(  
            \giantcomp{x}{r}
            \not\subseteq
            \giantone{R}
        \right)   
        \leq 
        \sum_{i = r}^{R - 1}
        \P_{\beta}
        \left(
            \giantcomp{z_i}{i}
            \not\subseteq 
            \giantcomp{z_{i+1}}{i+1}
        \right),
    \end{equation}
    namely there is a scale $i$ at which the largest cluster in $B(z_{i},i)$ is not contained in the largest cluster in $B(z_{i+1},i+1)$.
    Let $\nu > 0$ be as in \thref{prop:biskup}, and let $\rho > 0$ be such that $\nu \#B(r) > \rho \# B(r+1)$ for all $r$ sufficiently large. Such a choice is possible by taking $\rho \leq \nu c_G/(2^dC_G)$.
    For $i \geq r$, the events $\# \giantcomp{z_i}{i} \geq \nu \# B(i)$ and $\# \cluster_{z_{i+1},i}^{\scriptscriptstyle (2)} \leq \rho \# B(i+1)$ imply that $\giantcomp{z_i}{i} \subseteq \giantcomp{z_{i+1}}{i+1}$.
    It follows from \thref{prop:biskup,prop:second_largest} that
    \begin{multline}
        \P_{\beta}
        \left(
            \giantcomp{z_i}{i}
            \not\subseteq 
            \giantcomp{z_{i+1}}{i+1}
        \right)
        \leq 
        \P_{\beta}
        \left(
            \giantcomp{z_i}{i}
            \leq 
            \nu
            \#
            B(i)
        \right)
        +
        \P_{\beta}
        \left(
            \cluster_{z_{i+1},i + 1}^{\scriptscriptstyle (2)}
            \geq 
            \rho \# B(i+1)
        \right)
        \\
        \leq
        \exp
        \left(
            -
            c_1 
            \# 
            B(i)^{2-\alpha}
        \right)
        +
        \frac{1}{\rho}
        \exp
        \left(
            -
            c_2
            \left(
                \rho
                \# 
                B(i+1)
            \right)^{\min(2-\alpha,1)}
        \right)
        \leq
        \exp
        \left(
            -
            c_3
            \# 
            B(i)^{\min(2-\alpha,1)}
        \right)
    \end{multline}
    for some $c_1,c_2,c_3 > 0$, where in in the last inequality we use the choice of $\rho$.
   We sum over $i$ to get
    \begin{align}
        \P_{\beta}
        \left(  
            \giantcomp{x}{r}
            \not\subseteq
            \giantone{R}
        \right)  
        \leq
        \sum_{i = r}^{\infty}
        \exp
        \left(
            -
            c_3
            \# 
            B(i)^{\min(2-\alpha,1)}
        \right)
        \leq 
        \exp
        \left(
            -
            c_4
            \# 
            B(r)^{\min(2-\alpha,1)}
        \right)
    \end{align}
    for some $c_4 > 0$ and for all $r,R \in \N$ with $R > r$ and $x \in V$ with $x \in B(R-r)$, concluding the proof of \eqref{eq:finite_failure}.
    For $r \in \N$ we have by a union bound, by \thref{prop:biskup}, and by \eqref{eq:finite_failure} that
    \begin{equation}
        \sum_{r= 1}^{\infty}
        \P_{\beta}
        \left(
             \{\giantone{r} \subseteq \giantone{r+1},
                \giantone{r} \geq \nu \# B(r)
            \}^c
        \right)
        < 
        \infty.
    \end{equation}
    By the Borel-Cantelli lemma there exists $R_0 \in \N$ such that $\cluster^\star = \cup_{r \geq R_0} \giantone{r}$ is an infinite connected component almost surely, and by the uniqueness of the infinite cluster we have that $\cluster^\star \subseteq \kinfty$ almost surely.
    In particular, the indicator $\charf \left( \giantone{r} \not\subseteq \giantone{R} \right)$ converges to $\charf \left( \giantone{r} \not\subseteq \kinfty \right)$ almost surely and by the dominated convergence theorem 
    \begin{equation}
        \lim_{R \to \infty}
        \P_{\beta}
        \left(  
            \giantone{r}
            \not\subseteq
            \giantone{R}
        \right)
        = 
        \P_{\beta}
        \left(  
            \giantone{r}
            \not\subseteq
            \kinfty
        \right).
    \end{equation}
    Together with \eqref{eq:finite_failure} this proves \eqref{eq:infinite_failure}.
\end{proof}

The following proposition shows that the giant is largely determined in a ball. 

\begin{proposition}
    \thlabel{prop:giant_determines_c_infty}
    Let $G$ be a transitive graph of polynomial growth with $d \geq 1$ and suppose that $J : V \times V \to \R_+$ is a transitive kernel satisfying $J(x,y) = \Omega(d_G(x,y)^{-d \alpha})$ with $\alpha \in (0,2)$.
    Let $\beta > \beta_c$.
    Then there exist $c_1,c_2 > 0$ such that for every $k,r \in \N$ and $u \in B(r - \exp(c_1 k^{\min(2-\alpha,1)}))$ we have
    \begin{equation}
        \P_{\beta}
        \left(
            \# 
            \cluster_r(u)
            > 
            k,
            u 
            \not\in 
            \giantone{r}
        \right)
        \leq
        \exp
        \left(
            -c_2 
            k^{\min(2-\alpha,1)}
        \right).
    \end{equation}
\end{proposition}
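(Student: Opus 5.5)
We aim to show that a vertex $u$ lying in a moderately large cluster inside $B(r)$ is, with overwhelming probability, already in the local giant. The plan is to pass through an intermediate scale $s$ with $\#B(s)\asymp k$ (roughly $s\asymp k^{1/d}$), chosen so that $B(u,s)\subseteq B(r)$ by the assumption $u\in B(r-\exp(c_1k^{\min(2-\alpha,1)}))$. We then use a union bound over three events.

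The first event is that $\cluster_r(u)$ meets $B(u,s)$ in few vertices even though $\#\cluster_r(u)>k$; this is the delicate case.

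The second event is that $\cluster_r(u)\cap B(u,s)$ is large but $\cluster_r(u)$ does not contain $\giantcomp{u}{s}$. If $\cluster_r(u)$ contains at least $k'$ vertices of $B(u,s)$ and is not the largest cluster there, then $\#\cluster_{u,s}^{\scriptscriptstyle (2)}\geq k'$ (or $\cluster_r(u)$ connects to the giant via edges that are revealed later, in which case we are fine). \thref{prop:second_largest} applied in $B(u,s)$ bounds this by $(\#B(s)/k')\exp(-\beta k'^{\min(2-\alpha,1)}/A)$.

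The third event is $\giantcomp{u}{s}\not\subseteq\giantone{r}$. \thref{leaving_giant}, applied with $x=u$ and inner radius $s$, bounds this by $\exp(-c\#B(s)^{\min(2-\alpha,1)})=\exp(-c'k^{\min(2-\alpha,1)})$, provided $u\in B(r-s)$.

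The main obstacle is that $\cluster_r(u)$ can be large while having few vertices near $u$, since long edges let it spread out. Our first attempt is to avoid localisation at a single centre altogether. Cover $B(r)$ by balls $B(x,s)$, $x$ in a maximal $s$-separated net, and call a ball bad if \thref{prop:biskup} or \thref{prop:second_largest} fails in it with threshold $k/\!\operatorname{poly}$. If $\#\cluster_r(u)>k$, then either $\cluster_r(u)$ has at least $k^{1/2}$ vertices inside some single ball $B(x,s)$ within distance $\exp(c_1k^{\min(2-\alpha,1)})$ of $u$, or it is spread over many balls. In the first case we run the argument above at centre $x$. In the second case we use the bulk-to-bulk bound $\exp(-c\beta\rho k\cdot k^{-\alpha})$ per vertex, exactly as in Step 4 of the proof of \thref{prop:second_largest}: every vertex of $\cluster_r(u)$ fails to attach to the nearby local giant of size $\gtrsim k$, which costs $\exp(-c\beta k^{2-\alpha})$ after revealing the local giants first.

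The number of candidate balls is polynomial in $\exp(c_1k^{\min(2-\alpha,1)})$. This is precisely why the hypothesis allows distance $\exp(c_1 k^{\min(2-\alpha,1)})$ to the boundary with $c_1$ small: the union bound then costs at most a factor $\exp(C c_1 k^{\min(2-\alpha,1)})$, which we absorb into $c_2$ by taking $c_1$ small relative to the constants of \thref{prop:biskup,prop:second_largest,leaving_giant}. In the final accounting we take $s$ as large as allowed, namely $s=\exp(c_1k^{\min(2-\alpha,1)})$, apply \thref{prop:biskup} at every net point in $B(u,s)$, and conclude with \thref{leaving_giant}. The exponent $\min(2-\alpha,1)$ is then inherited from those results.
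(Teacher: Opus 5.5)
The gap is the case you flag as delicate and never close: $\#\cluster_r(u)>k$ while the part of the cluster near $u$ is small. With $\#B(s)\asymp k$ you cannot dispose of it by showing that escape from $B(u,s)$ is rare, because it is not: $u$ alone has an edge longer than $s$ with probability that is at best polynomially small in $k$. After you switch to $s=\exp(c_1k^{\min(2-\alpha,1)})$, the covering argument still does not close it. The dichotomy ``at least $k^{1/2}$ vertices in one nearby ball, or spread over many balls'' says nothing about \emph{where} $\cluster_r(u)$ lives, and restricting the union bound to balls within distance $\exp(c_1k^{\min(2-\alpha,1)})$ of $u$ presupposes exactly the localisation you need. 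For instance, $u$ may be isolated inside $B(u,s)$ except for one long edge into a large non-giant cluster far away. Applying \thref{prop:biskup} at net points of $B(u,s)$, and a Step~4 non-attachment bound to the vertices near $u$, says nothing about this configuration. Running the Step~4 argument over all of $B(r)$ instead brings back the prefactor $\#B(r)/k$ of \thref{prop:second_largest}, which is not uniform in $r$. There are two smaller problems. First, $\cluster_r(u)\cap B(u,s)$ (or $\cluster_r(u)\cap B(x,s)$) need not be a single cluster of the graph restricted to that ball, so \thref{prop:second_largest} does not apply to it as you use it in your second event and your ``first case''. Second, with only $k^{1/2}$ non-attaching vertices the Step~4 cost is $\exp(-c\beta k^{3/2-\alpha})$, which loses the exponent.

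Your use of \thref{leaving_giant} and \thref{prop:second_largest} at scale $\exp(c_1k^{\min(2-\alpha,1)})$ does match the paper; what is missing is its elementary localisation step. The paper takes $R_k=\exp(c_1k^{2-\alpha}/(2d))$ and works with $\loccomp{u}{R_k}$, the cluster of $u$ inside $B(u,R_k)\subseteq B(r)$. It splits into three events. The first is $u\in\giantcomp{u}{R_k}$ but $u\notin\giantone{r}$, handled by \thref{leaving_giant}. The second is $u\notin\giantcomp{u}{R_k}$ and $\#\loccomp{u}{R_k}>k$, which forces $\#\cluster_{u,R_k}^{\scriptscriptstyle (2)}>k$; \thref{prop:second_largest} applies, and its prefactor $\#B(R_k)/k$ is absorbed for $c_1$ small. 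The third is $\#\loccomp{u}{R_k}\le k<\#\cluster_r(u)$. In that case $\loccomp{u}{R_k}$ has an open edge leaving $B(u,R_k)$, so a path of at most $k$ edges starting at $u$ covers distance $R_k$. By pigeonhole one of its edges, emanating from $\loccomp{u}{R_k}$, has length at least $R_k/k$. This has probability at most $k\sum_{L\ge R_k/k}\E_\beta[\deg(o,L)]\lesssim k(R_k/k)^{d(1-\alpha)}$, which is at most $\exp(-ck^{2-\alpha})$ precisely because $R_k$ is exponential in $k^{2-\alpha}$. So the exponential margin in the hypothesis is there to make escape through a long edge rare (and to keep $B(u,R_k)\subseteq B(r)$), not merely to pay for a union bound. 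This is also where integrability enters: the tail bound is \thref{lem:integrability}, which requires $\alpha>1$ and $J=O(d_G(x,y)^{-d\alpha})$, and your argument has no substitute for it.
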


\begin{proof}
    For $k \in \N$, let $R_k = \exp(c_1 k^{2-\alpha} / (2d))$
    and let $r \in \N$ be such that $d_G(u,x) \geq R_k + r$ so that in particular $B(u,R_k) \subseteq B(x,r)$.
    By a union bound we have
    \begin{multline}
        \label{eq:localised_giant_union_bound}
        \P_{\beta}
        \left(
            \#
            \cluster_r(u)
            > 
            k,
            u 
            \not\in 
            \giantone{r}
        \right)
        \leq 
        \P_{\beta}
        \left(
            u 
            \not\in
            \giantone{r},
            u 
            \in
            \giantcomp{u}{R_k}
        \right)
        \\
        +
        \P_{\beta}
        \left(
            u 
            \not\in
            \giantcomp{u}{R_k},
            \# 
            \loccomp{u}{R_k}
            >
            k
        \right)
        +
        \P_{\beta}
        \left(
            \# 
            \cluster_r(u) 
            > 
            k,
            \# 
            \loccomp{u}{R_k}
            \leq
            k
        \right),
    \end{multline}
    and we bound each term in order.
    If $u \not\in \giantcomp{u}{R_k}$ and $\# \loccomp{u}{R_k} > k$ then $\# \loccomp{u}{R_k}^{\scriptscriptstyle (2)} > k$ and hence by \thref{prop:second_largest}, by translation invariance, and by the choice of $R_k$, we have
    \begin{equation}
        \label{localised_giant_second_largest}
        \P_{\beta}
        \left(
            u 
            \not\in
            \giantcomp{u}{R_k},
            \# 
            \loccomp{u}{R_k}
            >
            k
        \right)
        \leq
        \P_{\beta}
        \left(
            \# 
            \loccomp{u}{R_k}^{\scriptscriptstyle (2)}
            >
            k
        \right)
        \leq
        \exp
        \left(
            -
            c_2
            k^{\min(2-\alpha,1)}
        \right)
    \end{equation}
    for some $c_2 = c_2(G) > 0$.
    By \thref{leaving_giant} we have
    \begin{equation}
        \label{localised_giant_leaving_giant}
        \P_{\beta}
        \left(
            u 
            \in
            \giantcomp{u}{R_k},
            u 
            \not\in
            \giantone{r}
        \right)
        \leq
        \exp
        \left(
            - 
            c_2
            \# 
            B(r)^{\min(2-\alpha,1)}
        \right)
        \leq 
        \exp
        \left(
            - 
            c_3
            k^{\min(2-\alpha,1)}
        \right)
    \end{equation}
    for some $c_2 = c_2(G) > 0$ and $c_3 = c_3(G) > 0$, where in the second inequality we use that $r > R$ and the definition  of $R$.
    Finally, if $\# \cluster_r(u) > k$ and $\# \loccomp{u}{R} \leq k$ then $u$ has a path to $B(u,R_k)^c$, and by the pigeonhole principle at least one of the edges in $\loccomp{u}{R_k}$ has length at least $R_k/k$. 
    It follows that
    \begin{align}
        \label{localised_giant_arm_path}
        \P_{\beta}
        \left(
            \# 
            \cluster_r(u)
            > 
            k,
            \# 
            \loccomp{u}{R_k}
            \leq
            k
        \right)
        \leq 
        k
        \sum_{L = R_k/k}^{\infty}
        \E_{\beta}
        \left[
            \deg(o,L)
        \right]
        \leq 
        \exp
        \left(
            -
            c_4
            k^{\min(2-\alpha,1)}
        \right)
    \end{align}
    for some $c_4 >0$, where in in the last inequality we use \thref{lem:integrability}, the choice of $R_k$, and the fact that $\alpha < 2$.
    The result follows by combining the bounds in \eqref{eq:localised_giant_union_bound} through \eqref{localised_giant_arm_path}.
\end{proof}

A union bound similar to that used in the proof of \thref{prop:giant_determines_c_infty} also gives \thref{prop:o_in_kinfty}.

\begin{proof}[Proof of \thref{prop:o_in_kinfty}]   
    Let $r \in \N$ and let $k^{\min(2-\alpha,1)} = c_1 \log(\# B(r))$ with $c_1$ is as in \thref{prop:second_largest}.
    By a union bound we can write 
    \begin{equation}
        \label{eq:bounding_o_to_inf_1}
        \P_{\beta}
        \left(
            o 
            \not\in 
            \giantone{r},
            o 
            \in
            \kinfty
        \right)
        \leq 
        \P_{\beta}
        \left(
            \# 
            \cluster_r
            > 
            k,
            o 
            \not\in 
            \giantone{r}
        \right)
        + 
        \P_{\beta}
        \left(
            \# 
            \cluster_r
            \leq
            k,
            \cluster_r
            \leftrightarrow
            B(r)^c
        \right).
    \end{equation}
    If $\# \cluster_r > k$ and $o \not\in \giantone{r}$ then $\# \gianttwo{r} > k$ and by \thref{prop:second_largest} and by the choice of $k$ we have that
    \begin{equation}
        \label{eq:bounding_o_to_inf_2}
        \P_{\beta}
        \left(
            \# \cluster_r 
            > 
            k,
            o 
            \not\in 
            \giantone{r}
        \right)
        \leq 
        \P_{\beta}
        \left(
            \gianttwo{r} 
            > 
            k
        \right)
        \leq 
        \exp
        \left(
            -
            c_2
            k^{\min(2-\alpha,1)}
        \right)
    \end{equation}
    for some $c_2 = c_2(G) > 0$.
    If $\# \cluster_r > k$ and $\cluster_r \leftrightarrow B(r)^c$ then by the pigeonhole principle at least one of the edges in $\cluster$ has length at least $r/k$.
    Just as in \eqref{localised_giant_arm_path}, it follows that
    \begin{align}
        \label{eq:bounding_o_to_inf_3}
        \P_{\beta}
        \left(
            \# 
            \cluster_r
            \leq 
            k,
            \cluster_r
            \leftrightarrow
            B(r)^c
        \right)
        \leq
        k
        \sum_{L = r/k}^{\infty}
        \E_{\beta}
        \left[
            \deg(o,L)
        \right]
        \leq 
        \exp
        \left(
            -
            c_4
            k^{\min(2-\alpha,1)}
        \right)
    \end{align}
    for some $c_4 > 0$.
    By the bounds above and by the choice of $k$ we have that
    \begin{equation}
        \P_{\beta}
        \left(
            o 
            \not\in 
            \giantone{r},
            o 
            \in 
            \kinfty
        \right)
        \leq 
        \exp
        \left(
            -c_5
            k^{2-\alpha}
        \right)
        \leq 
        \exp
        \left(
            -c_6
            \log(\#B(r))
        \right)
    \end{equation}
    for some $c_5,c_6 > 0$.
    Finally, by union bound we can write
    \begin{equation}
        \label{eq:rewrite_theta}
        \theta(\beta)
        =
        \P_{\beta}
        \left(
            o 
            \in 
            \kinfty
        \right)
        \leq 
        \P_{\beta}
        \left(
            o 
            \in 
            \giantone{r}
        \right)
        +
        \P_{\beta}
        \left(
            o 
            \not\in 
            \giantone{r},
            o 
            \in 
            \kinfty
        \right)
    \end{equation}
    and the result follows by rearranging.
\end{proof}

\begin{proof}[Proof of \thref{thm:cluster_size_decay}]
    Let $k \in \N$.
    By \thref{leaving_giant} and by the Borel-Cantelli lemma we have that $\giantone{r} \subseteq \kinfty$ for all $r$ sufficiently large almost surely.
    By \thref{prop:o_in_kinfty} and by the Borel-Cantelli lemma we have that $o \in \kinfty$ implies that $o \in \giantone{r}$ for all $r$ sufficiently large.
    It follows that the indicator $\charf \left(\# \cluster_r > k, o \not\in \giantone{r}, \giantone{r} \subseteq \kinfty \right)$ converges to the indicator $\charf \left( k < \# \cluster < \infty \right)$ almost surely.
    Indeed: if $k < \# \cluster < \infty$ then eventually $\# \cluster_r > k$ and $o \not\in \giantone{r}$; if $\# \cluster \leq k$ then $\# \cluster_r \leq k$ for all $r$; and if $\# \cluster = \infty$ then eventually $o \in \giantone{r}$.
    By the dominated convergence theorem
    \begin{equation}
        \lim_{r \to \infty}
        \P_{\beta}
        \left(
            \# 
            \cluster_r > k, 
            o \not\in \giantone{r}, 
            \giantone{r} \subseteq \kinfty
        \right)
        =
        \P_{\beta}
        \left(
            k
            <
            \#
            \cluster
            <
            \infty
        \right),
    \end{equation}
    so that in particular 
    \begin{equation}
        \P_{\beta}
        \left(
            k
            <
            \#
            \cluster
            <
            \infty
        \right)
        \leq 
        \P_{\beta}
        \left(
            \# 
            \cluster_r > k, 
            o \not\in \giantone{r}
        \right)
    \end{equation}
    for all $r \in \N$.
    By \thref{prop:giant_determines_c_infty} there exists $c > 0$ such that for all $r \in \N$
    \begin{equation}
        \P_{\beta}
        \left(
            \# 
            \cluster_r > k, 
            o \not\in \giantone{r}
        \right)
        \leq
        \exp
        \left(
            -c
            k^{\min(2-\alpha,1)}
        \right).
    \end{equation}
    \vskip-1em
\end{proof}

\subsection{Anchored isoperimetric dimension of the infinite cluster}
In this section we prove \thref{cor:anchored_isop_dim} and \thref{thm:anch_dim_upper} on the anchored isoperimetric dimension of the infinite cluster $\kinfty$.

\begin{proof}[Proof of \thref{cor:anchored_isop_dim}]
    This follows from \thref{thm:cluster_size_decay} and \cite[Theorem 1.5]{alonso_supercritical_2026}.
\end{proof}

We prove \thref{thm:anch_dim_upper} by combining the sphere calculus, the local existence-and-uniqueness of the giant in \thref{prop:biskup,prop:second_largest}, and the \textit{lazy proof} of the one-arm event with polylogarithmic corrections from our companion paper \cite[Theorem 1.8]{alonso_supercritical_2026}.
Recall that $\cluster_r$ denotes the cluster of the origin $o$ in $B(r)$, and now we assume $J(x,y)=\Theta(d_G(x,y)^{-d\alpha})$ with $\alpha\in(1,2)$.

\begin{proof}[Proof of \thref{thm:anch_dim_upper}]
    We write $\gamma = \max \left(d(2-\alpha),d - 1 \right)$. By \thref{lem:sphere_calculus} there exists $c_1 > 0$ such that for each $\ell \in \N$ we can choose some $n \in [2^{\ell-1},2^{\ell}]$ with 
    \begin{equation}
        \label{eq:good_kernel_bound}
        J
        \left(
            B(n), 
            B(n)^c
        \right)
        \leq 
        c_1
        n^{\gamma}
        \log(n)^{
            \charf
            \left(
                \alpha=1+1/d
            \right)
        },
    \end{equation}
    and we let $I$ denote this set. 
    We let $\nu$ be as in \thref{prop:biskup}. 
    Set $\xi(n) = n^{\gamma} \log(n)^{\charf \left( \alpha=1+1/d \right)}$, and let $A_n$ denote the event that $\# \giantone{n} \geq \nu \# B(n)$ and $\# \{(u,v) \in \giantone{n} \times B(n)^c : u \sim v \} \leq 2 \beta c_1 \xi(n)$, and let $B_n$ denote the event that $o \in \giantone{n}$.
    We show that on the event that $o \in \kinfty$ the event $A_n \cap B_n$ happens for all $n \in I$ sufficiently large almost surely.
    If this is the case, then for $\epsilon > 0$,
    \begin{equation}
        \liminf_{n \to \infty}
        \frac{
            \# \partial 
            \giantone{n}
        }{
            (\# \giantone{n})^{\gamma/d + \epsilon}
        }
        \leq
        \liminf_{n \to \infty}
        \frac{
            c_1 \log(n)^{\charf \left( \alpha=1+1/d \right)}
        }{
            n^{\epsilon}  
        }  
        =
        0
    \end{equation}
    and in particular since $\giantone{n} \subseteq \kinfty$ for all $n \in I$ sufficiently large the infinite cluster has anchored isoperimetric dimension at most $\max( 1/(\alpha-1), d)$ almost surely.
    By a union bound
    \begin{equation}
        \label{eq:nice_union_bound}
        \P_{\beta}
        \left(
            \left(
                A_n 
                \cap 
                B_n
            \right)^c
            \mid
            o 
            \in 
            \kinfty
        \right)
        \leq 
        \frac{
            \P_{\beta}
            \left(
                A_n^c
            \right)
            +
            \P_{\beta}
            \left(
                B_n^c
                \cap 
                \{o 
                \in 
                \kinfty\}
            \right)
        }{
            \P_{\beta}
            \left(
                o 
                \in
                \kinfty
            \right)
        }
    \end{equation}
    and we bound each term in turn.
    In the first instance we have by a union bound that
    \begin{equation}
        \label{eq:an_comp}
        \P_{\beta}
        \left(
            A_n^c
        \right)
        \leq
        \P_{\beta}
        \left(
            \# 
            \{
                (u,v)
                \in 
                \giantone{n} 
                \times
                B(n)^c
                :
                u \sim v
            \}
            >
            2
            \beta
            c_1
            \xi(n)
        \right)
        +
        \P_{\beta}
        \left(
            \# 
            \giantone{n}
            <
            \nu 
            \# 
            B(n)
        \right).
    \end{equation} 
    Since $n$ was chosen so as to satisfy \eqref{eq:good_kernel_bound} we compute
    \begin{equation}
        \E_{\beta}
        \left[
            \# 
            \{
                (u,v)
                \in 
                B(n)
                \times
                B(n)^c
                :
                u \sim v
            \}
        \right]
        \leq 
        \beta
        J(B(n),B(n)^c)
        \leq 
        \beta
        c_1
        \xi(n).
    \end{equation}
    As $\giantone{n} \subseteq B(n)$ and the edges are independent Bernoulli variables, Chernoff's bound gives that
    \begin{equation}
        \P_{\beta}
        \left(
            \# 
            \{
                (u,v)
                \in 
                \cluster_n 
                \times
                B(n)^c
                :
                u \sim v
            \}
            >
            2
            \beta
            c_1
            \xi(n)
        \right)
        \leq
        \exp
        \Big(  
            -
            \frac{
                \beta 
                c_1
                \xi(n)
            }{
                3
            }
        \Big).
    \end{equation}
    By \thref{prop:biskup} there exists $c_2 > 0$ such that    for all $n \in \N$
    \begin{equation}
        \P_{\beta}
        \left(
            \#\giantone{n} < \nu \# B(n)
        \right)
        \leq
        \exp
        (
            -
            c_2
            \beta 
            (\# B(n))^{2 - \alpha}
        ).
    \end{equation}
    The event that $o \in \kinfty$ implies that $o \leftrightarrow B(n)^c$ for all $n \in \N$ and hence
    \begin{equation}
        \P_{\beta}
        \left(
            B_n^c
            \cap 
            \{o 
            \in 
            \kinfty\}
        \right)
        =
        \P_{\beta}
        \left(
            o 
            \not\in 
            \giantone{n},
            o 
            \in 
            \kinfty
        \right)
        \leq
        \P_{\beta}
        \left(
             o
            \not\in 
            \giantone{n},
            o 
            \leftrightarrow
            B(n)^c
        \right).
    \end{equation}      
    For $k \in \N$, by a union bound and using that $o \not\in \giantone{n}$ and $\# K_n \geq k$ imply $\# \gianttwo{n} \geq k$ we have
    \begin{align}
        \P_{\beta}
        \left(
             o
            \not\in 
            \giantone{n},
            o 
            \leftrightarrow
            B(n)^c
        \right)
        \leq
        \P_{\beta}
        \left(
            \#
            \gianttwo{n}
            \geq 
            k
        \right)
        +
        \P_{\beta}
        \left(
            o 
            \leftrightarrow
            B(n)^c,
            \# 
            \cluster_n 
            < 
            k
        \right).
    \end{align}
    Setting $k(n) = \left((2d/c_3)\log(n)\right)^{1/(2-\alpha)}$ with $c_3$ as in \thref{prop:second_largest}, this same theorem gives
    \begin{equation}
        \P_{\beta}
        \left(
            \gianttwo{n}
            \geq 
            k(n)
        \right)
        \leq
        \exp
        \left(
            -
            c_3
            k(n)^{2-\alpha}
            /
            2
        \right).
    \end{equation}
    Further, it follows from \cite[Theorem 1.8]{alonso_supercritical_2026} that there exists $c_4 > 0$ such that
    \begin{equation}
        \P_{\beta}
        \left(
            o 
            \leftrightarrow
            B(n)^c,
            \# 
            \cluster_n 
            < 
            k
        \right)
        \leq
        c_4
        k(n)
        (n/k(n))^{d(1-\alpha)}.
    \end{equation}
    Since $\sum_{n \in I} \P_{\beta} \left((A_n \cap B_n)^c \mid o \in \kinfty \right) < \infty$ the result follows from the Borel-Cantelli lemma.
\end{proof}

\subsection{Law of large numbers}
\label{sec:lln}
\thref{prop:second_largest} allows us to verify the conditions of \cite[Theorem 2.2]{van_der_hofstad_giant_2023} to obtain the law of large numbers for \LRP{}. Here $u\leftrightarrow v$ means that $u, v$ are in the same component.
\begin{proposition}
    \thlabel{prop:condition_check}
    Let $G$ be a transitive graph of polynomial growth with $d \geq 1$, and suppose that $J : V \times V \to \R_+$ is a transitive kernel satisfying $J(x,y) = \Omega(d_G(x,y)^{-d \alpha})$ with $\alpha \in (0,2)$. 
    Let $\beta > \beta_c$.
    Let $(\GG_r)_{r \in \N}$ be the sequence of random graphs sampled from $\P_{\beta}$ restricted to $B(r)$.
    Then
    \begin{equation}
        \label{eq:lln_conditions}
        \lim_{k \to \infty}
        \limsup_{r \to \infty}
        \frac{1}{\# B(r)^2}
        \E_{\beta}
        \left[
            \sum_{u,v \in V(\GG_r)}
            \charf
            \left(
                \# 
                \cluster_r(u) 
                ,
                \# 
                \cluster_r(v) 
                > 
                k,
                u 
                \not\leftrightarrow
                v
            \right)
        \right]
        =
        0.
    \end{equation}
\end{proposition}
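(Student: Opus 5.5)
The plan is to bound the summand pointwise by indicator events involving only the largest and second-largest clusters in $B(r)$, and then apply \thref{prop:biskup,prop:second_largest}. The key observation is deterministic. If $u\not\leftrightarrow v$ in $\GG_r$, then at most one of $\cluster_r(u),\cluster_r(v)$ equals $\giantone{r}$. So if both have more than $k$ vertices, at least one is a cluster different from $\giantone{r}$ of size $>k$, which forces $\#\gianttwo{r}>k$. The sum is therefore at most $(\#B(r))^2\charf(\#\gianttwo{r}>k)$. This is too crude, since it only gives $\P_\beta(\#\gianttwo{r}>k)$, and that tends to zero as $r\to\infty$ only if $k$ grows with $r$. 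So I refine the count.

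Split the pairs $(u,v)$ according to whether $u\in\giantone{r}$. Pairs with $u\notin\giantone{r}$ and $\#\cluster_r(u)>k$ contribute at most $\#B(r)\cdot\#\{u\notin\giantone{r}:\#\cluster_r(u)>k\}$. By symmetry the case $v\notin\giantone{r}$ is the same, and these two cases cover everything because not both $u,v$ lie in $\giantone{r}$. It therefore suffices to show
\[
\lim_{k\to\infty}\limsup_{r\to\infty}\frac{1}{\#B(r)}\sum_{u\in B(r)}\P_\beta\bigl(\#\cluster_r(u)>k,\ u\notin\giantone{r}\bigr)=0.
\]
For this I use \thref{prop:giant_determines_c_infty}. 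Fix $k$ and set $L_k=\exp(c_1k^{\min(2-\alpha,1)})$. For every $u\in B(r-L_k)$ the probability is at most $\exp(-c_2k^{\min(2-\alpha,1)})$. For the remaining vertices $u\in B(r)\setminus B(r-L_k)$ I bound the probability trivially by $1$. Their number is at most $\#B(r)-\#B(r-L_k)$, which is $o(\#B(r))$ as $r\to\infty$ with $k$ fixed. This last fact needs $\#B(r-L)/\#B(r)\to1$ for fixed $L$. It follows from polynomial growth via the averaging/spheres-partition-balls idea, at least along a subsequence. To handle a genuine $\limsup$ I would use that $\#S(s)\le C s^{d-1}$ fails to be known, so instead I will cite the known sphere bound from \cite{breuillard_rate_2013}, which gives $\#B(r)-\#B(r-L)=o(r^d)$.

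Combining, the normalised expectation is at most $2\exp(-c_2k^{\min(2-\alpha,1)})+o_r(1)$. Taking $\limsup_r$ and then $k\to\infty$ gives zero. I expect the main obstacle to be the boundary-layer estimate $\#(B(r)\setminus B(r-L_k))=o(\#B(r))$ for a full limsup, which relies on an external sphere-volume upper bound (or on regularity of $\#B(r)$ from Pansu/Breuillard asymptotics $\#B(r)\sim c r^d$). A secondary check is that the constants in \thref{prop:giant_determines_c_infty} are uniform in $r$, which they are as stated.
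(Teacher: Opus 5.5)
Your proposal is correct and follows the paper's proof essentially step for step. You use that at most one of $\cluster_r(u),\cluster_r(v)$ can be $\giantone{r}$, sum out the other vertex, apply \thref{prop:giant_determines_c_infty} in the interior $B(r-L_k)$, and bound the boundary layer trivially. For that boundary layer the paper only appeals to ``the amenability of $G$''; you are right that what is actually needed is that balls form a F{\o}lner sequence, and for a general transitive graph of polynomial growth the cleanest justification is Tessera's sphere bound $\#S(r)\le C r^{-\delta}\#B(r)$ rather than Breuillard--Le Donne, which is stated for nilpotent groups.
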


\begin{proof}
    For $u,v \in V(\GG_r)$, the event that $\# \cluster_r(u) > k$ and $\# \cluster_r(v) > k$ with $u \not\sim v$ implies that at least one of $\cluster_r(u)$ or $\cluster_r(v)$ is \textit{not} the largest component in $B(r)$. 
    Let $R_k$ be as in \thref{prop:giant_determines_c_infty}. 
    Summing over $v$, decomposing the sum over $u \in V(\GG_r)$ according to the distance between $u$ and $\partial B(r)$, and applying \thref{prop:giant_determines_c_infty}, we arrive at
    \begin{align}   
        \label{eq:lln_first_bound}
        \begin{split}
            \frac{1}{\# B(r)^2}
            &
            \E_{\beta}
            \Biggl[
                \sum_{u,v \in V(\GG_r)}
                \charf
                \left(
                    \# 
                    \cluster_r(u) 
                    ,
                    \# 
                    \cluster_r(v) 
                    > 
                    k,
                    u 
                    \not\leftrightarrow
                    v
                \right)
            \Biggr]
            \\
            & \leq 
            \frac{2}{\# B(r)}
            \E_{\beta}
            \left[
                \sum_{\substack{
                    u \in V(\GG_r)
                    \\
                    d_G(u,\partial B(r)) \leq R_k
                }}
                1
                +
                \sum_{\substack{
                    u \in V(\GG_r)
                    \\
                    d_G(u,\partial B(r)) > R_k
                }}
                \charf
                \left(
                    \cluster_r(u)>k, \cluster_r(u) \neq
                    K_r^{\scriptscriptstyle{(1)}}
                \right) 
            \right]
        \end{split}
        \\
        \label{eq:lln_second_bound}
        & \leq
        \frac{
            2 
            \#(B(r) \setminus B(r-R_k))
        }{
            \# B(r)
        }
        + 
        2
        \exp
        \left(
            -c_2 
            k^{\min(2-\alpha,1)}
        \right)
    \end{align}
    for some $c_2 > 0$, where in \eqref{eq:lln_second_bound} we use that $\# B(r-R_k - 1) \leq \#B(r)$.
    By the amenability of $G$
    \begin{equation}
        \lim_{r \to \infty}
        \frac{
            2 
            \#(B(r) \setminus B(r-R_k))
        }{
            \# B(r)
        }
        =
        0
    \end{equation}
    and the result follows from \eqref{eq:lln_second_bound} after taking limits.
\end{proof}

\begin{proof}[Proof of \thref{thm:lln}]
    The result follows from \thref{prop:condition_check} and \cite[Theorem 2.2]{van_der_hofstad_giant_2023}.
\end{proof}

\subsection{Lower bound on the cluster-size decay }
\label{sec:volume_lower_bound}
In this section we prove \thref{thm:cluster_size_lower_bound} on the lower bound for the distribution of finite clusters.

\begin{proof}[Proof of \thref{thm:cluster_size_lower_bound}]
    We let $\nu > 0$ be as in \thref{prop:biskup}, we write $\gamma = \max \left(d(2-\alpha),d - 1 \right) $, and we let $q \geq 4$ be minimal such that $k \leq \nu \# B(q/2)$.
    By \thref{lem:averaged_ball_connection}, we can choose $r \in [q,2q]$ with 
    \begin{equation}
        \label{eq:averaged_kernel_bound}
        J
        \left(
            B(r), 
            B(r)^c
        \right)
        \leq 
        c_1
        r^{\gamma}
        \log(r)^{
            \charf
            \left(
                \alpha=1+1/d
            \right)
        }
    \end{equation}
    for some $c_1 > 0$.
    Since $r \geq q$, we have $k \leq \nu \# B(q/2) \leq \nu \# B(r/2)$.
    We also have that there exists $c_2 > 0$ such that $k / \# B(r/2) \geq c_2$. 
    Indeed, by the minimality of $q$ we have $k > \nu \# B((q-1)/2) \geq \# B(q/2) / C_G$ where $C_G$ is the doubling constant of $G$, since $r \leq 2q$ we have $\# B(r/2) \leq \# B(q) \leq C_G \# B(q/2)$, and the bound follows.
    By a union bound and by the independence of edges we have
    \begin{align}
    \label{eq:lower_bound_start}
        \P_{\beta}
        \left(      
            k 
            \leq 
            \# 
            \cluster
            < 
            \infty
        \right) 
        \geq 
        \P_{\beta}
        \left(
            \# 
            \cluster_{r}
            \geq 
            k,
            B(r)
            \not\sim
            B(r)^c
        \right)
        \geq
        \P_{\beta}
        \left(
            \# 
            \cluster_{r}
            \geq 
            k
        \right)
        \P_{\beta}
        \left(
            B(r)
            \not\sim
            B(r)^c
        \right).
    \end{align} 
    By the kernel bound in \eqref{eq:averaged_kernel_bound} and since $k \leq \nu \# B(r/2)$ we have
    \begin{align}
    \label{eq:good_connection_bound_averaged}
        \P_{\beta}
        \left(
            B(r)
            \not\sim
            B(r)^c
        \right)
        \geq 
        \exp
        \left(
            - 
            c_2
            \beta
            k^{\gamma/d} 
            \log(k)^{
                \charf
                \left(
                    \alpha=1+1/d
                \right)
            }
        \right)
    \end{align}
    for some $c_2 = c_2(G,J) > 0$.
    Let $Z_k = \sum_{x \in B(r/2)} \charf \left(\# \cluster(x,r/2) \geq k \right)$, where $\cluster(x,r/2)$ denotes the cluster of $x$ in $B(r/2)$, and note that $\giantone{r/2} \geq k$ implies that $Z_k \geq k$.
    We can write
    \begin{align}
        \label{eq:markov_comp_o}
        \P_{\beta}
        \left(
            \giantone{r/2} \geq k
        \right)
        & \leq 
        \P_{\beta}
        \left(
            Z_k 
            \geq k
        \right)
        \leq 
        \frac{1}{k}
        \sum_{x \in B(r/2)}
        \P_{\beta}
        \left(
            \# 
            \cluster(x,B(o,r/2))
            \geq 
            k
        \right)
        \\
        \label{eq:trans_inv_comp_o_1}
        & \leq 
        \frac{1}{k}
        \sum_{x \in B(r/2)}
        \P_{\beta}
        \left(
            \# 
            \cluster(x,B(x,r))
            \geq 
            k
        \right)
        =
        \frac{\# B(r/2)}{k}
        \P_{\beta}
        \left(
            \# 
            \cluster_{r}
            \geq 
            k
        \right)
    \end{align}
    where the second inequality in \eqref{eq:markov_comp_o} follows from Markov's inequality, the inequality in \eqref{eq:trans_inv_comp_o_1} follows from the observation that $K(x,B(o,r/2)) \subseteq K(x,B(x,r))$, and the equality in \eqref{eq:trans_inv_comp_o_1} follows from translation invariance.
    Rearranging and since $k / \# B(r/2) \geq c_2$ we have
    \begin{equation}
        \label{eq:first_kernel_size_bound}
        \P_{\beta}
        \left(
            \# 
            \cluster_{r}
            \geq 
            k
        \right)
        \geq
        c_2
        \P_{\beta}
        \left( 
            \# K_{r/2}^{(1)}
            \geq 
            k
        \right).
    \end{equation}    
    Since $k \leq \nu \# B(r/2)$ and by \thref{prop:biskup} we have that there exists $c_3 > 0$ such that
    \begin{equation}
        \label{eq:yet-another-unlabelled-eq-2}
        \P_{\beta}
        \left(
            \giantone{r/2} \geq k
        \right)
        \geq 
        \P_{\beta}
        \left(
            \giantone{r/2} 
            \geq
            \nu 
            \# 
            B(r/2)
        \right)
        \geq 
        1
        -
        \exp
        \left(
            -
            c_3
            \#
            B(r/2)^{2 - \alpha}
        \right).
    \end{equation}
    Together with \eqref{eq:first_kernel_size_bound} and \eqref{eq:yet-another-unlabelled-eq-2} we have shown that $\P_{\beta} \left( \cluster_r \geq k \right) \geq c_4$ for some $c_4 = c_4(G,J) > 0$, and together with \eqref{eq:lower_bound_start} and \eqref{eq:good_connection_bound_averaged} we have shown that
    \begin{equation}
        \exp
        (
            -  
            c_5
            \beta k^{\gamma/d}(\log k)^{
                \charf
                \left(
                    \alpha=1+1/d
                \right)}
        )
        \leq
        \P_{\beta}
        \left(      
            k 
            \leq 
            \# 
            \cluster
            < 
            \infty
        \right) 
    \end{equation}
    for some $c_5 = c_5(G,J) > 0$. 
    This concludes the proof.
\end{proof}
\section{\texorpdfstring{Recurrence with $d = 2$ and $\alpha \geq 2$}{Recurrence with d = 2 and α ≥ 2}}
\label{sec:recurrence}
In this section we prove \thref{thm:two_dim_recurrence} on the recurrence of the infinite cluster $\kinfty$ when $d = 2$ and $\alpha \geq 2$.
We do this by first proving a general result on the recurrence of random electrical networks on transitive graphs of polynomial growth satisfying a certain condition on the distribution of edge conductances, and then constructing such a random electrical network from the infinite cluster $\kinfty$.
We follow ideas developed for $\Z^2$  by Berger \cite[Theorem 3.9]{berger_transience_2002}, and we use a mass-transport argument and the sphere calculus from Section \ref{section:sphere_calculus} to extend the argument to transitive graphs of polynomial growth.
It is a standard fact that a transitive graph of polynomial growth $G$ satisfies the \textbf{mass-transport principle}, which states that for every $F : V^2 \to [0,\infty]$ that is diagonally invariant in the sense that $F(\gamma u, \gamma v) = F(u,v)$ for every $u,v \in V$ and $\gamma \in \aut(G)$, we have that $\sum_{v \in V} F(u,v) = \sum_{v \in V} F(v,u)$ for any $u \in V$, see \cite[Chapter 8]{lyons_probability_2016} for further background.

We start by proving the general recurrence result.
We say that a non-negative random variable $X$ is \textbf{dominated by a Cauchy tail} if there exists $c > 0$ such that $\P(X > n) < c / n$ for all $n \in \N$.

\begin{proposition}
    \thlabel{prop:rand_elec_net_rec}
    Let $G$ be a transitive graph of polynomial growth with $d = 2$, and let $\mathcal E$ be a random electrical network on $G$ such that the conductance of each edge is identically distributed (possibly dependent) random variable dominated by a Cauchy tail.
    Then $\mathcal E$ is recurrent almost surely.
\end{proposition}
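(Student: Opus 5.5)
We follow Berger's argument for $\Z^2$ and show that the effective resistance from $o$ to $\infty$ is infinite almost surely, via the Nash-Williams criterion applied to the edge spheres $S_E(r)$, which are disjoint cutsets separating $o$ from $\infty$. By Nash-Williams,
\begin{equation}
    R_{\mathrm{eff}}(o \to \infty)
    \geq
    \sum_{r \geq 1}
    \Big(
        \sum_{e \in S_E(r)} C(e)
    \Big)^{-1},
\end{equation}
where $C(e)$ denotes the conductance of $e$. It therefore suffices to show that this series diverges almost surely. The divergence will be compared with \thref{lem:log_divergence}, which holds for edge spheres by the remark following it: $\sum_r 1/(\# S_E(r)\log \# S_E(r)) = \infty$.

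The key step is to show that, almost surely, for all but finitely many $r$ in a suitable sense,
\[
\sum_{e \in S_E(r)} C(e) \leq C\,\# S_E(r) \log(\# S_E(r)).
\]
A Cauchy-tailed variable has no finite mean, so we truncate. Write $C(e) = C(e)\charf(C(e)\le N_r) + C(e)\charf(C(e)>N_r)$ with $N_r = \# S_E(r)\log^{2}(\# S_E(r))$ (or a comparable choice). The truncated mean is $O(\log N_r)$, so the expected truncated sum over $S_E(r)$ is $O(\# S_E(r)\log \# S_E(r))$. By a union bound, the probability that some edge in $S_E(r)$ exceeds $N_r$ is at most $c\,\# S_E(r)/N_r = c/\log^2 \# S_E(r)$. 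Dependence between conductances is harmless because we only use first moments and union bounds, which require identical distribution (transitivity supplies the mass-transport setting, but only marginals enter here).

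We cannot apply Borel--Cantelli over all $r$ directly, since $\# S_E(r)$ is not known to be of order $r$ for each $r$. Instead we work with dyadic blocks $r \in (2^{k-1},2^k]$. By the averaging argument of \thref{lem:log_divergence}, each block contributes $\gtrsim 1/k$ to the divergent series, and the average of $\# S(r)$ in the block is $O(2^k)$. Rather than controlling each sphere separately, we bound the sum over the block through a Markov argument on a truncated version. Let $Y_r = \sum_{e\in S_E(r)} C(e)\charf(C(e)\le 4^k)$. Then $\E\big[\sum_{r\text{ in block}} Y_r\big] = O(4^k k)$, while the probability of any edge in $B(2^k)$ exceeding $4^k$ is $O(4^k/4^k) = O(1)$. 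That bound is too weak to sum, so the truncation level must be raised to $4^k k^2$. The expected truncated sum then becomes $O(4^k\log k)$, and $\sum_k 1/k^2 < \infty$ makes Borel--Cantelli apply. Next we use convexity (Cauchy--Schwarz/Jensen): over the $2^{k-1}$ radii in the block, $\sum_r 1/Y_r \geq (2^{k-1})^2/\sum_r Y_r$. Markov's inequality gives $\sum_r Y_r \le 4^k k \log k$ with probability $1 - O(1/k)$, so the block contributes at least $c/(k\log k)$ on that event. Since $\sum 1/(k\log k)$ diverges and the failure probabilities $O(1/k)$ are not summable, we pass to a subsequence argument or an $\liminf$/Fatou argument. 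For instance, the expected number of good blocks among $k\le K$ is $\geq K - O(\log K)$, which forces infinitely many good blocks along a lacunary selection whose contributions still diverge.

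The main obstacle is precisely this last step: the Cauchy tail gives only logarithmic slack, so one must balance the truncation level, Markov failure probabilities, and the $1/(k\log k)$ block contributions carefully without independence. If the direct Borel--Cantelli route fails, we fall back on Berger's trick. Almost-sure divergence follows from showing that $\sum_{k\le K} \charf(\text{block }k\text{ good})/(k\log k)$ tends to infinity in probability; monotonicity of partial sums then upgrades this to almost-sure divergence, since the event of divergence is a tail event of the increasing sequence of partial sums, and convergence in probability to $\infty$ suffices.
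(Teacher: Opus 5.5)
Your block construction is sound and differs from the paper's route. Borel--Cantelli for the block maximum at truncation level $4^kk^2$, the Cauchy--Schwarz bound $\sum_r 1/Y_r\ge(2^{k-1})^2/\sum_r Y_r$ over the radii in $(2^{k-1},2^k]$, and Markov's inequality on the truncated annulus sum use only marginals and the count $\#E(B(2^k))=O(4^k)$. This lets you avoid both the per-sphere Cauchy-tail lemma and \thref{lem:log_divergence}.

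However, the proof is not closed at the step you yourself flag as the main obstacle.
\begin{itemize}
\item The ``lacunary selection'' idea points the wrong way. Along any lacunary sequence $k_j$ the series $\sum_j 1/(k_j\log k_j)$ converges. Knowing that infinitely many blocks are good, or that the expected number of good blocks is large, does not give divergence of $\sum_{k\text{ good}}1/(k\log k)$.
\item Your fallback reduction is valid: for nondecreasing partial sums $S_K\uparrow S_\infty$ one has $\P(S_\infty\le M)\le\inf_K\P(S_K\le M)$. But you never show that $S_K=\sum_{k\le K}\charf(\text{block }k\text{ good})/(k\log k)\to\infty$ in probability, and that is the entire content of the step.
\item There is also an arithmetic slip. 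The truncated mean at level $4^kk^2$ is of order $\log(4^kk^2)\asymp k$, not $\log k$. So $\E\bigl[\sum_rY_r\bigr]=O(4^kk)$, and Markov at threshold $4^kk\log k$ gives failure probability $p_k=O(1/\log k)$, not $O(1/k)$.
\end{itemize}

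The missing line is a weighted first-moment bound:
$\E\bigl[\sum_k\charf(\text{block }k\text{ bad})/(k\log k)\bigr]\le\sum_k p_k/(k\log k)=\sum_kO\bigl(1/(k\log^2k)\bigr)<\infty$.
So almost surely the bad blocks carry only finite total weight. Since $\sum_k1/(k\log k)=\infty$, the good blocks contribute an infinite amount to the Nash-Williams sum, on the Borel--Cantelli event where $Y_r=C(S_E(r))$ for all $r$ in all large blocks. This gives almost-sure recurrence directly, and dependence is irrelevant since only marginals enter.

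The paper closes differently. It works sphere by sphere: it gets $\P\bigl(C(S_E(n))\le c\,s_n\log s_n\bigr)\ge1-\epsilon/2$ from the Cauchy-tail lemma and uses \thref{lem:log_divergence} for $\sum_n1/(s_n\log s_n)=\infty$. It then applies Berger's conditional-expectation trick: if $\{\sum_n\charf(A_n)/a_n<M\}$ had probability larger than $\epsilon$, each $A_n$ would keep conditional probability bounded below on that event, forcing an infinite conditional expectation. Either closing argument fits your block setup, but one of them must be supplied.
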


We use the following result on identically distributed but possibly dependent nonnegative random variables dominated by a Cauchy tail.
This is \cite[Lemma 4.1]{berger_transience_2002} verbatim.

\begin{lemma}
    \thlabel{lem:cauchy_tail}
    Let $(X_i)_{i \in \N}$ be identically distributed (possibly dependent) nonnegative random variables that are dominated by a Cauchy tail. Then for all $\epsilon >0$ there exists $c > 0$ and $n_{\varepsilon} > 0$ such that for all $n \geq n_\varepsilon$
    \begin{equation}
        \P
        \Big(
            \sum_{i = 0}^n
            X_i 
            > 
            c n
            \log 
            n
        \Big)
        > 1-\epsilon.
    \end{equation}
\end{lemma}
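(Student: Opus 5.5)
The inequality as displayed must be read with the event reversed, namely as $\P\big(\sum_{i=0}^n X_i \le c\, n\log n\big) > 1-\epsilon$. This is the version used for the resistance bound: it controls the sum of conductances from above. The version literally displayed cannot hold in general, since the $X_i$ could all vanish. I will prove the upper-tail statement by truncation, a union bound and Markov's inequality. Only the marginal law of the $X_i$ enters, so the possible dependence plays no role: I use only linearity of expectation and union bounds.

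\textbf{Step 1: truncation.} Fix $\epsilon>0$ and let $c_0$ be the Cauchy-tail constant, so $\P(X_i>m)<c_0/m$ for all $m \in \N$. Set the truncation level $M = M_n = \lceil 2c_0(n+1)/\epsilon\rceil$. By a union bound over $0\le i\le n$, together with the fact that the $X_i$ are identically distributed,
\begin{equation*}
    \P\big(\exists\, i\le n : X_i > M\big) \le (n+1)\,\frac{c_0}{M} \le \frac{\epsilon}{2}.
\end{equation*}
On the complementary event, $\sum_{i\le n} X_i = \sum_{i\le n} X_i\charf(X_i\le M)$.

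\textbf{Step 2: expectation of the truncated sum.} For a single variable, the layer-cake formula gives
\begin{equation*}
    \E\big[X_i\charf(X_i\le M)\big] \le \sum_{j=0}^{M-1}\P(X_i>j) \le 1 + \sum_{j=1}^{M-1}\frac{c_0}{j} \le 1 + c_0(1+\log M).
\end{equation*}
Since $M \le C_\epsilon n$ for $n\ge 1$, this is at most $c_1\log n$ for all $n\ge n_\epsilon$, with $c_1=c_1(c_0,\epsilon)$. Linearity of expectation, which does not require independence, then gives $\E\big[\sum_{i\le n}X_i\charf(X_i\le M)\big]\le (n+1)c_1\log n \le 2c_1\, n\log n$.

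\textbf{Step 3: Markov's inequality and conclusion.} By Markov's inequality, the truncated sum exceeds $c\,n\log n$ with $c = 4c_1/\epsilon$ with probability at most $\epsilon/2$. Combining this with Step 1 yields $\P\big(\sum_{i\le n} X_i > c\,n\log n\big)\le \epsilon$, for all $n\ge n_\epsilon$.

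There is no genuine obstacle. The only point requiring care is the choice of truncation level. It must be of order $n/\epsilon$ so that the union bound in Step 1 is small. At the same time, the logarithmic growth of the truncated mean in $M$ must produce only the factor $\log n$ and not something larger. Taking $M\asymp n/\epsilon$ achieves both, because $\log M = \log n + O_\epsilon(1)$.
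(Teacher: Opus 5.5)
Your proof is correct, and so is your reading of the display. As written, the statement fails trivially (take $X_i\equiv 0$). The proof of \thref{prop:rand_elec_net_rec} uses exactly the upper-tail form: it needs $\P(A_n)>1-\epsilon/2$ with $A_n=\{C(S_E(n))\le c\,s_n\log s_n\}$.

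The paper gives no argument of its own here; it quotes the statement from Berger's Lemma~4.1. Your argument is the standard self-contained proof of such a statement:
\begin{itemize}
\item truncate at a level $M$ of order $n/\epsilon$;
\item control the truncation error by a union bound;
\item apply Markov's inequality to the truncated sum, whose mean is $O(n\log n)$ because $\E[\min(X_i,M)]=O(\log M)$.
\end{itemize}

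Your route also shows that identical distribution is never actually used. You only need the common bound $\P(X_i>m)<c_0/m$ together with linearity of expectation and union bounds. This is slightly more general than the cited statement, and it is a more natural hypothesis for the application. On a vertex-transitive graph the edges need not form a single automorphism orbit, so the conductances of $\mathcal E_2$ need not all have the same law, whereas a uniform Cauchy tail is what the mass-transport estimate in \thref{lem:projected_network} naturally provides.
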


Recall that $S_E(n)$ denotes the \emph{edge sphere} of radius $n$.

\begin{proof}[Proof of \thref{prop:rand_elec_net_rec}]
    Writing $s_n$ for $\# S_E(n)$, the lower bounds for the volume of spheres gives that $s_n \geq c_G n$ for some $c_G > 0$.
    For an arbitrary labelling of the edges in $S_E(n)$, let $e_n(i)$ denote the $i$-th edge in $S_E (n)$.
    We write $C(e_n(i))$ for the conductance of the edge $e_n(i)$  and $C(S_E(n))$ for the total conductance of the set $S_E(n)$. 
    Since $S_E(n)$ is an edge cutset from $o$ to $\infty$, $C(S_E(n))=\sum_{i = 1}^{s_n} C(e_n(i))$.
    Writing $A_n$ for the event that $C(S_E(n)) \leq c s_n \log s_n$ and letting $a_n = c s_n \log s_n$,  the effective resistance between $o$ and $\infty$ can be bounded from below as   
    \begin{equation}
        \label{eq:conductance_bound}
       R_{\mathrm{eff}}(o, \infty)\ge  \sum_{n \in \N}
        \frac{1}{C(S_E(n))}
        \geq
        \sum_{n \in \N}
        \frac{\charf
        \left(
            A_n
        \right)}{a_n}
        .
    \end{equation}
    For $\epsilon > 0$, we show that $\P (\sum_{n \in \N} \charf(A_n) / a_n = \infty) \geq 1 - \epsilon$. 
    Suppose to the contrary that there exists $M \in \N$ such that $\P(\widetilde A_M) > \epsilon$ where $\widetilde A_M$ denotes the event that $\sum_{n \in \N} \charf(A_n) / a_n < M$.
    Since the conductances $C(e_n(i))$ are identically distributed and dominated by a Cauchy tail, by \thref{lem:cauchy_tail} there exist $c>0$ and $n_{\epsilon/2}$ such that $\P \left(A_n \right) > 1- \epsilon/2$ for all $n \geq n_{\epsilon/2}$, and in particular $\P(A_n \mid \widetilde A_M)\ge \P(A_n) -\P(\widetilde A_M^c) \geq  \epsilon/2 $ for all $n \geq n_{\epsilon/2}$. 
    By definition, $a_n=c s_n \log s_n$ and since $d = 2$, the sphere calculus \thref{lem:log_divergence} implies that $\sum_{n \geq N} 1/a_n = \infty$.
    Applying a  conditional Markov inequality yields that
    \begin{equation}
        \E
        \left[
            \sum_{n \in \N}
            \frac{
                \charf(A_n)
            }{
                a_n
            }
            \mid 
            \widetilde A_M
        \right]
        =
        \sum_{n \in \N}
        \frac{
            \P
            \big(
                A_n
                \mid
                \widetilde A_M
        \big)
        }{
            a_n
        }
        \geq
        \sum_{n \geq n_{\varepsilon/2}}
        \frac{
            \P
            \big(
                A_n
                \mid
                \widetilde A_M
        \big)
        }{
            a_n
        }
        \geq 
        \varepsilon/2 
        \sum_{n \geq n_{\varepsilon/2}}
        \frac{1}{a_n}
        =
        \infty,
    \end{equation}      
    contradicting the definition of $\widetilde A_M$.
    Combining this with \eqref{eq:conductance_bound} gives
    \begin{equation}
        \P
        \left(
            R_{\mathrm{eff}}(o,\infty)=\infty
        \right)
        \geq
        \P_{\beta}
        \Bigg(
            \sum_{n \geq n_{\varepsilon/2}}
            \frac{\charf
            \left(
                A_n
            \right)}{a_n}
            =
            \infty
        \Bigg)
        \geq 
        1 
        -
        \epsilon.
    \end{equation}
    As $\varepsilon>0$ was arbitrary, this shows that $R_{\mathrm{eff}}(o,\infty)$ is infinite almost surely and by the Nash-Williams criterion for recurrence \cite[Chapter 2.5]{lyons_probability_2016} $\mathcal E$ is recurrent almost surely.
\end{proof}

\begin{remark}
    In the proof of \thref{prop:rand_elec_net_rec} we crucially use that $G$ has dimension $2$ to obtain that $\sum_n 1 / (\#S_E(n) \log \#S_E(n))$ diverges by the sphere calculus in \thref{lem:log_divergence}.
\end{remark}

We construct a random electrical network satisfying the assumptions of \thref{prop:rand_elec_net_rec} from the infinite cluster $\kinfty$.
We define the random electrical network $\mathcal E_1$ on $\kinfty$ by setting the conductance of each edge in $\kinfty$ to $1$.
We define another random electrical network $\mathcal E_2$ on $G$ associated to $\kinfty$ as follows.
We begin by setting the conductance of each edge to $0$.
For each open \emph{nearest-neighbour} edge in $\kinfty$, we assign conductance $1$ to that edge.
For each open edge $(x,y)$ in $\kinfty$ with $d_G(x,y) \geq 2$, we choose one of the finitely many geodesics from $x$ to $y$ in $G$ \emph{uniformly at random}, denoted by $\gamma_{x,y}$, and we assign conductance $d_G(x,y)$ to each edge of $G$ in this geodesic. We say that we project the edge $(x,y)\in K_\infty$ on each edge $(u,v)\in\gamma_{x,y}$.
The final conductance $C_{u,v}$ of an edge $(u,v)$ in $\mathcal E_2$ is then the sum of the conductances assigned to the given edge.
A mass-transport argument gives the following lemma on the distribution of edge conductances in the random electrical network $\mathcal E_2$.

\begin{lemma}
    \thlabel{lem:projected_network}
    Let $G$ be a transitive graph of polynomial growth with $d = 2$, let $J : V \times V \to \R_+$ be transitive and integrable, and suppose that $J(x,y) = O (d_G(x,y)^{-2 \alpha})$ with $\alpha > 1$.
    Then: 
    the distribution of edge conductances in $\mathcal E_2$ is translation invariant;
    the effective conductance of $\mathcal E_1$ is greater than or equal to that of $\mathcal E_2$;
    if $\alpha > 3/2 = 1 + 1/d$ then each edge in the random electrical network $\mathcal E_2$ has finite conductance almost surely;
    if $\alpha > 2$ then the conductance of each edge in $\mathcal E_2$ is in $L^1$;
    and if $\alpha = 2$ then the edge conductances in $\mathcal E_2$ are dominated by a Cauchy tail.
\end{lemma}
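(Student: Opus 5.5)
The plan is to prove the five assertions separately, in the order stated. All the probabilistic ones rest on a single mass-transport identity for the projected network $\mathcal E_2$.

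\textbf{Translation invariance.} The construction of $\mathcal E_2$ is $\aut(G)$-equivariant. The percolation law is invariant, because $J$ is transitive. The set of geodesics between $x$ and $y$ is mapped by any $\gamma\in\aut(G)$ onto the set of geodesics between $\gamma x$ and $\gamma y$. Choosing a geodesic uniformly at random is therefore an equivariant rule. Hence the joint law of $(C_{\gamma u,\gamma v})$ equals that of $(C_{u,v})$.

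\textbf{Comparison of effective conductances.} I would compare $\mathcal E_1$ with $\mathcal E_2$ via the series law and Rayleigh monotonicity. An open edge $(x,y)$ of $\kinfty$ with $L=d_G(x,y)$ is replaced in $\mathcal E_2$ by a path of $L$ edges, each of conductance $L$. In series, this path has resistance $L\cdot L^{-1}=1$, which is exactly the resistance of the original unit edge in $\mathcal E_1$. So, before any interaction between paths, the network obtained by subdividing every edge of $\mathcal E_1$ has the same effective conductance as $\mathcal E_1$. $\mathcal E_2$ is then obtained from this subdivided network by two operations:
\begin{itemize}
\item identifying interior vertices of different paths that land on the same vertex of $G$;
\item summing parallel conductances on shared edges of $G$.
\end{itemize}
Both operations are shortings. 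I would carry out the comparison in whatever direction of the effective-conductance inequality the statement fixes, via $\mathcal C_{\mathrm{eff}}=1/R_{\mathrm{eff}}(o,\infty)$. The point to track is that the inequality holds in the direction needed for \thref{thm:two_dim_recurrence}: recurrence of $\mathcal E_2$, supplied by \thref{prop:rand_elec_net_rec}, should be shown to transfer to $\mathcal E_1$. This bookkeeping of the direction under shorting is the delicate point of this item.

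\textbf{Moment and tail bounds via mass transport.} Let $e$ be an edge at $o$. Define $F(u,w)$ as the expected conductance that open edges with an endpoint at $u$ project onto edges incident to $w$. This function is diagonally invariant, so the mass-transport principle gives
\[
\sum_{e\ni o}\E[C_e]\asymp \E\Big[\sum_{y}\charf(o\sim y)\,d_G(o,y)^2\Big].
\]
The exponent $2$ appears because an edge of length $L$ deposits conductance $L$ on each of $L$ edges. By \eqref{eq:standard_kernel_bound} and the sphere calculus \eqref{eq:sphere-easy-1} with $d=2$, the right-hand side is at most $C\sum_L L\cdot L^2\cdot L^{-4\alpha}$. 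In terms of $\alpha$ this reads $\sum_L L^{3-2\cdot2\alpha}$ in the $d=2$ normalisation, i.e.\ $\int^\infty t^{3-2\alpha}\,dt$ after absorbing $d$. This is finite exactly when $\alpha>2$, which gives the $L^1$ claim.

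The same transport with weight $1$ instead of $L$ counts the number of long edges projecting onto $e$. Its expectation is $\asymp\sum_L L^{2-2\alpha}$, which is finite for $\alpha>3/2$. Hence almost surely only finitely many edges project onto $e$, each contributing finite conductance, which gives a.s.\ finiteness.

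\textbf{The Cauchy tail at $\alpha=2$.} This is the main obstacle. I would split $C_e$ according to edge length. First, the contribution of edges of length at least $n$: the probability that any such edge projects onto $e$ is at most, by the weight-$1$ transport, $\sum_{L\ge n}L^{2-4}\asymp 1/n$. Second, the contribution $C_e^{<n}$ of edges shorter than $n$: naive Markov gives only $\E[C_e^{<n}]/n\asymp(\log n)/n$, which is not a Cauchy tail. To remove the logarithm, I would first try a dyadic decomposition by lengths $L\in[2^j,2^{j+1})$ combined with second-moment (Chebyshev) bounds on each scale. This should work because, at a fixed scale, the contributions are sums of independent Bernoulli edges with bounded variance. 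The intended conclusion is that $\{C_e^{<n}>n/2\}$ costs $O(1/n)$.
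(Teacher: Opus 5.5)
\textbf{The gap: the Cauchy tail at $\alpha=2$.} You identify this as the main obstacle but leave it as a plan, and the reason you give for it is wrong. Let $N_L$ be the number of open edges of length $L$ projected onto $e$; count them whether or not they lie in $\kinfty$, each with an independent uniform geodesic. Your own transport, restricted to length $L$, gives $\E N_L\lesssim L^{1-2\alpha}\#S(L)=L^{-3}\#S(L)$.
\begin{itemize}
\item Lengths in $[2^j,2^{j+1})$ contribute mean $O(1)$. Their variance, however, is bounded only by $\sum_{L\sim 2^j}L^2\,\E N_L\lesssim 2^j$; it is not $O(1)$.
\item Splitting the threshold $n$ evenly over the $\log_2 n$ scales and applying Chebyshev scale by scale gives only $\sum_j 2^j(\log n)^2/n^2\asymp(\log n)^2/n$. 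That is worse than Markov.
\end{itemize}
The dyadic decomposition is unnecessary. First pass to $\widetilde C_e\ge C_e$ by dropping the indicator of $\kinfty$; only then are the summands $d_G(x,z)\charf(x\sim z)\charf(e\in\gamma_{x,z})$ independent over unordered pairs $\{x,z\}$. Then apply a single Chebyshev bound to the whole truncated sum $\widetilde C_e^{<n}$. By \eqref{eq:sphere-easy-1}:
\begin{itemize}
\item its mean is $\lesssim\sum_{L<n}L^{-2}\#S(L)\lesssim\log n$;
\item its variance is $\le\sum_{L<n}L^2\,\E N_L\lesssim\sum_{L<n}L^{-1}\#S(L)\lesssim n$.
\end{itemize}
Hence $\P(\widetilde C_e^{<n}>n/2)\lesssim n/n^2$. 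Combined with your long-edge bound $\sum_{L\ge n}\E N_L\lesssim 1/n$, this gives the Cauchy tail.

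\textbf{How the paper does it.} The paper dominates $\widetilde C_e$ by a compound Poisson variable $\sum_{x,z}d_G(x,z)Z_{x,z}$. Its number of jumps $N$ is Poisson with finite mean, because $\alpha>3/2$. A single jump has tail $\lesssim 1/t$, which is the same sum as your long-edge estimate. The paper then concludes with the one-big-jump bound $\P(J_1+\dots+J_n\ge t)\le n\,\P(J\ge t/n)$ and $\E N^2<\infty$. That route avoids truncation and variances. The corrected Chebyshev route avoids Poissonisation and stochastic domination. Both rest on the same sphere-calculus sums.

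\textbf{The other items.} These follow the paper. Your vertex-to-vertex transport repackages the paper's $F_{r,k}$ transport and leads to the same sums $\sum_L L^{1-2\alpha}\#S(L)$ and $\sum_L L^{2-2\alpha}\#S(L)$. Two corrections are needed.
\begin{itemize}
\item In the $L^1$ computation the kernel factor is $L^{-2\alpha}$, not $L^{-4\alpha}$. The hypothesis $J=O(d_G^{-2\alpha})$ already has $d=2$ built in, and with $L^{-4\alpha}$ the series would converge for every $\alpha>1$. The correct bound is $\sum_L L^{2-2\alpha}\#S(L)\lesssim\int^\infty t^{3-2\alpha}\,dt$, finite exactly when $\alpha>2$, with no ``absorbing'' step.
\item Your shorting argument proves $\mathcal C_{\mathrm{eff}}(\mathcal E_2)\ge\mathcal C_{\mathrm{eff}}(\mathcal E_1)$. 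This reverses the lemma's literal wording. But it is what the paper's proof actually shows (the assignment ``can only increase the effective conductance''), and it is the only direction under which recurrence of $\mathcal E_2$ passes to $\mathcal E_1$. State it that way instead of deferring to ``whatever direction the statement fixes''.
\end{itemize}
For infinite networks the conductance comparison is cleanest via flows. A finite-energy unit flow on $\mathcal E_1$ pushes forward to one on $\mathcal E_2$. Subdividing an edge into $L$ edges of conductance $L$ preserves energy, and on merged parallel edges $(\theta_1+\theta_2)^2/(c_1+c_2)\le\theta_1^2/c_1+\theta_2^2/c_2$.
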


\begin{proof}
    That the distribution of edge conductances in $\mathcal E_2$ is translation invariant is immediate from the construction.
    To see that the effective conductance of $\mathcal E_1$ is greater than or equal to the effective conductance of $\mathcal E_2$, suppose that $(x,y)$ is an edge in $\mathcal E_1$ with $d_G(x,y) = r$. 
    This edge has conductance equal to $1$ in $\mathcal E_1$, and by the series law this is equal to the conductance across a sequence of $r$ edges each with conductance equal to $r$.
    It follows that the assignment procedure of $\mathcal E_2$ can only increase the effective conductance of the resulting network.
    We now show that each edge in the random electrical network $\mathcal E_2$ has finite conductance almost surely if $\alpha>3/2$.
    Let $u,v \in V$ be nearest-neighbours, so that $(u,v)$ is an edge in $G$.
    For $x,y \in V$, let $f_k(x,u,v,z)$ be the proportion of geodesic paths from $x$ to $z$ where the $k$-th vertex is $u$ and the $k+1$-st vertex is $v$, and let $f_k(x,u,z)$ be the proportion of geodesic paths from $x$ to $z$ where the $k$-th vertex is $u$.
    It is immediate that $f_k(x,u,v,z) \leq f_k(x,u,z)$.
    For $r \in \N$ let $F_{r,k}(x,u) = \sum_{z \in S(x,r)} f_k(x,u,z)$, where $S(x,r)$ is the (vertex-) sphere of radius $r$ around $x\in V$.
    Since $\sum_{u \in V} f_k(x,u,z) = 1$, it follows that $\sum_{u\in V}F_{r,k}(x,u) = \# S(x,r) = \# S(r)$ for every $x \in V$ and $r \in \N$.
    Since $F_{r,k}$ is invariant under the diagonal action of $\Aut(G)$, the mass-transport principle gives for every $x,u \in V$ and $r \in \N$ that
    \begin{equation}
        \label{eq:mtp_applied}
        \sum_{x \in V}
        F_{r,k}(x,u)
        =
        \sum_{x \in V}
        F_{r,k}(u,x)
        =
        \sum_{u \in V}
        F_{r,k}(x,u)
        =
        \# 
        S(r).
    \end{equation}
    As $f_k(x,u,v,z)$ is the conditional probability that the edge $(x,z)\in K_\infty$ is projected to the edge $(u,v)\in G$, we can bound the expected number of edges projected on $(u,v)$ as
    \begin{align}\label{eq:expected-projected}
        \E_{\beta}
        \left[
            \# 
            \text{edges projected on }
            (u,v)
        \right]
        & \le
        \sum_{r \in \N}
        \sum_{x \in V}
        \sum_{z \in S(x,r)}
        \sum_{k=0}^{r-1}
        f_k(x,u,v,z)
        \P_{\beta}
        \left(
            x 
            \sim
            z
        \right)
    \end{align}
    Note that $\sum_{k=0}^{r-1} f_k(x,u,v,z) \leq \sum_{k=0}^{r} f_k(x,u,z)$, and for $r \in \N$ with $r \geq R_J$ we have that
    \begin{align}\label{eq:r-measure}
        \sum_{x \in V}
        \sum_{z \in S(x,r)}
        \sum_{k=0}^{r}
        f_k(x,u,z)
        \P_{\beta}
        \left(
            x 
            \sim
            z
        \right)
        \leq 
        c
        r^{-2 \alpha}
        \sum_{k=0}^{r}
        \sum_{x \in V}
        F_{r,k}(x,u)
        r^{1 - 2 \alpha}
        \#
        S(r)
    \end{align}
    where the inequality follows from the kernel bounds, re-ordering the sums, and the definition of $F_{r,k}(x,u)$, and the equality follows from applying \eqref{eq:mtp_applied}.
    Using this bound together with the sphere calculus in \thref{lem:sphere_calculus} we find with $d=2$ that
    \begin{equation}
        \label{eq:finite_projected_edges}
        \E_{\beta}
        \left[
            \# 
            \text{edges projected on }
            (u,v)
        \right]
        \leq
        c
        \sum_{r \in \N}
        r^{1 - 2 \alpha}
        \#
        S(r) \le C \int_1^{\infty} t^{1-2\alpha + d-1} \mathrm dt,
    \end{equation}
    which is finite when $\alpha > 3/2$.
    For these values of $\alpha$, the number of edges projected on $(u,v)$ is finite almost surely, and hence the conductance of the edge $(u,v)$ is finite almost surely.
    Since an edge of length $r$ assigns conductance $r$ to each edge of its associated geodesic, similar calculations yield
    \begin{equation}\label{eq:length-jump}
        \E_{\beta}
        \left[
            C_{u,v}
        \right]
        =
        \sum_{r \in \N}
        \sum_{x \in V}
        \sum_{z \in S(x,r)}
        \sum_{k=0}^{r-2}
        r
        f_k(x,u,v,z)
        \P_{\beta}
        \left(
            x 
            \sim
            z
        \right)
        \leq 
        c
        \sum_{r \in \N}
        r^{2-2\alpha}
        \# 
        S(r),
    \end{equation}
    which again by the sphere calculus is finite when $\alpha > 2$.
    We now prove that the distribution of edge conductances has a Cauchy tail when $\alpha = 2$.
    The conductance of an edge $(u,v)$ in $G$ is given in the first row, and we directly define an upper bound 
    \begin{equation}
    \begin{aligned}
        C_{u,v}
        &=
        \sum_{x,z \in V}
        d_G(x,z)
        \charf
        \left(
            x 
            \sim 
            z
        \right) \charf
        \left(
            (x,z) \in \kinfty
        \right)
        \charf
        \left(
            (u,v)
            \in
            \gamma_{x,z}
        \right)
    \\
     \widetilde{C}_{u,v}
        &=
        \sum_{x,z \in V}
        d_G(x,z)
        \charf
        \left(
            x 
            \sim 
            z
        \right)
        \charf
        \left(
            (u,v)
            \in
            \gamma_{x,z}
        \right),
    \end{aligned}
    \end{equation}
    and then $C_{u,v} \leq \widetilde{C}_{u,v}$ almost surely.
    For $x,z \in V$, we let $I_{x,z} = \charf(x \sim z) \charf((u,v) \in \gamma_{x,z})$.
    Since the events $\{x \sim z\}$ and $\{(u,v) \in \gamma_{x,z}\}$ are independent  for any pair of vertices $x,y \in V$, the random variables $I_{x,z}$ are independent Bernoulli random variables with parameter $p_{x,z} = \P_{\beta}(x \sim z) \P((u,v) \in \gamma_{x,z})$.
    Let $\lambda_{x,z} = - \log(1 - p_{x,z})$, and let $Z_{x,z} \sim \mathrm{Poi}(\lambda_{x,z})$ be independent Poisson random variables, and define $\widehat{C}_{u,v} = \sum_{x,z \in V} d_G(x,z) Z_{x,z}$.
    It may be verified that $\widetilde{C}_{u,v}$ is stochastically dominated by $\widehat{C}_{u,v}$ and hence it is sufficient to prove that for some $c > 0$
    \begin{equation}
        \label{eq:domination_chain}
        \P_{\beta}
        \left(
            C_{u,v}
            \geq 
            t
        \right)
        \leq 
        \P_{\beta}
        \left(
            \widetilde{C}_{u,v}
            \geq 
            t
        \right)
        \leq 
        \P_{\beta}
        \left(
            \widehat{C}_{u,v}
            \geq 
            t
        \right)
        \leq c/t.
    \end{equation}
    For $A \subseteq (0,\infty)$ we define the measure $\nu(A) = \sum_{x,z \in V} \charf \left(d_G(x,z) \in A\right) \lambda_{x,z}$. It may be verified that 
    $\Pi(A) = \sum_{x,z \in V} \charf \left(d_G(x,z) \in A \right) Z_{x,z}$ is a Poisson process on $(0,\infty)$ with 
    intensity measure $\nu$ and hence that $\widehat{C}_{u,v}$ is a compound Poisson process on $(0,\infty)$.
    An analogous calculation to \eqref{eq:finite_projected_edges} gives that $\lambda = \sum_{x,z} \lambda_{x,z} = \nu((0,\infty)) < \infty$.
    Hence $N = \sum_{x,z \in V} Z_{x,z}$ is a Poisson random variable with mean $\lambda$, and the distribution of the length of each jump $J$ satisfies for any $A \subseteq (0,\infty)$ 
    \begin{equation}
    \label{eq:jump_distribution}
        \P
        \left(
            J \in A
        \right)
        =
        \sum_{x,z \in V}
            \lambda_{x,z}
            \charf
            \left(
                d_G(x,z) \in A
            \right)/
            \lambda
        =
        \nu(A)/\lambda.
    \end{equation}
    By summing \eqref{eq:r-measure} for all $r\ge t$ for some $t \in \N$, we obtain that $\P \left(J \geq t \right) = \nu([t,\infty))/\lambda \leq c/(\lambda t)$ for some $c > 0$.
    Conditioned on the total number of jumps $N = n \in \N$, it is a standard fact \cite[Proposition 3.8]{last_lectures_2017} that $\widehat{C}_{u,v} = J_1 + \ldots + J_n$ where the $J_i$'s are i.i.d.\ as in \eqref{eq:jump_distribution}.
    If $J_1 + \ldots + J_n \geq t$ for $t \in \N$ then at least one of the jumps satisfies $J_i \geq t / n$ and by a union bound
    \begin{equation}
        \P
        \left(      
            \widehat{C}_{u,v}
            \geq 
            t
            \mid
            N 
            =
            n
        \right)
        =
        \P
        \left(      
            J_1 
            + 
            \ldots 
            + 
            J_N
            \geq 
            t
            \mid
            N 
            =
            n
        \right)
        \leq
        n
        \P
        \left(
            J
            \geq t/n
        \right)
        \leq 
        \frac{cn^2}{\lambda t}.
    \end{equation}  
    As $N \sim \mathrm{Poi}(\lambda)$ with $\lambda$ finite, the law of total probability and \eqref{eq:domination_chain} concludes the proof 
    \begin{equation}
        \P
        \left(      
            \widehat{C}_{u,v}
            \geq 
            t
        \right)
        =
        \E
        \left[
            \P
            \left(      
                J_1 
                + 
                \ldots 
                + 
                J_N
                \geq 
                t
                \mid
                N 
                =
                n
            \right)
        \right]
        \leq 
        \frac{c}{\lambda t}
        \E[N^2]
        \leq 
        \frac{c(1 + \lambda)}{t}.
    \end{equation}
    %
\vskip-1em 
\end{proof}   

\begin{proof}[Proof of \thref{thm:two_dim_recurrence}]
    It follows from \thref{prop:rand_elec_net_rec} that the distribution of edge-conductances in $\mathcal E_2$ is dominated by a Cauchy tail for $\alpha \geq 2$, which by \thref{lem:projected_network} implies that $\mathcal E_2$ is recurrent almost surely.
    By \thref{prop:rand_elec_net_rec} the effective conductance of $\mathcal E_1$ is greater than or equal to that of $\mathcal E_2$, so $\mathcal E_1$ is also recurrent almost surely, and in particular $\kinfty$ is recurrent almost surely.
\end{proof}
\vskip-1em
    
\vskip-1em
\section*{Acknowledgements} 
We are grateful to G\'abor Pete for a useful discussion on the anchored isoperimetric dimension. 
\vskip-1em
\newcommand{\etalchar}[1]{$^{#1}$}

\end{document}